\DeclareMathAlphabet\EuR{U}{eur}{m}{n}
\SetMathAlphabet\EuR{bold}{U}{eur}{b}{n}
\DeclareMathOperator{\ab}{ab}
\DeclareMathOperator{\Aut}{Aut}
\DeclareMathOperator*{\colim}{colim}
\DeclareMathOperator{\cok}{cok}
\DeclareMathOperator{\Ext}{Ext}
\DeclareMathOperator{\geo}{geo}
\DeclareMathOperator{\Hom}{Hom}
\DeclareMathOperator{\id}{id}
\DeclareMathOperator{\im}{im}
\DeclareMathOperator{\ind}{ind}
\DeclareMathOperator{\inc}{inc}
\DeclareMathOperator{\map}{map}
\DeclareMathOperator{\pr}{pr}
\DeclareMathOperator{\per}{per}
\DeclareMathOperator{\res}{res}
\DeclareMathOperator{\rk}{rk}
\DeclareMathOperator{\sAut}{sAut}
\DeclareMathOperator{\sign}{sign}
\DeclareMathOperator{\tr}{tr}
\DeclareMathOperator{\Wh}{Wh}
\DeclareMathOperator{\wh}{Wh}
\newcommand{\strp}{\cals^{\per,s}}
\newcommand{\strone}{\cals^{\langle 1 \rangle,s}}
\newcommand{\strg}{\cals^{\operatorname{geo},s}}
  \newcommand{\IQ}{\mathbb{Q}}
  \newcommand{\IR}{\mathbb{R}}
  \newcommand{\IZ}{\mathbb{Z}}
      \newcommand{\C}{\mathbb{C}}
    \newcommand{\Q}{\mathbb{Q}}
  \newcommand{\R}{\mathbb{R}}
  \newcommand{\Z}{\mathbb{Z}}
  \newcommand{\calf}{\mathcal{F}}
  \newcommand{\calh}{\mathcal{H}}
  \newcommand{\calm}{\mathcal{M}}
  \newcommand{\caln}{\mathcal{N}}
  \newcommand{\calp}{\mathcal{P}}
  \newcommand{\cals}{\mathcal{S}}
  \newcommand{\bfE}{{\mathbf E}}
  \newcommand{\bfi}{{\mathbf i}}
  \newcommand{\bfK}{{\mathbf K}}
   \newcommand{\bfKO}{{\mathbf K}{\mathbf O}}  
  \newcommand{\bfL}{{\mathbf L}}
  \newcommand{\bfp}{{\mathbf p}}
  \newcommand{\bfpr}{{\mathbf {pr}}}
  \newcommand{\wK}{\widetilde K}
  \newcommand{\wL}{\widetilde L}
\newcommand{\curs}{\EuR}
\newcommand{\AB}{\curs{ABELIAN~GROUPS}}
\newcommand{\GROUPOIDS}{\curs{GROUPOIDS}}
\newcommand{\GROUPS}{\curs{GROUPS}}
\newcommand{\Or}{\curs{Or}}
\newcommand{\SPECTRA}{\curs{SPECTRA}}
\newcommand{\eub}[1]{\underline{E}#1}
\newcommand{\edub}[1]{\underline{\underline{E}}#1}
\newcommand{\bub}[1]{\underline{B}#1}
\newcommand{\ol}[1]{\overline{#1}}
\newcommand{\ul}[1]{\underline{#1}}
\newcommand{\wt}[1]{\widetilde{#1}}
\newcommand{\g}{\Gamma}
\DeclareMathOperator{\pt}{\bullet}
\newcommand{\xycomsquare}[8]                      % kommutatives Quadrat (xy-Version
{\xymatrix{#1 \ar[r]^{#2} \ar[d]^{#4} &
#3 \ar[d]^{#5}  \\
#6\ar[r]^{#7} &
#8
}
}
\newcommand{\xycomsquareminus}[8]                      % kommutatives Quadrat (xy-Version)
{\xymatrix{#1 \ar[r]^-{#2} \ar[d]^-{#4} &
#3 \ar[d]^-{#5}  \\
#6\ar[r]^-{#7} &
#8
}
}
\theoremstyle{plain}
\newtheorem{theorem}{Theorem}[section]
\newtheorem{lemma}[theorem]{Lemma}
\theoremstyle{definition}
\newtheorem{definition}[theorem]{Definition}
\newtheorem{example}[theorem]{Example}
\newtheorem{remark}[theorem]{Remark}
\theoremstyle{remark}
\let\c@equation=\c@theorem\makeatother
\newcommand{\version}[1]              %marks the date of last editing and compilation
{\begin{center} last edited on #1\\
last compiled on \today\\
name of tex-file: \jobname
\end{center}
}
\newcounter{commentcounter}
\title[Torus bundles over lens spaces]{Manifolds homotopy equivalent to certain torus bundles over lens spaces}
              \author{James F. Davis}
              \email{jfdavis@indiana.edu}
              \urladdr{http://www.indiana.edu/~jfdavis/}
              \address{Department of Mathematics\\
              Indiana University\\
              Rawles Hall\\
              831 East 3rd St\\
              Bloomington, IN 47405\\
              U.S.A.}
              \author{Wolfgang L\"uck}
        \address{Mathematicians Institut der Universit\"at Bonn\\
                Endenicher Allee 60\\
                53115 Bonn, Germany}
         \email{wolfgang.lueck@him.uni-bonn.de}
          \urladdr{http://www.him.uni-bonn.de/lueck}
              \date{July 2019}
              \keywords{surgery, structure sets,  algebraic $K$ and $L$-theory, torus bundles over lens spaces,
                crystallographic groups, Farrell-Jones Conjecture}
    \subjclass[2010]{57R67, 57N99, 19525}
\begin{document}

\begin{abstract}
  We compute the topological simple structure set of closed manifolds which occur as total
  spaces of flat bundles over lens spaces $S^l/(\IZ/p)$ with fiber $T^n$ for an odd prime
  $p$ and $l \ge 3$ provided that the induced $\IZ/p$-action on $\pi_1(T^n) = \IZ^n$ is
  free outside the origin. To the best of our know\-ledge this is the first computation of
  the structure set of a topological manifold whose fundamental group is not obtained from
  torsionfree and finite groups using amalgamated and HNN-extensions.  We give a
  collection of classical surgery invariants such as splitting obstructions and
  $\rho$-invariants which decide whether a simple homotopy equivalence from a closed topological
  manifold to $M$ is homotopic to a homeomorphism.
\end{abstract}

\maketitle

%%%%%%%%%%%%%%%%%%%%%%%%%%%%%%%%%%%%%%%%%%%%%%%%%%%%%%%%%%%%%%%%%%%%
%%%%%%%%%%%%%%%%%%%%%%%%%% Introduction %%%%%%%%%%%%%%%%%%%%%%%%%%%%%%%%
%%%%%%%%%%%%%%%%%%%%%%%%%%%%%%%%%%%%%%%%%%%%%%%%%%%%%%%%%%%%%%%%%%%%

\typeout{------------------- Introduction -----------------}
\setcounter{section}{-1}
\section{Introduction}
\label{sec:introduction}

%%%%%%%%%%%%%%%%%%%%%%%%%%%%%%%%%%%%%%%%%%%%%%%%%%%%%%%%%%%%%%%%%%%%

\subsection{Flat torus bundles over lens spaces}
\label{subsec:Flat_torus_bundles_over_lens_spaces}

Throughout this paper we will consider the following setup and notation:

\begin{itemize}

\item Let $p$ be an odd prime; 

\item Let $\rho \colon \IZ/p \to \Aut (\IZ^n) =\mathrm{GL}_n(\IZ)$ be a
group homomorphism so that the induced action of $\Z/p$ on
$\Z^n - \{0\}$ is free;

\item  The homomorphism $\rho$ defines an action of $\Z/p$ on the torus $T^n = \R^n/\Z^n$.  If we
want to emphasize the $\Z/p$-action, we write $T^n_\rho = B\Z^n_\rho$; 

\item Fix a free action of $\Z/p$ on a sphere $S^l$ for an odd integer $l \geq 3$.  We
refer to the orbit space $L^l := S^l/(\Z/p)$ as a \emph{lens space};

\item Define a closed $(n+l)$-manifold
\[
M := T^n_{\rho} \times_{\Z/p} S^l;
\]

\item The fundamental group of $M^{n+l}$ is the semi-direct product denoted by
\[
\Gamma := \Z^n \times_{\rho} \Z/p.
\]
\end{itemize}

We computed the $K$-theory of the $C^*$-algebra of $\g$ in~\cite{Davis-Lueck(2013)}.

An example of such an action $\rho$ is given by the regular
representation $\Z[\Z/p]$ modulo the ideal generated by the norm element
in which case we have $\rho\colon  \Z/p\to \Aut(\Z^{p-1})$.

The action of $\Z/p$ on $\Z^n - \{0\}$ is free if and only if the fixed point set $(T^n)^{\Z/p}$ is finite.
Equip the torus and the sphere with the standard orientations; this determines an orientation on $M$.
Since the $\Z/p$-action on $S^l$ is free, there is a fiber bundle $T^n_{\rho} \to M^{n+l} \to L^l$.
It is worth noting that $T^n$, $L^{\infty}$ and
$T^n_{\rho} \times_{\Z/p} S^\infty$ are models for $B\IZ^n$,  $B\Z/p$, and $B\Gamma$ respectively.
Notice that our assumptions imply that $\dim(M) = n+l \ge 5$.

  Next we summarize our main results.

%%%%%%%%%%%%%%%%%%%%%%%%%%%%%%%%%%%%%%%%%%%%%%%%%%%%%%%%%%%%%%%%%%%%

\subsection{The geometric topological simple structure set of $M$}
\label{subsec:The_geometric_topological_simple_structure_set_of_M}

We will show in Theorem~\ref{the:The_geometric_simple_structure_set_of_M}~%
\ref{the:The_geometric_simple_structure_set_of_M:comp}

\begin{theorem}[The geometric topological simple structure set of $M$] 
As an abelian group we have
\begin{eqnarray*}
\strg(M) &\cong & 
\IZ^{p^k(p-1)/2} \oplus \bigoplus_{i = 0}^{n-1} L_{n-i}(\IZ)^{r_i},
\end{eqnarray*}
where the natural number $k$ is determined by the equality $n = k(p-1)$ and
the numbers $r_i$ are defined in~\eqref{r-numbers}.
\end{theorem}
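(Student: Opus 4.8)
The plan is to evaluate the topological surgery exact sequence of the closed oriented manifold $M$ --- legitimate since $\dim M = n+l \ge 5$, and giving $\strg(M)$ the structure of an abelian group via Ranicki's identification with the algebraic structure group --- and then to pin down every term using the Farrell--Jones Conjecture. Concretely $\strg(M)$ sits in the exact sequence
\[
\cdots \to H_{n+l+1}(M;\bfLc) \xrightarrow{A} L^s_{n+l+1}(\IZ\g) \to \strg(M) \to H_{n+l}(M;\bfLc) \xrightarrow{A} L^s_{n+l}(\IZ\g) \to \cdots,
\]
where $\bfLc$ is the $1$-connective cover of the periodic quadratic $L$-theory spectrum of $\IZ$ and $A$ is the assembly map, so the asserted group will emerge as an extension of $\ker(A_{n+l})$ by $\cok(A_{n+l+1})$ that one must show splits. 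Thus the work divides into: (a) computing the two $L$-groups, (b) computing the two $\bfLc$-homology groups of $M$, (c) identifying the assembly maps.

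First I would compute $L^s_*(\IZ\g)$. The hypothesis that $\IZ/p$ acts freely on $\IZ^n\setminus\{0\}$ forces $\IZ^n$ to be a maximal abelian normal subgroup, so $\g$ is crystallographic and satisfies the Farrell--Jones Conjecture. Moreover every infinite virtually cyclic subgroup of $\g$ is infinite cyclic: such a subgroup meets $\IZ^n$ in an infinite cyclic $\langle v\rangle$ of index $1$ or $p$, and an index-$p$ extension would produce an element of order $p$ with $\rho(t^j)v=\pm v$, which is impossible since $p$ is odd and the action is free (no eigenvalue $1$; and $-1$ is not a $p$-th root of unity). Since $L$-theory has no Nil-summands and $\g$ contains no infinite dihedral subgroup, the relative assembly map from $\calfin$ to $\calvcyc$ is an isomorphism, so $L^s_m(\IZ\g) \cong H^{\g}_m(\eub\g; \bfL^s_{\IZ})$. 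Here $\eub\g = \IR^n$ with its affine $\g$-action; its singular set consists of $p^k$ orbits of points, each the unique fixed point of a subgroup $\cong \IZ/p$, one per conjugacy class of such subgroups --- the count being $|\IZ^n/(\id-\rho(t))\IZ^n| = \Phi_p(1)^k = p^k$, using $n = k(p-1)$. Cutting $\IR^n$ into the equivariant disk bundles $\coprod_{i=1}^{p^k}\g\times_{\IZ/p}D^n$, their complement, and the linking spheres $\coprod_{i=1}^{p^k}\g\times_{\IZ/p}S^{n-1}$ (with free $\IZ/p$-action), a Mayer--Vietoris argument expresses $H^{\g}_*(\eub\g;\bfL^s)$ through $L^s_*(\IZ[\IZ/p])$, the $\bfL^s$-homology of the lens-type spaces $S^{n-1}/(\IZ/p)$, and the $\bfL^s$-homology of the non-singular quotient. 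For $p$ odd the reduced groups $\widetilde{L}^s_*(\IZ[\IZ/p])$ are free abelian, concentrated in even degrees, of rank $(p-1)/2$ (detected by the multisignature into $\widetilde{L}_*(\IR[\IZ/p]) \cong \IZ^{(p-1)/2}$); this is the source of the summand $\IZ^{p^k(p-1)/2}$.

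For (b) and (c), I would use that $S^l \to S^\infty = E(\IZ/p)$ is $l$-connected, so the classifying map $M \to B\g$ is $l$-connected; combined with Poincar\'e duality on $M$ this computes $H_*(M;\bfLc)$ and exhibits $A_{n+l}$ and $A_{n+l+1}$ as the comparison of $H_*(M)$ with $H^{\g}_*(\eub\g)$ induced by the $\g$-map $\widetilde M = \IR^n\times S^l \to \IR^n = \eub\g$. The non-singular directions --- essentially the torus fibre --- contribute the summands $L_{n-i}(\IZ)^{r_i}$, with $r_i$ the Betti-type numbers of \eqref{r-numbers}, while the discrepancy between $\bfLc$ and the periodic $L$-spectrum is exactly what removes the would-be top term $i=n$, so the sum runs only to $i=n-1$. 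Assembling these pieces gives
\[
\strg(M) \cong \IZ^{p^k(p-1)/2} \oplus \bigoplus_{i=0}^{n-1} L_{n-i}(\IZ)^{r_i}.
\]

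The hard part will be step (c): tracking the two assembly maps through the translations above --- computing their kernels and cokernels, verifying that each of the $p^k$ singular points contributes exactly one copy of $\IZ^{(p-1)/2}$ (rather than two, from both even degrees mod $4$), that no unwanted torsion survives into $\strg(M)$, and that the resulting extension of $\ker(A_{n+l})$ by $\cok(A_{n+l+1})$ splits. Equivalently, the technical heart is an explicit understanding of the equivariant $\bfL$-homology map induced by the $l$-connected $\g$-map $\widetilde M \to \eub\g$, together with pinning down the numbers $r_i$ and the precise degrees in which $\widetilde{L}^s_*(\IZ[\IZ/p])$ feeds into the structure set.
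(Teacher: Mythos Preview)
Your outline is sound and the preliminary computations are correct: the reduction from $\calvcyc$ to $\calfin$, the count $|\calp|=p^k$, and the identification of $\widetilde L^s_*(\IZ[\IZ/p])$ are all right. But by your own admission the proposal stops short of a proof: step~(c) --- the actual analysis of the assembly maps, the exclusion of $p$-torsion, and the splitting of the extension --- is left as a list of desiderata rather than carried out. These are not routine bookkeeping; they are the substance of the theorem.

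The paper's route differs from yours in two structural ways, and both are aimed precisely at the difficulties you flag. First, rather than attacking $\strg(M)$ directly, the paper computes the \emph{periodic} structure set $\strp_{n+l+1}(M)$ first and then passes to $\strg(M)$ via the short exact sequence $0\to\strg(M)\to\strp_{n+l+1}(M)\to H_{n+l}(M;\bfL/\bfLc)$; this isolates the effect of the connective cover as a single infinite-cyclic correction (removing the $i=n$ term), rather than having it interact with everything at once. Second, and more importantly, the paper does not try to compute the assembly map for $M$ head-on. Instead it compares $M$ with $B\Gamma$: it shows $\strp_*(B\Gamma)\cong\bigoplus_{(P)\in\calp}\strp_*(BP)$ is a free $\IZ[1/p]$-module (using an equivariant $KO$-homology argument and the Atiyah--Segal completion theorem), and then proves that $\strp_{n+l+1}(M)$ injects into $H_n(T^n;\bfL)^{\IZ/p}\times\strp_{n+l+1}(B\Gamma)$. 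This injection is what rules out $p$-torsion, and the map becomes an isomorphism after inverting $p$, which settles the abstract isomorphism type. The key technical device making the spectral sequences collapse (your ``no unwanted torsion'' worry) is the Land--Nikolaus comparison $\bfL[1/2]\simeq\bfK\bfO[1/2]$, which transports the question to $KO$-theory where the authors' earlier paper already did the work. Your Mayer--Vietoris decomposition of $\IR^n$ is in the same spirit as the paper's $\Gamma$-pushout for $\eub\Gamma$, but without the $KO$-comparison and the at-$p$/away-from-$p$ dichotomy you do not have a mechanism to finish.
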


In particular, the structure set is infinite.

To our knowledge this is the first computation of the structure set of a topological
manifold whose fundamental group is not obtained from torsionfree and finite groups using
amalgamated and HNN-extensions.  The computation is rather involved and based on the
Farrell-Jones Conjecture.  We also compute the periodic structure sets of $B\Gamma$ and of
$M$ and prove detection results for these structure sets. The notion of a structure set is recalled
in Section~\ref{subsec:structure_sets}.  Its study is motivated by the question of determining the
homeomorphism classes of closed manifolds homotopy equivalent to $M$.

%%%%%%%%%%%%%%%%%%%%%%%%%%%%%%%%%%%%%%%%%%%%%%%%%%%%%%%%%%%%%%%%%%%%

\subsection{Homotopy equivalence versus homeomorphism}
\label{subsec:Homotopy_equivalence_versus_homeomorphism}

The following result gives a criterion when a simple homotopy equivalence of closed
manifolds with $M$ as target is homotopic to a homeomorphism. It is proved in
Section~\ref{subsec:Invariance_for_Detecting_the_structure_set_of_M}.

\begin{theorem}[Simple homotopy equivalence versus homeomorphism]%
\label{the:zero_in_structure_set_intro}
  Let $h \colon N \to M$ be a simple homotopy equivalence with a closed topological
  manifold $N$ as source and the manifold $M$
  of~Subsection~\ref{subsec:Flat_torus_bundles_over_lens_spaces} as target. Then $h$ is
  homotopic to a homeomorphism if and only only if the following conditions are satisfied:

\begin{itemize}

\item\label{the:zero_in_structure_set_intro:splitting} 
Vanishing of splitting obstructions:\\[1mm]
Let $\overline{h} \colon \overline{N} \to T^n \times S^l$ 
be obtained from $h$ by pulling back
 the $\IZ/p$-covering $T^n \times S^l \to M$. 
Consider any nonempty subset $J \subset \{1,2, \ldots, n\}$.
Let $T^J \times \pt\subset T^n \times S^l$ 
be the obvious $|J|$-dimensional submanifold. By making
$\overline{h}$ transversal to $T^J \times \pt$, we obtain a normal
map $(\overline{h})^{ -1}(T^J \times \pt) \to T^J \times \pt$ which defines a surgery
obstruction in $L_{|J|}(\IZ)$. 

This obstruction has to be zero;

\item\label{the:zero_in_structure_set_intro:Rho}
  Equality of $\rho$-invariants:\\[1mm]
  Consider any subgroup $P \subset \Gamma$ of order $p$.  Let $P'$ be the image of $P$
  under the abelianization map $\pr \colon \Gamma \to \Gamma_{\ab}$.  Let $M_P \to M$ be
  the cover corresponding to the subgroup $\pr^{-1}(P')$.  Let $h_P : N_P \to M_P$ be the
  corresponding covering simple homotopy equivalence.

Then we must have the equality of $\rho$-invariants in $ \wt R(P')^{(-1)^{(n+l+1)/2}}[1/p]$
\[
\rho(N_P \to BP') = \rho(M_P \to BP').
\]
\end{itemize}
\end{theorem}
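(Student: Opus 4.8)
The plan is to identify $\strg(M)$ — via the surgery exact sequence and the Farrell--Jones Conjecture — with a group built from the $L$-theory of $\IZ$ and of $\IZ[\IZ/p]$, and then to describe the structure invariant of $h$ completely in terms of the assembly-map summands, so that ``$h$ homotopic to a homeomorphism'' becomes ``the structure class is zero'' becomes ``each summand of the structure invariant vanishes''. First I would recall the topological surgery exact sequence for the closed manifold $M$ of dimension $n+l\ge 5$; since $M$ is a $K(\Gamma,1)$ the structural term is the homotopy fiber of the $L$-theory assembly map for $\Gamma$, which by the Farrell--Jones Conjecture for $\Gamma$ (established for such crystallographic-type groups, and used in \cite{Davis-Lueck(2013)}) is computed through the family of virtually cyclic subgroups. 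Because $\Gamma=\IZ^n\times_\rho\IZ/p$ with the $\IZ/p$-action free away from the origin, the relevant subgroups reduce to the finite subgroups of order $p$ together with the infinite cyclic subgroups coming from $\IZ^n$, and the Nil terms vanish (they are $\Nil$-groups of $\IZ$, which are zero). This is what produces the direct-sum decomposition of the abelian group $\strg(M)$: a $\IZ$-part coming from the $L$-theory of $\IZ[\IZ/p]$ localized away from $p$ (the source of the $\rho$-invariant data), and a sum of copies of $L_{n-i}(\IZ)$ coming from the ``torus directions'', indexed as in \eqref{r-numbers}.

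Next I would make the two families of invariants in the statement into the concrete incarnations of these summands. For the splitting part: pulling back the $p$-fold cover $T^n\times S^l\to M$ turns $h$ into $\overline h\colon \overline N\to T^n\times S^l$, and the summand $\bigoplus_{i=0}^{n-1}L_{n-i}(\IZ)^{r_i}$ of $\strg(M)$ is detected, after transfer to this cover, by the collection of codimension-$(n-|J|)$ splitting obstructions along the subtori $T^J\times\pt$. Here one uses that $\strg(T^n\times S^l)$ (equivalently the structure set of $T^n$, stabilized) splits as a sum over subsets $J\subseteq\{1,\dots,n\}$ of $L_{|J|}(\IZ)$ by the Shaneson/Wall splitting and the fact that $T^n\times S^l$ is a product with $L$-theory assembly for $\IZ^n\times(\text{free }\IZ/p)$ an equivalence; the numbers $r_i$ record multiplicities of $L_{|J|}(\IZ)$ with $|J|=i$ as dictated by the $\IZ/p$-representation $\rho$. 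For the $\rho$-invariant part: the $\IZ$-summand of rank $p^k(p-1)/2$ is detected by Atiyah--Patodi--Singer--type $\rho$-invariants attached to the finite subgroups $P\subset\Gamma$ of order $p$. Passing to the cover $M_P\to M$ associated with $\pr^{-1}(P')$, where $P'=\pr(P)\subset\Gamma_{\ab}$, I would invoke the homotopy invariance of the $\rho$-invariant modulo the image of the relevant $L$-group — the point is that the difference $\rho(N_P\to BP')-\rho(M_P\to BP')$ lives in $\wt R(P')^{(-1)^{(n+l+1)/2}}[1/p]$ and equals (a nonzero rational multiple of) the image of the structure class of $h$ under the $P$-component of the detection map. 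That this detection map, over all $P$ together with all $J$, is injective on $\strg(M)$ is exactly the ``detection result'' the introduction promises; granting it, vanishing of all splitting obstructions and equality of all $\rho$-invariants is equivalent to the structure class being zero, i.e.\ to $h$ being homotopic to a homeomorphism.

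Finally I would assemble: by the surgery exact sequence, $h\colon N\to M$ is homotopic to a homeomorphism if and only if its class $[h]\in\strg(M)$ is trivial; by the decomposition and the detection theorem, $[h]=0$ if and only if every splitting obstruction in part~\ref{the:zero_in_structure_set_intro:splitting} vanishes and every $\rho$-invariant equality in part~\ref{the:zero_in_structure_set_intro:Rho} holds; and conversely each of these invariants is a homeomorphism-invariant of $h$ (the splitting obstructions because a homeomorphism splits along submanifolds, the $\rho$-invariants because they are homeomorphism invariants of the covers), so the conditions are genuinely necessary. I expect the main obstacle to be the bookkeeping that makes the two detection maps jointly injective with the stated target groups: one has to check that the $\IZ/p$-action $\rho$, with the freeness hypothesis, forces the virtually cyclic assembly to split exactly into the $L_{|J|}(\IZ)$-summands with multiplicities $r_i$ and the $\wt R(P')[1/p]$-summands, with no cross terms and no surviving Nil or UNil contributions, and that the geometric $\rho$-invariant and the codimension-one splitting obstruction realize these algebraic summands on the nose (rather than up to an unidentified isomorphism). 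The transfer/induction arguments relating $\strg(M)$, $\strg(M_P)$ and $\strg(T^n\times S^l)$ — and in particular checking that passing to $\pr^{-1}(P')$ rather than to $P$ itself is the right cover to see the $P$-component — are where the care is needed; everything else is a formal consequence of the Farrell--Jones Conjecture input and the surgery exact sequence.
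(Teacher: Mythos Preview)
Your overall strategy matches the paper's: reduce ``$h$ homotopic to a homeomorphism'' to ``$[h]=0$ in $\strg(M)$'', produce an injective detection map with two families of components, and then identify those components geometrically with the splitting obstructions and the $\rho$-invariants. That skeleton is exactly what the paper does, assembling the result from its Theorems~\ref{the:geometric_structure_set_of_M},~\ref{thm:characteristic_variety_for_torus_x_sphere}, and~\ref{thm:Detection_Theorem}.

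There is, however, a genuine error at the foundation of your argument. You write that $M$ is a $K(\Gamma,1)$ and hence that the structure set is the cofiber of the $L$-theory assembly map for $\Gamma$. But $M = T^n_\rho \times_{\IZ/p} S^l$ is \emph{not} aspherical: it has $\pi_l \neq 0$ coming from the sphere. The aspherical model for $B\Gamma$ is $T^n_\rho \times_{\IZ/p} S^\infty$, and the classifying map $f\colon M \to B\Gamma$ is not a homotopy equivalence. Consequently the Farrell--Jones Conjecture for $\Gamma$ computes $\strp_*(B\Gamma)$, not $\strp_*(M)$, and the difference between the two is precisely the source of the torus summand $H_n(T^n;\bfL\langle 1\rangle)^{\IZ/p}$ that the splitting obstructions detect. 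The paper spends Section~\ref{sec:The_periodic_simple_structure_set_of_M} constructing the map $\sigma$ (and its connective version $\sigma^{\geo}$) via a Leray--Serre spectral sequence argument for the bundle $T^n \to M \to L^l$, proving that all differentials collapse (Lemma~\ref{lem:vanishing_of_differentials_in_three_spectral_sequences}), and showing that $\sigma \times \strp_{n+l+1}(f)$ is injective by comparing filtrations (Lemmas~\ref{lem:comparing_periodic_structure_sets_of_M_and_BGamma} and~\ref{lem:comparing:_kernel_onH_and_E}). None of this is visible if you conflate $M$ with $B\Gamma$; your ``bookkeeping'' worry at the end understates what is missing.

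A second, smaller gap: your identification of the $\rho^{\geo}$-component with the geometric $\rho$-invariants is asserted rather than argued. In the paper this is Theorem~\ref{thm:Detection_Theorem}, and it is not automatic: $\rho^{\geo}$ is defined algebraically through the multisignature and the isomorphism of Theorem~\ref{the:Computation_of_strp(BGamma)}\ref{the:Computation_of_strp(BGamma):restriction_iso}, whereas the geometric $\rho'$ is defined via $G$-signatures of bounding manifolds. The paper matches them only on $\ker(\sigma^{\geo})$, using that $\im(\nu^{\geo})$ has finite $p$-power index there, that the target is a $\IZ[1/p]$-module, and crucially that $\rho'$ is a \emph{homomorphism} on the structure set (the Crowley--Macko additivity theorem, cited as Theorem~\ref{thm:rho'}\ref{thm:rho':(1)}). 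Your sketch does not invoke this additivity, and without it the passage from the action formula to a statement about arbitrary structure classes does not go through.
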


%%%%%%%%%%%%%%%%%%%%%%%%%%%%%%%%%%%%%%%%%%%%%%%%%%%%%%%%%%%%%%%%%%%%%%%%

\subsection{Acknowledgments}  
\label{subsec:Acknowledgements}

The first author was supported by NSF grant DMS 1615056.
The paper has been supported financially by  the ERC Advanced Grant ``KL2MG-interactions''
(no.  662400) of the second author granted by the European Research Council and
by the Deutsche Forschungsgemeinschaft (DFG, German Research Foundation)
under Germany's Excellence Strategy \--- GZ 2047/1, Projekt-ID 390685813.  We wish to thank the referee for careful reading and helpful suggestions.

The paper is organized as follows:
\tableofcontents

Here is a detailed outline of the paper.  Using the Farrell-Jones Conjecture, it is
straightforward to compute $K_1$ and $K_0$ of the group ring $\Z \g$ (see
Section~\ref{sec:K_theory_of_group_ring}).  Computing the algebraic $L$-theory
(Section~\ref{sec:L_theory_of_group_ring}) is more difficult, we use homological
computations from our previous paper~\cite{Davis-Lueck(2013)}, and a result of Land and
Nikolaus~\cite{Land-Nikolaus(2018)} which generalizes a result of
Sullivan~\cite{Sullivan(1966PhD), Sullivan(1966notes)} comparing $L$-theory spectra with
topological $K$-theory spectra after inverting 2.  As with much of the rest of the paper,
we compute the algebraic $L$-theory at $p$ and away from $p$.

Our goal is to compute and detect the geometric structure set of $M$.  However, it is much
easier to compute the periodic structure set (also known as the algebraic structure set)
of the classifying space $B\g$.  Indeed, as a simple application of the Farrell-Jones
Conjecture, we show in Section~\ref{sec:The_periodic_simple_structure_set_of_BGamma}
\[
  \bigoplus_{P \in (\calp)} \strp_m(BP) \xrightarrow{\cong} \strp_m(B\g).
\]

In Section~\ref{subsec:The_periodic_simple_structure_set_of_BG_finite_p-group} we use
equivariant $KO$-homology to show for any odd order $p$-group $G$, that $\strp_m(BG)$ is a
finitely generated free $\Z[1/p]$-module.
Section~\ref{sec:The_periodic_simple_structure_set_of_M} is the heart of the paper, where
we compute the periodic structure set of $M$, working at $p$ and away from $p$.  In
Section~\ref{sec:The_geometric_simple_structure_set_of_M} we give the computation of the
geometric structure set of $M$, as well as detection by algebraic topological invariants.
In Section~\ref{subsec:Invariance_for_Detecting_the_structure_set_of_M}, we detect the
structure set by the geometric invariants given in
Theorem~\ref{the:zero_in_structure_set_intro}: splitting obstructions and the
$\rho$-invariant.  Finally, in Section~\ref{sec:Appendix:Open_questions} we mention some
basic questions which we did not answer, in the hope that these questions are accessible
and will stimulate future work.

%%%%%%%%%%%%%%%%%%%%%%%%%%%%%%%%%%%%%%%%%%%%%%%%%%%%%%%%%%%%%%%%%%%%
%%%%%%%%% Preliminaries about the group $\IZ^n \rtimes \IZ/p$%%%%%%%%%%%%%%%%%%%%%
%%%%%%%%%%%%%%%%%%%%%%%%%%%%%%%%%%%%%%%%%%%%%%%%%%%%%%%%%%%%%%%%%%%%

\typeout{----------- Preliminaries about the group $\IZ^n \rtimes \IZ/p$ ---------------}

\section{Preliminaries about the group $\IZ^n \rtimes \IZ/p$}
\label{subsec:Preliminiaries_about_the_group_Zn_rtimes_Z/p}

In this section we collect various facts about $\Gamma$ from~\cite[Lemma~1.9]{Davis-Lueck(2013)}.

\begin{lemma}\label{lem:preliminaries_about_Gamma_and_Zn_rho}
  \begin{enumerate}
  \item\label{lem:preliminaries_about_Gamma_and_Zn_rho:ideals} Let
    $\zeta = e^{2 \pi i/p}$.  There are nonzero ideals $I_1, \dots , I_k$ of $\Z[\zeta]$
    and isomorphisms of $\Z[\Z/p]$-modules
    \begin{align*}
      \IZ^n & \cong I_1 \oplus \dots \oplus I_k; \\
      \IZ^n \otimes \IQ & \cong  \IQ(\zeta)^k.
    \end{align*}
    Hence $n = k(p-1)$;

  \item\label{lem:preliminaries_about_Gamma_and_Zn_rho:finite_subgroups} Each nontrivial
    finite subgroup $P$ of $\Gamma$ is isomorphic to $\IZ/p$ and its Weyl group
    $W_{\Gamma}\!P := N_{\Gamma}P/P$ is trivial;

  \item\label{lem:preliminaries_about_Gamma_and_Zn_rho:list_of_finite_subgroups} There
    are isomorphisms
\[
H^1(\IZ/p;\IZ^n) \xrightarrow{\cong} \cok(\rho -\id \colon \IZ^n \to \IZ^n) \cong
(\IZ/p)^k;
\]
and a bijection
\[
\cok\bigl(\rho -\id \colon \IZ^n \to \IZ^n\bigr) \xrightarrow{\cong} \calp := \{(P)\mid P
\subset \Gamma, 1 < |P| < \infty \}.
\]
Here $\calp$ is the set of conjugacy classes $(P)$ of nontrivial subgroups of finite
order.  If we fix an element $s \in \Gamma$ of order $p$, the bijection sends the element
$\overline{u} \in \IZ^n/(1-\rho)\IZ^n$ to the conjugacy class of the subgroup of order $p$ generated by
$us$;

\item\label{lem:preliminaries_about_Gamma_and_Zn_rho:order_of_calp} We have
  $|\calp| = p^k$;

\item\label{lem:preliminaries_about_Gamma_and_Zn_rho:fixed_set} There is a
  bijection from the $\IZ/p$-fixed set of the $\IZ/p$-space $T^n_{\rho} :=
  \R^n_\rho/\Z^n_\rho$ to $H^1(\Z/p; \Z^n_\rho)$.  In particular
  $(T^n_\rho)^{\IZ/p}$ consists of $p^k$ points;

\item\label{lem:preliminaries_about_Gamma_and_Zn_rho:commutator}
  $[\Gamma,\Gamma] = \im\left(\rho - \id \colon \IZ^n \to \IZ^n\right)$;

\item\label{lem:preliminaries_about_Gamma_and_Zn_rho:abelianization}
  $\Gamma/[\Gamma,\Gamma] \cong \cok(\rho -\id \colon \IZ^n \to \IZ^n) \oplus
  \IZ/p = (\IZ/p)^{k+1}$.

\end{enumerate}

\end{lemma}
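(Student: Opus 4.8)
The plan is to reduce every part of the lemma to two elementary observations about the $\IZ[\IZ/p]$-module $\IZ^n = \IZ^n_\rho$. Fix a generator $s \in \IZ/p$ and set $t := \rho(s) \in \mathrm{GL}_n(\IZ)$ and $A := \cok(\rho - \id) = \IZ^n/(t - \id)\IZ^n$. First, freeness of the $\IZ/p$-action on $\IZ^n - \{0\}$ is equivalent to injectivity of $t - \id$; since the image of the norm operator $N := \id + t + \dots + t^{p-1}$ consists of $\IZ/p$-fixed vectors and a fixed vector lies in $\ker(t - \id) = 0$, this forces $N = 0$ on $\IZ^n$, so the module structure factors through $\IZ[\IZ/p]/(N) \cong \IZ[\zeta]$. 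Second, $t$ acts as the identity on $A$, because $tv - v \in (t - \id)\IZ^n$ for every $v$. I expect the only genuine obstacle to be the bookkeeping in part~\ref{lem:preliminaries_about_Gamma_and_Zn_rho:list_of_finite_subgroups}: one has to move carefully between order-$p$ subgroups of $\Gamma$, their generators, and the passage from conjugacy classes to cosets in $A$ inside the semidirect product, and it is exactly these two observations that make those identifications close up.

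For part~\ref{lem:preliminaries_about_Gamma_and_Zn_rho:ideals} I would invoke the structure theorem for finitely generated torsion-free modules over the Dedekind domain $\IZ[\zeta]$ to write $\IZ^n \cong I_1 \oplus \dots \oplus I_r$ with nonzero ideals $I_j$, and then rationalize to get $\IZ^n \otimes \IQ \cong \IQ(\zeta)^r$, forcing $n = r(p-1)$, hence $r = k$. For parts~\ref{lem:preliminaries_about_Gamma_and_Zn_rho:finite_subgroups}--\ref{lem:preliminaries_about_Gamma_and_Zn_rho:order_of_calp} I would work inside $\Gamma = \IZ^n \rtimes_\rho \IZ/p$: the projection $\Gamma \to \IZ/p$ has torsion-free kernel, so every finite subgroup injects into $\IZ/p$; and $(u,s)^p = (Nu, 1) = (0,1)$, so the subgroups of order $p$ are precisely $P_u := \langle (u,s) \rangle$ for $u \in \IZ^n$ (any order-$p$ subgroup is generated by some $(u, s^j)$, a power of which has the form $(u',s)$). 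The one calculation I would actually carry out is the conjugation formula
\[
(v, s^m)\,(u,s)\,(v,s^m)^{-1} = \bigl((\id - t)v + t^m u,\ s\bigr),
\]
which shows that conjugating $P_u$ yields $P_{u'}$ with $u' = (\id - t)v + t^m u$; since $t$ acts trivially on $A$ the classes of $u$ and $u'$ in $A$ agree, and conversely any relation $u' - u \in (t - \id)\IZ^n$ is realized by taking $m = 0$. This gives the bijection $A \xrightarrow{\cong} \calp$, $\ol u \mapsto (\langle us \rangle)$; taking $u' = u$ with $m$ arbitrary in the same formula shows $N_\Gamma P_u = P_u$, so all Weyl groups are trivial. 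Finally $H^1(\IZ/p; \IZ^n) = \ker N / \im(t - \id) = A$ by the standard $2$-periodic resolution of $\IZ$ over $\IZ[\IZ/p]$, and $A \cong \bigoplus_j I_j/(\zeta - 1)I_j \cong \bigl(\IZ[\zeta]/(\zeta - 1)\bigr)^k \cong (\IZ/p)^k$ since $t - \id$ acts as multiplication by $\zeta - 1$ on each $I_j$ and $\IZ[\zeta]/(\zeta - 1) \cong \IF_p$; part~\ref{lem:preliminaries_about_Gamma_and_Zn_rho:order_of_calp} is then immediate.

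For part~\ref{lem:preliminaries_about_Gamma_and_Zn_rho:fixed_set} I would feed the short exact sequence $0 \to \IZ^n \to \IR^n \to T^n_\rho \to 0$ of $\IZ/p$-modules into the long exact group-cohomology sequence: since $\IR^n$ is uniquely divisible its higher cohomology vanishes, and $(\IR^n)^{\IZ/p} = 0$ because $t - \id$ is invertible over $\IR$, so the connecting homomorphism is an isomorphism $(T^n_\rho)^{\IZ/p} \xrightarrow{\cong} H^1(\IZ/p; \IZ^n) \cong (\IZ/p)^k$, giving $p^k$ fixed points. For parts~\ref{lem:preliminaries_about_Gamma_and_Zn_rho:commutator} and~\ref{lem:preliminaries_about_Gamma_and_Zn_rho:abelianization}: the identity $[(v,1),(0,s)] = ((\id - t)v, 1)$ shows $\im(t - \id) \subseteq [\Gamma, \Gamma]$, while $Q := \im(t - \id)$ is $\rho(\IZ/p)$-invariant, hence normal in $\Gamma$, with $\Gamma/Q \cong A \rtimes \IZ/p$ carrying the trivial action (again since $t$ is the identity on $A$), so $\Gamma/Q$ is abelian and $[\Gamma, \Gamma] \subseteq Q$. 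This pins down $[\Gamma, \Gamma] = \im(t - \id)$ and $\Gamma/[\Gamma,\Gamma] \cong A \times \IZ/p \cong (\IZ/p)^{k+1}$ by part~\ref{lem:preliminaries_about_Gamma_and_Zn_rho:list_of_finite_subgroups}.
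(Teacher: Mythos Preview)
Your proof is correct in every part. The paper itself does not prove this lemma at all; it simply records the statement and cites \cite[Lemma~1.9]{Davis-Lueck(2013)} for the details, so there is no in-paper argument to compare against. Your approach --- the vanishing of the norm $N$ forcing a $\IZ[\zeta]$-module structure and hence the Dedekind structure theorem for part~\ref{lem:preliminaries_about_Gamma_and_Zn_rho:ideals}, the explicit conjugation formula in the semidirect product for parts~\ref{lem:preliminaries_about_Gamma_and_Zn_rho:finite_subgroups}--\ref{lem:preliminaries_about_Gamma_and_Zn_rho:order_of_calp}, the long exact cohomology sequence of $0 \to \IZ^n \to \IR^n \to T^n_\rho \to 0$ for part~\ref{lem:preliminaries_about_Gamma_and_Zn_rho:fixed_set}, and the commutator computation for parts~\ref{lem:preliminaries_about_Gamma_and_Zn_rho:commutator}--\ref{lem:preliminaries_about_Gamma_and_Zn_rho:abelianization} --- is the standard one and almost certainly what the cited reference does.
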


%%%%%%%%%%%%%%%%%%%%%%%%%%%%%%%%%%%%%%%%%%%%%%%%%%%%%%%%%%%%%%%%%%%%
%%%%%%%%% Preliminaries about the Farrell-Jones Conjecture  %%%%%%%%%%
%%%%%%%%%%%%%%%%%%%%%%%%%%%%%%%%%%%%%%%%%%%%%%%%%%%%%%%%%%%%%%%%%%%%

\typeout{--------- Preliminaries about the Farrell-Jones Conjecture ------}

\section{Preliminaries about the Farrell-Jones Conjecture}
\label{subsec:Preliminaries_about_the_Farrell-Jones_Conjecture}

To classify high-dimensional manifolds one uses the surgery exact sequence.  One of the
terms in the surgery exact sequence is the 4-periodic $L$-group $L_*(\Z G)$, where $G$ is
the fundamental group of the manifold under consideration.  Although the $L$-groups are
algebraically defined, when $G$ is infinite the computation of the $L$-groups is done by a
mix of algebraic, topological, and geometric methods.  This is encoded in the
Farrell-Jones Conjecture, which can be stated in terms of an equivariant
homology theory in the sense of~\cite[Section~1]{Lueck(2002b)} as follows.

Let $\bfE\colon  \GROUPOIDS \to \SPECTRA$ be a covariant functor 
from the category of small groupoids to the category of spectra, which is \emph{homotopy invariant}, i.e., it
sends an equivalence of groupoids to a weak homotopy equivalence of spectra.  Given a cellular
map $X \to Y$ of $G$-$CW$-complexes for a (discrete) group $G$,
Davis-L\"uck~\cite{Davis-Lueck(1998)} define
\[
H^G_m(X \to Y ; \bfE) := \pi_m\left(\map_G(G/-,\text{cone}(X\to Y)) \wedge_{\Or(G)} \bfE(\overline{G/-})\right),
\]
where cone refers to the mapping cone, $\Or(G)$ to the orbit category of $G$, and
$\overline{G/H}$ to the groupoid associated to the $G$-set $G/H$.  This defines a
$G$-homology theory $H^G_*$ on the category of $G$-$CW$-complexes. Its coefficients are
given by $H^G_m(G/H ; \bfE) = \pi_m(\bfE(H))$.

An equivariant homology theory $\calh^{?}_*$ in the sense of~\cite[Section~1]{Lueck(2002b)}
assigns to every discrete group $G$ a $G$-homology theory $\calh^G_*$ on the category of
$G$-$CW$-complexes.  Given a group homomorphism $\alpha\colon  G \to H$, there is a
corresponding map of abelian groups
$\ind_{\alpha} \colon\calh^G_m(X,A) \to \calh^H_m(H \times_{\alpha} (X,A))$.  The axioms for an
equivariant homology theory are satisfied when $\bfE\colon\GROUPOIDS \to \SPECTRA$ is a
homotopy invariant functor and $\calh^G_m(X)$ is defined as above,
see~\cite[Proposition~157 on page~796]{Lueck-Reich(2005)}.

Examples of such $\GROUPOIDS$-spectra are $ \bfK$ and $ \bfL^{\langle -\infty \rangle}$
defined in~\cite[Section 2]{Davis-Lueck(1998)}. Here
$\pi_m(\bfK(\ol{G/H})) = K_m(\Z H)$ and
$\pi_m(\bfL(\ol{G/H})) = L_m^{\langle -\infty \rangle}(\Z H)$.

A \emph{family} $\calf$ of subgroups of $G$ is a collection of subgroups which is nonempty
and closed under conjugation and under taking subgroups.  The \emph{classifying space
  $E_\calf G$ for group actions with isotropy in $\calf$} is characterized up to $G$-homotopy equivalence as a
$G$-CW-complex where $(E_\calf G)^H$ is empty if $H \not \in \calf$ and contractible if
$H \in \calf$.  We write $\eub G$ for the classifying space when $\calf$ is the family of
finite subgroups and $\edub G$ for the classifying space when the $\calf$ is the family of
virtually cyclic subgroups. For more information about these spaces we refer for instance 
to~\cite{Lueck(2005s)}. 

The Farrell-Jones Conjecture  for the group $G$, which was originally stated 
in~\cite[1.6 on page~257]{Farrell-Jones(1993a)}, predicts that for all $m \in \IZ$
the projection $\edub G \to \pt$ induces isomorphisms
\begin{eqnarray*}
H^G_m(\edub G;\bfK)  
& \to &  
H^G_m(\pt;\bfK) = K_m(\IZ G);
\\
H^G_m(\edub G;\bfL^{\langle -\infty \rangle})  
& \to &  
H^G_m(\pt;\bfL^{\langle -\infty \rangle}) = L^{\langle -\infty \rangle}_m(\IZ G).
\end{eqnarray*}

We now specialize to the group $\Gamma = \Z^n \rtimes \Z/p$.  The first point is that the
Farrell-Jones Conjecture in $K$- and $L$-theory holds for $\Gamma$
by~\cite{Bartels-Lueck(2012annals)}. Since the only subgroups which are virtually cyclic
and not finite are infinite cyclic and since the Farrell-Jones Conjecture holds for
infinite cyclic groups, the transitivity principle~\cite[Theorem~A.10]{Farrell-Jones(1993a)} 
or~\cite[Theorem~65 on page~742]{Lueck-Reich(2005)} shows that
\begin{eqnarray*}
H^\Gamma_m(\eub \Gamma; \bfL^{\langle -\infty \rangle})  
& \xrightarrow{\cong} &
H^\Gamma_m(\edub \Gamma; \bfL^{\langle -\infty \rangle});
\\
H^\Gamma_m(\eub \Gamma; \bfK)  
& \xrightarrow{\cong} &
H^\Gamma_m(\edub \Gamma; \bfK),
\end{eqnarray*}
are bijective for all $m \in \IZ$. Hence we get

\begin{theorem}[Farrell-Jones Conjecture for $\Gamma$]\label{the:FJC_L-infty_for_Gamma)}
The projection  $\eub \Gamma \to \pt$ induces for all $m \in \IZ$ bijections
\begin{eqnarray*}
H^{\Gamma}_m(\eub \Gamma;\bfK)  
& \to &  
H^{\Gamma}_m(\pt;\bfK) = K_m(\IZ \Gamma);
\\
H^{\Gamma}_m(\eub \Gamma;\bfL^{\langle -\infty \rangle})  
& \to &  
H^{\Gamma}_m(\pt;\bfL^{\langle -\infty \rangle}) = L^{\langle -\infty \rangle}_m(\IZ \Gamma).
\end{eqnarray*}
\end{theorem}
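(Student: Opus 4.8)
The plan is to derive this from the Farrell--Jones Conjecture in its strongest form — with the family $\calvcyc$ of virtually cyclic subgroups — and then to shrink the family down to $\calfin$ by means of the Transitivity Principle, exactly as indicated just before the statement. First I would invoke~\cite{Bartels-Lueck(2012annals)}: the group $\Gamma = \IZ^n \rtimes \IZ/p$ is crystallographic, so it acts properly, cocompactly and isometrically on $\IR^n$ and in particular lies in the class of groups (for instance $\CAT(0)$-groups, or just virtually finitely generated abelian groups) for which the $K$- and $L$-theoretic Farrell--Jones Conjecture with coefficients in $\IZ$ is known. Hence the assembly maps induced by $\edub\Gamma \to \pt$,
\[
H^\Gamma_m(\edub\Gamma;\bfK) \xrightarrow{\cong} K_m(\IZ\Gamma), \qquad H^\Gamma_m\bigl(\edub\Gamma;\bfL^{\langle -\infty\rangle}\bigr) \xrightarrow{\cong} L_m^{\langle -\infty\rangle}(\IZ\Gamma),
\]
are isomorphisms for all $m \in \IZ$. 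It then remains to prove that $\eub\Gamma \to \edub\Gamma$ induces isomorphisms on $H^\Gamma_*(-;\bfK)$ and $H^\Gamma_*(-;\bfL^{\langle -\infty\rangle})$.

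Second, I would pin down the virtually cyclic subgroups of $\Gamma$. By Lemma~\ref{lem:preliminaries_about_Gamma_and_Zn_rho}~\ref{lem:preliminaries_about_Gamma_and_Zn_rho:finite_subgroups} every nontrivial finite subgroup of $\Gamma$ is isomorphic to $\IZ/p$. I claim every infinite virtually cyclic subgroup $V \le \Gamma$ is infinite cyclic. Since $\IZ^n$ is normal in $\Gamma$ with quotient $\IZ/p$, the index $[V : V \cap \IZ^n]$ divides $p$; as $V$ is infinite this forces $V \cap \IZ^n$ to be an infinite subgroup of $\IZ^n$ of finite index in the virtually cyclic group $V$, hence $V \cap \IZ^n \cong \IZ$, say generated by $v \ne 0$. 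If $V/(V \cap \IZ^n)$ were all of $\IZ/p$, a generator $g$ would act on the normal subgroup $\IZ v$ through some $\rho(\bar g)$ of order $p$, forcing $\rho(\bar g)v = \pm v$; but $\rho(\bar g)v = v$ contradicts freeness of the $\IZ/p$-action on $\IZ^n - \{0\}$, and $\rho(\bar g)v = -v$ gives $\rho(2\bar g) v = v$ with $2\bar g$ again a generator (as $p$ is odd), the same contradiction. Hence $V \cong \IZ$.

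Third, I would invoke the Transitivity Principle~\cite[Theorem~A.10]{Farrell-Jones(1993a)}, \cite[Theorem~65 on page~742]{Lueck-Reich(2005)} for the pair $\calfin \subseteq \calvcyc$ of families of subgroups of $\Gamma$: it reduces the desired isomorphism $H^\Gamma_*(\eub\Gamma;\bfE) \xrightarrow{\cong} H^\Gamma_*(\edub\Gamma;\bfE)$, $\bfE \in \{\bfK,\bfL^{\langle -\infty\rangle}\}$, to showing for each $V \in \calvcyc$ that $H^V_*(\eub V;\bfE) \to H^V_*(\pt;\bfE)$ is an isomorphism. For finite $V$ this is trivial ($\eub V = \pt$), and for $V \cong \IZ$ it is the classical statement that the Farrell--Jones assembly map holds for $\IZ$: in $K$-theory this is the Bass--Heller--Swan decomposition (the Nil-terms of the regular ring $\IZ$ vanish), and for $\bfL^{\langle -\infty\rangle}$ it is the Shaneson--Ranicki splitting, where the lower decoration $\langle -\infty\rangle$ is precisely what kills the relevant $\mathrm{UNil}$- and $\mathrm{Nil}$-contributions. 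Composing the resulting isomorphism $H^\Gamma_*(\eub\Gamma;\bfE) \xrightarrow{\cong} H^\Gamma_*(\edub\Gamma;\bfE)$ with the Bartels--L\"uck assembly isomorphism from the first step finishes the proof.

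I expect no serious obstacle: the one place where something specific to the group at hand is needed, rather than a citation, is the classification of virtually cyclic subgroups in the second step — and that is exactly where the freeness hypothesis on $\rho$ enters. Everything else is the standard ``reduce $\calvcyc$ to $\calfin$'' bookkeeping built on~\cite{Bartels-Lueck(2012annals)}.
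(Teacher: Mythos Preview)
Your proposal is correct and follows essentially the same approach as the paper: invoke Bartels--L\"uck for the Farrell--Jones Conjecture with the family $\calvcyc$, observe that the infinite virtually cyclic subgroups of $\Gamma$ are all infinite cyclic, and then use the Transitivity Principle together with the known FJC for $\IZ$ to pass from $\edub\Gamma$ to $\eub\Gamma$. You supply more detail than the paper on why every infinite virtually cyclic subgroup is infinite cyclic (the paper simply asserts this), and your argument there is fine; one minor quibble is that your remark about $\langle -\infty\rangle$ ``killing $\mathrm{UNil}$'' is beside the point for the group $\IZ$, where the Shaneson splitting already gives the FJC for every decoration.
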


\begin{remark}
  Note that $\g = \Z^n \rtimes \Z/p$ acts affinely on $\Z^n$ where $\Z^n$ acts by
  translation and $\Z/p$ acts by the map $\rho$.  By extending scalars, $\g$ acts properly and cocompactly on
  $\R^n$.  Hence $\g$ is a crystallographic group and $\R^n$ can be taken as a model for
  $\eub \g$.
\end{remark}

%%%%%%%%%%%%%%%%%%%%%%%%%%%%%%%%%%%%%%%%%%%%%%%%%%%%%%%%%%%%%%%%%%%%%%%%%%%%%%%%%%%%%%
%%%%%%%%%%%%%%%%%%%%%%%%%%%%%%%%%% Algebraic $K$-theory of the group ring %%%%%%%%%%%%%%%%%%%%%%%
%%%%%%%%%%%%%%%%%%%%%%%%%%%%%%%%%%%%%%%%%%%%%%%%%%%%%%%%%%%%%%%%%%%%%%%%%%%%%%%%%%%%%%

\typeout{------------------------ Algebraic $K$-theory of the group ring  -------------------------}

%%%%%%%%%%%%%%%%%%%%%%%%%%%%%%%%%%%%%%%%%%%%%%%%%%%%%%%%%%%%%%%%%%%%%%%%%%%%%%%%%%%%%%

\section{Algebraic $K$-theory of the group ring} 

\label{sec:K_theory_of_group_ring}

For a group $G$ and an integer $m$, define $\wh_m(G)$ to be the
homotopy groups of the homotopy cofiber of the assembly map in
algebraic $K$-theory, i.e.
\[
\wh_m(G) = H^G_m(EG \to \pt; \bfK) 
\]
where $\bfK = \bfK_{\Z}$ is the algebraic $K$-theory spectrum over the
orbit category of~\cite[Section~2]{Davis-Lueck(1998)} with
$\pi_n(\bfK(\ol{G/H})) = K_n(\Z H)$.  Hence $\wh_1(G)$ is the
classical Whitehead group
$\wh(G) = \cok( \{\pm 1\} \times G^{\ab} \to K_1(\Z G))$, $\wh_0(G)$
agrees with $\wt K_0(\Z G) = \cok(K_0(\Z) \to K_0(\Z G))$, and
$\wh_{n}(G)$ is $K_{n}(\Z G)$ for $n \le -1$.

The space $\eub \Gamma$ can be profitably analyzed using the following  cellular
$\Gamma$-pushout, see~\cite[Corollary~2.11]{Lueck-Weiermann(2012)},
\begin{eqnarray}
  &
  \xycomsquareminus{\coprod_{(P) \in \calp} \Gamma \times _P EP}{}{E\Gamma}
  {}{}
  {\coprod_{(P) \in \calp} \Gamma/P}{}{\eub{\Gamma}}
  &
\label{G-pushoutfor_EGamma_to_eunderbar_Gamma}
\end{eqnarray}

This leads to the following result taken from~\cite[Lemma 7.2~(ii)]{Davis-Lueck(2013)}.

\begin{lemma}\label{lem:MV_sequence} Let $\calh^?_*$ be an equivariant homology theory
  in the sense of~\cite[Section~1]{Lueck(2002b)}.  Then there is a long exact sequence
\begin{multline*}
\dots \to \calh^\g_{m+1}(\eub  \g)  \xrightarrow{\ind_{\g \to 1}} \calh_{m+1}(\bub \g) \to \bigoplus_{P \in (\calp)} \wt \calh^P_m(\pt) 
\\
\xrightarrow{\varphi_m} \calh^\g_m(\eub \g)  \xrightarrow{\ind_{\g \to 1}}  \calh_m(\bub \g) \to \dots ,
\end{multline*}
where $\widetilde{\calh}_m^{P}(\pt)$ is the kernel of the induction map
$\ind_{P \to 1} \colon \calh_m^{P}(\pt) \to \calh_m(\pt)$, the map $\varphi_m$ is induced by the various
inclusions $P \to \Gamma$, and $\bub{\Gamma} := \Gamma \backslash \eub{\Gamma}$.

The map 
\[
\ind_{\Gamma \to 1}[1/p] \colon  \calh^{\Gamma}_m(\eub{\Gamma})[1/p]
\to \calh_m(\bub{\Gamma})[1/p]
\]
is split surjective.
\end{lemma}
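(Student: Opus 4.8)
The plan is to use the normal subgroup $\IZ^n \trianglelefteq \Gamma$ of index $p$ to reduce the assertion to a statement about the non‑equivariant theory $\calh_* = \calh^{\{1\}}_*$, and then to split the resulting map by comparing an orbit space with a Borel construction after inverting $p$. Note that this uses no equivariant transfer, so it goes through for an arbitrary equivariant homology theory $\calh^?_*$.

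\emph{Reduction to a non‑equivariant map.} Since $\IZ^n$ is torsion free, every finite subgroup of $\Gamma$ meets $\IZ^n$ trivially, so $\res^\Gamma_{\IZ^n}\eub\Gamma$ is a free $\IZ^n$-$CW$-complex; being contractible it is a model for $E\IZ^n$, and by the Remark we may take $\eub\Gamma=\IR^n$, so that $\IZ^n\backslash\eub\Gamma = T^n_\rho$ and $\bub\Gamma = \Gamma\backslash\eub\Gamma = (\IZ/p)\backslash T^n_\rho$. Let $f\colon\Gamma\times_{\IZ^n}\res^\Gamma_{\IZ^n}\eub\Gamma\to\eub\Gamma$, $(\gamma,x)\mapsto\gamma x$, be the canonical $\Gamma$-map. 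By the induction structure, $\ind_{\IZ^n\hookrightarrow\Gamma}$ identifies $\calh^{\IZ^n}_m(\res^\Gamma_{\IZ^n}\eub\Gamma)$ with $\calh^\Gamma_m(\Gamma\times_{\IZ^n}\res^\Gamma_{\IZ^n}\eub\Gamma)$, and $\ind_{\IZ^n\to 1}$ identifies it with $\calh_m(T^n_\rho)$ because $\res^\Gamma_{\IZ^n}\eub\Gamma$ is $\IZ^n$-free; write $s_0\colon\calh_m(T^n_\rho)\to\calh^\Gamma_m(\eub\Gamma)$ for $f_*$ transported through these identifications. Using naturality of $\ind_{\Gamma\to 1}$ for the $\Gamma$-map $f$ (whose orbit map $\Gamma\backslash f$ is exactly the projection $q\colon T^n_\rho\to(\IZ/p)\backslash T^n_\rho=\bub\Gamma$) together with the composition property $\ind_{\Gamma\to 1}\circ\ind_{\IZ^n\hookrightarrow\Gamma}=\ind_{\IZ^n\to 1}$, one checks that $\ind_{\Gamma\to 1}\circ s_0 = q_*$. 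Hence it suffices to produce, after inverting $p$, a section $\sigma$ of $q_*\colon\calh_m(T^n_\rho)\to\calh_m(\bub\Gamma)$: then $s_0\circ\sigma$ is a section of $\ind_{\Gamma\to 1}[1/p]$.

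\emph{Splitting $q_*$.} Put $X=T^n_\rho$ and let $\pi\colon X\times_{\IZ/p}E(\IZ/p)\to(\IZ/p)\backslash X=\bub\Gamma$ be the projection. The diagonal $\IZ/p$-action on $X\times E(\IZ/p)$ is free, so $c\colon X\times E(\IZ/p)\to X\times_{\IZ/p}E(\IZ/p)$ is a genuine $p$-sheeted covering; its stable transfer $\tau$ satisfies $c_*\circ\tau=p\cdot\id$, so the composite $\calh_*(X)\cong\calh_*(X\times E(\IZ/p))\xrightarrow{c_*}\calh_*(X\times_{\IZ/p}E(\IZ/p))$ is split surjective after inverting $p$. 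Since $q\circ\pr_X=\pi\circ c$ and $\pr_X\colon X\times E(\IZ/p)\to X$ is a homotopy equivalence, $q_*$ factors as $\pi_*$ composed with this split surjection, so it remains to see that $\pi_*$ is an isomorphism after inverting $p$. Now $Y\mapsto\calh_*(Y\times_{\IZ/p}E(\IZ/p))[1/p]$ and $Y\mapsto\calh_*((\IZ/p)\backslash Y)[1/p]$ are $\IZ[1/p]$-linear $\IZ/p$-homology theories on finite $\IZ/p$-$CW$-complexes (the quotient functor preserves $CW$-pairs, pushouts along cofibrations and disjoint unions), and $\pi$ induces a natural transformation between them. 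On the free orbit $\IZ/p$ it is the identity of $\calh_*(\pt)[1/p]$; on a fixed point it is the map $\calh_*(B\IZ/p)[1/p]\to\calh_*(\pt)[1/p]$, which is an isomorphism because the $p$-sheeted covering $E(\IZ/p)\to B\IZ/p$ makes both composites $\calh_*(\pt)\to\calh_*(B\IZ/p)\to\calh_*(\pt)$ and $\calh_*(B\IZ/p)\to\calh_*(\pt)\to\calh_*(B\IZ/p)$ equal to multiplication by $p$, whence $\wt\calh_*(B\IZ/p)[1/p]=0$. By the comparison theorem for $\IZ/p$-homology theories (a natural transformation which is an isomorphism on all orbits is an isomorphism), $\pi_*[1/p]$ is an isomorphism on every finite $\IZ/p$-$CW$-complex, in particular on $T^n_\rho$ (compact, as $\eub\Gamma$ can be taken cocompact). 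Composing the sections produced along the way yields the section of $\ind_{\Gamma\to 1}[1/p]$.

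The transfer computations and the comparison theorem are routine; the parts that need care are the identification in the Reduction step — verifying that, through the two induction isomorphisms, $\ind_{\Gamma\to 1}\circ f_*$ is literally the orbit‑space map $q_*$ — and the input $\wt\calh_*(B\IZ/p)[1/p]=0$, which is exactly what forces the Borel construction and the orbit space to have the same $\calh$-homology after inverting $p$. (One could alternatively appeal to the triviality of the Weyl groups of the finite subgroups from Lemma~\ref{lem:preliminaries_about_Gamma_and_Zn_rho} and an equivariant Chern‑character splitting, but the argument above has the virtue of working verbatim for any equivariant homology theory.)
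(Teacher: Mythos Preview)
Your proposal proves only the second assertion of the lemma, the split surjectivity of $\ind_{\Gamma\to 1}[1/p]$; the long exact sequence is never mentioned. That is a genuine omission: the sequence is used repeatedly later (Theorems~\ref{the:L-infty-theory}, \ref{the:L(ZGamma)_decorated}, etc.), and it does not follow from anything in your write-up. In the paper's framework it is obtained from the cellular $\Gamma$-pushout~\eqref{G-pushoutfor_EGamma_to_eunderbar_Gamma}. Applying $\calh^\Gamma_*$ to that pushout and using the induction isomorphisms $\calh^\Gamma_*(\Gamma\times_P EP)\cong\calh_*(BP)$ and $\calh^\Gamma_*(\Gamma/P)\cong\calh^P_*(\pt)$ gives a Mayer--Vietoris sequence; comparing it via $\ind_{\Gamma\to 1}$ with the Mayer--Vietoris sequence of the quotient pushout (with corners $\coprod BP$, $B\Gamma$, $\coprod\pt$, $\bub\Gamma$) and cancelling the common $\calh_*(BP)$ terms yields the stated sequence with third term $\bigoplus\wt\calh^P_*(\pt)$. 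You should include this derivation, or at least state that it follows from the pushout, before turning to the splitting.

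Your argument for the split surjectivity is correct and takes a genuinely different route from the one the paper points to. The paper works entirely through the pushout~\eqref{G-pushoutfor_EGamma_to_eunderbar_Gamma} and the accompanying map of Mayer--Vietoris sequences (citing \cite[Lemma~7.2~(ii)]{Davis-Lueck(2013)}); you instead pass through the normal subgroup $\IZ^n$ to reduce $\ind_{\Gamma\to 1}$ to the non-equivariant quotient map $q_*\colon\calh_*(T^n_\rho)\to\calh_*(\bub\Gamma)$, and then split $q_*[1/p]$ by comparing the orbit space with the Borel construction. This has the advantage of being self-contained and of using only the covering transfer and the elementary fact $\wt\calh_*(B\IZ/p)[1/p]=0$; it is essentially the mechanism the paper deploys later in Lemma~\ref{lem:calh(bub(Gamma))_to_calh(Zn)Z/p}. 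The paper's pushout approach, on the other hand, produces the long exact sequence and the splitting in one package, which is why it is preferred there. One small point of exposition: your sentence ``both composites \ldots\ equal multiplication by $p$'' is using the transfer $\tau_*$ as one of the two maps between $\calh_*(\pt)$ and $\calh_*(B\IZ/p)$; it would be clearer to say this explicitly, since the map you actually need to invert is the one induced by $B\IZ/p\to\pt$, and its invertibility follows once you know $c_*[1/p]$ is an isomorphism.
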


\begin{theorem}[Computation of $\wh_m(\Gamma)$]\label{the:algebraic_K-theory}
For every $n \in \Z$, 
\[
\bigoplus_{P \in (\calp)} \wh_n(P) \xrightarrow{\cong}  \wh_n(\Gamma) 
\]
Furthermore, for $p$ an odd prime, $\wh(\Z/p) \cong \Z^{(p-3)/2}$,
$\wt K_0(\Z[\Z/p])$ is the ideal class group $C(\Z[\exp(2\pi i/p)])$
and hence is finite, and  $K_{n}(\Z[\Z/p]) = 0$ for $n \le -1$.
\end{theorem}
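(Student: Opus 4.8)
Set $\zeta = \exp(2\pi i/p)$. The plan is to use the Farrell--Jones Conjecture to split $\wh_n(\Gamma)$ along the finite subgroups, and then to quote classical computations for the cyclic group $\IZ/p$.

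For the splitting, recall from Section~\ref{sec:K_theory_of_group_ring} that $\wh_n(\Gamma) = H^\Gamma_n(E\Gamma \to \pt;\bfK)$, the $n$-th homotopy group of the mapping cone of the $K$-theoretic assembly map. By Theorem~\ref{the:FJC_L-infty_for_Gamma)} the projection $\eub\Gamma \to \pt$ induces an isomorphism on $\bfK$-homology, so the induced map
\[
H^\Gamma_n(E\Gamma \to \eub\Gamma;\bfK) \xrightarrow{\ \cong\ } H^\Gamma_n(E\Gamma \to \pt;\bfK) = \wh_n(\Gamma)
\]
is an isomorphism. Now I would apply $H^?_*(-;\bfK)$ to the cellular $\Gamma$-pushout~\eqref{G-pushoutfor_EGamma_to_eunderbar_Gamma}. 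Since that square is a $\Gamma$-homotopy pushout, the mapping cone of $E\Gamma \to \eub\Gamma$ is $\Gamma$-homotopy equivalent to the wedge over $(P)\in\calp$ of the mapping cones of $\Gamma\times_P EP \to \Gamma/P$; as $\calp$ is finite and $H^?_*(-;\bfK)$ is additive, this gives
\[
\wh_n(\Gamma) \cong \bigoplus_{(P)\in\calp} H^\Gamma_n\bigl(\Gamma\times_P EP \to \Gamma/P;\bfK\bigr).
\]
Finally, the induction structure of the equivariant homology theory $H^?_*(-;\bfK)$ applied to the injective homomorphism $P\hookrightarrow\Gamma$ identifies the $(P)$-summand with $H^P_n(EP \to \pt;\bfK) = \wh_n(P)$, and tracing through shows the resulting isomorphism $\bigoplus_{(P)\in\calp}\wh_n(P)\xrightarrow{\cong}\wh_n(\Gamma)$ is induced by the inclusions $P\to\Gamma$.

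It remains to compute $\wh_n(\IZ/p)$, since by Lemma~\ref{lem:preliminaries_about_Gamma_and_Zn_rho}~\ref{lem:preliminaries_about_Gamma_and_Zn_rho:finite_subgroups} each $P\in\calp$ is isomorphic to $\IZ/p$ (there being $p^k$ of them by~\ref{lem:preliminaries_about_Gamma_and_Zn_rho:order_of_calp}). For $n=1$: the Bass--Milnor--Serre solution of the congruence subgroup problem yields $SK_1(\IZ[\IZ/p]) = 0$, so $\wh(\IZ/p)$ is free abelian, and Bass' rank formula computes its rank as $r_\R - r_\IQ = \bigl(1 + \tfrac{p-1}{2}\bigr) - 2 = \tfrac{p-3}{2}$, where $r_\R$ and $r_\IQ$ are the numbers of irreducible real resp.\ rational representations of $\IZ/p$; equivalently this rank is $\rk \IZ[\zeta]^\times = \tfrac{p-1}{2} - 1$ by Dirichlet's unit theorem, via the conductor square $\IZ[\IZ/p] \to \IZ \times \IZ[\zeta]$. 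For $n=0$: $\wh_0(\IZ/p) = \wt K_0(\IZ[\IZ/p])$, and Rim's theorem identifies this with $\wt K_0(\IZ[\zeta])$, which for the Dedekind domain $\IZ[\zeta]$ is the ideal class group $C(\IZ[\zeta])$; it is finite by finiteness of class numbers. For $n \le -1$: $\wh_n(\IZ/p) = K_n(\IZ[\IZ/p])$, and Carter's computation of the negative $K$-theory of integral group rings of finite groups gives $K_n(\IZ[\IZ/p]) = 0$ for $n \le -2$, while $K_{-1}(\IZ[\IZ/p]) = 0$ because the relevant rank $1 - r_\IQ + \sum_{\ell \mid p}\bigl(r_{\IQ_\ell} - r_{\IF_\ell}\bigr)$ equals $1 - 2 + (2 - 1) = 0$ (the only prime dividing $p$ is $p$, with $\IQ_p[\IZ/p] \cong \IQ_p \times \IQ_p(\zeta)$ and $\IF_p[\IZ/p] \cong \IF_p[t]/(t-1)^p$ local) and the $2$-torsion term in Carter's formula vanishes for cyclic groups.

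The Farrell--Jones Conjecture for $\Gamma$ and the classical $K$-theoretic input for $\IZ/p$ (Bass--Milnor--Serre, Rim, Carter) are all quoted, so the single genuine verification is the excision-and-induction bookkeeping in the first step --- that~\eqref{G-pushoutfor_EGamma_to_eunderbar_Gamma} indeed splits $\wh_n(\Gamma)$ and that the splitting is induced by the inclusions $P\hookrightarrow\Gamma$. This is routine for an equivariant homology theory equipped with an induction structure, and is the only (minor) obstacle.
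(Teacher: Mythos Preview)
Your proof is correct and follows essentially the same approach as the paper: deduce the splitting from the Farrell--Jones Conjecture (Theorem~\ref{the:FJC_L-infty_for_Gamma)}) together with the $\Gamma$-pushout~\eqref{G-pushoutfor_EGamma_to_eunderbar_Gamma}, and then quote classical results for $\wh_n(\IZ/p)$. You supply more explicit detail on the latter (Bass--Milnor--Serre, Rim, and Carter's formula with the rank computation) than the paper, which simply refers to Oliver, Milnor, and Carter.
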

\begin{proof}
The isomorphism $\oplus \wh_n(P) \cong \wh_n(\Gamma)$ is a direct consequence of 
Theorem~\ref{the:FJC_L-infty_for_Gamma)} and  the 
$G$-pushout~\eqref{G-pushoutfor_EGamma_to_eunderbar_Gamma}.
See also~\cite[Theorem 1.8]{Lueck-Rosenthal(2014)},~\cite[Theorem 5.1(d)]{Davis-Lueck(2003)} 
and~\cite[Theorem 0.2]{Lueck-Stamm(2000)}.

The computation of $\wh(\Z/p)$ for an odd prime $p$ (and much more
information about the Whitehead group for finite groups) can be found
in~\cite{Oliver(1988)}, a discussion of $\wt K_0(\Z [\Z/p])$
in~\cite{Milnor(1971)}, and the vanishing of $K_{n}(\Z [\Z/p])$ for $n \le -1$
in~\cite{Carter(1980)}.
\end{proof}

Theorem~\ref{the:algebraic_K-theory} is consistent with~\cite[Theorem~1.8~(i)]{Lueck-Rosenthal(2014)}.

%%%%%%%%%%%%%%%%%%%%%%%%%%%%%%%%%%%%%%%%%%%%%%%%%%%%%%%%%%%%%%%%%%%%%%%%%%%%%%%%%%%%%%
%%%%%%%%%%%%%%%%%%%%%%%%%%%% Algebraic $L$-theory of the group ring %%%%%%%%%%%%%%%%%%%%%%%
%%%%%%%%%%%%%%%%%%%%%%%%%%%%%%%%%%%%%%%%%%%%%%%%%%%%%%%%%%%%%%%%%%%%%%%%%%%%%%%%%%%%%%

\typeout{------------------------ Algebraic $L$-theory of the group ring  -------------------------}

%%%%%%%%%%%%%%%%%%%%%%%%%%%%%%%%%%%%%%%%%%%%%%%%%%%%%%%%%%%%%%%%%%%%%%%%%%%%%%%%%%%%%%

\section{Algebraic $L$-theory of the group ring} 

\label{sec:L_theory_of_group_ring}

In this section we compute the $L$-groups of $\IZ\Gamma$ for all decorations.

%%%%%%%%%%%%%%%%%%%%%%%%%%%%%%%%%%%%%%%%%%%%%%%%%%%%%%%%%%%%%%%%%%%%%%%%%%%%%%%%%%

\subsection{Decorated $L$-groups}
\label{subsec:Decorated_L-groups}

We first discuss the so-called decorated versions of $L$-theory
$L^{\langle -i \rangle}_*(\Z G)$ for any group $G$ for $ i = 2, 1, 0, -1, -2, \ldots$ and
$i = -\infty$. We recall briefly a few facts; more information can be found
in~\cite{Ranicki(1992a)}.  For $m \in \Z$, these are functors
$L_m^{\langle i \rangle} \colon\GROUPS \to \AB$ which are 4-periodic in the sense that
$L_m^{\langle i \rangle}(\Z G) = L_{m+4}^{\langle i \rangle}(\Z G)$.  There are natural maps
\[
L^{\langle i+1 \rangle}_*(\IZ G) \to L^{\langle i \rangle}_*(\IZ G)
\]
and one defines
\[
L^{\langle -\infty \rangle}_*(\IZ G) = \colim_{i \to -\infty} L^{\langle i \rangle}_*(\IZ G).
\]
One sometimes writes
\begin{eqnarray*}
L^s_*(\IZ G) & = & L^{\langle 2 \rangle}_*(\IZ G);
\\
L^h_*(\IZ G) & = & L^{\langle 1 \rangle}_*(\IZ G);
\\
L^p_*(\IZ G) & = & L^{\langle 0 \rangle}_*(\IZ G).
\end{eqnarray*}

The $L^s$-groups are bordism groups of algebraic Poincar\'e complexes with based modules
and simple Poincar\'e duality; they are useful in classifying manifolds.  The $L^h$-groups
are bordism groups of algebraic Poincar\'e complexes with free modules; they are useful
for studying the existence question of when a space has the homotopy type of a manifold.
The $L^p$-groups are bordism groups of algebraic Poincar\'e complexes with projective
modules.  The $L^{\langle -\infty \rangle}$-groups are useful for the Farrell-Jones
Conjecture.

We will often use the Ranicki-Rothenberg exact sequence for a group $G$,
see~\cite[Theorem~7.12 on page~146]{Ranicki(1992a)}

\begin{multline}
\cdots \to L^{\langle i +1 \rangle}_m(\IZ G) \to  L^{\langle i \rangle}_m(\IZ G) 
\to \widehat{H}^m(\IZ/2;\Wh_i(G))
\\
\to L^{\langle i + 1 \rangle}_{m-1}(\IZ G) \to  L^{\langle i \rangle}_{m-1}(\IZ G) 
\to \cdots.
\label{Rothenberg_sequence}
\end{multline}

For any decoration $i$, there is a homotopy invariant functor
%is this true for i = 2?
\[
\bfL^{\langle i \rangle} \colon\GROUPOIDS \to \SPECTRA
\]
satisfying $\pi_m(\bfL^{\langle i \rangle}(\ol{G/H})) = L_m^{\langle i \rangle}(\Z H)$.  
Farrell and Jones~\cite{Farrell-Jones(1993a)}  conjecture that
\[
H_*^G(\edub G; \bfL^{\langle -\infty \rangle}) \xrightarrow{\cong} H_*^G(\pt; \bfL^{\langle -\infty \rangle})
\]
for all groups $G$.   However, the decorated version of the assembly map 
\[
H_*^G(\edub G; \bfL^{\langle i \rangle}) \xrightarrow{\cong} H_*^G(\pt; \bfL^{\langle i \rangle})
\]
need not be bijective in general, for example, it is not bijective for the group
$G = \Z^2 \times \Z/29$ for the decorations $p$, $h$, and $s$, see~\cite[Example~14]{Farrell-Jones-Lueck(2002)}.  
However, the Farrell-Jones Conjecture for the group $\g$
holds for all $i$ as we show below. This will be important since the $L^s$-version is the geometrically significant one.

%%%%%%%%%%%%%%%%%%%%%%%%%%%%%%%%%%%%%%%%%%%%%%%%%%%%%%%%%%%%%%%%%%%%%%%%%%%%%%%%%%

\subsection{The $\langle -\infty \rangle$-decoration}
\label{subsec:The_langle-infty_rangle_decoration}

In this subsection we compute $ L^{\langle - \infty \rangle}_m(\Z\Gamma)$ using the
Farrell-Jones Conjecture, see Theorem~\ref{the:FJC_L-infty_for_Gamma)}.  The $L$-theory of
$\g = \Z^n \rtimes \Z/p$ is, in some sense, built from the $L$-theory of $\Z^n$ and
$\Z/p$, so we first review these.

The Farrell-Jones Conjecture in $K$-theory holds for the torsion-free group $\Z^n$.  It
follows that $\wh_m(\Z^n)=0$ for all $m \in \Z$ and for all $n \in \Z_{\geq 0}$.  Thus the
maps
\begin{align}
L_m^{\langle i  \rangle}(\Z[\Z^n]) 
& \xrightarrow{\cong} 
L_m^{\langle -\infty \rangle}(\Z[\Z^n]) 
\label{l(Z)_and_decos}
\end{align}
are bijections for  $i$, $m$, and $n$  and the  map of spectra
\begin{align}
\bfL^{\langle i \rangle}(\Z[\Z^n]) 
& \xrightarrow{\simeq} 
\bfL^{\langle -\infty \rangle}(\Z[\Z^n]) \label{bfl(Z)_and_decos}
\end{align}
is a weak homotopy equivalence for all $i$ and $n$.  Hence we will omit the decoration and
refer to $L_m(\Z[\Z^n])$ and $\bfL(\Z[\Z^n])$.

When $n = 0$, the $L$-groups are well known, essentially  due to Kervaire-Milnor,
\[
L_m(\Z) = 
\begin{cases}
 \Z & m \equiv 0 \pmod 4; \\
  0 & m \equiv 1 \pmod 4; \\
   \Z/2 & m \equiv 2 \pmod 4; \\
 0 & m \equiv 3 \pmod 4, \\
\end{cases}
\]
where the map to $\Z$ is given by the signature divided by 8 and the map to $\Z/2$ is
given by the Arf invariant.  Since the Farrell-Jones Conjecture in $L$-theory holds for
the torsion-free group $\Z^n$,
\begin{equation}\label{L-theory_of_Z_upper_n}
L_m(\Z[\Z^n]) \xleftarrow{\cong} H_m(B\Z^n; \bfL(\Z)) = \bigoplus_{i=0}^n L_{m-i}(\Z)^{\binom{n}{i}}.
\end{equation}

For $p$ an odd prime, we get
from~\cite[Theorem~1]{Bak(1975)},~\cite[Theorem~1,2,  and 3]{Bak(1978)}, and~\cite[Theorem~10.1]{Hambleton-Taylor(2000)}
\begin{equation*}
 L_m^{\langle 0 \rangle}(\Z[\Z/p]) = 
\begin{cases}
 \Z^{(p-1)/2} \oplus L_m(\Z) & m \;\text{even;} \\
 0 & m \; \text{odd.}
\end{cases}
\end{equation*}
Since $\wh_i(\Z/p) = 0$ for $i < 0$ by~\cite{Carter(1980)}, we have
\[
L_m^{\langle 0 \rangle}(\Z[\Z/p]) \xrightarrow{\cong}  L_m^{\langle -1 \rangle}(\Z[\Z/p]) \xrightarrow{\cong}  L_m^{\langle -2 \rangle}(\Z[\Z/p])
\xrightarrow{\cong} \cdots \xrightarrow{\cong} L_m^{\langle -\infty \rangle}(\Z[\Z/p]).
\]
Thus
\begin{equation}\label{LminusinftyofZ/p}
 L_m^{\langle -\infty \rangle}(\Z[\Z/p]) = 
\begin{cases}
 \Z^{(p-1)/2}\oplus  L_m(\Z) & m \;\text{even;} \\
 0 & m \;\text{odd.}
\end{cases}
\end{equation}

\begin{theorem}[$L^{\langle -\infty\rangle}$-theory] \
\label{the:L-infty-theory}
There is a long exact sequence 
\begin{multline*}
\cdots \to H_{m+1}(\bub{\Gamma};\bfL(\Z)) 
\to \bigoplus_{(P) \in \calp} \widetilde{L}^{\langle - \infty \rangle}_m(\IZ P) 
\to L^{\langle - \infty \rangle}_m(\IZ\Gamma)
\\
\xrightarrow{\beta_m} H_{m}(\bub{\Gamma};\bfL(\Z)) 
\to \bigoplus_{(P) \in \calp} \widetilde{L}^{\langle - \infty \rangle}_{m-1}(\IZ P) \to \cdots,
\end{multline*} 
where $\widetilde{L}^{\langle - \infty \rangle}_m(\IZ P)$ is the kernel of the map
$L^{\langle - \infty \rangle}_m(\IZ P) \to L_m(\IZ)$
induced by induction with $P \to 1$.

The map $\beta_m[1/p]$ is a split surjection, and thus there is an isomorphism of $\IZ[1/p]$-modules
\[
L^{\langle - \infty \rangle}_m(\IZ\Gamma)[1/p] \cong 
\biggl(\bigoplus_{(P) \in \calp} \widetilde{L}^{\langle - \infty \rangle}_m(\IZ P)[1/p]\biggr)
\oplus H_{m}(\bub{\Gamma};\bfL(\Z))[1/p].
\]
\end{theorem}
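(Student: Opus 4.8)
The plan is to apply the equivariant homology Mayer--Vietoris sequence of Lemma~\ref{lem:MV_sequence} to the equivariant homology theory $\calh^?_* = H^?_*(-;\bfL^{\langle -\infty\rangle})$ and then identify the various terms. First, by Theorem~\ref{the:FJC_L-infty_for_Gamma)}, the Farrell--Jones Conjecture gives $H^\Gamma_m(\eub\Gamma;\bfL^{\langle -\infty\rangle}) \cong L^{\langle -\infty\rangle}_m(\IZ\Gamma)$, so the middle term $\calh^\Gamma_m(\eub\Gamma)$ of the Mayer--Vietoris sequence is exactly $L^{\langle -\infty\rangle}_m(\IZ\Gamma)$. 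Second, since $\Gamma$ is torsion-free away from finite subgroups isomorphic to $\IZ/p$, and since $\bfL^{\langle -\infty\rangle}(\overline{1}) = \bfL(\IZ)$, the ``base'' term $\calh_m(\bub\Gamma) = H_m(\bub\Gamma;\bfL(\IZ))$ is the untwisted $\bfL(\IZ)$-homology of the quotient $\bub\Gamma = \Gamma\backslash\eub\Gamma$. Third, each $P$ in $(\calp)$ is isomorphic to $\IZ/p$ with trivial Weyl group by Lemma~\ref{lem:preliminaries_about_Gamma_and_Zn_rho}~\ref{lem:preliminaries_about_Gamma_and_Zn_rho:finite_subgroups}, so the correction term $\widetilde{\calh}^P_m(\pt)$ is precisely the kernel $\widetilde{L}^{\langle -\infty\rangle}_m(\IZ P)$ of the induction map $L^{\langle -\infty\rangle}_m(\IZ P)\to L_m(\IZ)$, which we have already computed in~\eqref{LminusinftyofZ/p} to be $\IZ^{(p-1)/2}$ for $m$ even and $0$ for $m$ odd. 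Substituting these identifications into the long exact sequence of Lemma~\ref{lem:MV_sequence} yields the claimed exact sequence verbatim.

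For the splitting statement, the second half of Lemma~\ref{lem:MV_sequence} already asserts that $\ind_{\Gamma\to 1}[1/p]\colon \calh^\Gamma_m(\eub\Gamma)[1/p]\to\calh_m(\bub\Gamma)[1/p]$ is split surjective; under the identifications above this is exactly $\beta_m[1/p]$. Since inverting $p$ makes $\IZ[1/p]$ flat over $\IZ$, localization is exact, so the long exact sequence splits into short exact sequences
\[
0 \to \bigoplus_{(P)\in\calp}\widetilde{L}^{\langle -\infty\rangle}_m(\IZ P)[1/p] \to L^{\langle -\infty\rangle}_m(\IZ\Gamma)[1/p] \xrightarrow{\beta_m[1/p]} H_m(\bub\Gamma;\bfL(\IZ))[1/p] \to 0,
\]
and the splitting of $\beta_m[1/p]$ gives the asserted direct sum decomposition of $\IZ[1/p]$-modules.

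The step requiring the most care is verifying that the abstract Mayer--Vietoris sequence of Lemma~\ref{lem:MV_sequence} specializes correctly: one must check that the $\Gamma$-pushout~\eqref{G-pushoutfor_EGamma_to_eunderbar_Gamma} genuinely feeds into that lemma for the equivariant homology theory attached to $\bfL^{\langle -\infty\rangle}$ (that this functor is homotopy invariant is noted in Section~\ref{subsec:Decorated_L-groups}), and that the connecting and transfer maps in the abstract sequence match the geometric maps named in the statement (the map induced by the inclusions $P\to\Gamma$, and the assembly/restriction map $\beta_m$). The identification $\widetilde{\calh}^P_m(\pt) = \widetilde{L}^{\langle -\infty\rangle}_m(\IZ P)$ uses that the Weyl group $W_\Gamma P$ is trivial so that no further orbit-category bookkeeping intervenes. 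Everything else is a direct substitution, and the flatness argument for localizing at $p$ is routine.
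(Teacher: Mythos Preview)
Your proposal is correct and follows exactly the paper's approach: the paper's proof is the single sentence ``This follows directly from Theorem~\ref{the:FJC_L-infty_for_Gamma)} and Lemma~\ref{lem:MV_sequence},'' and you have simply unpacked what that sentence means. Your remark that the triviality of $W_\Gamma P$ is needed for the identification $\widetilde{\calh}^P_m(\pt) = \widetilde{L}^{\langle -\infty\rangle}_m(\IZ P)$ is unnecessary (that identification holds by the definition of the equivariant homology theory, since $\calh^P_m(\pt)=\pi_m(\bfL^{\langle -\infty\rangle}(P))=L^{\langle -\infty\rangle}_m(\IZ P)$), but it does no harm.
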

\begin{proof}
This follows directly from Theorem~\ref{the:FJC_L-infty_for_Gamma)} and Lemma~\ref{lem:MV_sequence}.
\end{proof}

Next we want to improve  Theorem~\ref{the:L-infty-theory}
by comparing it with the computation of $KO_*(C^*_r(G))$ of~\cite[Theorem~10.1]{Davis-Lueck(2013)}.

The next result is motivated by Sullivan's thesis~\cite{Sullivan(1966PhD)}, but its proof
requires much more, in particular recent results of Land and Nikolaus.

\begin{theorem}[Comparing $L$-theory and topological $K$-theory]
\label{the:Comparing_L-theory_and_topological_K-theory}
There is a natural transformation of
equivariant homology theories
\[
T^?_*(-) \colon H^?_*(-;\bfL^{\langle - \infty \rangle})[1/2] \to KO^?_*(-)[1/2]
\]
such that for every group $G$, every proper $G$-$CW$-complex $X$ and every $m \in \IZ$
the map
\[
T^G_m(X) \colon H^G_m(X;\bfL^{\langle - \infty \rangle})[1/2] \to KO^G_m(X)[1/2]
\]
is an isomorphism.
\end{theorem}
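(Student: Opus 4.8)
The plan is to build the natural transformation $T^?_*$ at the level of spectra over orbit categories, using the Davis--L\"uck machinery of assembly maps to globalize, and then reduce the isomorphism statement to the case of a single orbit $G/H$ with $H$ finite, where it becomes a purely non-equivariant statement about finite groups. First I would recall that by the recent work of Land--Nikolaus~\cite{Land-Nikolaus(2018)}, generalizing Sullivan's comparison of surgery and $KO$-theory after inverting $2$, there is a natural equivalence of spectra $\bfL^{\langle -\infty \rangle}(\IZ[1/2]) \simeq \bfKO[1/2]$, and more to the point a natural map $\bfL^{\langle -\infty \rangle}(\IZ H)[1/2] \to \bfKO(C^*_r(H))[1/2]$ for every (finite) group $H$ which is an equivalence when $H$ is finite. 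The essential input here is that for finite $H$ the $L$-theory of $\IZ H$ and the $KO$-theory of $C^*_r(\IZ H)$ agree rationally (and after inverting $2$), which is classical and can also be extracted from Hambleton--Taylor and from the explicit computations in~\cite{Davis-Lueck(2013)}.

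Next I would promote this to a map of $\GROUPOIDS$-spectra, i.e.\ a natural transformation of homotopy-invariant functors $\bfL^{\langle -\infty \rangle}[1/2] \to \bfKO^{\topo}[1/2] \colon \GROUPOIDS \to \SPECTRA$, where the target is the functor sending a groupoid $\calg$ to the (real) topological $K$-theory spectrum of the associated groupoid $C^*$-algebra; this is where one invokes that the Land--Nikolaus comparison is natural in the group, so it passes through the groupoid-enrichment. Applying the Davis--L\"uck construction $H^G_*(-;-)$ to this natural transformation of coefficient spectra yields for every group $G$ and every $G$-$CW$-complex $X$ a natural map
\[
T^G_m(X)\colon H^G_m(X;\bfL^{\langle -\infty\rangle})[1/2] \to KO^G_m(X)[1/2],
\]
and naturality in $\alpha \colon G \to H$ (compatibility with induction) is automatic from the functoriality of the construction, so we obtain a natural transformation of equivariant homology theories as claimed.

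To prove that $T^G_m(X)$ is an isomorphism for proper $G$-$CW$-complexes $X$, I would use the standard induction over cells: both source and target are $G$-homology theories (the target is a $G$-homology theory because $KO^?_*$ is part of an equivariant homology theory, again via the groupoid $C^*$-algebra construction), and a natural transformation of $G$-homology theories that is an isomorphism on all orbits $G/H$ with $H \in \calf$ is an isomorphism on all $E_\calf G$-type complexes by a Mayer--Vietoris and colimit argument. For a proper $G$-$CW$-complex every isotropy group is finite, so it suffices to check $T^G_m(G/H)$ is an isomorphism for $H$ finite. By construction $H^G_m(G/H;\bfL^{\langle -\infty\rangle}) = L^{\langle -\infty\rangle}_m(\IZ H)$ and $KO^G_m(G/H) = KO_m(C^*_r(H))$, and the map is exactly the Land--Nikolaus comparison map for the finite group $H$, which is an isomorphism after inverting $2$ — this is the crux, and I would cite it from~\cite{Land-Nikolaus(2018)} together with, if needed, the explicit identification of $L^{\langle -\infty\rangle}_*(\IZ[\IZ/p])[1/2]$ with $KO_*(C^*_r(\IZ/p))[1/2]$ already visible in~\eqref{LminusinftyofZ/p} and~\cite[Theorem~10.1]{Davis-Lueck(2013)}.

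\textbf{Main obstacle.} The hard part is not the cellular induction, which is formal, but constructing the natural transformation at the spectrum level with enough naturality to enrich over groupoids: one needs the Land--Nikolaus equivalence to be genuinely functorial in the ring (or group) and compatible with the passage from rings to $C^*$-algebras, so that it descends to a map of $\GROUPOIDS$-spectra rather than merely a map defined group-by-group. Equivalently, one must know that the whole comparison is implemented by a natural transformation of functors on an appropriate category, which is precisely what the modern $\infty$-categorical formulation in~\cite{Land-Nikolaus(2018)} provides; without that, assembling $T^?_*$ as a transformation of equivariant homology theories (and in particular verifying the induction compatibility) would be delicate. A secondary, more technical point is the correct handling of decorations: one must ensure the Land--Nikolaus comparison is for the $\langle -\infty\rangle$-decoration (or that the decoration is irrelevant after inverting $2$, since the $\widehat H^*(\IZ/2;\Wh_*)$ terms in~\eqref{Rothenberg_sequence} are $2$-torsion), so that the source of $T^?_*$ really is $H^?_*(-;\bfL^{\langle -\infty\rangle})[1/2]$.
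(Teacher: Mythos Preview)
Your proposal is correct and follows essentially the same route as the paper: build the transformation at the level of $\GROUPOIDS$-spectra using Land--Nikolaus naturality, then reduce the isomorphism claim on proper $G$-$CW$-complexes to single orbits $G/H$ with $H$ finite via the standard cell-by-cell argument. The only point the paper makes more explicit is the factorization of the comparison map through $L$-theory of the real group $C^*$-algebra---i.e.\ the composite $\bfL(\IZ H)[1/2] \to \bfL(C^*_r(H;\IR))[1/2] \xleftarrow{\sim} \bfKO(C^*_r(H;\IR))[1/2]$---so that the orbit-level check becomes the classical change-of-rings isomorphism $L_m(\IZ H)[1/2] \xrightarrow{\cong} L_m(\IR H)[1/2]$ for finite $H$ (Ranicki, Proposition~22.34), rather than appealing to Land--Nikolaus again; your reference to Hambleton--Taylor and the explicit $\IZ/p$ computations would suffice for the groups in this paper but the Ranicki citation covers arbitrary finite $H$.
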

\begin{proof}
  Notice that  we invert
  $2$ so that the decorations do not matter and we  therefore ignore them.  The key
  ingredient is a rigorous construction of a weak homotopy equivalence of spectra
  $\bfKO(A)[1/2] \to \bfL(A)$ for a real $C^*$-algebra $A$ from its topological
  $K$-theory spectrum to its algebraic $L$-theory spectrum after inverting
  $1/2$, which is natural in
  $A$,~\cite[Theorem~C]{Land-Nikolaus(2018)}.  In particular this applies to the real
  group
  $C^*$-algebra and can be extended from groups to groupoids,
  see~\cite[Section~5.2]{Land-Nikolaus(2018)}.  For any group there is a natural map from
  the integral group ring to the real group
  $C^*$-algebra which yields a natural map between the corresponding
  $L$-theory spectra. This construction also extends to groupoids.  Combing these two
  transformations and using the fact that the one in~\cite[Theorem~C]{Land-Nikolaus(2018)} 
  is a weak homotopy equivalence, yields the
  desired transformation $T^?_*(-)$. Recall that a $G$-$CW$-complex is proper if all isotropy groups are finite.  In order to show that
  $T^G_n(X)$ is an isomorphism for every proper $G$-$CW$-complex
  $X$, it suffices to do this in the case $X = G/H$ for a finite subgroup $H \subseteq  G$,
  see~\cite[Lemma~1.2]{Bartels-Echterhoff-Lueck(2008colim)}.
  Hence one needs to show that the change of coefficients map 
  $L_m(\IZ H)[1/2] \to L_m(\IR H)[1/2]$ is bijective for all
  $m$ which is done in~\cite[Proposition~22.34 on page 253]{Ranicki(1992)}.
\end{proof}

If $G$ is a group and $M$ is a left $\Z G$-module, the \emph{invariants} of $M$ are
$M^G := \Hom_{\Z G}(\Z, M) = H^0(BG; M)$ and the \emph{coinvariants} of $M$ are
$M_G := \Z \otimes_{\Z G} M = H_0(BG; M)$.  If $G$ is finite of order $q$, the norm map
$M_G \to M^G$ sending $1 \otimes x$ to $\sum_{g\in G} gx$ is an isomorphism after
tensoring with $\Z[1/q]$.

If $\calh_*$ is a (non-equivariant) generalized homology theory, and $G$ acts on $X$, then
the homology quotient map factors as
$\calh_*(X) \to \calh_*(X)_G \to \calh_*(G \backslash X)$.  If the quotient map
$X \to G \backslash X$ is a regular $G$-cover and $G$ is finite of order $q$, then there
is a transfer map $ \calh_*(G \backslash X) \to \calh_*(X)^G$ so that the composite
$\calh_*(X)_G \to \calh_*(G \backslash X) \to \calh_*(X)^G$ is an isomorphism after
tensoring with $\Z[1/q]$.

\begin{lemma}\label{lem:calh(bub(Gamma))_to_calh(Zn)Z/p}
  Let $\calh_*$ be a any generalized homology theory taking values in $\Z[1/p]$-modules.
  For all $m \in \Z$, the following maps are isomorphisms:
\begin{align*}
\calh_m(B \Z^n_\rho)_{\Z/p} &  \xrightarrow{\cong}  \calh_m(B \Gamma)   \xrightarrow{\cong}  \calh_m(B \Z^n_\rho)^{\Z/p};\\
\calh_m(B \Gamma) & \xrightarrow{\cong}  \calh_m(\bub \Gamma).
\end{align*}
\end{lemma}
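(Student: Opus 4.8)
The plan is to use the $\Gamma$-pushout \eqref{G-pushoutfor_EGamma_to_eunderbar_Gamma} together with the observation that both maps in the statement are, after applying the functor $\calh_*$, controlled by a Mayer--Vietoris argument in which all the "correction terms" involve $\calh_*$ of classifying spaces $BP$ of subgroups $P$ of order $p$, and such terms vanish once $p$ is inverted. Concretely, recall that $\eub\Gamma$ can be taken to be $\R^n$ (the remark after Theorem~\ref{the:FJC_L-infty_for_Gamma)}), so $\bub\Gamma = \Gamma\backslash\eub\Gamma$ and $B\Z^n_\rho \to \bub\Gamma$ is a regular $\Z/p$-cover: indeed $\R^n \to \bub\Gamma$ is the universal cover composed with the $\Z^n$-action, and the residual action is that of $\Z/p$ with finite (hence, after dividing, the quotient of) the crystallographic quotient. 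Thus the general transfer formalism recalled just before the lemma applies with $G = \Z/p$, $q = p$, $X$ a model for $B\Z^n_\rho$ fitting into a regular $\Z/p$-cover of $\bub\Gamma$; since $\calh_*$ takes values in $\Z[1/p]$-modules, the composite $\calh_m(B\Z^n_\rho)_{\Z/p} \to \calh_m(\bub\Gamma) \to \calh_m(B\Z^n_\rho)^{\Z/p}$ is an isomorphism, and the norm map $\calh_m(B\Z^n_\rho)_{\Z/p} \xrightarrow{\cong} \calh_m(B\Z^n_\rho)^{\Z/p}$ is an isomorphism as well. This gives the first line of the lemma once one knows $\calh_m(B\Z^n_\rho)_{\Z/p} \to \calh_m(\bub\Gamma)$ is an isomorphism.

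For that, and for the second line, I would feed the equivariant homology theory $\calh^?_*$ underlying $\calh_*$ (take $\calh^G_*(X) := \calh_*(G\backslash X)$ for free $X$, extended in the standard way, or simply appeal to Lemma~\ref{lem:MV_sequence} applied to $\calh^?_*$) into the long exact Mayer--Vietoris sequence of Lemma~\ref{lem:MV_sequence}:
\[
\dots \to \calh^\Gamma_{m+1}(\eub\Gamma) \to \calh_{m+1}(\bub\Gamma) \to \bigoplus_{P\in(\calp)} \wt\calh^P_m(\pt) \to \calh^\Gamma_m(\eub\Gamma) \to \calh_m(\bub\Gamma) \to \dots
\]
The correction terms $\wt\calh^P_m(\pt)$ are the kernels of $\ind_{P\to 1}\colon \calh^P_m(\pt) \to \calh_m(\pt)$; but for $P$ of order $p$ the induction map and the restriction map compose (in both orders) to multiplication by $p$ up to the transfer, so after inverting $p$ the map $\ind_{P\to 1}$ is split injective and $\wt\calh^P_m(\pt) = 0$. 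Hence the sequence degenerates to isomorphisms $\calh^\Gamma_m(\eub\Gamma) \xrightarrow{\cong} \calh_m(\bub\Gamma)$ for all $m$. On the other hand, applying the same sequence to the other "half" of the pushout, or rather using that $\calh^\Gamma_m(\eub\Gamma) \cong \calh^\Gamma_m(\R^n)$ and that the $\Gamma$-space $\R^n = \eub\Gamma$ is $\Gamma$-homotopy equivalent, after collapsing the free part, to a space whose $\Gamma$-homology is computed by the $\Z/p$-coinvariants of $\calh_*(B\Z^n_\rho)$: concretely, $\Gamma\backslash E\Gamma = B\Gamma$ and the induction map $\calh^\Gamma_m(E\Gamma) = \calh_m(B\Gamma) \to \calh^\Gamma_m(\eub\Gamma)$ together with $\calh_m(B\Z^n_\rho) \to \calh_m(B\Gamma)$ and the identification $\calh_m(B\Z^n_\rho)_{\Z/p} \cong \calh^\Gamma_m(\eub\Gamma)$ gives everything. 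The cleanest route: $\eub\Gamma$ restricted to the subgroup $\Z^n$ is a model for $E\Z^n$, so $\calh^\Gamma_m(\eub\Gamma)$ is the $\Z/p$-equivariant homology of $B\Z^n_\rho$, which after inverting $p$ is just $\calh_m(B\Z^n_\rho)_{\Z/p}$; this yields $\calh_m(B\Z^n_\rho)_{\Z/p} \xrightarrow{\cong} \calh^\Gamma_m(\eub\Gamma) \xrightarrow{\cong} \calh_m(\bub\Gamma)$, and also $\calh_m(B\Gamma) \to \calh^\Gamma_m(\eub\Gamma)$ identifies with $\calh_m(B\Gamma) \xrightarrow{\cong} \calh_m(\bub\Gamma)$, again using vanishing of the $\wt\calh^P_m$ terms in the analogous sequence with $\eub\Gamma$ replaced by $\pt$.

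I expect the main obstacle to be bookkeeping rather than mathematics: making precise the identification $\calh^\Gamma_m(\eub\Gamma)[1/p] \cong \calh_m(B\Z^n_\rho)_{\Z/p}$ and checking that the three maps named in the lemma are literally the composites that the pushout and transfer machinery produce (so that "isomorphism" really refers to the maps as stated, not merely to abstract isomorphism of groups). The key inputs are all in hand: the $\Gamma$-pushout \eqref{G-pushoutfor_EGamma_to_eunderbar_Gamma}, the Mayer--Vietoris sequence of Lemma~\ref{lem:MV_sequence}, the elementary transfer argument recalled before the statement, and Lemma~\ref{lem:preliminaries_about_Gamma_and_Zn_rho}\ref{lem:preliminaries_about_Gamma_and_Zn_rho:finite_subgroups} which guarantees every finite subgroup $P$ has order exactly $p$ and trivial Weyl group, so that the correction terms are precisely $\wt\calh^P_m(\pt)$ with $P\cong\Z/p$ and these die after inverting $p$. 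Assembling these gives all three isomorphisms.
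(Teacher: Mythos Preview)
There is a genuine error in your setup. You claim that $B\Z^n_\rho \to \bub\Gamma$ is a regular $\Z/p$-cover, but this is false: taking $\eub\Gamma = \R^n$, the space $\Z^n\backslash\eub\Gamma$ is $T^n$ with the $\Z/p$-action induced by $\rho$, and this action has $p^k$ fixed points (Lemma~\ref{lem:preliminaries_about_Gamma_and_Zn_rho}~\ref{lem:preliminaries_about_Gamma_and_Zn_rho:fixed_set}). So $T^n \to \bub\Gamma$ is a branched quotient, not a covering map, and the transfer formalism recalled before the lemma does not apply as stated. You also repeatedly write $\bub\Gamma$ where the first line of the lemma has $B\Gamma$; the regular cover that does exist is $B\Z^n_\rho = \Z^n\backslash E\Gamma \to B\Gamma$, and the transfer there gives only that the \emph{composite} $\calh_m(B\Z^n_\rho)_{\Z/p} \to \calh_m(B\Gamma) \to \calh_m(B\Z^n_\rho)^{\Z/p}$ is an isomorphism, not that either factor is.

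The paper's proof replaces the missing step with a single clean observation you should absorb: for \emph{any} $\Z/p$-$CW$-complex $X$ (free or not), the natural map $j_m \colon \calh_m(X)_{\Z/p} \to \calh_m((\Z/p)\backslash X)$ is an isomorphism. The point is that since $p$ is inverted, $(-)_{\Z/p}$ is exact, so both sides are $\Z/p$-homology theories and $j_*$ is a natural transformation; one checks it on orbits $(\Z/p)/H$ and concludes. Applying this to $X = \Z^n\backslash E\Gamma$ gives that $\alpha$ is an isomorphism, and applying it to $X = \Z^n\backslash\eub\Gamma$ (where the action is \emph{not} free) gives that $\gamma\circ\alpha$ is an isomorphism; combined with the transfer for $\beta\circ\alpha$, all three maps follow. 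No appeal to Lemma~\ref{lem:MV_sequence} or to an auxiliary equivariant theory is needed. Your Lemma~\ref{lem:MV_sequence} route for the second line can be made to work if you explicitly use Borel homology $\calh^G_*(X) := \calh_*(EG\times_G X)$, so that $\calh^\Gamma_*(\eub\Gamma) = \calh_*(B\Gamma)$ and $\wt\calh^P_*(\pt) = \wt\calh_*(BP)$ vanishes; but as written your construction of $\calh^?_*$ is too vague to carry the argument.
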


\begin{proof} 
Consider the diagram
$$\xymatrix{
  \calh_*(B\Z^n_\rho)_{\Z/p} \ar[r]^\alpha &  \calh_*(B\Gamma) \ar[r]^\beta \ar[d]^\gamma & \calh_*(B\Z^n_\rho)^{\Z/p} \\
  & \calh_*(\bub\Gamma) & }$$ where $\alpha$ and $\gamma$ are the functorial
$\calh_*$-maps and $\beta$ is the transfer.  We will show that $\alpha$, $\beta$, and
$\gamma$ are isomorphisms.

Given a  $\IZ/p$-$CW$-complex $X$, the map
\[
j_m \colon \calh_m(X)_{\Z/p} 
\to  
\calh_m\bigl((\IZ/p)\backslash X\bigr)
\]
is natural in $X$.  Since the functor sending a $\IZ[1/p][\IZ/p]$-module $M$ to $M_{\Z/p}$
is an exact functor, the assignments sending a $\IZ/p$-$CW$-complex to $\calh_m(X)_{\Z/p}$
and to $\calh_m\bigl((\IZ/p)\backslash X\bigr)$ are $\IZ/p$-homology theories and $j_*$ is
a transformation of $\IZ/p$-homology theories.  One easily checks that $j_m$ is a
bijection if $X$ is $(\IZ/p)/H$ for any subgroup $H \subseteq \IZ/p$. It follows that
$j_m$ is a bijection for any $\IZ/p$-$CW$-complex.  Taking $X$ to be
$\Z^n\backslash E \Gamma$, we see $\alpha$ is an isomorphism and taking $X$ to be
$\Z^n\backslash \eub \Gamma$ we see that $\gamma \circ \alpha$ is an isomorphism.  We
commented above that $\beta \circ \alpha$ is an isomorphism.  It follows that all the maps
are isomorphisms.
\end{proof}

We also need some numbers defined in our previous paper~\cite{Davis-Lueck(2013)}.  For
$j,k \in \Z_{\geq 0}$, and for $p$ an odd prime, define
\begin{equation}\label{r-numbers}
  r_{j} := \rk (\Lambda^j(\Z[\zeta_p]^{k})^{\Z/p}),
\end{equation} 
where $\Lambda^j$ means the $j$-th exterior power of a $\Z$-module and $\zeta_p$ is a
primitive $p$-th root of unity.  Thus $r_j = \rk H^j(T^n_\rho)^{\Z/p}$.  When $k = 1$ we
worked out these numbers in~\cite[Lemma~1.22]{Davis-Lueck(2013)}: $r_{j} = 0$ for
$j \geq p$, and for $0 \leq j \leq (p-1)$,
\begin{align*}
  r_j = \frac{1}{p} \left(\binom{p-1}{j} +(-1)^j  (p-1)\right).
\end{align*}

\begin{theorem}[Computation of $L^{\langle -\infty\rangle}(\IZ\Gamma)$] 
\label{the:L-infty(ZGamma)}\
\begin{enumerate}

\item\label{the:L-infty(ZGamma):L2m_plus_1_abstract}
There is an isomorphism
\[
L^{\langle -\infty\rangle}_{2m+1}(\IZ \Gamma) 
\cong
H_{2m+1}(\bub{\Gamma};\bfL(\IZ )).
\]
Transferring to the finite index subgroup $
\Z^n$ of  $\Gamma$ 
induces an isomorphism
\[
L^{\langle -\infty\rangle}_{2m+1}(\IZ \Gamma) 
\xrightarrow{\cong} 
L_{2m+1}(\IZ [\IZ^n_\rho])^{\IZ/p};
\]

\item\label{the:L-infty(ZGamma):L2m_abstract}
There is an exact sequence
\[
0 \to \bigoplus_{(P) \in \calp} \widetilde{L}^{\langle -\infty\rangle}_{2m}(\IZ P) \to 
L^{\langle -\infty\rangle}_{2m}(\IZ \Gamma)  \to 
H_{2m}(\bub{\Gamma};\bfL(\Z )) \to 0,
\]
which splits after inverting $p$;

\item\label{the:L-infty(ZGamma):explicite}
We have
\[
L^{\langle -\infty\rangle}_{m}(\IZ \Gamma)  \cong
\begin{cases}
\IZ^{p^k(p-1)/2} \oplus 
\left(\bigoplus_{i=0}^n  L_{m-i}(\IZ)^{r_{i}}\right) 
& m \; \text{even};
\\
\bigoplus_{i=0}^n  L_{m-i}(\IZ)^{r_{i}}  & m \; \text{odd}.
\end{cases}
\]
\end{enumerate}
\end{theorem}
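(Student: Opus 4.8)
The plan is to prove the three parts of Theorem~\ref{the:L-infty(ZGamma)} by extracting concrete information from the long exact Mayer--Vietoris sequence of Theorem~\ref{the:L-infty-theory}, feeding in the explicit computations \eqref{LminusinftyofZ/p} for $\widetilde L^{\langle -\infty\rangle}_*(\IZ P)$ and the identification of $H_*(\bub\Gamma;\bfL(\IZ))$ via Lemma~\ref{lem:calh(bub(Gamma))_to_calh(Zn)Z/p} and the Atiyah--Hirzebruch/Farrell--Jones computation \eqref{L-theory_of_Z_upper_n}. The crucial parity observation is that, by \eqref{LminusinftyofZ/p}, the reduced groups $\widetilde L^{\langle-\infty\rangle}_m(\IZ P)$ vanish for $m$ odd; since $|\calp| = p^k$ and $\widetilde L^{\langle-\infty\rangle}_{2m}(\IZ P) \cong \IZ^{(p-1)/2}$ for each of the $p^k$ conjugacy classes $P$, the term $\bigoplus_{(P)\in\calp}\widetilde L^{\langle-\infty\rangle}_{2m}(\IZ P)$ is a free abelian group of rank $p^k(p-1)/2$.

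For part~\ref{the:L-infty(ZGamma):L2m_plus_1_abstract}, I would look at the segment of the sequence around degree $2m+1$: the two neighbouring $\bigoplus_{(P)}\widetilde L^{\langle-\infty\rangle}$-terms (in degrees $2m+1$ and $2m$) are respectively zero and torsion-free, so the connecting and inclusion maps force $L^{\langle-\infty\rangle}_{2m+1}(\IZ\Gamma) \to H_{2m+1}(\bub\Gamma;\bfL(\IZ))$ to be an isomorphism --- one needs only that the map out of $\bigoplus_{(P)}\widetilde L^{\langle-\infty\rangle}_{2m}(\IZ P)$ into $L^{\langle-\infty\rangle}_{2m}(\IZ\Gamma)$ is injective, which will follow from the part~\ref{the:L-infty(ZGamma):L2m_abstract} analysis (or can be bootstrapped the other way). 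Then I apply Lemma~\ref{lem:calh(bub(Gamma))_to_calh(Zn)Z/p} with $\calh_* = H_*(-;\bfL(\IZ))[1/p]$ --- legitimate here because $L^{\langle-\infty\rangle}_{\mathrm{odd}}(\IZ)=0$ makes the odd-degree $\bfL(\IZ)$-homology of $B\IZ^n$ already $p$-torsion-free, in fact using \eqref{L-theory_of_Z_upper_n} one sees $H_{2m+1}(B\IZ^n_\rho;\bfL(\IZ))$ is a sum of copies of $L_m(\IZ)$'s with $m$ ranging over odd residues, hence is $2$-torsion only and inverting $p$ changes nothing --- to get the transfer isomorphism onto $L_{2m+1}(\IZ[\IZ^n_\rho])^{\IZ/p}$.

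For part~\ref{the:L-infty(ZGamma):L2m_abstract}, the four-term exact sequence
\[
H_{2m+1}(\bub\Gamma;\bfL(\IZ)) \to \bigoplus_{(P)}\widetilde L^{\langle-\infty\rangle}_{2m}(\IZ P) \to L^{\langle-\infty\rangle}_{2m}(\IZ\Gamma) \to H_{2m}(\bub\Gamma;\bfL(\IZ)) \to \bigoplus_{(P)}\widetilde L^{\langle-\infty\rangle}_{2m-1}(\IZ P)
\]
has last term zero by \eqref{LminusinftyofZ/p}, so the only thing to check is that the first map is zero, i.e.\ that $\bigoplus_{(P)}\widetilde L^{\langle-\infty\rangle}_{2m}(\IZ P) \to L^{\langle-\infty\rangle}_{2m}(\IZ\Gamma)$ is injective. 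This is where I expect the main work to lie: one argument is that $H_{2m+1}(\bub\Gamma;\bfL(\IZ))$ is torsion (a sum of $\IZ/2$'s, as above) while $\bigoplus_{(P)}\widetilde L^{\langle-\infty\rangle}_{2m}(\IZ P)$ is free abelian, so any map between them vanishes --- thus no $[1/p]$ is even needed for exactness of the short sequence, and the splitting after inverting $p$ is precisely the split surjectivity of $\beta_m[1/p]$ already recorded in Theorem~\ref{the:L-infty-theory}.

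Finally, part~\ref{the:L-infty(ZGamma):explicite} is pure bookkeeping: combine the short exact sequence of part~\ref{the:L-infty(ZGamma):L2m_abstract} (respectively the isomorphism of part~\ref{the:L-infty(ZGamma):L2m_plus_1_abstract}) with the rank count $\bigoplus_{(P)}\widetilde L^{\langle-\infty\rangle}_{2m}(\IZ P)\cong\IZ^{p^k(p-1)/2}$ and with the identification
\[
H_m(\bub\Gamma;\bfL(\IZ)) \cong H_m(B\IZ^n_\rho;\bfL(\IZ))^{\IZ/p} \cong \bigoplus_{i=0}^n L_{m-i}(\IZ)^{r_i},
\]
the last isomorphism being \eqref{L-theory_of_Z_upper_n} together with the definition $r_i = \rk H^i(T^n_\rho)^{\IZ/p}$ --- one must check the $\IZ/p$-invariants of the Atiyah--Hirzebruch summands reassemble correctly, but since $L_*(\IZ)$ is concentrated in even degrees with $L_{4j}(\IZ)=\IZ$ and $L_{4j+2}(\IZ)=\IZ/2$, and the exterior-power module $\Lambda^i(\IZ^n_\rho)$ is $\IZ/p$-equivariantly $H^i(T^n_\rho)$, taking invariants commutes with the (split, since $2$ is invertible on $\IZ/p$-cohomology with these coefficients --- or more simply because $H^i(T^n_\rho)$ is $\IZ$-free and $\IZ/p$ has order prime to nothing problematic) decomposition. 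Since the short exact sequence need not split integrally, the statement in part~\ref{the:L-infty(ZGamma):explicite} is as an abstract extension; I would note that the free part $\IZ^{p^k(p-1)/2}$ together with the torsion-free summand $L_{4j}(\IZ)^{r_i}$ pieces and the $\IZ/2^{r_i}$ pieces account for every group up to extension, which is all that is claimed.
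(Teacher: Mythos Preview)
The proposal has a concrete error and two gaps that cannot be patched without additional input.

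\textbf{The error.} You claim that $H_{2m+1}(B\IZ^n_\rho;\bfL(\IZ))$ is ``a sum of copies of $L_m(\IZ)$'s with $m$ ranging over odd residues, hence is $2$-torsion only''. This is false. By~\eqref{L-theory_of_Z_upper_n}, $H_{2m+1}(B\IZ^n;\bfL(\IZ)) = \bigoplus_i L_{2m+1-i}(\IZ)^{\binom{n}{i}}$; the nonzero terms have $i$ odd (so that $2m+1-i$ is \emph{even}), and among these are $L_0(\IZ)^{\binom{n}{i}} = \IZ^{\binom{n}{i}}$ whenever $i\equiv 2m+1\pmod 4$. Likewise $H_{2m+1}(\bub\Gamma;\bfL(\IZ))$ has free summands in general (e.g.\ $r_3=2$ when $p=3$, $k=3$). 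Hence your argument that the connecting map $H_{2m+1}(\bub\Gamma;\bfL)\to\bigoplus_{(P)}\widetilde L^{\langle-\infty\rangle}_{2m}(\IZ P)$ vanishes because ``source torsion, target free'' does not work. The correct fix is already in Theorem~\ref{the:L-infty-theory}: $\beta_{2m+1}[1/p]$ is split surjective, so the connecting map vanishes after inverting $p$, and since its target is free abelian it vanishes integrally. This salvages the first isomorphism of~\ref{the:L-infty(ZGamma):L2m_plus_1_abstract} and the short exact sequence of~\ref{the:L-infty(ZGamma):L2m_abstract}.

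\textbf{The gaps.} Your argument never invokes the $KO$-comparison (Theorem~\ref{the:Comparing_L-theory_and_topological_K-theory}), and the paper uses it essentially in two places. First, for the transfer isomorphism in~\ref{the:L-infty(ZGamma):L2m_plus_1_abstract}: Lemma~\ref{lem:calh(bub(Gamma))_to_calh(Zn)Z/p} and diagram-chasing (as in the paper's diagram~\eqref{transfer_lemma}) give that $\iota^*[1/p]$ is an isomorphism, but knowing the target has no $p$-torsion does not exclude $p$-torsion in the kernel of $\iota^*$ (consider $\IZ/p\to 0$). The paper handles the localization at $p$ by inverting $2$, passing to $KO$-theory, and quoting~\cite[Section~11.2]{Davis-Lueck(2013)} for the $KO$-transfer. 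Second, for~\ref{the:L-infty(ZGamma):explicite}: the identification $H_m(\bub\Gamma;\bfL(\IZ))\cong H_m(B\IZ^n;\bfL(\IZ))^{\IZ/p}$ coming from Lemma~\ref{lem:calh(bub(Gamma))_to_calh(Zn)Z/p} holds only after inverting $p$, so your ``pure bookkeeping'' computes at most $L^{\langle-\infty\rangle}_m(\IZ\Gamma)[1/p]$. And since the sequence in~\ref{the:L-infty(ZGamma):L2m_abstract} need not split integrally and $\Ext(\IZ/2,\IZ)\neq 0$, knowing the outer terms does not pin down the middle group --- your last paragraph concedes this, but the theorem asserts an actual isomorphism, not merely an extension class. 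The paper closes this by computing the isomorphism type after inverting $2$ via Theorem~\ref{the:Comparing_L-theory_and_topological_K-theory} and~\cite[Theorem~10.1]{Davis-Lueck(2013)}, and then combining with the $[1/p]$-computation.
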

\begin{proof}~%
\ref{the:L-infty(ZGamma):L2m_plus_1_abstract} and~\ref{the:L-infty(ZGamma):L2m_abstract}
The computation of $L_*^{\langle -\infty \rangle}(\Z[\Z/p])$
in~\eqref{LminusinftyofZ/p} and Theorem~\ref{the:L-infty-theory} implies
that for every $m \in \IZ$ we obtain an isomorphism
\[
L^{\langle -\infty\rangle}_{2m+1}(\IZ \Gamma) 
\xrightarrow{\cong} 
H_{2m+1}(\bub{\Gamma};\bfL(\Z ))
\]
and a short exact sequence
\[
0 \to \bigoplus_{(P) \in \calp} \widetilde{L}^{\langle -\infty\rangle}_{2m}(\IZ P) \to 
L^{\langle -\infty\rangle}_{2m}(\IZ \Gamma)  \to 
H_{2m}(\bub{\Gamma};\bfL(\Z )) \to 0
\]
which splits after inverting $p$.   

It remains to show that transferring to subgroup $
\Z^n$ of  $\Gamma$ 
induces an isomorphism
\[
L^{\langle -\infty\rangle}_{2m+1}(\IZ \Gamma) 
\xrightarrow{\cong} 
L_{2m+1}(\IZ [\IZ^n])^{\IZ/p}.
\]
It suffices to do this after localizing at $p$ and after inverting $p$.   

We first invert 2.
Because of Theorem~\ref{the:Comparing_L-theory_and_topological_K-theory} 
and the fact that the assembly maps
\begin{eqnarray*}
H_m^{\Gamma}(\eub{\Gamma};\bfL^{\langle -\infty \rangle})[1/2] 
& \xrightarrow{\cong} &
H_m^{\Gamma}(\pt;\bfL^{\langle -\infty \rangle})[1/2]  
= L_m^{\langle -\infty \rangle}(\IZ \Gamma)[1/2];
\\
KO_m^{\Gamma}(\eub{\Gamma})[1/2]
& \xrightarrow{\cong} &
KO_m(C^*_r(\Gamma;\IR)),
\end{eqnarray*}
are bijections, see Theorem~\ref{the:FJC_L-infty_for_Gamma)} and~\cite{Higson-Kasparov(2001)},
we obtain a commutative diagram of $\IZ[1/2]$-modules
\[
\xycomsquareminus
{L^{\langle -\infty\rangle}_{2m+1}(\IZ \Gamma)[1/2]}
{\iota^*[1/2]}
{L_{2m+1}(\IZ[\IZ^n])^{\IZ/p}[1/2]}
{\cong}{\cong}
{KO_{2m+1}(C^*_r(\Gamma;\IR))[1/2]}
{\iota^*[1/2]}
{KO_{2m+1}(C^*_r(\IZ^n;\IR))^{\IZ/p}[1/2]}
\]
with bijective vertical arrows. The lower horizontal  arrow
is bijective by Theorem~\cite[Section~11.2]{Davis-Lueck(2013)}.
Hence
\[
\iota^*_{(p)} \colon L^{\langle -\infty\rangle}_{2m+1}(\IZ \Gamma)_{(p)} 
\xrightarrow{\cong}
L_{2m+1}(\IZ[\IZ^n])^{\IZ/p}_{(p)} 
\]
is bijective for every $m \in \IZ$.

Next consider the commutative diagram
\begin{equation} \label{transfer_lemma}
\xycomsquareminus
{H_{2m+1}(B\g;\bfL(\Z))}
{\iota^*}
{H_{2m+1}(B\Z^n;\bfL(\Z))^{\Z/p}}
{A_\g}{A_{\Z^n}^{\Z/p}}
{L^{\langle -\infty\rangle}_{2m+1}(\IZ \Gamma)}
{\iota^*}
{L_{2m+1}(\IZ [\Z^n])^{\Z/p}.}
\end{equation}
We will show that all maps are isomorphisms after inverting $p$.
Lemma~\ref{lem:calh(bub(Gamma))_to_calh(Zn)Z/p} shows this is true for the top horizontal map.
The Farrell-Jones Conjecture in $L$-theory for $\Z^n$ (see~\eqref{L-theory_of_Z_upper_n}) shows
this holds for the right vertical map.  To show this holds for $A_\g$, first rewrite it as
$A_\g : H_{2m+1}(B\g;\bfL(\Z)) = H_{2m+1}^\g(E\g;\bfL^{\langle - \infty \rangle}) \to
H_{2m+1}^\g(\eub\g;\bfL^{\langle - \infty \rangle})$, using the Farrell-Jones Conjecture
in $L$-theory for $\g$.  The map
$\ind_{\g \to 1} : H_{2m+1}^\g(\eub\g;\bfL^{\langle - \infty \rangle}) \to
H_{2m+1}(\bub\g;\bfL(\Z))$ is an isomorphism after inverting $p$ by
Lemma~\ref{lem:MV_sequence} and the vanishing of
$L_{2m+1}^{\langle - \infty \rangle}(\Z[\Z/p])$.  The composite
$\ind_{\g \to 1} \circ A_\g$ is an isomorphism after inverting $p$ by
Lemma~\ref{lem:calh(bub(Gamma))_to_calh(Zn)Z/p}.  Thus we can conclude that $A_\g$ is an
isomorphism after inverting $p$.

Thus the bottom row
$\iota^* : L^{\langle -\infty\rangle}_{2m+1}(\IZ \Gamma) \to L_{2m+1}(\IZ [\Z^n])^{\Z/p}$
of~\eqref{transfer_lemma} is also an isomorphism after inverting $p$.  We have shown that
$\iota^*_{(p)}$ and $\iota^*[1/p]$ are isomorphisms, so we conclude that $\iota^*$ is an
isomorphism.  \\[1mm]~%
\ref{the:L-infty(ZGamma):explicite} It suffices to show that the abelian groups in
question are isomorphic after inverting $2$ and after inverting $p$.  We conclude
from~\cite[Theorem~10.1]{Davis-Lueck(2013)} and
Theorem~\ref{the:Comparing_L-theory_and_topological_K-theory} that this is the case after
inverting $2$. It remains to treat the case, where we invert $p$. Because of
assertion~\ref{the:L-infty(ZGamma):L2m_abstract},\eqref{LminusinftyofZ/p}, and
Lemma~\ref{lem:calh(bub(Gamma))_to_calh(Zn)Z/p}, it remains to prove
\begin{eqnarray}  \label{fixed_normal_invariants}
L_{m}(\IZ[\IZ^n])^{\IZ/p}[1/p]
& \cong & 
\bigoplus_{i=0}^n  \left(L_{m-i}(\IZ)\right)^{r_{i}}[1/p].
\label{L(ZZn)Z/p}
\end{eqnarray}

Since $L_{m}(\IZ[\IZ^n]) \cong H_m(B\IZ^n;\bfL(\Z))$,
it remains to show
\begin{eqnarray}
H_m(B\IZ^n;\bfL(\Z))^{\IZ/p}[1/p] 
& \cong &
\bigoplus_{i=0}^n  \left(L_{m-i}(\Z)\right)^{r_{i}}[1/p].
\label{H_m(BZn;Llangle-inftyrangle)Z/p[1/p]}
\end{eqnarray} 
The Atiyah-Hirzebruch spectral sequence converging to 
$H_m(B\IZ^n;\bfL(\Z))$ collapses, since $B\IZ^n$ is $T^n$ and therefore one can compute
$H_m(B\IZ^n;\bfL(\Z))$ directly. Hence we obtain a filtration
of $\IZ[\IZ/p]$-modules 
\[
0 = F_{n+1,m-n-1} \subseteq F_{n,m-n} \subseteq \cdots \subseteq 
F_{1,m-1} \subseteq  F_{0,m} = H_m(B\IZ^n;\bfL(\Z))
\]
together with exact sequences of $\IZ[\IZ/p]$-modules
\[
0 \to F_{i+1,m-i-1} \to F_{i,m-i} 
\xrightarrow{q} H_i(B\IZ^n_{\rho}) \otimes_{\IZ} L^{\langle - \infty \rangle}_{m-i}(\IZ) \to 0
\]
which splits as short exact sequence of $\IZ$-modules. 
Let $s \colon H_i(B\IZ^n_{\rho}) \otimes_{\IZ} L_{m-i}(\IZ)
\to  F_{i,m-i}$ be a $\IZ$-map with $q \circ s = \id$.
Then
\[
\widetilde{s} \colon H_i(B\IZ^n_{\rho}) \otimes_{\IZ} L_{m-i}(\IZ)
\to  F_{i,m-i}, \quad x \mapsto \sum_{g \in \IZ/p} g\cdot s(g^{-1}\cdot x)
\] 
is a $\IZ [\IZ/p]$-map with $q \circ \widetilde{s} = p \cdot \id$.
 
We obtain
an exact sequence of $\IZ[1/p]$-modules
\begin{multline*}
0 \to (F_{i+1,n-i+1})^{\IZ/p}[1/p] \to (F_{i,n-i})^{\IZ/p}[1/p] 
\\
\xrightarrow{q^{\IZ/p}[1/p]} 
H_i(B\IZ^n_{\rho})^{\IZ/p}[1/p] \otimes_{\IZ} L_{m-i}(\IZ) \to 0.
\end{multline*}
Since $q^{\IZ/p}[1/p] \circ \widetilde{s}^{\IZ/p}[1/p]$ is the automorphism
$p \cdot \id$ of 
$H_i(B\IZ^n_{\rho})^{\IZ/p}[1/p] \otimes_{\IZ} L_{m-i}(\IZ)$ 
this short exact sequence  of $\IZ[1/p]$-modules splits and we obtain an
$\IZ[1/p]$-iso\-mor\-phism
\[
(F_{i,m-i})^{\IZ/p}[1/p]  \cong (F_{i+1,m-i-1})^{\IZ/p}[1/p] \oplus 
\left(H_i(B\IZ^n_{\rho})^{\IZ/p}[1/p] 
\otimes_{\IZ} L_{m-i}(\IZ)\right).
\]
This implies by induction over $i$ that there is an isomorphism of
$\IZ[1/p]$-modules
\[
H_m(B\IZ^n;\bfL(\Z))^{\IZ/p}[1/p]  \cong 
\bigoplus_{i=0}^n 
H_i(B\IZ^n_{\rho})^{\IZ/p}[1/p] \otimes_{\IZ} L_{m-i}(\IZ).
\]
Since $H_i(B\IZ^n_{\rho})^{\IZ/p} \cong \IZ^{r_i}$, the claim follows.
This finishes the proof of Theorem~\ref{the:L-infty(ZGamma)}.
\end{proof}

%%%%%%%%%%%%%%%%%%%%%%%%%%%%%%%%%%%%%%%%%%%%%%%%%%%%%%%%%%%%%%%%%%%%%%%%%%%%%%%%%%

\subsection{Arbitrary decorations}
\label{subsec:arbitrary_decorations}

Finally we extend Theorem~\ref{the:L-infty(ZGamma)} to all decorations.  Recall we
abbreviate $\bfL^{\langle i \rangle}(\IZ)$ by $\bfL(\IZ)$ which is justified
by~\eqref{bfl(Z)_and_decos}.

\begin{theorem}[Computation of $L^{\langle i \rangle}(\IZ\Gamma)$] 
\label{the:L(ZGamma)_decorated}
Let $i \in \{2,1,0, -1, -2, \ldots\} \amalg \{-\infty\}$. Then:
\begin{enumerate}

\item\label{the:L(ZGamma)_decorated:iso_conjecture}
The assembly map
\[
A^{\langle i \rangle}_m \colon H_m^{\Gamma}(\eub{\Gamma};\bfL^{\langle i \rangle}) 
\xrightarrow{\cong}
H_m^{\Gamma}(\pt;\bfL^{\langle i \rangle}) = 
L^{\langle i \rangle}_m(\IZ \Gamma)
\]
is an isomorphism for $m \in \IZ$;

\item\label{the:L(ZGamma)_decorated:exact_sequence}
For $m \in \IZ$ there is an exact sequence
\[
0 \to \bigoplus_{(P) \in \calp} \widetilde{L}^{\langle i \rangle}_{m}(\IZ P) \to 
L^{\langle i\rangle}_{m}(\IZ \Gamma)  \xrightarrow{\beta_m^{\langle i \rangle}}
H_{m}(\bub{\Gamma};\bfL(\Z )) \to 0,
\]
which splits after inverting $p$. The first map is given by the various inclusions $P \to \Gamma$ and 
$\beta_m^{\langle i \rangle}$ is defined by the composite
\[\beta_m^{\langle i \rangle} \colon L^{\langle i\rangle}_{m}(\IZ \Gamma) \xrightarrow{(A^{\langle i \rangle}_m)^{-1}}
H_m^{\Gamma}(\eub{\Gamma};\bfL^{\langle i \rangle})  \xrightarrow{\ind_{\Gamma \to \{1\}}}
H_{m}(\bub{\Gamma};\bfL(\IZ));
\]

\item\label{the:L(ZGamma)_decorated:odd}
For $m \in \IZ$ the maps
\begin{eqnarray*}
L^{s}_{2m+1}(\IZ \Gamma)
& \xrightarrow{\cong} &
L^{\langle - \infty \rangle}_{2m+1}(\IZ \Gamma);
\\
\beta_{2m+1}^{\langle i \rangle} \colon 
L^{s}_{2m+1}(\IZ \Gamma) 
&\xrightarrow{\cong} &
H_{2m+1}(\bub{\Gamma};\bfL(\IZ)),
\end{eqnarray*}
are bijective and transferring to the finite index subgroup $
\Z^n$ of  $\Gamma$ 
induces an isomorphism
\[
L^{s}_{2m+1}(\IZ \Gamma) 
\xrightarrow{\cong} 
L_{2m+1}(\IZ [\IZ^n])^{\IZ/p};
\]

\item\label{the:L(ZGamma)_decorated:explicite}
We have 
\[
L^{s}_{m}(\IZ \Gamma)  \cong
\begin{cases}
\IZ^{p^k(p-1)/2} \oplus 
\left(\bigoplus_{i=0}^n  L_{m-i}(\IZ)^{r_{i}} \right)
& m \; \text{even};
\\
\bigoplus_{i=0}^n  L_{m-i}(\IZ)^{r_{i}}  & 
m \; \text{odd}.
\end{cases}
\]
\end{enumerate}
\end{theorem}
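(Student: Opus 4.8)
\emph{Plan.} The idea is to deduce the assertions for the geometrically relevant decoration $s=\langle 2\rangle$ (and for $h=\langle 1\rangle$) from the $\langle-\infty\rangle$-version already established in Theorem~\ref{the:L-infty(ZGamma)}, using the Ranicki--Rothenberg comparison together with the cellular $\Gamma$-pushout~\eqref{G-pushoutfor_EGamma_to_eunderbar_Gamma}.

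For~\ref{the:L(ZGamma)_decorated:iso_conjecture} I would argue by induction on the decoration, increasing from $\langle-\infty\rangle$, where $A^{\langle-\infty\rangle}_m$ is an isomorphism by Theorem~\ref{the:FJC_L-infty_for_Gamma)}. For each $i$ there is a natural fibration sequence of homotopy invariant functors $\GROUPOIDS\to\SPECTRA$ of the form $\bfL^{\langle i+1\rangle}\to\bfL^{\langle i\rangle}\to\bfC^{\langle i\rangle}$ with $\pi_m(\bfC^{\langle i\rangle}(\ol{G/H}))=\widehat H^m(\IZ/2;\wh_i(H))$, and the assembly maps compare the resulting long exact sequences of $\Gamma$-homology groups of $\eub\Gamma$ and of $\pt$; on $\pt$ this recovers~\eqref{Rothenberg_sequence}. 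By Theorem~\ref{the:algebraic_K-theory} one has $\wh_i(H)=0$ for every finite $H\subseteq\Gamma$ when $i\le -1$ (indeed $\wh_i(\{1\})=0$ for $i\le 1$ and $\wh_i(\IZ/p)=0$ for $i\le -1$), so $\bfC^{\langle i\rangle}$ is trivial on all orbit types of $\eub\Gamma$ for $i\le -1$ and the decorated assembly map agrees with the $\langle-\infty\rangle$-one for $i\le 0$. For $i\in\{0,1\}$ the spectrum $\bfC^{\langle i\rangle}$ still vanishes on the trivial subgroup, so applying $H^\Gamma_\ast(-;\bfC^{\langle i\rangle})$ to~\eqref{G-pushoutfor_EGamma_to_eunderbar_Gamma} kills the contributions of $E\Gamma$ and of $\coprod_{(P)}\Gamma\times_P EP$ and yields $H^\Gamma_m(\eub\Gamma;\bfC^{\langle i\rangle})\cong\bigoplus_{(P)\in\calp}\widehat H^m(\IZ/2;\wh_i(P))$ compatibly with the projection to $H^\Gamma_m(\pt;\bfC^{\langle i\rangle})=\widehat H^m(\IZ/2;\wh_i(\Gamma))$; the latter map is the canonical isomorphism since $\wh_i(\Gamma)\cong\bigoplus_{(P)}\wh_i(P)$ by Theorem~\ref{the:algebraic_K-theory}, the involution $g\mapsto g^{-1}$ preserves each conjugacy class $(P)$, and $\Gamma$ has trivial orientation character. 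A five-lemma comparison of the two Rothenberg sequences then carries the isomorphism from $\langle i\rangle$ to $\langle i+1\rangle$, proving~\ref{the:L(ZGamma)_decorated:iso_conjecture}.

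Granting~\ref{the:L(ZGamma)_decorated:iso_conjecture}, I feed the equivariant homology theory $H^?_\ast(-;\bfL^{\langle i\rangle})$ into Lemma~\ref{lem:MV_sequence}: its long exact sequence has $\calh^\Gamma_m(\eub\Gamma)=L^{\langle i\rangle}_m(\IZ\Gamma)$, $\calh_m(\bub\Gamma)=H_m(\bub\Gamma;\bfL(\IZ))$ (the decoration being irrelevant over $\IZ$ by~\eqref{bfl(Z)_and_decos}) and $\widetilde\calh^P_m(\pt)=\widetilde L^{\langle i\rangle}_m(\IZ P)$, with connecting map to the point equal to $\beta^{\langle i\rangle}_m$. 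To obtain the short exact sequence of~\ref{the:L(ZGamma)_decorated:exact_sequence} I must show the remaining connecting maps vanish. After inverting $p$ this is exactly the splitting asserted in Lemma~\ref{lem:MV_sequence}. After localizing at $p$ the decorations become irrelevant: the difference between decorations is governed by the cofibers $\bfC^{\langle i\rangle}$, whose homotopy groups are $2$-torsion, and $p$ is odd; hence the localized-at-$p$ sequence coincides with the one for $\langle-\infty\rangle$, whose connecting maps vanish by Theorem~\ref{the:L-infty(ZGamma)}. A finitely generated abelian group that becomes trivial both after inverting $p$ and after localizing at $p$ is trivial, so the connecting maps vanish integrally, giving~\ref{the:L(ZGamma)_decorated:exact_sequence}.

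The last input needed for~\ref{the:L(ZGamma)_decorated:odd} and~\ref{the:L(ZGamma)_decorated:explicite} is that for the building block the natural maps $L^{\langle i\rangle}_m(\IZ[\IZ/p])\to L^{\langle-\infty\rangle}_m(\IZ[\IZ/p])$ are isomorphisms for all $m$ and $i\in\{0,1,2\}$ --- equivalently, in the Rothenberg sequences~\eqref{Rothenberg_sequence} for $\IZ/p$ the maps $L^{\langle i\rangle}_m(\IZ[\IZ/p])\to\widehat H^m(\IZ/2;\wh_i(\IZ/p))$ are surjective, so the Tate cohomology of $\wh_0(\IZ/p)$ and $\wh_1(\IZ/p)$ is not detected by the $L$-groups. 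This is classical, being part of the calculation of the surgery groups of $\IZ/p$ together with $L^h_{\mathrm{odd}}(\IZ[\IZ/p])=0$ (see~\cite{Bak(1975),Bak(1978),Hambleton-Taylor(2000)}), and it is the main technical point --- without it Lemma~\ref{lem:MV_sequence} would a priori allow extra $2$-torsion in $L^s_\ast(\IZ\Gamma)$. Given it, $\widetilde L^s_m(\IZ P)\to\widetilde L^{\langle-\infty\rangle}_m(\IZ P)$ is an isomorphism, so in particular $\widetilde L^s_{2m+1}(\IZ P)=0$, and~\ref{the:L(ZGamma)_decorated:exact_sequence} for $i=s$ in odd degrees gives $\beta^s_{2m+1}\colon L^s_{2m+1}(\IZ\Gamma)\xrightarrow{\cong}H_{2m+1}(\bub\Gamma;\bfL(\IZ))$; since the corresponding map for $\langle-\infty\rangle$ is also an isomorphism, the natural map $L^s_{2m+1}(\IZ\Gamma)\to L^{\langle-\infty\rangle}_{2m+1}(\IZ\Gamma)$ is an isomorphism, and the transfer assertion is inherited from Theorem~\ref{the:L-infty(ZGamma)}~\ref{the:L-infty(ZGamma):L2m_plus_1_abstract}. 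Finally, for~\ref{the:L(ZGamma)_decorated:explicite} the $\langle-\infty\rangle$-computation (Theorem~\ref{the:L-infty(ZGamma)}~\ref{the:L-infty(ZGamma):explicite}) identifies $H_m(\bub\Gamma;\bfL(\IZ))$ with $\bigoplus_{i=0}^n L_{m-i}(\IZ)^{r_i}$, and the map from the short exact sequence of~\ref{the:L(ZGamma)_decorated:exact_sequence} for $i=s$ to that for $\langle-\infty\rangle$ --- an isomorphism on sub and on quotient by the previous sentence --- shows the $s$-extension class is the image of the split $\langle-\infty\rangle$-extension class, hence the $s$-sequence splits and one reads off the stated formula.
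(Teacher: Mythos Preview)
Your arguments for~\ref{the:L(ZGamma)_decorated:iso_conjecture} and~\ref{the:L(ZGamma)_decorated:exact_sequence} are correct and in fact streamline the paper's treatment: working directly with the cofiber spectrum $\bfC^{\langle i\rangle}$ and the pushout~\eqref{G-pushoutfor_EGamma_to_eunderbar_Gamma} is cleaner than the paper's large commutative diagram, and your ``localize at $p$ / invert $p$'' dichotomy for killing the connecting maps in Lemma~\ref{lem:MV_sequence} is a nice device.  For~\ref{the:L(ZGamma)_decorated:odd} the conclusion is right, but you do not need the ``strong claim'' to get $\widetilde L^s_{2m+1}(\IZ P)=0$: this is exactly the Bak--Wall computation~\eqref{widetildeLs(ZP)}, and the paper simply cites it.

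There is, however, a genuine gap in your argument for~\ref{the:L(ZGamma)_decorated:explicite}.  Your ``main technical point'' --- that the natural maps $L^{\langle i\rangle}_m(\IZ[\IZ/p])\to L^{\langle-\infty\rangle}_m(\IZ[\IZ/p])$ are isomorphisms for $i\in\{0,1,2\}$ --- is not what the cited references give, and the reformulation you offer (surjectivity onto Tate cohomology) is not equivalent to it: for the change-of-decoration map to be an isomorphism in \emph{all} degrees one needs the Tate cohomology to vanish, not merely to be hit.  Concretely, every unit of $\IZ[\zeta_p]$ is a root of unity times a real unit, so the standard involution on $\wh(\IZ/p)\cong\IZ^{(p-3)/2}$ is trivial; hence $\widehat H^{2m}(\IZ/2;\wh(\IZ/p))\cong(\IZ/2)^{(p-3)/2}$, and for $p>3$ the Rothenberg sequence forces $L^s_{2m}(\IZ[\IZ/p])\hookrightarrow L^h_{2m}(\IZ[\IZ/p])$ to have nontrivial cokernel.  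So the map on the left-hand term of your comparison of short exact sequences is not an isomorphism as claimed, and you cannot conclude that the $s$-extension is pulled back from a split $\langle-\infty\rangle$-extension (which, incidentally, is only known to split after inverting $p$).  The paper avoids this entirely by proving the formula after inverting $2$ (where all decorations coincide by Rothenberg, reducing to Theorem~\ref{the:L-infty(ZGamma)}~\ref{the:L-infty(ZGamma):explicite}) and after inverting $p$ (where~\ref{the:L(ZGamma)_decorated:exact_sequence} splits and one plugs in~\eqref{widetildeLs(ZP)} and~\eqref{H_m(BZn;Llangle-inftyrangle)Z/p[1/p]}).  Your own ``localize at $p$'' argument from part~\ref{the:L(ZGamma)_decorated:exact_sequence} already gives the first half of this, so the fix is to abandon the strong claim and argue exactly as the paper does.
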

\begin{proof}~%
\ref{the:L(ZGamma)_decorated:iso_conjecture}
and~\ref{the:L(ZGamma)_decorated:exact_sequence}
Since $\Wh_i(P) = 0$ because of~\cite{Carter(1980)} and $\Wh_i(\Gamma) = 0$ by 
Theorem~\ref{the:algebraic_K-theory} for $i \le -2$, we obtain fron~\eqref{Rothenberg_sequence}
for $i \le -1$, $m \in \IZ$  isomorphisms
\begin{eqnarray*}
& L^{\langle -\infty \rangle}_m(\IZ P) 
\xrightarrow{\cong} 
L^{\langle i - 1 \rangle}_m(\IZ P) 
\xrightarrow{\cong} 
L^{\langle i \rangle}_m(\IZ P);
&
\\
& L^{\langle -\infty  \rangle}_m(\IZ \Gamma) 
\xrightarrow{\cong} 
L^{\langle i -1 \rangle}_m(\IZ \Gamma) 
\xrightarrow{\cong} 
L^{\langle i \rangle}_m(\IZ \Gamma);
\\
& H_m^{\Gamma}(\eub{\Gamma};\bfL^{\langle -\infty  \rangle})
\xrightarrow{\cong} 
H_m^{\Gamma}(\eub{\Gamma};\bfL^{\langle i -1   \rangle})
\xrightarrow{\cong} 
H_m^{\Gamma}(\eub{\Gamma};\bfL^{\langle i   \rangle}). &
\end{eqnarray*}
This  together with Theorem~\ref{the:L-infty(ZGamma)}
implies that assertions~\ref{the:L(ZGamma)_decorated:iso_conjecture} 
and~\ref{the:L(ZGamma)_decorated:exact_sequence} are true for 
$i \in \{-1, -2, \ldots\} \amalg \{-\infty\}$. 

It remains to prove assertions~\ref{the:L(ZGamma)_decorated:iso_conjecture} 
and~\ref{the:L(ZGamma)_decorated:exact_sequence} for
$i = 0,1,2$ what we will do inductively. So we want to show
for $i \le 2$ that assertions~\ref{the:L(ZGamma)_decorated:iso_conjecture} 
and~\ref{the:L(ZGamma)_decorated:exact_sequence} are true for $i$ if they are true for $i-1$.
Consider the following commutative diagram
\begin{small}
\[
\xymatrix@!C=11em{
\vdots \ar[d] & \vdots \ar[d] &  \vdots \ar[d]_-{\cong} 
\\
\bigoplus_{(P) \in \calp} \widetilde{L}^{\langle i-1 \rangle}_m(\IZ P) \ar[r] \ar[d]
&
L^{\langle i-1 \rangle}_m(\IZ \Gamma) \ar[r]^{\beta_m^{\langle i - 1\rangle}}\ar[d]
&
H_m(\bub{\Gamma};\bfL^{\langle i - 1\rangle}(\Z)) \ar[d]
\\
\bigoplus_{(P) \in \calp} \widehat{H}^m(\IZ/2;\Wh_{i-1}(P))  \ar[r]_-{\cong} \ar[d]
&
\widehat{H}^m(\IZ/2;\Wh_{i-1}(\Gamma))  \ar[r] \ar[d]
& 0 \ar[d]
\\
\bigoplus_{(P) \in \calp} \widetilde{L}^{\langle i \rangle}_{m-1}(\IZ P) \ar[r] \ar[d]
&
L^{\langle i \rangle}_{m-1}(\IZ \Gamma) \ar[r]^{\overline{\beta}_m^{\langle i \rangle}} \ar[d]
&
H_{m-1}(\bub{\Gamma};\bfL^{\langle i \rangle}(\Z)) \ar[d]_-{\cong}
\\
\bigoplus_{(P) \in \calp} \widetilde{L}^{\langle i-1 \rangle}_{m-1}(\IZ P) \ar[r] \ar[d]
&
L^{\langle i-1 \rangle}_{m-1}(\IZ \Gamma) \ar[r]^{\beta_m^{\langle i - 1\rangle}} \ar[d]
&
H_{m-1}(\bub{\Gamma};\bfL^{\langle i - 1\rangle}(\Z)) \ar[d]
\\
\bigoplus_{(P) \in \calp} \widehat{H}^{m-1}(\IZ/2;\Wh_{i-1}(P))  \ar[r]_-{\cong} \ar[d]
&
\widehat{H}^{m-1}(\IZ/2;\Wh_{i-1}(\Gamma))  \ar[r] \ar[d]
& 0 \ar[d]
\\
\vdots & \vdots & \vdots 
}
\]
\end{small}
The left vertical column is the direct sum over $\calp$ of the
Rothenberg sequences~\eqref{Rothenberg_sequence} associated to $P$.
The middle column is the Rothenberg sequences~\eqref{Rothenberg_sequence} 
associated to $\Gamma$. The isomorphism  in the right column come from~\eqref{bfl(Z)_and_decos}.
Notice that all columns are exact. In each row the left arrow comes from
the various inclusions $P \to \Gamma$. 
The rows involving the Tate cohomology are exact sequences
\[
0 \to 
\bigoplus_{(P) \in \calp} \widehat{H}^m(\IZ/2;\Wh_{i-1}(P)) 
\to
\widehat{H}^m(\IZ/2;\Wh_{i-1}(\Gamma))  
\to 0 \to 0.
\]
The sequences for the decorations $\langle i-1 \rangle $ are short exact sequences by
induction hypothesis
\[
0 \to
\bigoplus_{(P) \in \calp} \widetilde{L}^{\langle i-1 \rangle}_m(\IZ P) \to
L^{\langle i-1 \rangle}_m(\IZ \Gamma)  
\xrightarrow{\beta_m^{\langle i - 1\rangle}}
H_m(\bub{\Gamma};\bfL(\Z)) \to 0.
\]
The map 
$\overline{\beta}_m^{\langle i \rangle} \colon L^{\langle i \rangle}_{m}(\IZ \Gamma) \to H_{m}(\bub{\Gamma};\bfL(\IZ))$
is defined such that the diagram commutes. The remaining columns yield short exact sequences
\begin{eqnarray}
&  0 \to
\bigoplus_{(P) \in \calp} \widetilde{L}^{\langle i \rangle}_m(\IZ P) 
\to L^{\langle i \rangle}_m(\IZ \Gamma)  
\xrightarrow{\overline{\beta}_m^{\langle i \rangle}}
H_m(\bub{\Gamma};\bfL(\Z)) 
\to 0.
&
\label{short_exact_sequence_for_i}
\end{eqnarray}
Lemma~\ref{lem:MV_sequence} 
 shows that there is a long exact sequence
\begin{multline*}
\cdots \to H_{m+1}(\bub{\Gamma};\bfL(\IZ)) 
\to \bigoplus_{(P) \in \calp} \widetilde{L}^{\langle i \rangle}_m(\IZ P) 
\to H^{\Gamma}_{m}(\eub{\Gamma};\bfL^{\langle i \rangle})
\\
\to H_{m}(\bub{\Gamma};\bfL(\IZ)) 
\to \bigoplus_{(P) \in \calp} \widetilde{L}^{\langle i \rangle}_{m-1}(\IZ P) \to \cdots
\end{multline*} 
and that the $\IZ[1/p]$-map 
\[
H^{\Gamma}_{m}(\eub{\Gamma};\bfL^{\langle i  \rangle})[1/p]
\to
 H_{m}(\bub{\Gamma};\bfL(\IZ))[1/p] 
\]
is split surjective. We have already shown, 
see~\eqref{short_exact_sequence_for_i}, that the composite
\[
\bigoplus_{(P) \in \calp} \widetilde{L}^{\langle i \rangle}_m(\IZ P) 
\to H^{\Gamma}_{m}(\eub{\Gamma};\bfL^{\langle i \rangle})
\to L^{\langle i \rangle}_m(\IZ \Gamma)
\]
is injective. Hence the long exact sequence reduces to short exact sequences
\[
0 \to \bigoplus_{(P) \in \calp} \widetilde{L}^{\langle i \rangle}_m(\IZ P) 
\to H^{\Gamma}_{m}(\eub{\Gamma};\bfL^{\langle i \rangle})
\to H_{m}(\bub{\Gamma};\bfL(\IZ))
\to 0
\]
which split after inverting $p$. We obtain the following commutative diagram
\[
\xymatrix{
0 \ar[r] 
&
\bigoplus_{(P) \in \calp} \widetilde{L}^{\langle i \rangle}_m(\IZ P) 
\ar[r] \ar[d]_-{\id} 
&
 H^{\Gamma}_{m}(\eub{\Gamma};\bfL^{\langle i \rangle})
\ar[r] \ar[d]^{A^{\langle i \rangle}_m} 
&
H_{m}(\bub{\Gamma};\bfL(\IZ))
\ar[r] \ar[d]_-{\id}
& 0
\\
0 \ar[r] 
&
\bigoplus_{(P) \in \calp} \widetilde{L}^{\langle i \rangle}_m(\IZ P)
\ar[r] 
&
L^{\langle i \rangle}_m(\IZ \Gamma) 
\ar[r]  
&
H_{m}(\bub{\Gamma};\bfL(\IZ))
\ar[r]  
& 0.
}
\]
Since the rows are exact and the first and third vertical arrows are
bijective, the middle arrow is bijective by the Five-Lemma.
This finishes the proof of 
assertions~\ref{the:L(ZGamma)_decorated:iso_conjecture}.

A direct computation shows that the map $\overline{\beta}_m^{\langle i \rangle}$ agrees with the map
$\beta_m^{\langle i \rangle}$. Now assertion~\ref{the:L(ZGamma)_decorated:exact_sequence} 
follows from~\eqref{short_exact_sequence_for_i}.
\\[1mm]~%
\ref{the:L(ZGamma)_decorated:odd}
The following isomorphism is due independently to Bak~\cite{Bak(1974)} and Wall~\cite[Corollary~2.4.3]{Wall(1976)}
\begin{eqnarray}
\widetilde{L}^s_m(\IZ P) \cong
\begin{cases} \label{equation:LsZ/p}
\IZ^{(p-1)/2} & m\; \text{even};
\\
0 & m\; \text{odd}.
\end{cases}
\label{widetildeLs(ZP)}
\end{eqnarray}
Hence assertion~\ref{the:L(ZGamma)_decorated:exact_sequence} implies that we obtain an 
isomorphism
\[
L^{s}_{2m+1}(\IZ \Gamma) 
\xrightarrow{\cong} 
H_{2m+1}(\bub{\Gamma};\bfL(\IZ)).
\]
The following diagram commutes
\[
\xymatrix{
L^{s}_{2m+1}(\IZ \Gamma) \ar[d] \ar[r]
&
H_{2m+1}(\bub{\Gamma};\bfL(\IZ)) \ar[d]^{\id}
\\
L^{\langle - \infty \rangle}_{2m+1}(\IZ \Gamma) \ar[r]
&
H_{2m+1}(\bub{\Gamma};\bfL(\IZ)).
}
\]
The lower horizontal arrow is an isomorphism by
Theorem~\ref{the:L-infty(ZGamma)}~%
\ref{the:L-infty(ZGamma):L2m_plus_1_abstract} and the right vertical arrow is an isomorphism 
by~\eqref{bfl(Z)_and_decos}. Hence the map
\[
L^{s}_{2m+1}(\IZ \Gamma)
\xrightarrow{\cong} 
L^{\langle - \infty \rangle}_{2m+1}(\IZ \Gamma)
\]
is an isomorphism. The following diagram commutes  
\[
\xymatrix{L^{s}_{2m+1}(\IZ \Gamma) \ar[d] \ar[r]
& 
L^{s}_{2m+1}(\IZ [\IZ^n])^{\IZ/p} \ar[d]
\\
L^{\langle - \infty \rangle}_{2m+1}(\IZ \Gamma) \ar[r]
& 
L^{\langle - \infty \rangle}_{2m+1}(\IZ [\IZ^n])^{\IZ/p}.
}
\] 
The lower horizontal arrow is an isomorphism by
Theorem~\ref{the:L-infty(ZGamma)}~%
\ref{the:L-infty(ZGamma):L2m_plus_1_abstract}.
Since $\Wh_i(\IZ^n) = 0$ for $i \le 1$, the right vertical arrow
is an isomorphism by the Rothenberg sequence~\eqref{Rothenberg_sequence}.
Hence the upper horizontal arrow is an isomorphism. This finishes
the proof of assertion~\ref{the:L(ZGamma)_decorated:odd}.
\\[1mm]~%
\ref{the:L(ZGamma)_decorated:explicite}
It suffices to prove the claim after inverting $2$ and after inverting $p$.
If we invert $2$, the natural comparison maps between the various decorated
$L$-groups are isomorphisms by the Rothenberg sequence~\eqref{Rothenberg_sequence}
and the claim follows from Theorem~\ref{the:L-infty(ZGamma)}~%
\ref{the:L-infty(ZGamma):explicite}.
If we invert $p$, the claim follows
from assertion~\ref{the:L(ZGamma)_decorated:exact_sequence}, Lemma~\ref{lem:calh(bub(Gamma))_to_calh(Zn)Z/p}
and isomorphisms~\eqref{l(Z)_and_decos},~%
\eqref{H_m(BZn;Llangle-inftyrangle)Z/p[1/p]}, and~\eqref{widetildeLs(ZP)}.
\end{proof}

%%%%%%%%%%%%%%%%%%%%%%%%%%%%%%%%%%%%%%%%%%%%%%%%%%%%%%%%%%%%%%%%%%%%
%%%%%%%%%%%%%%%%%%%%%%%%%% Structure sets %%%%%%%%%%%%%%%%%%%%%%%%%%%%%%%%
%%%%%%%%%%%%%%%%%%%%%%%%%%%%%%%%%%%%%%%%%%%%%%%%%%%%%%%%%%%%%%%%%%%%

\typeout{------------------------------ Structure sets ---------------------------}

\section{Structure sets}
\label{subsec:structure_sets}

Given a closed oriented $m$-dimensional manifold $N$,
we denote its \emph{geometric simple structure set} by $\strg(N)$. 
An element is represented by a simple homotopy equivalence
$N' \to N$ with a closed  manifold $N'$ as source and $N$ as target.
Two such maps $g' \colon N' \to N$ and $g'' \colon N'' \to N$ define the same element
in $\strg(N)$ if and only if there is a  homeomorphism
$u \colon N' \to N''$ such that $g'' \circ u$ and $g'$ are homotopic. 
This structure set appears in the geometric surgery exact sequence due to Browder, Novikov,
Sullivan and Wall, see~\cite[Theorem 10.3]{Wall(1999)} and~\cite[Chap.~5]{Lueck(2002)}, valid when $m = \dim N \geq 5$.
\begin{multline}
\cdots \to \caln(N \times (D^1,S^0)) \to L_{m+1}^s(\IZ[\pi_1(N)]) \to
\strg(N) 
\\
\to \caln(N) \to L_{m}^s(\IZ[\pi_1(N)]).
\label{geometric_surgery_sequence}
\end{multline}

In the sequel we abbreviate
\[
\bfL := \bfL^s = \bfL^{\langle  2 \rangle} \colon\GROUPOIDS \to \SPECTRA
\]
so that we have $\pi_m(\bfL(\ol{G/H})) = L^s_m(\Z H)$ for a group $G$ and subgroup $H \subseteq G$.

Given any $CW$-complex $X$, 
there is an exact algebraic surgery sequence of abelian groups
\begin{multline}
\cdots \xrightarrow{\eta_{m+2}(X)}  H_{m+1}(X;\bfL(\Z)) \xrightarrow{A_{m+1}(X)} L_{m+1}^s(\IZ[\pi_1(X)]) 
\xrightarrow{\xi_{m+1}(X)} \strp_{m+1}(X) 
\\
\xrightarrow{\eta_{m+1}(X)} H_{m}(X;\bfL(\Z)) \xrightarrow{A_{m}(X)} L_{m}^s(\IZ[\pi_1(X)]) 
\xrightarrow{\xi_{m}(X)} \cdots,
\label{algebraic_surgery_sequence}
\end{multline}
natural in $X$.  There is also a $1$-connective version of the sequence~\eqref{algebraic_surgery_sequence}
\begin{multline}
\cdots \xrightarrow{\eta_{m+2}^{\langle 1 \rangle}(X)}  H_{m+1}(X;\bfL\langle 1 \rangle) 
\xrightarrow{A_{m+1}^{\langle 1 \rangle}(X)} L_{m+1}^s(\IZ[\pi_1(X)]) 
\xrightarrow{\xi_{m+1}^{\langle 1 \rangle}(X)} \strone_{m+1}(X) 
\\
\xrightarrow{\eta_{m+1}^{\langle 1 \rangle}(X)} H_{m}(X;\bfL\langle 1 \rangle) 
\xrightarrow{A_{m}^{\langle 1 \rangle}(X)} L_{m}^s(\IZ[\pi_1(X)]) 
\xrightarrow{\xi^{\langle 1 \rangle}_{m}(X)} \cdots,
\label{algebraic_surgery_sequence_one_connective}
\end{multline}
where $\bfp \colon \bfL \langle 1 \rangle \to \bfL$ is the $1$-connective cover of the
$L$-theory spectrum $\bfL := \bfL(\Z)$. Here
$\pi_i(\bfp) \colon \pi_i(\bfL \langle 1 \rangle) \to \pi_i(\bfL(\Z))$ 
is an isomorphism for $i \ge 1$
and $\pi_i(\bfL \langle 1 \rangle) = 0$ for $i \le 0$.

The algebraic surgery exact sequences can be constructed in two ways.  It can be
constructed by defining the structure groups to be the homotopy groups of the cofiber of a
spectrum-level assembly map (defined, for example, in~\cite[Example
5.5]{Davis-Lueck(1998)}) or at the level of representatives (see the quadratic structure
group ${\mathbb S}_{m}(\IZ,X)$ appearing in~\cite[Definition~14.6 on
page~148]{Ranicki(1992)}).  Using the second definition, Ranicki identified the
geometric surgery sequence~\eqref{geometric_surgery_sequence} with the $1$-connective
algebraic surgery sequence~\eqref{algebraic_surgery_sequence_one_connective} truncated at
$L_5^s(\Z[\pi_1(X)]$, see~\cite[Theorem~18.5 on page~198]{Ranicki(1992)} and~\cite{Kuehl-Macko-Mole(2013)}.
In particular we get an identification
\begin{equation}
\strg(N)  \cong  \strone_{n+l+1}(N),
\label{ident_structure_sets}
\end{equation}
and a map
\begin{eqnarray}
j(N) \colon \strg(N) \cong \strone_{n+l+1}(N) 
& \to & 
\strp_{n+l+1}(N)
\label{j(N)}
\end{eqnarray}
from the canonical map $\bfp \colon \bfL \langle 1 \rangle \to \bfL$.

%%%%%%%%%%%%%%%%%%%%%%%%%%%%%%%%%%%%%%%%%%%%%%%%%%%%%%%%%%%%%%%%%%%%%%%%%%%%%%%%%%%%%%
%%%%%% The periodic simple structure set of \texorpdfstring{$BP$}{BP} for a finite \texorpdfstring{$p$}{p}-group %%%%%%
%%%%%%%%%%%%%%%%%%%%%%%%%%%%%%%%%%%%%%%%%%%%%%%%%%%%%%%%%%%%%%%%%%%%%%%%%%%%%%%%%%%%%%

\typeout{--The periodic simple structure set of \texorpdfstring{$BP$}{BP} for a finite \texorpdfstring{$p$}{p}-group -----}

%%%%%%%%%%%%%%%%%%%%%%%%%%%%%%%%%%%%%%%%%%%%%%%%%%%%%%%%%%%%%%%%%%%%%%%%%%%%%% 

\section{The periodic simple structure set of \texorpdfstring{$BP$}{BP} for a finite \texorpdfstring{$p$}{p}-group}
\label{subsec:The_periodic_simple_structure_set_of_BG_finite_p-group}

In this section we compute the periodic simple structure groups $\strp_{*}(BP)$ for a
finite $p$-group $P$ and $p$ an odd prime.

Let $k$ be a nonzero integer.  A map of abelian groups $f\colon  A \to B$ is a
\emph{$1/k$-equivalence} if $f \otimes \id \colon  A \otimes \Z[1/k] \to B \otimes \Z[1/k]$ is an
isomorphism.  An abelian group $A$ is \emph{$1/k$-local} if the map
$A \otimes \Z \to A \otimes \Z[1/k]$ is an isomorphism.  A map $A \to B$ is a
\emph{$1/k$-localization} if it is a $1/k$-equivalence and $B$ is $1/k$-local.  If $k$ and
$l$ are relatively prime integers, then $A$ is $1/k$-local if and only if
$A \otimes \Z[1/l]$ is $1/k$-local. 

\begin{theorem}[The periodic structure set of $BP$ for a finite $p$-group $P$]
\label{the:The_periodic_structure_set_of_BP_for_a_finite_p-group}
  Let $p$ be an odd prime and $P$ be a finite $p$-group.  Let $\alpha(P; \C)$ be the
  number of irreducible real representations of $P$ of complex type.

\begin{enumerate}
\item\label{the:The_periodic_structure_set_of_BP_for_a_finite_p-group:point} 
 \[
 \wL^s_m(\Z P) 
 \cong \begin{cases}
\Z^{\alpha(P; \C)} & m \;\text{even},
\\
0  & m \; \text{odd},
\end{cases}
\]
where $\wL^s_m(\Z P)$ is the cokernel of the map $L^s_m(\Z ) \to L^s_m(\Z P)$ induced by the inclusion $1 \to P$.

\item\label{the:The_periodic_structure_set_of_BP_for_a_finite_p-group:localization} The
  homomorphism $\xi_*(BP) \colon L^s_*(\Z P) \to \strp_*(BP)$ induces a $1/p$-locali\-za\-tion
  $\widetilde{\xi}_*(BP) \colon\wL^s_*(\Z P) \to \strp_*(BP)$.

\end{enumerate}

\end{theorem}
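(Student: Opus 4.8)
The plan is to prove the two assertions in turn, reducing everything to facts about the finite group $P$ and the algebraic surgery sequence~\eqref{algebraic_surgery_sequence}, localized at the prime $p$.

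\textbf{Part~\ref{the:The_periodic_structure_set_of_BP_for_a_finite_p-group:point}.} First I would identify $\wL^s_m(\Z P)$, the cokernel of $L^s_m(\Z) \to L^s_m(\Z P)$, with a sum of Witt-type invariants of $\R P$ (or of the rational group algebra). Since $P$ has odd order, inverting $2$ is harmless for the rank computation, and by Theorem~\ref{the:Comparing_L-theory_and_topological_K-theory} and~\cite[Proposition~22.34]{Ranicki(1992)} the map $L_m(\Z P)[1/2] \to L_m(\R P)[1/2] \cong KO_m(C^*_r(P;\R))[1/2]$ is an isomorphism. For a finite group $P$ the group $KO_m(C^*_r(P;\R))$ is computed from the Wedderburn decomposition of $\R P$ into matrix algebras over $\R$, $\C$, and $\Hbb$; only the real and complex type summands contribute freely in even degrees (the quaternionic summands contribute $2$-torsion or vanish rationally), and the real type summands are matched by the split $L_m(\Z) \to L_m(\Z P)$. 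So after quotienting by the image of $L^s_m(\Z)$ one is left with one copy of $\Z$ for each complex-type summand, i.e. $\Z^{\alpha(P;\C)}$, in even degrees and $0$ in odd degrees. (One must also check the decorations do not matter: for $p$ odd, $\wh_i(P)=0$ for $i\le 0$ by~\cite{Carter(1980)} and $\widehat H^*(\Z/2;\wh_i(P))=0$ because $\wh_1(P)$ is torsion-free of odd-related type — actually $\wh(P)$ has no $2$-torsion for $P$ of odd order — so the Rothenberg sequence~\eqref{Rothenberg_sequence} gives $L^s\cong L^h\cong\cdots$; this is already implicit in the passage around~\eqref{widetildeLs(ZP)}.)

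\textbf{Part~\ref{the:The_periodic_structure_set_of_BP_for_a_finite_p-group:localization}.} For the localization statement I would feed the above into the algebraic surgery sequence~\eqref{algebraic_surgery_sequence} for $X = BP$. The key input is that $BP$ is a classifying space of a finite $p$-group, so its reduced integral homology is all $p$-torsion; hence $\widetilde H_*(BP;\Z)[1/p]=0$, and by the Atiyah--Hirzebruch spectral sequence the reduced $\bfL(\Z)$-homology $\widetilde H_m(BP;\bfL(\Z))$ is $p$-torsion as well, so $\widetilde H_m(BP;\bfL(\Z))[1/p]=0$. Splitting off the contribution of a point (using that $BP$ is connected, so $H_m(BP;\bfL(\Z))\cong L_m(\Z)\oplus \widetilde H_m(BP;\bfL(\Z))$ and $\strp_m(BP)\cong \strp_m(\pt)\oplus\wL\text{-type piece}$, with $\strp_m(\pt)$ corresponding to $L_m(\Z)$), the sequence~\eqref{algebraic_surgery_sequence} reduces, after inverting $p$, to isomorphisms $\wL^s_m(\Z P)[1/p] \xrightarrow{\cong} \strp_m(BP)[1/p]$ induced by $\widetilde\xi_*(BP)$: the assembly-map terms $\widetilde H_*(BP;\bfL(\Z))[1/p]$ on both sides vanish. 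That shows $\widetilde\xi_*(BP)$ is a $1/p$-equivalence. It remains to show $\strp_*(BP)$ is $1/p$-local. Combined with part~\ref{the:The_periodic_structure_set_of_BP_for_a_finite_p-group:point} and the vanishing of $\widetilde H_*(BP;\bfL(\Z))[1/p]$, this follows once one knows $\strp_*(BP)$ has no $p$-divisibility obstruction outside its $1/p$-localization; concretely, from the exact sequence~\eqref{algebraic_surgery_sequence} the group $\strp_m(BP)$ sits in an extension involving $\cok(A_m)$ and $\ker(A_{m-1})$ where $A_*$ is the assembly map $H_*(BP;\bfL(\Z))\to L^s_*(\Z P)$; since $\widetilde H_*(BP;\bfL(\Z))$ is $p$-torsion while $L^s_*(\Z P)$ and in fact $\wL^s_*(\Z P)$ are finitely generated free $\Z$-modules (in even degrees) or zero, a diagram chase shows $\strp_m(BP)$ is an extension of a free $\Z[1/p]$-module by a finite $p$-group, hence is $1/p$-local. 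I would package this by showing directly that the fiber sequence defining $\strp_*$ becomes, after smashing with the Moore spectrum $S\Z[1/p]$, an equivalence onto $\strp_*(BP)$ itself — i.e. $\strp_*(BP)$ is already $\Z[1/p]$-local because its $p$-torsion is killed and its free part is $p$-divisible-free — but the cleaner route is the explicit extension argument.

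\textbf{Main obstacle.} The part requiring the most care is verifying that $\alpha(P;\C)$ is exactly the rank produced by $KO_*(C^*_r(P;\R))$ modulo the point contribution, including the claim that quaternionic-type summands contribute nothing to $\wL^s_*(\Z P)\otimes\Q$ — this uses that $P$ has odd order, so $\R P$ has \emph{no} summands of quaternionic type at all (a finite group with a symplectic irreducible representation has even order), making the bookkeeping clean; and the dual subtlety is checking that the $L^s$-decoration genuinely agrees with $L^{\langle-\infty\rangle}$ here, which rests on $\wh(P)$ being free abelian (true for odd $p$-groups by Oliver/Wall, cf.~\eqref{widetildeLs(ZP)}) so that all Tate cohomology groups $\widehat H^*(\Z/2;\wh_i(P))$ vanish. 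Once these two points are nailed down, the rest is a formal manipulation of~\eqref{algebraic_surgery_sequence} after inverting $p$.
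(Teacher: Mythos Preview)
Your argument for Part~\ref{the:The_periodic_structure_set_of_BP_for_a_finite_p-group:point} is fine, and your reduction in Part~\ref{the:The_periodic_structure_set_of_BP_for_a_finite_p-group:localization} showing that $\widetilde\xi_*(BP)$ is a $1/p$-equivalence is also correct and matches the paper. The genuine gap is in your proof that $\strp_*(BP)$ is $1/p$-local.

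From the reduced surgery sequence you correctly extract
\[
0 \to \cok(\wt A_m) \to \strp_m(BP) \to \ker(\wt A_{m-1}) \to 0.
\]
Since $\wt H_*(BP;\bfL(\Z))$ is $p$-primary torsion and $\wL^s_*(\Z P)$ is free abelian or zero, both reduced assembly maps $\wt A_m$, $\wt A_{m-1}$ are zero, so this reads
\[
0 \to \wL^s_m(\Z P) \to \strp_m(BP) \to \wt H_{m-1}(BP;\bfL(\Z)) \to 0.
\]
But the subgroup here is $\Z^{\alpha(P;\C)}$, \emph{not} a free $\Z[1/p]$-module, and the quotient is an abelian $p$-group that is typically infinite (the spectrum $\bfL(\Z)$ is periodic, so infinitely many rows of the Atiyah--Hirzebruch spectral sequence contribute). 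An extension of an abelian $p$-group by a finitely generated free abelian group is in no way forced to be $1/p$-local: already $0\to\Z\xrightarrow{p}\Z\to\Z/p\to 0$ and $0\to\Z\to\Z\oplus\Z/p\to\Z/p\to 0$ show the middle term need not be a $\Z[1/p]$-module. So the ``diagram chase'' you allude to cannot exist; the extension data alone do not determine whether multiplication by $p$ is invertible on $\strp_m(BP)$.

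The paper closes this gap by an entirely different route: it rewrites $\strp_m(BP)\cong H^P_m(EP\to\pt;\bfL(\Z))$, inverts~$2$ and passes to $KO^P$ and then $K^P$ via Theorem~\ref{the:Comparing_L-theory_and_topological_K-theory}, and finally uses the Atiyah--Segal completion theorem together with a Universal Coefficient Theorem to compute $K^P_*(EP\to\pt)$ explicitly as $I^*\otimes\Z[1/p]$ (Lemma~\ref{lemma:relative_K-theory}). This is a real computation, not a formal consequence of the surgery sequence, and it is where the $1/p$-locality actually comes from.
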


\begin{proof}~%
\ref{the:The_periodic_structure_set_of_BP_for_a_finite_p-group:point} This a result
  due independently to Bak and Wall, see~\cite{Bak(1974)} and~\cite[Corollary~2.4.3]{Wall(1976)}.  
\\[1mm]~%
\ref{the:The_periodic_structure_set_of_BP_for_a_finite_p-group:localization} The
  assembly map $A_*(\pt) \colon H_*(\pt; \bfL(\Z)) \to L_*^s(\Z)$ is an isomorphism, essentially by
  definition.  Thus there is are reduced algebraic surgery exact sequence
\begin{multline}
\cdots \xrightarrow{\wt \eta_{m+1}(X)}  \wt H_{m}(X;\bfL(\Z)) \xrightarrow{\wt A_{m}(X)} \wL_{m}^s(\IZ[\pi_1(X)]) 
\xrightarrow{\wt \xi_{m}(X)} \strp_m(X) 
\\
\xrightarrow{\wt \eta_m(X)} \wt H_{m-1}(X;\bfL(\Z)) \xrightarrow{\wt A_{m-1}(X)} \wt L_{m-1}^s(\IZ[\pi_1(X)]) 
\xrightarrow{\wt \xi_{m-1}(X)} \cdots,
\label{reduced_surgery_sequence_periodic}
\end{multline}
and
\begin{multline}
\cdots \xrightarrow{\wt \eta_{m+1}^{\langle 1 \rangle}(X)}  \wt H_{m}(X;\bfL\langle 1 \rangle) 
\xrightarrow{\wt A_{m}^{\langle 1 \rangle}(X)} \wL_{m}^s(\IZ[\pi_1(X)]) 
\xrightarrow{\wt \xi^{\langle 1 \rangle}_{m}(X)} \strg(X) 
\\
\xrightarrow{\wt \eta^{\langle 1 \rangle}_m(X)} \wt H_{m-1}(X;\bfL \langle 1 \rangle) 
\xrightarrow{\wt A_{m-1}^{\langle 1 \rangle}(X)} \wt L_{m-1}^s(\IZ[\pi_1(X)]) 
\xrightarrow{\wt \xi_{m-1}(X)} \cdots.
\label{reduced_surgery_sequence_geometric}
\end{multline}

Using the Atiyah-Hirzebruch spectral sequence and the $\Z$-flatness of $\Z[1/p]$, one sees
that $\wt H_*(BP; \bfL(\Z)) \otimes \Z[1/p] = 0$.  Using the flatness of $\Z[1/p]$ again it
follows that $\wt \xi_*(BP)$ is a $1/p$-equivalence.  Thus it suffices to show that
$\strp_*(BP)$ is $1/p$-local. 

We will express the structure set in terms of an equivariant homology theory.  
We get from~\cite[Theorem B.1]{Connolly-Davis-Khan(2014H1)} an isomorphism
\[
\strp_m(X) \cong  H^G_m(\wt X\to\pt;  \bfL(\Z))
\]
for a connected CW-complex $X$ with fundamental group $G$.  
Thus it suffices to show that $H^P_*(EP \to\pt;  \bfL(\Z))$ is $1/p$-local.
Since $p$ is odd, it remains to show that
$H^P_*(EP \to\pt; \bfL(\Z))[1/2]$ is $1/p$-local since $p$ is odd.
Because of Theorem~\ref{the:Comparing_L-theory_and_topological_K-theory} it is enough to show
that $KO_*(EP \to\pt; \bfL(\Z))[1/2]$ is $1/p$-local.

The composite of complexification $c$ with the forgetful map $r$
\[
{\bf KO} \xrightarrow{c} {\bf K} \xrightarrow{r} {\bf KO}
\]
is multiplication by 2, so $KO_m^P(EP \to \pt)[1/2]$ is a summand of
$K_m^P(EP \to \pt)[1/2]$.  Since  a summand of a $1/p$-local abelian group is $1/p$-local,
it suffices to show that $K_*^P(EP \to \pt)$ is $1/p$-local. Hence the proof of
Theorem~\ref{the:The_periodic_structure_set_of_BP_for_a_finite_p-group} is finished
after we have proved the next result.
\end{proof}

\begin{lemma}  Let $p$ be a prime and $P$ be a $p$-group.  Let $\alpha(P; \C)$ 
be the number of irreducible complex representations of $P$.
\label{lemma:relative_K-theory}
\begin{enumerate}
\item\label{lemma:relative_K-theory:point} $\wK_m^P(\pt) 
 \cong \begin{cases}
\Z^{\alpha(P; \C)-1} & m \;\text{even},
\\
0  & m \; \text{odd}.
\end{cases}$

\item\label{lemma:relative_K-theory:localization}

The map $\wK_*^P(\pt)   \longrightarrow K^P_*(EP \to \pt)$ is a $1/p$-localization.

\end{enumerate}
\end{lemma}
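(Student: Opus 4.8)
The plan is to compute $K^P_*(\pt)$ by Wedderburn theory, and then to analyse the relative term $K^P_*(EP\to\pt)$ through the long exact sequence of the relative assembly map, deducing the localization statement by a mod-$p$ reduction that rests on the Atiyah--Segal completion theorem.

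For assertion~\ref{lemma:relative_K-theory:point}: by construction $K^P_m(\pt)=\pi_m(\bfK^{top}(\ol{P/P}))=K_m(C^*_r(P))=K_m(\C[P])$, where $\bfK^{top}$ denotes the complex topological $K$-theory $\GROUPOIDS$-spectrum. Since $P$ is finite, Wedderburn gives $\C[P]\cong\prod_{i=1}^{\alpha(P;\C)}M_{n_i}(\C)$ with $n_i=\dim\rho_i$ and $\rho_1,\dots,\rho_{\alpha(P;\C)}$ the irreducible complex representations, so Morita invariance and Bott periodicity yield $K_m(\C[P])\cong\bigoplus_iK_m(\C)$, which is $\Z^{\alpha(P;\C)}$ for $m$ even and $0$ for $m$ odd. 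Under $K_0(\C[P])=R_\C(P)$ the map $K_m(\pt)\to K^P_m(\pt)$ induced by $1\hookrightarrow P$ is induction, sending $1$ to the class $[\C[P]]=\sum_in_i[\rho_i]$ of the regular representation; since $P$ is a $p$-group $n_1=1$ (for the trivial representation), so this vector is primitive in $R_\C(P)\cong\Z^{\alpha(P;\C)}$ and its cokernel $\wK^P_m(\pt)$ is free of rank $\alpha(P;\C)-1$ in even degrees and vanishes in odd degrees.

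For assertion~\ref{lemma:relative_K-theory:localization}: consider the long exact sequence of the cofibre sequence underlying the relative assembly map,
\[
\cdots\to K^P_m(EP)\to K^P_m(\pt)\to K^P_m(EP\to\pt)\xrightarrow{\ \partial\ }K^P_{m-1}(EP)\to\cdots ,
\]
noting that $K^P_*(EP)\cong KU_*(BP)$ because $EP$ is a free $P$-$CW$-complex, and that the composite $KU_*(\pt)\to KU_*(BP)\to K^P_*(\pt)$ is again induction from the trivial group. Then $K^P_*(\pt)\to K^P_*(EP\to\pt)$ annihilates the image of $KU_*(\pt)$, so the map $\wK^P_*(\pt)\to K^P_*(EP\to\pt)$ of the statement is defined; moreover $KU_*(BP)$ differs from $KU_*(\pt)$ only by $p$-torsion, which is killed in the torsion-free group $R_\C(P)$, so this map is even injective. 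For a finite $p$-group $P$ the reduced homology $\widetilde H_*(BP;\Z)$ is $p$-torsion in positive degrees, so the Atiyah--Hirzebruch spectral sequence shows $\widetilde{KU}_*(BP)$ is $p$-torsion; hence $\widetilde{KU}_*(BP)\otimes\Z[1/p]=0$ and $KU_*(BP)[1/p]\cong KU_*(\pt)[1/p]$ via the basepoint. Tensoring the long exact sequence with the flat ring $\Z[1/p]$ then identifies $K^P_m(EP\to\pt)[1/p]$ with $\cok(KU_m(\pt)[1/p]\to K^P_m(\pt)[1/p])=\wK^P_m(\pt)[1/p]$ compatibly with the map of the statement, so that map is a $1/p$-equivalence.

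It remains to show the target $K^P_*(EP\to\pt)$ is itself $1/p$-local. Equivalently, the cofibre spectrum of the reduced assembly map becomes contractible after smashing with the mod-$p$ Moore spectrum, i.e.\ the assembly map mod $p$
\[
KU_*(BP;\Z/p)\longrightarrow K_*(C^*_r(P);\Z/p)
\]
is an isomorphism. This is where the Atiyah--Segal completion theorem enters: $KU^*(BP)$ is the $I$-adic completion of $R_\C(P)=K^*_P(\pt)$, and since $P$ is a $p$-group $\IF_p\otimes R_\C(P)$ is local with nilpotent augmentation ideal (every character value $\chi_\rho(g)$ is $\equiv\dim\rho$ modulo any prime over $p$, as $g$ has $p$-power order), so $\IF_p\otimes R_\C(P)$ is already $I$-adically complete and the completion map is an isomorphism after reducing mod $p$; chasing this through the universal coefficient sequences --- using the consequences $\widetilde{KU}_0(BP)=0$ and $\widetilde{KU}_1(BP)$ a $p$-divisible $p$-torsion group of corank $\alpha(P;\C)-1$ --- gives the mod-$p$ isomorphism, and hence the $1/p$-locality together with the identification of the $1/p$-equivalence above as a $1/p$-localization. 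I expect this last step to be the main obstacle: correctly matching the cofibre of the equivariant assembly map with the Atiyah--Segal picture (homology versus cohomology, completion versus co-completion) and thereby pinning down the extension $0\to\Z^{\alpha(P;\C)-1}\to K^P_0(EP\to\pt)\to(\Z/p^\infty)^{\alpha(P;\C)-1}\to0$ as the arithmetic one with middle term $\Z[1/p]^{\alpha(P;\C)-1}$ rather than a split extension.
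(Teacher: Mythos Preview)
Your argument for part~\ref{lemma:relative_K-theory:point} is correct and essentially matches the paper's: both identify $K^P_0(\pt)$ with $R(P)$ (or its dual) and observe that the regular representation is a primitive vector.

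For part~\ref{lemma:relative_K-theory:localization}, you have correctly isolated the two halves (the $1/p$-equivalence, which you dispatch cleanly via the Atiyah--Hirzebruch spectral sequence, and the $1/p$-locality) and you have located the right algebraic input: for a $p$-group the augmentation ideal $I\subset R(P)$ is nilpotent modulo $p$, equivalently the $I$-adic and $p$-adic topologies on $I$ agree. This is exactly the fact the paper uses in the form $\{I^n\}=\{p^nI\}$.

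The gap is precisely the step you yourself flag as ``the main obstacle'': the Atiyah--Segal completion theorem is a statement about equivariant $K$-\emph{cohomology}, whereas the assembly map you need to control is in equivariant $K$-\emph{homology}. Your sentence ``chasing this through the universal coefficient sequences'' is a placeholder, not an argument; and your parenthetical claims that $\widetilde{KU}_0(BP)=0$ and that $\widetilde{KU}_1(BP)$ is $p$-divisible of corank $\alpha(P;\C)-1$ are themselves consequences of the computation you are trying to carry out, not inputs you can simply assume. Without an honest passage from cohomology to homology you cannot pin down the extension $0\to\Z^{\alpha-1}\to K^P_0(EP\to\pt)\to(\Z/p^\infty)^{\alpha-1}\to0$, and in particular you cannot rule out the split case.

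The paper resolves this by working with finite skeleta $EP^n$. On each skeleton a Universal Coefficient Theorem for equivariant $K$-theory (taken from \cite{Joachim-Lueck(2013)}) is available:
\[
0\to\Ext_\Z(K_P^{*+1}(X\to Y),\Z)\to K_*^P(X\to Y)\to\Hom_\Z(K_P^{*}(X\to Y),\Z)\to 0
\]
for finite $P$-CW-pairs. Atiyah--Segal then identifies the pro-system $\{K_P^0(EP^n\to\pt)\}$ with $\{I^n\}=\{p^nI\}$, so dualizing gives $K^P_0(EP^n\to\pt)\cong(p^nI)^*=p^{-n}I^*$, and passing to the colimit over $n$ yields
\[
K^P_0(EP\to\pt)\cong\colim_n p^{-n}I^*=I^*\otimes\Z[1/p],
\]
with the map from $\wK^P_0(\pt)=I^*$ being the canonical inclusion. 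This computes the group directly and makes the $1/p$-localization statement immediate, bypassing the extension problem altogether. To complete your proof you would need to supply exactly this kind of UCT-on-skeleta argument (or an equivalent Anderson-duality statement); the mod-$p$ reduction you propose is heuristically correct but does not stand on its own.
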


\begin{proof}~%
\ref{lemma:relative_K-theory:point} Let $R(P)$ be the complex representation ring.  Note
  that equivariant $K$-cohomology and equivariant $K$-homology are 2-periodic,
  $K^0_P(\pt) = R(P)$, $K_0^P(\pt) = \Hom_{\Z}(R(P),\Z)$, and
  $K^1_P(\pt) = 0 = K_1^P(\pt)$.
  \\[1mm]~%
\ref{lemma:relative_K-theory:localization} Our method for computing the relative
  equivariant $K$-homology will be to apply a Universal Coefficient Theorem.  This
  Universal Coefficient Theorem applies only to finite complexes.  Hence we take models of
  $BP$ and $EP$ whose $n$-skeletons $BP^n$ and $EP^n$ have a finite number of cells.  For
  each $n$ we have the exact sequence
\begin{multline}
0 \to K_P^1(EP^n)  \to K_P^0(EP^n \to \pt) \to K^0_P(\pt)\\
\to K_P^0(EP^n) \to K_P^1(EP^n \to \pt) \to 0.
\end{multline}
It induces an exact sequence of pro-$\IZ$-modules indexed by $n = 0,1,2,3 \ldots$
\begin{multline}\label{equivariant_K_cohomology}
0 \to \{K_P^1(EP^n)\}  \to \{K_P^0(EP^n \to \pt)\} \to \{K^0_P(\pt)\} 
\\
\to \{K_P^0(EP^n)\} \to \{K_P^1(EP^n \to \pt)\} \to 0,
\end{multline}
where $\{K^0_P(\pt)\}$ is a constant pro-$\Z$-module. For an introduction to the abelian
category of pro-$\IZ$-modules see~\cite{Atiyah-Segal(1969)}.  By the Atiyah-Segal
Completion Theorem~\cite{Atiyah-Segal(1969)}, there is an isomorphism of pro-$\Z$-modules
$ \{K_P^1(EP^n)\} \cong \{0\} $ and a commutative diagram of pro-$\IZ$-modules
\[
\xymatrix{
\{R(P)\} \ar[r]\ar[d]^{\cong}  & \{R(P)/I^n\} \ar[d]^{\cong} 
\\
\{K_P^0(\pt)\} \ar[r] & \{K_P^0(EP^n)\} 
,}
\]
where $I = \ker(R(P) \to \Z)$ is the augmentation ideal of the representation ring.
It follows that the sequence~\eqref{equivariant_K_cohomology} can be identified with
\[ 
0\to \{0\} \to \{I^n\}  \to \{R(P)\} 
\to \{R(P)/I^n\} \to \{0\} \to 0,
\]
and in particular,
\begin{eqnarray*}
\{K_P^1(EP^n \to \pt)\} & = & \{0\};
\\
\{K_P^0(EP^n \to \pt)\} & = & \{I^n\}.
\end{eqnarray*}

A Universal Coefficient Theorem for $K_*^P$ proven in~\cite{Joachim-Lueck(2013)} states
that for an equivariant map $X \to Y$ of finite $P$-CW-complexes, there is a short exact
sequence
\[
0 \to \Ext_{\Z}(K_P^{*+1}(X \to Y),\Z) \to K_*^P(X \to Y)\to  \Hom_{\Z}(K_P^{*}(X \to Y),\Z) \to 0.
\]
It is an algebraic fact that $\{I^n\} = \{p^nI\}$ and, setting
$I^* = \Hom_{\Z}(I,\Z) \subset I^* \otimes \Q$, that $(p^nI)^* = p^{-n}I^*$.  Thus
\begin{align*}
K^P_0(EP \to \pt) & \cong \colim_{n \to \infty}  K^P_0(EP^n \to \pt) \\
& \cong  \colim_{n \to \infty} \Hom_{\IZ}(K_P^0(EP^n \to \pt),\IZ) \\
& \cong  \colim_{n \to \infty} (p^nI)^* \\
&  \cong  I^* \otimes_{\IZ} \IZ[1/p].
\end{align*}
The map $\wK^P_0( \pt) \to K^P_0(EP \to \pt)$ can be identified with the inclusion
$I^* = I^* \otimes \Z \to I^* \otimes \Z[1/p]$.  Similar, but easier reasoning shows that
$K^P_1(EP \to \pt)=0$. Hence Lemma~\ref{lemma:relative_K-theory} follows.
\end{proof}

%%%%%%%%%%%%%%%%%%%%%%%%%%%%%%%%%%%%%%%%%%%%%%%%%%%%%%%%%%%%%%%%%%%%%%%%%%%%%%%%%%%%%%
%%%%%%%%%%%%%%%%%%% The periodic simple structure set of \texorpdfstring{$B\Gamma$}{BGamma} %%%%%%%%%%%%%
%%%%%%%%%%%%%%%%%%%%%%%%%%%%%%%%%%%%%%%%%%%%%%%%%%%%%%%%%%%%%%%%%%%%%%%%%%%%%%%%%%%%%%

\typeout{------------- The periodic simple structure set of \texorpdfstring{$B\Gamma$}{BGamma}  ----------------}

%%%%%%%%%%%%%%%%%%%%%%%%%%%%%%%%%%%%%%%%%%%%%%%%%%%%%%%%%%%%%%%%%%%%%%%%%%%%%% %%%%%%%%

\section{The periodic simple structure set of \texorpdfstring{$B\Gamma$}{BGamma}}
\label{sec:The_periodic_simple_structure_set_of_BGamma}

In this section we compute the periodic simple structure set of $B\Gamma$.  Recall that
$\calp$ is the set of conjugacy classes of subgroups of $\Gamma$ of order $p$ and that the
order of $\calp$ is $p^k$.

\begin{theorem}[Computation of $\strp(B\Gamma)$]
\label{the:Computation_of_strp(BGamma)}\ 
\begin{enumerate}

\item\label{the:Computation_of_strp(BGamma):induction_iso}
The map
\[\bigoplus_{(P) \in \calp} \strp_m(BP) \xrightarrow{\cong}
\strp_m(B\Gamma)\]
induced by the various inclusions $P \to \Gamma$ is for all $m \in
\IZ$ an isomorphism.  

We get
\[\strp_{m}(B\Gamma) \cong \begin{cases}
\big(\IZ[1/p]\bigr)^{p^k \cdot (p-1)/2} & m \;\text{odd};
\\
0  & m \; \text{even};
\end{cases}
\]

\item\label{the:Computation_of_strp(BGamma):restriction_iso} Let
  $\pr \colon \Gamma \to \Gamma_{\ab} = \Gamma/[\Gamma,\Gamma]$ be the projection onto the
  abelianization of $\Gamma$.  Let
  $\strp_m(B\!\pr) \colon \strp_m(B\Gamma) \to \strp_m(\Gamma_{\ab})$ be the induced map.
  Recall $\Gamma_{\ab} \cong (\Z/p)^{k+1}$.  For every $(P) \in \calp$, let
  $P' \subset \Gamma_{\ab}$ be the image of $P$ under $\pr$.  Let
\[
\res_{\Gamma_{\ab}}^{P'} \colon \strp_m(B\Gamma_{\ab}) \to \strp_m(BP')
\]
be the map induced by transfer to the subgroup $P' \subset \Gamma_{\ab}$.

Then the map
\[ 
\prod_{(P) \in \calp} \res_{\Gamma_{\ab}}^{P'} \circ \ \strp_m(B\!\pr) 
\colon \strp_m(B\Gamma) 
\xrightarrow{\cong} \prod_{(P) \in \calp} \strp_m(BP')
\]
is an isomorphism.

\end{enumerate}
\end{theorem}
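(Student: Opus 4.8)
The plan is to express the periodic structure set as an equivariant homology theory, exploit the Farrell--Jones Conjecture for $\Gamma$ in the $s$-decoration (available by Theorem~\ref{the:L(ZGamma)_decorated}~\ref{the:L(ZGamma)_decorated:iso_conjecture}), and then extract the contribution of the finite subgroups from the $\Gamma$-pushout~\eqref{G-pushoutfor_EGamma_to_eunderbar_Gamma} for $\eub{\Gamma}$.

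For assertion~\ref{the:Computation_of_strp(BGamma):induction_iso} I would start, exactly as in the proof of Theorem~\ref{the:The_periodic_structure_set_of_BP_for_a_finite_p-group}, from the identification $\strp_m(BG)\cong H^G_m(EG\to\pt;\bfL)$ of~\cite[Theorem~B.1]{Connolly-Davis-Khan(2014H1)}, valid for any group $G$. Writing $E\Gamma\to\pt$ as the composite $E\Gamma\to\eub{\Gamma}\to\pt$ yields a cofibre sequence $\cone(E\Gamma\to\eub{\Gamma})\to\cone(E\Gamma\to\pt)\to\cone(\eub{\Gamma}\to\pt)$, hence a long exact sequence; since $H^\Gamma_m(\eub{\Gamma};\bfL)\to L^s_m(\IZ\Gamma)$ is an isomorphism by Theorem~\ref{the:L(ZGamma)_decorated}~\ref{the:L(ZGamma)_decorated:iso_conjecture}, the term $H^\Gamma_m(\eub{\Gamma}\to\pt;\bfL)$ vanishes and one gets $H^\Gamma_m(E\Gamma\to\eub{\Gamma};\bfL)\xrightarrow{\cong}\strp_m(B\Gamma)$. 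Next, because~\eqref{G-pushoutfor_EGamma_to_eunderbar_Gamma} is a homotopy pushout of $\Gamma$-$CW$-complexes, the mapping cones of its two vertical maps coincide, so $H^\Gamma_m(E\Gamma\to\eub{\Gamma};\bfL)\cong\bigoplus_{(P)\in\calp}H^\Gamma_m(\Gamma\times_P EP\to\Gamma/P;\bfL)$, and the induction isomorphism for the $\GROUPOIDS$-spectrum $\bfL$ identifies the $(P)$-summand with $H^P_m(EP\to\pt;\bfL)=\strp_m(BP)$, naturally in the inclusions $P\hookrightarrow\Gamma$. This gives the displayed isomorphism. The explicit value then drops out of $|\calp|=p^k$ (Lemma~\ref{lem:preliminaries_about_Gamma_and_Zn_rho}~\ref{lem:preliminaries_about_Gamma_and_Zn_rho:order_of_calp}), the fact that each $P\in\calp$ is cyclic of order $p$ (Lemma~\ref{lem:preliminaries_about_Gamma_and_Zn_rho}~\ref{lem:preliminaries_about_Gamma_and_Zn_rho:finite_subgroups}), and the computation of $\strp_*(BP)$ in Theorem~\ref{the:The_periodic_structure_set_of_BP_for_a_finite_p-group} together with $\alpha(\IZ/p;\IC)=(p-1)/2$.

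For assertion~\ref{the:Computation_of_strp(BGamma):restriction_iso}, let $\iota\colon\bigoplus_{(P)\in\calp}\strp_m(BP)\xrightarrow{\cong}\strp_m(B\Gamma)$ be the isomorphism just produced and $\Phi$ the map in the statement; it suffices to show $\Phi\circ\iota$ is an isomorphism, which I would do by computing its matrix with respect to the evident $\calp$-indexed direct sum decompositions of source and target. Its $(Q,P)$-entry is the composite $\strp_m(BP)\to\strp_m(B\Gamma)\xrightarrow{\strp_m(B\!\pr)}\strp_m(B\Gamma_{\ab})\xrightarrow{\res^{Q'}_{\Gamma_{\ab}}}\strp_m(BQ')$, where $Q'=\pr(Q)$, etc. Since $[\Gamma,\Gamma]\subseteq\IZ^n$ is torsion-free (Lemma~\ref{lem:preliminaries_about_Gamma_and_Zn_rho}~\ref{lem:preliminaries_about_Gamma_and_Zn_rho:commutator}), each finite subgroup $P$ maps isomorphically onto $P'$, so $P\hookrightarrow\Gamma\xrightarrow{\pr}\Gamma_{\ab}$ equals $P\xrightarrow{\cong}P'\hookrightarrow\Gamma_{\ab}$; hence, by functoriality, the $(Q,P)$-entry is, up to the isomorphism $\strp_m(BP)\cong\strp_m(BP')$, the transfer $\res^{Q'}_{\Gamma_{\ab}}$ composed with the map induced by $P'\hookrightarrow\Gamma_{\ab}$, which the Mackey double coset formula in the abelian group $\Gamma_{\ab}$ evaluates. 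When $Q=P$ it equals $[\Gamma_{\ab}:P']\cdot\id=p^k\cdot\id$, an isomorphism because $\strp_m(BP')$ is a $\IZ[1/p]$-module; when $Q\neq P$ one has $Q'\neq P'$ --- $\pr$ restricts to an injection on $\calp$, by Lemma~\ref{lem:preliminaries_about_Gamma_and_Zn_rho}~\ref{lem:preliminaries_about_Gamma_and_Zn_rho:list_of_finite_subgroups} together with~\ref{lem:preliminaries_about_Gamma_and_Zn_rho:abelianization} --- so $P'\cap Q'=\{1\}$ and every double coset summand factors through $\strp_m(B(P'\cap Q'))=\strp_m(\pt)$, which is $0$ because the assembly map $H_*(\pt;\bfL(\IZ))\to L^s_*(\IZ)$ is an isomorphism. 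Thus $\Phi\circ\iota$ has diagonal matrix with invertible entries, so $\Phi$ is an isomorphism.

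The mathematical substance is carried by Theorem~\ref{the:L(ZGamma)_decorated} and Theorem~\ref{the:The_periodic_structure_set_of_BP_for_a_finite_p-group}; the step that will need care in the write-up is the translation between the geometry and the equivariant homology theory $H^?_*(-;\bfL)$: verifying that the $\Gamma$-pushout~\eqref{G-pushoutfor_EGamma_to_eunderbar_Gamma} really does induce the cone comparison above, and that $\strp_m(B\!\pr)$ followed by the transfers $\res^{P'}_{\Gamma_{\ab}}$ is governed by induction and restriction, so that the Mackey formula applies. Here the triviality of the Weyl groups $W_\Gamma P$ (Lemma~\ref{lem:preliminaries_about_Gamma_and_Zn_rho}~\ref{lem:preliminaries_about_Gamma_and_Zn_rho:finite_subgroups}) is what keeps these bookkeeping steps clean.
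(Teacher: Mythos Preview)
Your proposal is correct and follows essentially the same approach as the paper. For part~\ref{the:Computation_of_strp(BGamma):induction_iso} the arguments are identical. For part~\ref{the:Computation_of_strp(BGamma):restriction_iso} there is one minor difference worth flagging: the paper first invokes Theorem~\ref{the:The_periodic_structure_set_of_BP_for_a_finite_p-group}~\ref{the:The_periodic_structure_set_of_BP_for_a_finite_p-group:localization} to reduce the Mackey computation to the level of $\widetilde{L}^s_*(\IZ P')$, where the double coset formula for $L$-groups of group rings is standard, and then concludes after inverting $p$; you instead apply the double coset formula directly to $\strp_m$ and use that $\strp_m(\pt)=0$ and that $\strp_m(BP')$ is already a $\IZ[1/p]$-module. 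Your route is slightly slicker but tacitly uses that the Mackey structures on $H_*(-;\bfL(\IZ))$ and on $L^s_*$ are compatible with the assembly map and hence descend to the cofibre; this is true but should be stated explicitly in the write-up.
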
    
\begin{proof}~%
\ref{the:Computation_of_strp(BGamma):induction_iso}
By~\cite[Theorem B.1]{Connolly-Davis-Khan(2014H1)},
we have the identifications
\begin{eqnarray*}
\strp_m(B\Gamma) & = & H^{\Gamma}_{m}(E\Gamma \to \pt; \bfL^s);
\\
\strp_m(BP) & = & H^{P}_{m}(EP \to \pt; \bfL^s).
\end{eqnarray*}
Theorem~\ref{the:L(ZGamma)_decorated}~\ref{the:L(ZGamma)_decorated:iso_conjecture} implies
\begin{eqnarray*}
H^{\Gamma}_{*}(E\Gamma \to \eub{\Gamma}; \bfL^s) 
& = & 
H^{\Gamma}_{*}(E\Gamma \to \pt; \bfL^s).
\end{eqnarray*}

We get for $m \in \IZ$ from the induction structure, see~\cite[Section~1]{Lueck(2002b)}  isomorphisms
\begin{equation}\label{quotient_identification}
  H_m^{\{1\}}(BP;\bfL^s) \xleftarrow{\cong} H^P_m( EP ;  \bfL^s)
  \xrightarrow{\cong}   H_m^\Gamma( \Gamma \times_P EP;  \bfL^s).
\end{equation}

We conclude from the $\Gamma$-pushout~\eqref{G-pushoutfor_EGamma_to_eunderbar_Gamma} and
the identification~\eqref{quotient_identification},
\begin{eqnarray*}
\bigoplus_{(P) \in \calp} H^{P}_{*}(EP \to \pt; \bfL^s)
& = & 
H^{\Gamma}_{*}(E\Gamma \to \eub{\Gamma}; \bfL^s).
\end{eqnarray*}
Hence 
\[
\bigoplus_{(P) \in \calp} \strp_m(BP) \xrightarrow{\cong}
\strp_m(B\Gamma) 
\]
is an isomorphisms. 
Now assertion~\ref{the:Computation_of_strp(BGamma):induction_iso} follows
from Theorem~\ref{the:The_periodic_structure_set_of_BP_for_a_finite_p-group} and 
Lemma~\ref{lem:preliminaries_about_Gamma_and_Zn_rho}~\ref{lem:preliminaries_about_Gamma_and_Zn_rho:order_of_calp}.
\\[1mm]~%
\ref{the:Computation_of_strp(BGamma):restriction_iso}
Because of assertion~\ref{the:Computation_of_strp(BGamma):induction_iso}
it suffices to show that the composite
\[\bigoplus_{(P) \in \calp} \strp_m(BP) \xrightarrow{\cong}
\strp_m(B\Gamma)  \xrightarrow{\prod_{(P) \in \calp} \res_{\Gamma_{\ab}}^{P'} \circ \ \strp_m(B\!\pr)}
\prod_{(P) \in \calp} \strp_m(BP')
\]
is an isomorphism. We conclude from Theorem~\ref{the:The_periodic_structure_set_of_BP_for_a_finite_p-group}~%
\ref{the:The_periodic_structure_set_of_BP_for_a_finite_p-group:localization} that it suffices to show that
the composite 
\[
\bigoplus_{(P) \in \calp} \widetilde{L}^s_{m}(\IZ P) \to L^s_m(\IZ \Gamma) \xrightarrow{\pr_*}
\widetilde{L}^s_m(\IZ[\Gamma_{\ab}]) \xrightarrow{\prod_{(P) \in \calp} \res_{\Gamma_{\ab}}^{P'}}
\prod_{(P) \in \calp} \widetilde{L}^s_{m}(\IZ P')
\] 
is an isomorphism after inverting $p$, where the first map is given by induction with the
various inclusions $P \to \Gamma$, the second by induction with 
$\pr \colon \Gamma \to \Gamma_{\ab}$ and the third is the product over the various transfer
homomorphisms $\res_{\Gamma_{\ab}}^{P'}$. This composite
agrees with the composite
\[
\bigoplus_{(P) \in \calp} \widetilde{L}^s_{m}(\IZ P') 
\xrightarrow{\left(\prod_{(Q) \in \calp} \res_{\Gamma_{\ab}}^{Q'}\right) 
\circ \left(\bigoplus_{(P) \in \calp} \ind_{P'}^{\Gamma_{\ab}}\right)}
\prod_{(Q) \in \calp} \widetilde{L}^s_{m}(\IZ Q').
\]
Hence it suffices to show
that for $(P),(Q) \in \calp$ 
\[
\res_{\Gamma_{\ab}}^{Q'} \circ \ind_{P'}^{\Gamma_{\ab}} \colon
\widetilde{L}^s_{m}(\IZ P') \to \widetilde{L}^s_{m}(\IZ Q')
\]
is trivial for $(P) \not= (Q)$ and $p^m \cdot \id$ for
$(P) = (Q)$. Notice that $P' = Q' \Leftrightarrow (P) = (Q)$ 
holds for $(P), (Q) \in \calp$
by Lemma~\ref{lem:preliminaries_about_Gamma_and_Zn_rho}~%
\ref{lem:preliminaries_about_Gamma_and_Zn_rho:list_of_finite_subgroups} 
and~\ref{lem:preliminaries_about_Gamma_and_Zn_rho:abelianization}.
By the double coset formula the composite
\[
\res^{\Gamma_{\ab}}_{Q'} \circ \ind_{P'}^{\Gamma_{\ab}} \colon
L^s_{m}(\IZ P') \to L^s_{m}(\IZ Q')
\]
factorizes through $L^s_m(\IZ)$ if $P' \not= Q'$, and  is 
$\sum_{\Gamma_{\ab}/P} \id$ if $P'= Q'$.
Since $\Gamma_{\ab}/P$ contains $p^k$ elements by 
Lemma~\ref{lem:preliminaries_about_Gamma_and_Zn_rho}~%
\ref{lem:preliminaries_about_Gamma_and_Zn_rho:list_of_finite_subgroups} 
and~\ref{lem:preliminaries_about_Gamma_and_Zn_rho:abelianization},
Theorem~\ref{the:Computation_of_strp(BGamma)} follows.

\end{proof}

The splitting of the periodic structure set appearing in 
Theorem~\ref{the:Computation_of_strp(BGamma)}~%
\ref{the:Computation_of_strp(BGamma):induction_iso}
has already been established in~\cite[Theorem~2.12]{Lueck-Rosenthal(2014)}.

%%%%%%%%%%%%%%%%%%%%%%%%%%%%%%%%%%%%%%%%%%%%%%%%%%%%%%%%%%%%%%%%%%%%%%%%%%%%%%%%%%%%%%
%%%%%%%%%%%%%%%%%%%%%%%%%%%% The periodic simple structure set of M %%%%%%%%%%%%%%%%%%%%%%%%%%%%%%
%%%%%%%%%%%%%%%%%%%%%%%%%%%%%%%%%%%%%%%%%%%%%%%%%%%%%%%%%%%%%%%%%%%%%%%%%%%%%%%%%%%%%%

\typeout{-------------- The periodic simple structure set of M  -------------------------}

%%%%%%%%%%%%%%%%%%%%%%%%%%%%%%%%%%%%%%%%%%%%%%%%%%%%%%%%%%%%%%%%%%%%%%%%%%%%%%%%%%%%%%

\section{The periodic simple structure set of $M$}
\label{sec:The_periodic_simple_structure_set_of_M}

In this section we compute the periodic simple structure set $\strp_{n+l+1}(M)$ of $M$.
Recall that $M = T^n_{\rho} \times_{\Z/p} S^l$ and $\pi_1(M) =\Gamma = \Z^n \times_{\rho} \Z/p$.
Let $f \colon M \to B\Gamma$ be a classifying map for the universal
covering of $M$.   

\begin{theorem}[The periodic simple structure set of $M$]\
\label{the:The_periodic_simple_structure_set_of_M}
There is a homomorphism
\[
\sigma \colon \strp_{n+l + 1}(M) \to H_n(T^n;\bfL(\Z))^{\Z/p}
\]
such that the following holds:
\begin{enumerate}
\item\label{the:The_periodic_simple_structure_set_of_M:inj_map_strp}
The map
\[
\sigma \times \strp_{n+l+1}(f) \colon \strp_{n+l+1}(M) \to 
H_n(T^n;\bfL(\Z))^{\Z/p}  \times \strp_{n+l+1}(B\Gamma)
\] 
is injective;
%splitting invariants of normal invariants

\item\label{the:The_periodic_simple_structure_set_of_M:image}
The cokernel of $\sigma$ is a finite abelian $p$-group;
%splitting and rho

\item\label{the:The_periodic_simple_structure_set_of_M:kernel}
Consider the composite 
\[
\nu \colon \bigoplus_{(P) \in \calp} \widetilde{L}^s_{n+l+1}(\IZ P) \to 
\widetilde{L}^s_{n+l+1}(\IZ \Gamma)
\xrightarrow{\widetilde{\xi}_{n+l+1}(M)} \strp_{n+l+1}(M)
\]
where the first map is given by induction with the various inclusions $P \to \Gamma$
and $\widetilde{\xi}_{n+l+1}(M)$ comes from~\eqref{reduced_surgery_sequence_periodic}.

Then $\nu$ is injective, the image of $\nu$ is contained in the kernel of $\sigma$ and
$\ker(\sigma)/im(\nu)$ is a finite abelian $p$-group;

\item\label{the:The_periodic_simple_structure_set_of_M:inverting_p}
After inverting $p$ we obtain an isomorphism
\[
\left(\sigma \times \strp_{n+l+1}(f)\right)[1/p] \colon \strp_{n+l+1}(M)[1/p] \to 
H_n(T^n;\bfL(\Z))^{\Z/p}[1/p]  \times \strp_{n+l+1}(B\Gamma)[1/p];
\]

\item\label{the:The_periodic_simple_structure_set_of_M:comp}
As an abelian group we have
\begin{eqnarray*}
\strp_{n+l+1}(M) &\cong & 
\IZ^{p^k(p-1)/2} \oplus \bigoplus_{i = 0}^{n} L_{n-i}(\IZ)^{r_i},
\end{eqnarray*}
where the numbers $r_i$ are defined in~\eqref{r-numbers}.
\end{enumerate}
\end{theorem}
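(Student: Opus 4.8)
The plan is to pin down $\strp_{n+l+1}(M)$ as an abelian group by realizing it as a split extension coming from the map $\strp_{n+l+1}(f)$, feeding in assertions~\ref{the:The_periodic_simple_structure_set_of_M:inj_map_strp}--\ref{the:The_periodic_simple_structure_set_of_M:inverting_p}. Two preliminary observations. First, $\strp_{n+l+1}(M)$ is finitely generated: the algebraic surgery exact sequence~\eqref{algebraic_surgery_sequence} presents it as an extension of a subgroup of $H_{n+l}(M;\bfL(\IZ))$ by a quotient of $L^s_{n+l+1}(\IZ\Gamma)$, and $H_*(M;\bfL(\IZ))$ is finitely generated because $M$ is a closed manifold, while $L^s_*(\IZ\Gamma)$ is finitely generated by Theorem~\ref{the:L(ZGamma)_decorated}~\ref{the:L(ZGamma)_decorated:explicite}. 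Second, $n+l+1$ is even (since $n=k(p-1)$ is even and $l$ is odd), so Theorem~\ref{the:Computation_of_strp(BGamma)}~\ref{the:Computation_of_strp(BGamma):induction_iso} gives $\strp_{n+l+1}(B\Gamma)\cong(\IZ[1/p])^{p^k(p-1)/2}$.

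Next I would look at the exact sequence
\[
0 \to \ker\bigl(\strp_{n+l+1}(f)\bigr) \to \strp_{n+l+1}(M) \xrightarrow{\strp_{n+l+1}(f)} \im\bigl(\strp_{n+l+1}(f)\bigr) \to 0 .
\]
The image $\im(\strp_{n+l+1}(f))$ is a finitely generated subgroup of the torsionfree $\IZ[1/p]$-module $\strp_{n+l+1}(B\Gamma)$, hence free abelian; by assertion~\ref{the:The_periodic_simple_structure_set_of_M:inverting_p} the map $\strp_{n+l+1}(f)$ is onto after inverting $p$, so this image is a lattice in $(\IZ[1/p])^{p^k(p-1)/2}$ and therefore free abelian of rank $p^k(p-1)/2$. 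Since a free abelian quotient splits off, we get $\strp_{n+l+1}(M)\cong\ker(\strp_{n+l+1}(f))\oplus\IZ^{p^k(p-1)/2}$, and it remains to identify $\ker(\strp_{n+l+1}(f))$ with $\bigoplus_{i=0}^{n}L_{n-i}(\IZ)^{r_i}$.

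For the kernel I would argue as follows. By assertion~\ref{the:The_periodic_simple_structure_set_of_M:inj_map_strp} the restriction of $\sigma$ to $\ker(\strp_{n+l+1}(f))$ is injective, and by assertion~\ref{the:The_periodic_simple_structure_set_of_M:inverting_p} it becomes an isomorphism onto $H_n(T^n;\bfL(\IZ))^{\IZ/p}$ after inverting $p$; since both sides are finitely generated, this exhibits $\ker(\strp_{n+l+1}(f))$ as a subgroup of $p$-power index in $H_n(T^n;\bfL(\IZ))^{\IZ/p}$. So it suffices to show that $H_n(T^n;\bfL(\IZ))^{\IZ/p}$, together with each of its subgroups of $p$-power index, is isomorphic to $\bigoplus_{i=0}^{n}L_{n-i}(\IZ)^{r_i}$. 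Here $H_n(T^n;\bfL(\IZ))$ carries no $p$-torsion: the Atiyah--Hirzebruch spectral sequence for $T^n=B\IZ^n$ collapses and identifies it with $\bigoplus_i L_{n-i}(\IZ)^{\binom{n}{i}}$, whose torsion is $2$-primary; hence neither does the subgroup of $\IZ/p$-invariants. On the other hand, running the argument in the proof of Theorem~\ref{the:L-infty(ZGamma)}~\ref{the:L-infty(ZGamma):explicite} with $m=n$ and using $H_i(B\IZ^n_\rho)^{\IZ/p}\cong\IZ^{r_i}$ yields $H_n(T^n;\bfL(\IZ))^{\IZ/p}[1/p]\cong\bigl(\bigoplus_{i=0}^{n}L_{n-i}(\IZ)^{r_i}\bigr)[1/p]$. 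A finitely generated abelian group without $p$-torsion is determined up to isomorphism by its localization away from $p$, so $H_n(T^n;\bfL(\IZ))^{\IZ/p}\cong\bigoplus_{i=0}^{n}L_{n-i}(\IZ)^{r_i}$, and likewise $\ker(\strp_{n+l+1}(f))\cong\bigoplus_{i=0}^{n}L_{n-i}(\IZ)^{r_i}$; for the last clause, writing this group as $\IZ^a\oplus(\IZ/2)^b$, any subgroup of $p$-power index contains $p^N\IZ^a\oplus(\IZ/2)^b$ for some $N$ (as $p$ is odd) and is therefore itself isomorphic to $\IZ^a\oplus(\IZ/2)^b$. Combining the two summands gives the asserted isomorphism.

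The step I expect to require the most care is the identification of $H_n(T^n;\bfL(\IZ))^{\IZ/p}$: the computation quoted from Theorem~\ref{the:L-infty(ZGamma)} is carried out only after inverting $p$, so one has to observe separately that passing to $\IZ/p$-invariants introduces no $p$-torsion and keeps the $2$-primary part in each degree equal to $(\IZ/2)^{r_i}$, so that the "determined away from $p$" principle really applies to the finitely generated group $\strp_{n+l+1}(M)$. Everything else is formal bookkeeping with the exact sequences furnished by assertions~\ref{the:The_periodic_simple_structure_set_of_M:inj_map_strp}--\ref{the:The_periodic_simple_structure_set_of_M:inverting_p} and with the computations of $\strp_*(B\Gamma)$ and $L^s_*(\IZ\Gamma)$.
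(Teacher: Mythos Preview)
Your argument is correct and follows essentially the same strategy as the paper: use assertion~\ref{the:The_periodic_simple_structure_set_of_M:inj_map_strp} to see that $\strp_{n+l+1}(M)$ has no $p$-torsion (it injects into a group without $p$-torsion), use assertion~\ref{the:The_periodic_simple_structure_set_of_M:inverting_p} to pin down its $\IZ[1/p]$-localization, and then invoke the principle that a finitely generated abelian group without $p$-torsion is determined by its $\IZ[1/p]$-localization. The paper applies this principle directly to the whole of $\strp_{n+l+1}(M)$, whereas you first split off $\im\bigl(\strp_{n+l+1}(f)\bigr)\cong\IZ^{p^k(p-1)/2}$ and then run the same reasoning on $\ker\bigl(\strp_{n+l+1}(f)\bigr)$; this is a minor organizational difference, not a genuinely different route. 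One small point in your favor: the paper simply cites Remark~\ref{remark:points_of_view_sigma} for the integral isomorphism $H_n(T^n;\bfL(\IZ))^{\IZ/p}\cong\bigoplus_i L_{n-i}(\IZ)^{r_i}$, while you supply an actual argument for it (no $p$-torsion plus the $\IZ[1/p]$-computation from~\eqref{H_m(BZn;Llangle-inftyrangle)Z/p[1/p]}).
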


\begin{remark}  \label{remark:points_of_view_sigma}
 There are several different points of views on the codomain of $\sigma$.   Indeed there are isomorphisms
 \[
 L_n(\Z[\Z^n])^{\Z/p} \cong H_n(T^n;\bfL(\Z))^{\Z/p} \cong \bigoplus_{i = 0}^{n} (H_i(T^n)^{\Z/p} \otimes L_{n-i}(\Z))  \cong \bigoplus_{i = 0}^{n} L_{n-i}(\Z)^{r_i}.
 \]
 The first isomorphism is due to the Shaneson splitting/Farrell-Jones Conjecture, the second isomorphism is due to the collapse of the Atiyah-Hirzebruch Spectral Sequence, and the last isomorphism comes from~\eqref{r-numbers}.
\end{remark}

In the proof of Theorem~\ref{the:The_periodic_simple_structure_set_of_M} it will be
convenient to define $\mu \colon \strp_{n+l + 1}(M) \to L_n(\IZ[\IZ^n_{\rho}])^{\IZ/p}$ and then define the composite
$
\sigma \colon \strp_{n+l + 1}(M) \xrightarrow{\mu} L_n(\IZ[\IZ^n_{\rho}])^{\IZ/p} \cong H_n(T^n;\bfL(\Z))^{\Z/p} $, noting that $\ker \mu = \ker \sigma$ and $\cok \mu \cong \cok \sigma$.
\begin{proof}[Proof of assertion~\ref{the:The_periodic_simple_structure_set_of_M:inj_map_strp} 
of Theorem~\ref{the:The_periodic_simple_structure_set_of_M}]
We will have proved
 assertion~\ref{the:The_periodic_simple_structure_set_of_M:inj_map_strp} once we accomplish the following two goals.

\begin{itemize}
\item[Goal 1):] Construct the commutative diagram~\eqref{big_commutative_surgery_diagram} below.

\item[Goal 2):] Show that the induced maps
\[
\ker \bigl(\strp_{n+l+1}(f)\bigr) \to \ker \bigl(H_{n+l}(f;\bfL(\Z))\bigr) \to \ker \bigl(E^{\infty}_{l,n}(f)\bigr)
\]
are isomorphisms.  
\end{itemize}

Once we accomplish these two goals, a diagram chase gives the proof of 
Theorem~\ref{the:The_periodic_simple_structure_set_of_M}~%
\ref{the:The_periodic_simple_structure_set_of_M:inj_map_strp}.

We now construct the following commutative diagram of abelian groups.
\begin{eqnarray}
\label{big_commutative_surgery_diagram}
& & 
\\
\nonumber
\xymatrix@!C=14em{
\strp_{n+l+1}(M) \ar[r]^-{\strp_{n+l+1}(f)}
\ar[d]^{\eta_{n+l+1}(M)} 
\ar@/_{28mm}/[ddddddd]^{\mu}
&
\strp_{n+l+1}(B\Gamma) 
\ar[d]^{\eta_{n+l+1}(B\Gamma)}
\\
H_{n+l}(M;\bfL(\Z)) \ar[r]^-{H_{n+l}(f;\bfL(\Z))} 
&
H_{n+l}(B\Gamma;\bfL(\Z)) 
\\
F_{l,n}(M) \ar[u]_{\inc}^{\cong} \ar[r]^-{F_{l,n}(f)}  \ar[d]^{\pr}
&
F_{l,n}(B\Gamma) =(H_l(M; \bfL(\Z)) \to H_l(B\Gamma; \bfL(\Z))) \ar[d]^{\pr} \ar[u]_{\inc}
\\
E^{\infty}_{l,n}(M) \ar[r]^-{E^{\infty}_{l,n}(f)}  \ar[d]^{\id}_{\cong}
&
E^{\infty}_{l,n}(B\Gamma) \ar[d]^{\id}_{\cong}
\\
E^{2}_{l,n}(M) \ar[r]^-{E^{2}_{l,n}(f)} \ar[d]^{\id}_{\cong}
&
 E^{2}_{l,n}(B\Gamma) \ar[d]^{\id}_{\cong}
\\
H_l^{\IZ/p}\bigl(S^l;H_n(T^n_{\rho};\bfL(\Z))\bigr) \ar[r]^-{g_{n+l}} \ar[d]^{\id}_{\cong}
&
 H_l^{\IZ/p}\bigl(E\IZ/p; H_n(T^n_{\rho};\bfL(\Z))\bigr)
\ar[d]^{\id}_{\cong}
\\
H_n(T^n_{\rho};\bfL(\Z))^{\IZ/p} \ar[r]^-{g_{n+l}} \ar[d]^{A_n(T^n_{\rho})^{\IZ/p}}_{\cong}
&
 H_l\bigl(\IZ/p;H_n(T^n_{\rho};\bfL(\Z))\bigr) 
\ar[d]^{\id}_{\cong}
\\
L_n^s(\IZ[\IZ^n_{\rho}])^{\IZ/p} 
\ar[r]^-{g_{n+l} \circ (A_n(T^n_{\rho})^{\IZ/p})^{-1}}
&
H_l\bigl(\IZ/p;H_n(T^n_{\rho};\bfL(\Z))\bigr) 
}
& & 
\end{eqnarray}

Some explanations are in order.  Here, and elsewhere in the paper, if $A$ is a nontrivial
$\Z G$-module and $X$ is a free $G$-space, we write $H^{G}_i(X;A)$ for
$H_i(C_*(X) \otimes_{\Z G} A)$, and we also write $H_i(G;A)$ for $H^G_i(EG;A)$.

The symbol $\inc$ stands always for an obvious inclusion.

Given a free $\IZ/p$-$CW$-complex $X$ and any homology theory $\calh_*$, there is 
a Leray-Serre spectral sequence converging to $\calh_{i+j}(X \times_{\IZ/p} T^n_{\rho})$
whose $E^2$-term is  $H^{\IZ/p}_i\bigl(X;\calh_j(T^n_{\rho})\bigr)$. In particular, we have a spectral sequence
\[
E^2_{i,j} = H_i^{\Z/p}(X;  H_j(T^n_\rho; \bfL(\Z))) \Longrightarrow H_{i+j}(X \times_{\Z/p} T^n_{\rho}; \bfL(\Z)).
\]
 The symbols $F_{l,n}(M)$, $F_{l,n}(B\Gamma)$, $E_{l,n}^r(M)$, and $E_{l,n}^r(B\Gamma)$ 
 denote the corresponding filtration terms and
$E^r$-terms of the spectral sequences applied to the free $\IZ/p$-$CW$-complex
$X = S^l$ and $X = E\IZ/p$. This explains
the third, fourth and fifth row in the diagram~\eqref{big_commutative_surgery_diagram}
except for the map $g_{n+l}$ which we describe below.
In order to show the equality of the fourth, fifth and the sixth row we need to show

\begin{lemma}\label{lem:vanishing_of_differentials_in_three_spectral_sequences}
All differentials in the following  two spectral sequences vanish:
\begin{enumerate}

\item\label{lem:vanishing_of_differentials_in_three_spectral_sequences:(1)}
$E^2_{i,j} = H_i\bigl(\IZ/p;H_j(T^n_{\rho};\bfL(\Z))\bigr) \Longrightarrow H_{i+j}(B\Gamma;\bfL(\Z))$;

\item\label{lem:vanishing_of_differentials_in_three_spectral_sequences:(2)}
$E^2_{i,j} = H_i^{\IZ/p}\bigl(S^l;H_j(T^n_{\rho};\bfL(\Z))\bigr) \Longrightarrow H_{i+j}(M;\bfL(\Z))$.

\end{enumerate}
\end{lemma}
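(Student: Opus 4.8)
First I would reduce everything to one prime at a time. Every differential in either spectral sequence is a homomorphism of finitely generated abelian groups, and the only torsion primes occurring on the two $E^2$-pages are $2$ and $p$; since a homomorphism $f$ of finitely generated abelian groups with $f[1/p]=0$ and $f[1/2]=0$ has image which is simultaneously $p$- and $2$-primary torsion, hence zero, it suffices to prove collapse after inverting $p$ and after inverting $2$. \emph{After inverting $p$}: the coefficients $H_j(T^n_\rho;\bfL(\Z))[1/p]$ become $\Z[1/p]$-modules, so $H_i(\IZ/p;-)[1/p]=0$ for $i>0$ and spectral sequence (i) is concentrated in the column $i=0$. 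For (ii), computing $H^{\IZ/p}_*(S^l;A)[1/p]$ from the standard $\IZ/p$-$CW$-structure on $S^l$ and the decomposition $\Z[1/p][\IZ/p]\cong\Z[1/p]\times\Z[1/p][\zeta_p]$ (both $p$ and $\zeta_p-1$ being units) shows that $E^2_{i,j}[1/p]$ vanishes unless $i\in\{0,l\}$, where it is $H_j(T^n_\rho;\bfL(\Z))^{\IZ/p}[1/p]$; the only possibly nonzero differential, $d^l\colon E^l_{l,*}\to E^l_{0,*}$, lands in the column $i=0$, which survives because the fibre inclusion $\iota\colon T^n_\rho\hookrightarrow M$ becomes split injective on $\IZ/p$-invariants of $H_*(-;\bfL(\Z))[1/p]$: with $\pi\colon \widetilde{M}=T^n_\rho\times S^l\to M$ the $\IZ/p$-cover, $q\colon \widetilde{M}\to T^n_\rho$ the projection, $\pi^{!}$ the transfer, and $\widetilde{\iota}$ a lift of $\iota$, the composite $\tfrac1p\,q_*\circ\pi^{!}\circ\iota_*$ equals $\tfrac1p\sum_{g\in\IZ/p}g_*=\mathrm{id}$ on invariants.

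\emph{After inverting $2$}: using Sullivan's splitting $\bfL(\Z)[1/2]\simeq\prod_{c\in\Z}\Sigma^{4c}H\Z[1/2]$ (extractable from~\cite[Theorem~C]{Land-Nikolaus(2018)} applied to $\IR$), both Leray--Serre spectral sequences become direct sums of shifted copies of the \emph{ordinary} homology Leray--Serre spectral sequence of the flat torus bundle $T^n_\rho\to B\Gamma\to B\IZ/p$, resp.\ $T^n_\rho\to M\to L^l$, with coefficients in the local system $H_*(T^n_\rho;\Z[1/2])$; for primes $\ell\neq2,p$ this is again concentrated in the column $i=0$, so it remains to prove collapse of the ordinary homology Leray--Serre spectral sequence of these bundles over $\Z_{(p)}$. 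Here I would use the $\IZ/p$-equivariant splitting $T^n_\rho\cong T^{(1)}\times\cdots\times T^{(k)}$ coming from the decomposition $\IZ^n\cong I_1\oplus\cdots\oplus I_k$ in Lemma~\ref{lem:preliminaries_about_Gamma_and_Zn_rho}, so that the bundle is the fibrewise product over its base of the ``atomic'' bundles with fibre $T^{(j)}$ (a $(p-1)$-torus on which $\IZ/p$ acts through $\zeta_p$). Since $H_*(T^{(j)};\Z_{(p)})$ is a free $\Z_{(p)}$-module, the Künneth theorem has no $\Tor$ term and the Leray--Serre spectral sequence of the fibrewise product is the tensor product, over the homology of the base, of the atomic ones; in particular its differentials are obtained by the Leibniz rule from the atomic differentials, so it suffices to treat a single atomic bundle.

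\emph{The atomic case}, which is the heart of the argument, I expect to be elementary. For $m$ prime to $p$ the fibrewise multiplication $[m]\colon T^{(j)}\to T^{(j)}$ is $\IZ/p$-equivariant, induces a self-map of the atomic bundle commuting with all differentials, and acts by $m^a$ on $H_a(T^{(j)};\Z_{(p)})$; taking $m$ a primitive root modulo $p$, the relation $(m^{r-1}-1)\,d^r=0$ together with the fact that $m^{r-1}-1$ is a unit of $\Z_{(p)}$ for $2\le r\le p-1$ (as then $p-1\nmid r-1$) forces $d^r=0$ for all $r<p$, and since the fibre $T^{(j)}$ has dimension $p-1$ the only remaining differential is $d^p\colon E^p_{*,0}\to E^p_{*-p,\,p-1}$, which emanates from the bottom row $j=0$ and is therefore killed by the zero section of the atomic bundle. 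The one point needing care is verifying that the tensor‑product description of the Leray--Serre spectral sequence of a fibrewise product is valid over $\Z_{(p)}$ (which is exactly why one wants the fibre homology to be torsion‑free) and that the operations $[m]$ and the zero section are compatible with the passage, via the Sullivan splitting, from $\bfL(\Z)[1/2]$-coefficients back to ordinary $\Z_{(p)}$-coefficients; the rest is formal.
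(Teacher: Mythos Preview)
Your argument after inverting $p$ is fine, and the overall strategy of checking vanishing prime by prime is sound. The gap is in the step ``after inverting $2$'': the splitting
\[
\bfL(\IZ)[1/2]\;\simeq\;\prod_{c\in\IZ}\Sigma^{4c}H\IZ[1/2]
\]
does \emph{not} hold. What Land--Nikolaus (and Sullivan before them) give is $\bfL(\IZ)[1/2]\simeq KO[1/2]$, and $KO[1/2]$ is not a generalized Eilenberg--MacLane spectrum: at each odd prime $p$ the first $k$-invariant of the Adams summand of $ku_{(p)}$ is the Milnor primitive $Q_1$, which is nonzero (for instance on $B\IZ/p$), and since $KU[1/2]\simeq KO[1/2]\vee\Sigma^{2}KO[1/2]$ the same obstruction rules out any such splitting of $KO[1/2]$. (You may be thinking of Sullivan's $2$-local result that $G/TOP_{(2)}$ is a product of Eilenberg--MacLane spaces; the odd-primary statement is precisely the opposite: $G/TOP[1/2]\simeq BO[1/2]$.) Without this splitting your reduction to the ordinary-homology Leray--Serre spectral sequence collapses, and the $[m]$-argument applied directly to the $\bfL(\IZ)[1/2]$-homology spectral sequence loses its force: there $[m]$ acts on the weight-$a$ summand $H_a(T^n)\otimes L_{j-a}(\IZ)[1/2]$ of $E^2_{i,j}$ by $m^a$, and since $d^r$ preserves the weight $a$ it acts by the \emph{same} scalar $m^a$ on source and target, so the relation $(m^{r-1}-1)d^r=0$ never appears. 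All that survives is the congruence constraint $r\equiv 1\pmod 4$, which leaves $d^5,d^9,\ldots$ untouched and does not combine with your section argument to finish.

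The paper proceeds differently at $p$. It uses the correct identification $H_{*}(-;\bfL(\IZ))_{(p)}\cong KO_{*}(-)_{(p)}$ and then invokes two inputs from the companion paper~\cite{Davis-Lueck(2013)}: (a) the edge homomorphism $KO_m(T^n_\rho)_{\IZ/p}\to KO_m(B\Gamma)$ is an isomorphism for $m$ even, which kills all differentials touching the column $i=0$ in even total degree; and (b) the explicit vanishing $H_i(\IZ/p;K_j(T^n_\rho))_{(p)}=0$ for $i>0$ and $i+j$ even, which (via $r\circ c=2$) handles the remaining positions. Part~(ii) is then deduced from~(i) by comparison along the $l$-connected map $S^l\to E\IZ/p$, using that $E^2_{i,j}(M)=0$ for $i>l$. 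If you want to salvage an argument independent of~\cite{Davis-Lueck(2013)}, you would need a genuinely different mechanism to control the $KO$-theoretic differentials at $p$; the weight/$[m]$ trick alone is not enough.
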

\begin{proof}~%
\ref{lem:vanishing_of_differentials_in_three_spectral_sequences:(1)}
It suffices to show that all differentials vanish 
after inverting $p$ and after localizing at $p$.
Since for $i \not= 0$
\[
E^2_{i,j}[1/p] = H_i\bigl(\Z/p;H_j(T^n_{\rho};\bfL(\Z))\bigr)[1/p] = 0,
\]
this is obvious after inverting $p$.

If we localize at $p$, we get a natural isomorphism of  homology theories
\[
KO_*(-)_{(p)} \xrightarrow{\cong} H_*(-;\bfL(\Z))_{(p)},
\]
since $p$ is odd, by Sullivan's $KO[1/2]$-orientation, see~Theorem~\ref{the:Comparing_L-theory_and_topological_K-theory}.
Hence it suffices to show that all the
differentials of  the Leray-Serre spectral sequence converging to $KO_{i+j}(B\Gamma)$
with $E^2$-term $H_i\bigl(\IZ/p,KO_j(T^n_{\rho})\bigr)$ are trivial.  The edge
homomorphism
\[
H_0(\Z/p;KO_{m}(T^n_{\rho})) = KO_{m}(T^n_{\rho}) \otimes_{\IZ[\IZ/p]} \IZ 
\xrightarrow{\cong} KO_{m}(B\Gamma)
\]
is bijective for even $m$ by~\cite[Theorem 6.1~(ii)]{Davis-Lueck(2013)}.
Thus all differentials involving $E^r_{0,m}$ are trivial for $m$ even.  Hence it suffices
to show
\[
H_i\bigl(\Z/p;KO_j(T^n_{\rho})\bigr)_{(p)} = 0 \quad 
\text{if}\; i > 0, \; i + j \equiv  0 \mod 2.
\]
Since there are natural transformation of homology theories
$i \colon KO_* \to K_*$ 
and $r \colon K_* \to KO_*$ with
$r \circ i =  2 \cdot \id$, it suffices to show
\[
H_i\bigl(\Z/p;K_j(T^n_{\rho})\bigr)_{(p)} = 0 \quad 
\text{if}\; i > 0, \; i + j \equiv  0 \mod 2.
\]
This vanishing is explicitly given in the proof of~\cite[Theorem 4.1~(ii)]{Davis-Lueck(2013)}.
\\[1mm]~%
\ref{lem:vanishing_of_differentials_in_three_spectral_sequences:(2)}
Let $g \colon S^l \to E\IZ/p$ be the classifying map of the free $\IZ/p$-CW-complex $S^l$.
It induces a map of the spectral sequence of 
assertion~\ref{lem:vanishing_of_differentials_in_three_spectral_sequences:(2)}
to the one of assertion~\ref{lem:vanishing_of_differentials_in_three_spectral_sequences:(1)}.
We know already the all differentials of the latter one vanish.
The induced maps on the $E^2$-terms
\begin{equation}
E^2_{i,j}(M) = H^{\IZ/p}_i\bigl(S^l;H_j(T^n_{\rho};\bfL(\Z))\bigr) 
\to E^2_{i,j}(B\Gamma) = H^{\IZ/p}_i\bigl(E\IZ/p;H_j(T^n_{\rho};\bfL(\Z))\bigr)
\label{Gorillaz}
\end{equation}
are bijective for $i \le l-1$ and surjective for $ i = l$
since $S^l \to E\IZ/p$ is $l$-connected. Since $S^l$
is $l$-dimensional, we have
\[
E^2_{i,j}(M) = 0 \quad \text{if}\; i \ge l+1.
\]
This finishes the proof of Lemma~\ref{lem:vanishing_of_differentials_in_three_spectral_sequences}.
\end{proof}

Let $g_{n+l} \colon H^{\Z/p}_l\bigl(S^l;H_n(T_{\rho}^n;\bfL(\Z))\bigr) \to H^{\Z/p}_l\bigl(E\IZ/p;H_n(T^n_{\rho};\bfL(\Z))\bigr)$
be the map induced by the classifying map $g \colon S^l \to E\IZ/p$.

For an appropriate choice of generator $t \in \IZ/p$ 
the chain $\IZ[\IZ/p]$-chain complex of $S^l$ is $\IZ[\IZ/p]$-chain homotopy equivalent
to the $l$-dimensional $\IZ[\IZ/p]$-chain complex
\[
\cdots \to 0 \to \IZ[\IZ/p] \xrightarrow{t-1}  \IZ[\IZ/p] \xrightarrow{N}
 \cdots \xrightarrow{N}  \IZ[\IZ/p] \xrightarrow{t-1}  \IZ[\IZ/p] \to 0 \to \cdots.
\]
Thus we obtain an  identification 
\[
H_l^{\IZ/p}\bigl(S^l;H_n(T^n_{\rho};\bfL(\Z))\bigr) =
H_n(T^n_{\rho};\bfL(\Z))^{\IZ/p}.
\]

The assembly map
\[
A_n(T^n_{\rho}) \colon H_n(T^n_{\rho};\bfL(\Z))  \xrightarrow{\cong} L_n(\IZ[\IZ^n_{\rho}])
\]
is an isomorphism because of the Shaneson splitting (or the Farrell-Jones conjecture in
$L$-theory for the group $\Z^n$) and the Rothenberg sequences since $\Wh(\IZ^n) = 0$ and
$\widetilde{K}_m(\IZ[\IZ^n]) = 0$ for $m \le 0$, and is a map of $\Z[\Z/p]$-modules
because of naturality of the assembly map.

Define the map 
\begin{eqnarray}
\mu  \colon \strp_{n+l+1}(M) 
& \to &
L_n(\IZ[\IZ^n_{\rho}])^{\IZ/p}
\label{map_mu}
\end{eqnarray} 
to be the composite of the vertical arrows in
the diagram~\eqref{big_commutative_surgery_diagram} from 
$\strp_{n+l+1}(M)$ to $L_n^s(\IZ[\IZ^n_{\rho}])^{\IZ/p}$, namely, to be the composite
$A_n(T_{\rho}^n)^{\IZ/p} \circ \pr \circ  \eta_{n+l+1}(M)$.

We have explained all modules and maps in the 
diagram~\eqref{big_commutative_surgery_diagram}. One easily checks that it commutes.
Hence we have accomplished Goal 1).

\begin{lemma}\label{lem:comparing_periodic_structure_sets_of_M_and_BGamma}
Consider the following commutative square
\[
\xymatrix@!C=9em{
\strp_{n+l+1}(M) \ar[r]^{\strp_{n+l+1}(f)} \ar[d]^-{\eta_{n+l+1}(M)}
&
\strp_{n+l+1}(B\Gamma)   \ar[d]^-{\eta_{n+l+1}(B\Gamma)} 
\\
H_{n+l}(M;\bfL(\Z))  \ar[r]^{H_{n+l}(f;\bfL(\Z))}
&
H_{n+l}(B\Gamma;\bfL(\Z))
} 
\]
where the vertical maps come from~\eqref{algebraic_surgery_sequence}.

Then the  vertical maps induce an isomorphism
\begin{eqnarray*}
\ker\left(\strp_{n+l+1}(f)\right)
& \xrightarrow{\cong} &
\ker\left(H_{n+l}(f;\bfL(\Z))\right).
\end{eqnarray*}
\end{lemma}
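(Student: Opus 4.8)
The plan is to run a diagram chase on the map between the algebraic surgery exact sequences~\eqref{algebraic_surgery_sequence} for $M$ and for $B\Gamma$ induced by $f\colon M\to B\Gamma$. Since $f$ induces an isomorphism $\pi_1(M)\xrightarrow{\cong}\Gamma=\pi_1(B\Gamma)$, the induced maps on $L^s$-groups are isomorphisms, which we identify with $\id_{L^s_\ast(\IZ\Gamma)}$; naturality of the assembly map then yields $A_m(M)=A_m(B\Gamma)\circ H_m(f;\bfL(\Z))$ for all $m$, and $\xi_m(M),\eta_m(M)$ are compatible with their $B\Gamma$-counterparts. Surjectivity of $\eta_{n+l+1}(M)$ from $\ker\bigl(\strp_{n+l+1}(f)\bigr)$ onto $\ker\bigl(H_{n+l}(f;\bfL(\Z))\bigr)$ is then formal: if $H_{n+l}(f;\bfL(\Z))(b)=0$ then $A_{n+l}(M)(b)=A_{n+l}(B\Gamma)\bigl(H_{n+l}(f;\bfL(\Z))(b)\bigr)=0$, so $b=\eta_{n+l+1}(M)(a_0)$ for some $a_0$ by exactness; since $\eta_{n+l+1}(B\Gamma)\bigl(\strp_{n+l+1}(f)(a_0)\bigr)=H_{n+l}(f;\bfL(\Z))(b)=0$ we may write $\strp_{n+l+1}(f)(a_0)=\xi_{n+l+1}(B\Gamma)(y)$, and $a:=a_0-\xi_{n+l+1}(M)(y)$ lies in $\ker\bigl(\strp_{n+l+1}(f)\bigr)$ with $\eta_{n+l+1}(M)(a)=b$.

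For injectivity, a chase identifies $\ker\bigl(\eta_{n+l+1}(M)\bigr)\cap\ker\bigl(\strp_{n+l+1}(f)\bigr)$ with $\xi_{n+l+1}(M)\bigl(\ker\xi_{n+l+1}(B\Gamma)\bigr)=\xi_{n+l+1}(M)\bigl(\im A_{n+l+1}(B\Gamma)\bigr)$, so the claim is equivalent to $\im A_{n+l+1}(B\Gamma)\subseteq\im A_{n+l+1}(M)$ inside $L^s_{n+l+1}(\IZ\Gamma)$. By the naturality identity $A_{n+l+1}(M)=A_{n+l+1}(B\Gamma)\circ H_{n+l+1}(f;\bfL(\Z))$, it therefore suffices to prove that $H_{n+l+1}(f;\bfL(\Z))$ is \emph{surjective}; this is where the previous section is used and is the main obstacle (everything above is formal).

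To prove that surjectivity: since $n=k(p-1)$ is even and $l$ is odd, the total degree $n+l+1$ is even, so every bidegree $(i,j)$ with $i+j=n+l+1$ has $i+j$ even. Hence the vanishing $H_i(\IZ/p;H_j(T^n_\rho;\bfL(\Z)))=0$ for $i\ge 1$ and $i+j$ even — which is exactly the input isolated in the proof of Lemma~\ref{lem:vanishing_of_differentials_in_three_spectral_sequences} (it holds after inverting $p$ for trivial reasons and after localizing at $p$ by the computation in~\cite{Davis-Lueck(2013)}), combined with the collapse of the two Leray--Serre spectral sequences there — shows that the spectral sequence for $B\Gamma$ is concentrated in filtration zero in total degree $n+l+1$, so $H_{n+l+1}(B\Gamma;\bfL(\Z))\cong E^\infty_{0,n+l+1}(B\Gamma)=H_{n+l+1}(T^n_\rho;\bfL(\Z))_{\IZ/p}$. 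In the spectral sequence for $M$ the filtration quotients $E^\infty_{i,n+l+1-i}(M)=H_i(\IZ/p;H_{n+l+1-i}(T^n_\rho;\bfL(\Z)))$ vanish for $1\le i\le l-1$ for the same reason, while $E^\infty_{l,n+1}(M)$ maps to $E^\infty_{l,n+1}(B\Gamma)=0$; thus $f$ carries the subgroup $E^\infty_{0,n+l+1}(M)\cong H_{n+l+1}(T^n_\rho;\bfL(\Z))_{\IZ/p}$ of $H_{n+l+1}(M;\bfL(\Z))$ isomorphically onto $H_{n+l+1}(B\Gamma;\bfL(\Z))$. In particular $H_{n+l+1}(f;\bfL(\Z))$ is onto, which completes the injectivity and hence the lemma.

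The delicate point, and the only non-formal one, is thus the surjectivity of $H_{n+l+1}(f;\bfL(\Z))$: it rests entirely on the group-homology vanishing for the $\IZ[\IZ/p]$-modules $H_\ast(T^n_\rho;\bfL(\Z))$ in even total degree and on keeping track of the filtrations that the two Leray--Serre spectral sequences put on $H_{n+l+1}(M;\bfL(\Z))$ and $H_{n+l+1}(B\Gamma;\bfL(\Z))$.
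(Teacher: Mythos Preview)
Your proof is correct. Both you and the paper reduce the lemma, via the same diagram chase on the ladder of algebraic surgery exact sequences, to the surjectivity of $H_{n+l+1}(f;\bfL(\Z))$. The difference is in how that surjectivity is established.

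The paper argues separately after localizing at $p$ and after inverting $p$. At $p$ it passes to $KO$-theory via Theorem~\ref{the:Comparing_L-theory_and_topological_K-theory} and invokes the surjectivity of $KO_k(B\IZ^n)\to KO_k(B\Gamma)$ for $k$ even from~\cite{Davis-Lueck(2013)}; away from $p$ it uses a transfer argument. You instead work integrally with the two Leray--Serre spectral sequences already set up in Lemma~\ref{lem:vanishing_of_differentials_in_three_spectral_sequences}: since $n+l+1$ is even, the vanishing $H_i\bigl(\IZ/p;H_j(T^n_\rho;\bfL(\Z))\bigr)=0$ for $i\ge 1$ and $i+j$ even (isolated in that lemma's proof) forces $H_{n+l+1}(B\Gamma;\bfL(\Z))$ to coincide with its filtration-zero piece $H_{n+l+1}(T^n_\rho;\bfL(\Z))_{\IZ/p}$, which is hit isomorphically by the filtration-zero piece of $H_{n+l+1}(M;\bfL(\Z))$. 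This is a genuinely more economical argument: it stays inside the spectral-sequence framework already in place and avoids re-importing the $KO$-theoretic input. One small remark: your sentence about $E^\infty_{l,n+1}(M)$ mapping to $E^\infty_{l,n+1}(B\Gamma)=0$ is true but not needed for surjectivity --- the filtration-zero piece alone already does the job.
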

\begin{proof}
Consider the following commutative diagram of abelian groups which comes from~\eqref{algebraic_surgery_sequence}
\[
\xymatrix@!C=13em{%
H_{n+l+1}(M;\bfL(\Z)) \ar[r]^{H_{n+l+1}(f;\bfL(\Z))} \ar[d]^{A_{n+l+1}(M)}
&
H_{n+l+1}(B\Gamma;\bfL(\Z)) \ar[d]^{A_{n+l+1}(B\Gamma)}
\\
L_{n+l+1}^s(\IZ\Gamma) \ar[r]^{\id} \ar[d]^{\xi_{n+l+1}(M)} 
&
L_{n+l+1}^s(\IZ\Gamma)  \ar[d]^{\xi_{n+l+1}(B\Gamma)} 
\\
\strp_{n+l+1}(M) \ar[r]^{\strp_{n+l+1}(f)} \ar[d]^-{\eta_{n+l+1}(M)}
&
\strp_{n+l+1}(B\Gamma)  \ar[d]^-{\eta_{n+l+1}(B\Gamma)}
\\
H_{n+l}(M;\bfL(\Z)) \ar[r]^{H_{n+l}(f;\bfL(\Z))} \ar[d]^{A_{n+l}(M)}
&
H_{n+l}(B\Gamma;\bfL(\Z)) \ar[d]^{A_{n+l}(B\Gamma)}
\\
L_{n+l}^s(\IZ\Gamma) \ar[r]^{\id} 
&
L_{n+l}^s(\IZ\Gamma).
}
\]
An easy diagram chase shows that it suffices to show that the map
\[
H_{n+l+1}(f;\bfL(\Z)) \colon H_{n+l+1}(M;\bfL(\Z)) \to H_{n+l+1}(B\Gamma;\bfL(\Z))
\]
is surjective. We prove surjectivity after localizing at $p$ and 
after inverting $p$.  

We begin with localizing at $p$.
Since $p$ is odd,  for every $CW$-complex $X$ and
$m \in \IZ$ there is a natural isomorphism, see Theorem~\ref{the:Comparing_L-theory_and_topological_K-theory},
\[
H_{m}(X;\bfL(\Z))_{(p)} \xrightarrow{\cong} KO_m(X)_{(p)}.
\]
Hence it suffices to show that
\[
KO_{n+l+1}(f)_{(p)} \colon KO_{n+l+1}(M)_{(p)} \to KO_{n+l+1}(B\Gamma)_{(p)}
\]
is surjective. This follows from the following commutative diagram
\[
\xymatrix{%
KO_{n+l+1}(S^l \times B\IZ^n) \ar[r] \ar[d]^{KO_{n+l+1}(\pr)}
&
KO_{n+l+1}(M) \ar[d]^{KO_{n+l+1}(f)}
\\
KO_{n+l+1}(B\IZ^n) \ar[r] 
&
KO_{n+l+1}(B\Gamma) 
}
\]
since the left vertical arrow is induced by the projection and is hence surjective and the
lower horizontal arrow is surjective, as $KO_{k}(B\IZ^n) \to KO_{k}(B\Gamma)$ is surjective 
for $k$ even by~\cite[Theorem 6.1~(ii)]{Davis-Lueck(2013)}.

Next we invert $p$.  Then a standard transfer argument (see, for example, Proposition A.4
of~\cite{Davis-Lueck(2013)}) shows that
\[
H_m(B\IZ^n_{\rho};\bfL(\Z)) \otimes_{\IZ[\IZ/p]} \IZ[1/p] 
\xrightarrow{\cong} 
H_m(B\Gamma;\bfL(\Z))[1/p].
\]
is an isomorphism for all $m \in \Z$.  It follows, as above, that
$H_{n+l+1}(M;\bfL(\Z)) \to H_{n+l+1}(B\Gamma;\bfL(\Z))$ is surjective, as desired.
\end{proof}

\begin{lemma}\label{lem:comparing:_kernel_onH_and_E}
The composite
\[
H_{n+l}(M;\bfL(\Z)) \xrightarrow{\id} F_{l,n}(M) \xrightarrow{\pr}
E^{\infty}_{l,n}(M)
\]
induces a bijection
\begin{multline*}
\ker\bigl(H_{n+l}(f;\bfL(\Z)) \colon H_{n+l}(M;\bfL(\Z)) 
\to  H_{n+l}(B\Gamma;\bfL(\Z))\bigr)
\\
\xrightarrow{\cong} 
\ker\bigl( E^{\infty}_{l,n}(f) \colon E^{\infty}_{l,n}(M) \to
  E^{\infty}_{l,n}(B\Gamma)\bigr).
\end{multline*}
\end{lemma}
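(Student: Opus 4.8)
The plan is to recognize the composite in the lemma as an explicit quotient map and then feed a short diagram chase with the vanishing results already proved.

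First I would observe that, since $S^l$ is $l$-dimensional, the Leray--Serre spectral sequence $E^2_{i,j}(M) = H^{\IZ/p}_i\bigl(S^l;H_j(T^n_\rho;\bfL(\Z))\bigr) \Longrightarrow H_{i+j}(M;\bfL(\Z))$ is concentrated in filtration degrees $0 \le i \le l$, so that $F_{l,n}(M) = H_{n+l}(M;\bfL(\Z))$ --- this is precisely why $\inc$ is an isomorphism in~\eqref{big_commutative_surgery_diagram} --- and hence the composite named in the lemma is the canonical surjection $q_M \colon H_{n+l}(M;\bfL(\Z)) \twoheadrightarrow E^{\infty}_{l,n}(M)$ with kernel the filtration term $F_{l-1,n+1}(M)$. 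Since $f$ is induced by the $\IZ/p$-equivariant classifying map $S^l \to E\IZ/p$, it respects skeletal filtrations, so $H_{n+l}(f;\bfL(\Z))$ carries $F_{\bullet}(M)$ into $F_{\bullet}(B\Gamma)$ and induces $E^{\infty}_{l,n}(f)$ on the top subquotient.

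The key step is to show that $H_{n+l}(f;\bfL(\Z))$ restricts to an \emph{isomorphism} $F_{l-1,n+1}(M) \xrightarrow{\cong} F_{l-1,n+1}(B\Gamma)$ of the lower filtration terms. For this I would invoke Lemma~\ref{lem:vanishing_of_differentials_in_three_spectral_sequences}: all differentials in both spectral sequences vanish, so $E^2 = E^{\infty}$ in each; and, as recorded in the proof of that lemma around~\eqref{Gorillaz}, the comparison map $E^{\infty}_{i,j}(M) \to E^{\infty}_{i,j}(B\Gamma)$ is bijective for $i \le l-1$ because $S^l \to E\IZ/p$ is $l$-connected. An induction up the filtration, using the short exact sequences $0 \to F_{i-1} \to F_i \to E^{\infty}_{i,\,n+l-i} \to 0$ together with the five lemma, then yields the claimed isomorphism of the $(l-1)$st filtration terms.

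Granting that, the lemma follows by a diagram chase. For injectivity, an element of $\ker\bigl(H_{n+l}(f;\bfL(\Z))\bigr)$ killed by $q_M$ lies in $F_{l-1,n+1}(M)$, on which $H_{n+l}(f;\bfL(\Z))$ is injective, so it is zero. For surjectivity, I would lift a class $\bar y \in \ker\bigl(E^{\infty}_{l,n}(f)\bigr)$ to some $y \in H_{n+l}(M;\bfL(\Z))$; then $H_{n+l}(f;\bfL(\Z))(y)$ lands in $F_{l,n}(B\Gamma)$ and dies in $E^{\infty}_{l,n}(B\Gamma) = F_{l,n}(B\Gamma)/F_{l-1,n+1}(B\Gamma)$, hence lies in $F_{l-1,n+1}(B\Gamma)$; by the surjectivity half of the isomorphism above I would correct $y$ by an element of $F_{l-1,n+1}(M)$ with the same image, obtaining a preimage of $\bar y$ inside $\ker\bigl(H_{n+l}(f;\bfL(\Z))\bigr)$. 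I do not expect a genuine obstacle here: the mathematical content sits entirely in Lemma~\ref{lem:vanishing_of_differentials_in_three_spectral_sequences} and in the $l$-connectivity of $S^l \to E\IZ/p$, and the only delicate point is keeping the filtration indices straight --- in particular using that for the bundle over $S^l$ the top filtration term already exhausts $H_{n+l}$.
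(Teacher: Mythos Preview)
Your proposal is correct and follows essentially the same route as the paper: both use Lemma~\ref{lem:vanishing_of_differentials_in_three_spectral_sequences} and the $l$-connectivity of $S^l \to E\IZ/p$ to conclude that $F_{l-1,n+1}(f)$ is an isomorphism, then combine this with $F_{l,n}(M) = H_{n+l}(M;\bfL(\Z))$ to finish. The paper is terser, writing only the injectivity step explicitly (the intersection $F_{l-1,n+1}(M) \cap \ker H_{n+l}(f;\bfL(\Z)) = 0$) and leaving the surjectivity diagram chase you spell out to the reader.
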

\begin{proof}
We have shown in 
Lemma~\ref{lem:vanishing_of_differentials_in_three_spectral_sequences}~%
\ref{lem:vanishing_of_differentials_in_three_spectral_sequences:(1)}
and~\ref{lem:vanishing_of_differentials_in_three_spectral_sequences:(2)}
that all the differentials of the spectral sequence converging to $H_{i+j}(B\Gamma;\bfL(\Z))$ with 
$E^2_{i,j} = H_i\bigl(\IZ/p;H_j(T^n_{\rho};\bfL(\Z))\bigr)$ and all the differentials  of the
spectral sequence  converging to $H_{i+j}(M;\bfL(\Z))$ with 
$E^2_{i,j} = H_i^{\IZ/p}\bigl(S^l;H_j(T^n_{\rho};\bfL(\Z))\bigr)$
vanish. The map
\[
E^2_{i,j}(f) \colon E^2_{i,j}(M) \to E^2_{i,j}(B\Gamma)
\]
is bijective for $i \le l-1$ and all $j$ since the map $S^l \to E\IZ/p$ is
$l$-connected. Hence the map
\[
F_{i,j}(f) \colon F_{i,j}(M) \to F_{i,j}(B\Gamma)
\]
is bijective for $i \le l-1$ and all $j$. This implies
\[
F_{l-1,n+1}(M) \cap \ker\bigl(H_{n+l}(f;\bfL(\Z)) \colon H_{n+l}(M;\bfL(\Z)) 
\to  H_{n+l}(B\Gamma;\bfL(\Z))\bigr) = 0.
\]
Since $S^l$ is $l$-dimensional and hence $H_{n+l}(M;\bfL(\Z)) =
F_{l,n}(M)$, Lemma~\ref{lem:comparing:_kernel_onH_and_E} follows.
\end{proof}

Lemma~\ref{lem:comparing_periodic_structure_sets_of_M_and_BGamma} and
Lemma~\ref{lem:comparing:_kernel_onH_and_E} give Goal 2) and hence complete the proof of
 assertion~\ref{the:The_periodic_simple_structure_set_of_M:inj_map_strp} 
of Theorem~\ref{the:The_periodic_simple_structure_set_of_M}
\end{proof}

\begin{proof}[Proof of assertion~\ref{the:The_periodic_simple_structure_set_of_M:image}
of Theorem~\ref{the:The_periodic_simple_structure_set_of_M}]
Consider the commutative diagram
\begin{equation}\label{technical_commutative}
\xymatrix@!C=9em{%
 F_{l-1,n+l}(M) \ar[r]^{\cong} \ar[d]_{\inc} 
& 
F_{l-1,n+l}(B\Gamma) \ar[d]^{\cong_{1/p}} 
\\
H_{n+l}(M;\bfL(\Z)) \ar[d]_{A_{n+l}(M)} \ar[r]^{H_{n+l}(f;\bfL(\Z))}
& 
H_{n+l}(B\g; \bfL(\Z)) \ar[d]^{\cong_{1/p}} \ar[dl]^{A_{n+l}(B\g)}
\\
 L_{n+l}^s (\Z \g) \ar[r]_{\cong} 
& 
H_{n+l}(\bub \g; \bfL(\Z))
}
\end{equation}
where the two maps ending at $L_{n+l}^s (\Z \g)$ are the assembly maps $A_{n+l}(M)$ and
$A_{n+l}(B\g)$ and bottom horizontal isomorphism is discussed in 
Theorem~\ref{the:L(ZGamma)_decorated}~\ref{the:L(ZGamma)_decorated:odd}.
We have already explained that the top horizontal arrow is an isomorphism, see~\eqref{Gorillaz}.  
The spectral sequence appearing in
Lemma~\ref{lem:vanishing_of_differentials_in_three_spectral_sequences}~%
\ref{lem:vanishing_of_differentials_in_three_spectral_sequences:(1)}
implies that the vertical inclusion at the top right induces an isomorphism after
tensoring with $\Z[1/p]$. We conclude from Lemma~\ref{lem:calh(bub(Gamma))_to_calh(Zn)Z/p}
that $H_{n+l}(B\g; \bfL(\Z))[1/p] \to H_{n+l}(\bub\g; \bfL(\Z))[1/p]$ is an isomorphism.
 
Now the diagram~\eqref{technical_commutative} shows that
\[
\left(A_{n+l}(M) \circ \inc\right)[1/p] \colon  
F_{l-1,n+1}(M)[1/p] \xrightarrow{\cong} L_{n+l}^s(\IZ \Gamma)[1/p]
\]
is bijective.

We have the following commutative diagram
\begin{eqnarray}
\label{diagram_SHLsF}
& & \xymatrix@!C=11em{
& 
0 \ar[d] 
& 
\\ 
& 
F_{l-1,n+1}(M) \ar[d]^{\inc} \ar[dr]^{\hspace{3mm}A_{n+l}(M) \circ \inc} 
&
\\
\strp_{n+l+1}(M) \ar[r]^{\eta_{n+l+1}(M)} \ar[dr]^{\mu}
&
H_{n+l}(M;\bfL(\Z)) \ar[r]^{A_{n+l}(M)} \ar[d]^{A_{n+l}(T^n_{\rho})^{\IZ/p} \circ \pr}
&
L_{n+l}^s(\IZ \Gamma)
\\
&
L_n(\IZ[\IZ^n_{\rho}])^{\IZ/p} \ar[d]
&
\\
& 
0
&
}
\end{eqnarray}
 with exact row and exact column. An easy diagram chase proves that
\[
\mu[1/p] \colon \strp_{n+l+1}(M)[1/p] \to L_n(\IZ[\IZ^n_{\rho}])^{\IZ/p}[1/p]
\]
is surjective and we get
\begin{equation}
\label{equality_of_certain_kernels}
\ker(\mu[1/p]) = \ker(\eta_{n+l+1}(M)[1/p]).
\end{equation}
Since $L_n^s(\IZ[\IZ^n_{\rho}])$ is a finitely generated abelian group, see 
Theorem~\ref{the:The_periodic_structure_set_of_BP_for_a_finite_p-group}~%
\ref{the:The_periodic_structure_set_of_BP_for_a_finite_p-group:point},
the cokernel of $\mu$ is a finite abelian $p$-group.
Recalling that the cokernel of $\sigma$ is isomorphic to the cokernel of $\mu$, his finishes the proof  of assertion~\ref{the:The_periodic_simple_structure_set_of_M:image}
of Theorem~\ref{the:The_periodic_simple_structure_set_of_M}
\end{proof}
\begin{proof}[Proof of assertion~\ref{the:The_periodic_simple_structure_set_of_M:kernel}
of Theorem~\ref{the:The_periodic_simple_structure_set_of_M}]
The composite of 
\[
\nu \colon \bigoplus_{(P) \in \calp} \widetilde{L}^s_{n+l+1}(\IZ P) \to \strp_{n+l+1}(M)
\]
with
\[
\strp_{n+l+1}(f) \colon \strp_{n+l+1}(M) \to \strp_{n+l+1}(B\Gamma)
\]
becomes an isomorphism after inverting $p$ since the composite is also the composite of
\[
\bigoplus_{(P) \in \calp} \widetilde{L}^s_{n+l+1}(\IZ P) \to \bigoplus_{(P) \in \calp} \strp_{n+l+1}(BP) \to \strp_{n+l+1}(B\g),
\]
where the first map is an isomorphism after inverting $p$ by 
Theorem~\ref{the:The_periodic_structure_set_of_BP_for_a_finite_p-group}~%
\ref{the:The_periodic_structure_set_of_BP_for_a_finite_p-group:localization}
and the second map is an isomorphism by 
Theorem~\ref{the:Computation_of_strp(BGamma)}~%
\ref{the:Computation_of_strp(BGamma):induction_iso}.
Hence $\nu$ is injective after inverting $p$.  Since
$\widetilde{L}^s_{n+l+1}(\IZ P )$ is a finitely generated free abelian group, $\nu$ is
injective.

Note
\[\im(\nu) \subset \im(\wt \xi_{n+l+1}(M)) = \im(\xi_{n+l+1}(M)) =
\ker(\eta_{n+l+1}(M))\subset \ker \mu,
\]
where the first equality holds since the
simply-connected surgery exact sequence is short exact and the second equality holds
because of the periodic surgery exact sequence~\eqref{algebraic_surgery_sequence}.
We have  shown in~\eqref{equality_of_certain_kernels}
that $\ker(\mu[1/p]) = \ker(\eta_{n+l+1}(M)[1/p])$ holds. Hence
$\ker(\mu)/\ker \eta_{n+l+1}(M) = \ker(\mu)/\im(\xi_{n+l+1}(M))$ is a $p$-torsion abelian
group.  Since $\strp_{n+l+1}(M)$ is a finitely generated abelian group because of the
surgery exact sequence~\eqref{algebraic_surgery_sequence}, we conclude that
$\ker(\mu)/\im(\xi_{n+l+1}(M))$ is a finite abelian $p$-group.

Next we consider the following commutative diagram
\[
\xymatrix@!C=10em{
&
0 \ar[d]
&
\\
& 
\bigoplus_{(P) \in \calp} \widetilde{L}^s_{n+l+1}(\IZ P) \ar[d] \ar[dr]^{\nu}
&
\\
H_{n+l+1}(B\Gamma;\bfL(\Z)) \ar[r]^{A_{n+l+1}(B\Gamma)} \ar[dr]_{H_{n+l+1}(i;\bfL(\Z))}
& 
L^s_{n+l+1}(\IZ \Gamma) \ar[r]^{\xi_{n+l+1}(B\Gamma)} \ar[d]
& \strp_{n+l+1}(M)
\\
& H_{n+l+1}(\bub{\Gamma};\bfL(\Z))  \ar[d]
&
\\
& 
0
&
}
\]
Here the exact column is taken from 
Theorem~\ref{the:L(ZGamma)_decorated}~\ref{the:L(ZGamma)_decorated:exact_sequence}, 
the row is exact because of the 
surgery exact sequence~\eqref{algebraic_surgery_sequence} for $M$ and the surjectivity
of $H_{n+l+1}(M;\bfL(\Z)) \to H_{n+l+1}(B\Gamma;\bfL(\Z))$
which we have shown in the proof of 
Lemma~\ref{lem:comparing_periodic_structure_sets_of_M_and_BGamma}.
The map $H_{n+l+1}(i;\bfL(\Z))$ induced by the obvious map $i \colon B\Gamma \to
\bub{\Gamma}$ is bijective after inverting $p$, see
the proof of part~\ref{the:The_periodic_simple_structure_set_of_M}~%
\ref{the:The_periodic_simple_structure_set_of_M:image}.
Since there is a finite $CW$-model for $\bub{\Gamma}$ and hence
$H_{n+l+1}(\bub{\Gamma};\bfL(\Z))$ is finitely generated, the cokernel of
$H_{n+l+1}(i;\bfL(\Z))$ is a finite abelian $p$-group.
Since this cokernel is isomorphic to 
$\im(\xi_{n+l+1}(B\Gamma))/\im(\nu)$ by the commutative diagram above, we have shown that
both $\ker(\mu)/\im(\xi_{n+l+1}(M))$ and $\im(\xi_{n+l+1}(B\Gamma))/\im(\nu)$
are finite abelian $p$-groups. Therefore $\ker(\mu)/\im(\nu)$ is a finite abelian
$p$-group.  Recalling that $\ker(\mu)= \ker (\sigma)$, this finishes the proof  of
assertion~\ref{the:The_periodic_simple_structure_set_of_M:kernel}
of Theorem~\ref{the:The_periodic_simple_structure_set_of_M}.
\end{proof}

\begin{proof}[Proof of assertion~\ref{the:The_periodic_simple_structure_set_of_M:inverting_p}
of Theorem~\ref{the:The_periodic_simple_structure_set_of_M}]
Because of assertion~\ref{the:The_periodic_simple_structure_set_of_M:inj_map_strp}
it suffices to show that
\[
\left(\mu \times \strp_{n+l+1}(f)\right)[1/p] \colon \strp_{n+l+1}(M)[1/p] \to 
L_n(\IZ[\IZ^n_{\rho}])^{\IZ/p}[1/p]  \times \strp_{n+l+1}(B\Gamma)[1/p]
\]
is surjective. We have already shown that $\mu[1/p]$ is surjective and that
its kernel agrees with the image of $\nu[1/p]$. We have already explained
that $\strp_{n+l+1}(f)[1/p] \circ \nu[1/p]$ is an isomorphism.
Assertion~\ref{the:The_periodic_simple_structure_set_of_M:inverting_p} 
follows. 
\end{proof}

\begin{proof}[Proof of assertion~\ref{the:The_periodic_simple_structure_set_of_M:comp}]
If we invert $p$, we conclude from
assertion~\ref{the:The_periodic_simple_structure_set_of_M:inverting_p} 
using Theorem~\ref{the:Computation_of_strp(BGamma)}~%
\ref{the:Computation_of_strp(BGamma):induction_iso}, 
Theorem~\ref{the:The_periodic_structure_set_of_BP_for_a_finite_p-group}, 
Lemma~\ref{lem:preliminaries_about_Gamma_and_Zn_rho}~%
\ref{lem:preliminaries_about_Gamma_and_Zn_rho:order_of_calp},
\begin{eqnarray*}
\strp_{n+l+1}(M)[1/p] &\cong & 
\IZ^{p^k(p-1)/2}[1/p] \oplus L_{n}(\IZ[\IZ^n_{\rho}])^{\IZ/p}[1/p].
\end{eqnarray*}
The abelian groups 
$\strp_{n+l+1}(M)$ and $\IZ^{p^k(p-1)/2} \oplus L_{n}(\IZ[\IZ^n_{\rho}])^{\IZ/p}$
are finitely generated. The abelian group
$\IZ^{p^k(p-1)/2} \oplus L_{n}(\IZ[\IZ^n_{\rho}])^{\IZ/p}$ contains no $p$-torsion.
We conclude from assertion~\ref{the:The_periodic_simple_structure_set_of_M:inj_map_strp}
that the abelian group $\strp_{n+l+1}(M)$ contains no $p$-torsion.
Hence 
\begin{eqnarray*}
\strp_{n+l+1}(M) &\cong & 
\IZ^{p^k(p-1)/2} \oplus L_{n}(\IZ[\IZ^n_{\rho}])^{\IZ/p}.
\end{eqnarray*}
It remains to prove
\begin{eqnarray*}
L_{n+l}(\IZ[\IZ^n_{\rho}])^{\IZ/p} & \cong &
\bigoplus_{i = 0}^n L_{n-i}(\IZ)^{r_i}.
\end{eqnarray*}
This follows from Remark~\ref{remark:points_of_view_sigma}.  
This finishes the proof of
Theorem~\ref{the:The_periodic_simple_structure_set_of_M}.
\end{proof}

%%%%%%%%%%%%%%%%%%%%%%%%%%%%%%%%%%%%%%%%%%%%%%%%%%%%%%%%%%%%%%%%%%%%%%%%%%%%%%%%%%%%%%
%%%%%%%%%%%%%%%%%%%%%%%%%%%% The geometric simple structure set of M %%%%%%%%%%%%%%%%%%%%%%%%%%%%%%
%%%%%%%%%%%%%%%%%%%%%%%%%%%%%%%%%%%%%%%%%%%%%%%%%%%%%%%%%%%%%%%%%%%%%%%%%%%%%%%%%%%%%%

\typeout{-------------- The geometric simple structure set of M  -------------------------}

%%%%%%%%%%%%%%%%%%%%%%%%%%%%%%%%%%%%%%%%%%%%%%%%%%%%%%%%%%%%%%%%%%%%%%%%%%%%%%%%%%%%%%

\section{The geometric simple structure set of $M$}
\label{sec:The_geometric_simple_structure_set_of_M}

In this section we compute the geometric simple structure set
$\strg(M)$ of $M$.  For the rest of this paper, we simplify our notation and write
$\bfL$ for the spectrum $\bfL(\Z)$ and $\bfL\langle 1 \rangle$ for its
1-connective cover.

In general we have the following relationship between the periodic and
the geometric simple structure set.

\begin{lemma}
\label{lem:strg_strp_injective}
Let $N$ be a connected oriented closed  $m$-dimensional manifold. Then we obtain an exact sequence
\[
0 \to \strg(N) \xrightarrow{j(N)} \strp_{m+1}(N)  \xrightarrow{\partial(N)} H_m(N;\bfL/\bfL\langle 1 \rangle)
\]
where the map $j(N)$ is taken from~\eqref{j(N)}, the map $\partial(N)$ factors as
$\strp_{m+1}(N) \xrightarrow{\eta_{m+1}(N)} H_m(N;\bfL) \to H_m(N;\bfL/\bfL\langle 1\rangle)$, and
we have $H_m(N;\bfL/\bfL\langle 1\rangle) \cong H_m(N;L_0(\Z)) \cong \Z$.
\end{lemma}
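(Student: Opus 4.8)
The plan is to deduce the lemma from the cofibration sequence of spectra $\bfL\langle 1\rangle \xrightarrow{\bfp} \bfL \xrightarrow{q} \bfL/\bfL\langle 1\rangle$ together with the naturality of the algebraic surgery exact sequences~\eqref{algebraic_surgery_sequence} and~\eqref{algebraic_surgery_sequence_one_connective} in the coefficient spectrum. First I would compute the homotopy groups of the cofibre: since $\pi_i(\bfp)$ is an isomorphism for $i\ge 1$ and $\pi_i(\bfL\langle 1\rangle)=0$ for $i\le 0$, the long exact homotopy sequence of the cofibration gives $\pi_i(\bfL/\bfL\langle 1\rangle)=0$ for $i\ge 1$, $\pi_0(\bfL/\bfL\langle 1\rangle)\cong\pi_0(\bfL)=L_0(\Z)=\Z$, and $\pi_{-i}(\bfL/\bfL\langle 1\rangle)\cong L_{-i}(\Z)$ for $i\ge 1$; thus the cofibre spectrum is concentrated in non-positive degrees. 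Feeding this into the Atiyah-Hirzebruch spectral sequence $E^2_{p,q}=H_p(N;\pi_q(\bfL/\bfL\langle 1\rangle))\Rightarrow H_{p+q}(N;\bfL/\bfL\langle 1\rangle)$ and using $p\le\dim N=m$ and $q\le 0$: every term with $p+q\ge m+1$ vanishes, so $H_{m+1}(N;\bfL/\bfL\langle 1\rangle)=0$; and in total degree $m$ only $(p,q)=(m,0)$ contributes, with all differentials into and out of $E^2_{m,0}$ zero (their sources lie in $H_{>m}(N;-)=0$, their targets in $H_*(N;\pi_{>0}(\bfL/\bfL\langle 1\rangle))=0$). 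Hence $H_m(N;\bfL/\bfL\langle 1\rangle)\cong H_m(N;L_0(\Z))=H_m(N;\Z)\cong\Z$, the last step because $N$ is connected, oriented, closed of dimension $m$.

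Next I would observe that $\bfp$ induces a morphism from~\eqref{algebraic_surgery_sequence_one_connective} to~\eqref{algebraic_surgery_sequence}: it is the identity on the common terms $L^s_*(\Z[\pi_1(N)])$ (the $1$-connective assembly map is, by construction, the composite of $\bfp_*$ with the periodic one, both having the same $L$-spectrum of the group ring as target), it is $\bfp_*\colon H_*(N;\bfL\langle 1\rangle)\to H_*(N;\bfL)$ on the homology terms, and it is $j(N)$ on the structure terms — this last point is exactly the definition~\eqref{j(N)} of $j(N)$, after the identification $\strg(N)\cong\strone_{m+1}(N)$ of~\eqref{ident_structure_sets}. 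From the long exact homology sequence of $\bfL\langle 1\rangle\to\bfL\to\bfL/\bfL\langle 1\rangle$ and the vanishing of $H_{m+1}(N;\bfL/\bfL\langle 1\rangle)$, the map $\bfp_*$ is surjective on $H_{m+1}(N;-)$ and injective on $H_m(N;-)$, with cokernel $H_m(N;\bfL/\bfL\langle 1\rangle)$ realized by $q_*\colon H_m(N;\bfL)\to H_m(N;\bfL/\bfL\langle 1\rangle)$.

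The conclusion is then a short diagram chase between the two exact sequences. If $j(N)(x)=0$, then $\bfp_*(\eta^{\langle 1\rangle}_{m+1}(N)(x))=\eta_{m+1}(N)(j(N)(x))=0$, so $\eta^{\langle 1\rangle}_{m+1}(N)(x)=0$ by injectivity of $\bfp_*$ in degree $m$, hence $x=\xi^{\langle 1\rangle}_{m+1}(N)(y)$; then $\xi_{m+1}(N)(y)=j(N)(x)=0$, so $y=A_{m+1}(N)(z)$, and lifting $z$ along the surjection $\bfp_*$ to $w\in H_{m+1}(N;\bfL\langle 1\rangle)$ gives $y=A^{\langle 1\rangle}_{m+1}(N)(w)$ and $x=\xi^{\langle 1\rangle}_{m+1}(N)(y)=0$; so $j(N)$ is injective. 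One then sets $\partial(N):=q_*\circ\eta_{m+1}(N)$, which is the asserted factorization; $\partial(N)\circ j(N)=q_*\circ\bfp_*\circ\eta^{\langle 1\rangle}_{m+1}(N)=0$ since $q\circ\bfp\simeq *$, and if $\partial(N)(x)=0$ then $\eta_{m+1}(N)(x)\in\ker q_*=\im\bfp_*$, say $\eta_{m+1}(N)(x)=\bfp_*(a)$; since $A^{\langle 1\rangle}_m(N)(a)=A_m(N)(\bfp_*(a))=A_m(N)(\eta_{m+1}(N)(x))=0$ we get $a=\eta^{\langle 1\rangle}_{m+1}(N)(x')$, whence $\eta_{m+1}(N)(x-j(N)(x'))=0$, so $x-j(N)(x')=\xi_{m+1}(N)(b)=j(N)(\xi^{\langle 1\rangle}_{m+1}(N)(b))$ and $x\in\im j(N)$. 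I expect the only delicate point to be the identification in the second paragraph — that $\bfp$ really induces a map of the entire surgery exact sequence which is the identity on the $L^s$-groups and $j(N)$ on the structure sets — which I would justify from the spectrum-level description of these sequences as homotopy groups of cofibres of assembly maps with common target (or from Ranicki's identification recalled before~\eqref{j(N)}); everything else is formal, the only genuine input being the elementary homology computation of the first paragraph.
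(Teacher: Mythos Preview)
Your proof is correct and follows essentially the same approach as the paper: both set up the map of algebraic surgery exact sequences induced by $\bfp\colon\bfL\langle 1\rangle\to\bfL$, use the Atiyah--Hirzebruch spectral sequence and the coconnectivity of $\bfL/\bfL\langle 1\rangle$ to show $H_{m+1}(N;\bfL/\bfL\langle 1\rangle)=0$ and $H_m(N;\bfL/\bfL\langle 1\rangle)\cong H_m(N;L_0(\Z))\cong\Z$, and then diagram chase. Your chase is more explicit than the paper's, which simply asserts ``A diagram chase yields the desired exact sequence''; and you are right that the only point requiring care is the identification of the map of exact sequences, which the paper handles exactly as you suggest, via the spectrum-level description.
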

\begin{proof}
We have the following commutative diagram with exact columns
\[
\xymatrix@!C=13em{
H_{m+1}(N;\bfL\langle 1 \rangle) \ar@{->>}[r] \ar[d]
&
H_{m+1}(N;\bfL) \ar[d]
\\
L_{m+1}^s(\IZ\Gamma) \ar[r]^{\id} \ar[d] 
&
L_{m+1}^s(\IZ\Gamma)  \ar[d]
\\
\strg(N) \ar[r]^{j(N)} \ar[d]
&
\strp_{m+1}(N)  \ar[d]
\\
H_{m}(N;\bfL\langle 1 \rangle ) \ar@{>->}[r] \ar[d]
&
H_{m}(N;\bfL) \ar[d]
\\
L_{m}^s(\IZ\Gamma) \ar[r]^{\id}
&
L_{m}^s(\IZ\Gamma),
}
\]
where the columns are the exact sequences~\eqref{algebraic_surgery_sequence}
and~\eqref{algebraic_surgery_sequence_one_connective}  using the identification~\eqref{ident_structure_sets}
and the horizontal maps are given by the passage 
$\bfL\langle 1 \rangle \to \bfL$. Let $\bfL/\bfL\langle 1 \rangle$ be the homotopy cofiber of
the canonical map $\bfi \colon \bfL\langle 1 \rangle \to \bfL$ and
denote by $\bfpr \colon \bfL \to \bfL/\bfL\langle 1 \rangle$
the canonical map of spectra. We get an exact sequence
\begin{multline*}
H_{m+1}(N;\bfL\langle 1 \rangle) \to H_{m+1}(N;\bfL)  
\to H_{m+1}(N;\bfL/\bfL\langle 1 \rangle) 
\\
\to H_{m}(N;\bfL\langle 1 \rangle) \to H_{m}(N;\bfL) \to H_{m}(N;\bfL/\bfL\langle 1 \rangle).
\end{multline*}
Since $\pi_q(\bfL/\bfL\langle 1 \rangle)$ vanishes for $q \ge 1$ and is $L_0(\IZ)$ for
$q = 0$, an easy spectral sequence argument shows that
$H_{m+1}(N;\bfL/\bfL\langle 1 \rangle) = 0$.  Thus the top horizontal map is surjective.
The fourth horizontal map is injective and its cokernel maps injectively to
$H_{m}(N;\bfL/\bfL\langle 1 \rangle) \cong H_m(N;L_0(\IZ)) \cong \Z$.  A diagram chase
yields the desired exact sequence. 
\end{proof}

Recall that $M = T^n_{\rho} \times_{\Z/p} S^l$ and $\Gamma = \Z^n \times_{\rho} \Z/p$
is $\pi_1(M)$.  Let $f \colon M \to B\Gamma$ be a classifying map for the universal
covering of $M$.

\begin{theorem}[The geometric simple structure set of $M$]\
\label{the:The_geometric_simple_structure_set_of_M}
There is a homomorphism
\[
\sigma^{\geo} \colon \strg(M) \to H_n(T^n;\bfL\langle 1 \rangle)^{\IZ/p},
\]
such that the following holds:  
\begin{enumerate}
\item\label{the:The_geometric_simple_structure_set_of_M:inj_map_strp}
The map
\[
\sigma^{\geo}  \times (\strp_{n+l+1}(f) \circ j(M))\colon \strg(M) \to 
H_n(T^n;\bfL\langle 1 \rangle)^{\IZ/p} \times \strp_{n+l+1}(B\Gamma)
\] 
is injective;
%splitting invariants of normal invariants

\item\label{the:The_geometric_simple_structure_set_of_M:image}
The cokernel of $\sigma^{\geo} $ is a finite abelian $p$-group;
%splitting and rho

\item\label{the:The_geometric_simple_structure_set_of_M:kernel}
Consider the composite 
\[
\nu^{\geo} \colon \bigoplus_{(P) \in \calp} \widetilde{L}^s_{n+l+1}(\IZ P) \to 
\widetilde{L}^s_{n+l+1}(\IZ \Gamma)
\xrightarrow{\widetilde{\xi}^{\langle 1 \rangle}_{n+l+1}(M)} \strg(M)
\]
where the first map is given by induction with the various inclusions $P \to \Gamma$
and $\widetilde{\xi}^{\langle 1 \rangle}_{n+l+1}(M)$ comes from~\eqref{reduced_surgery_sequence_geometric}.

Then $\nu^{\geo}$ is injective, the image of $\nu^{\geo}$ is contained in the kernel of $\sigma^{\geo}$ and
$\ker(\sigma^{\geo})/\im(\nu^{\geo})$ is a finite abelian $p$-group;

\item\label{the:The_geometric_simple_structure_set_of_M:inverting_p}
After inverting $p$ we obtain an isomorphism
\begin{multline*}
\hspace{20mm}  \left(\sigma^{\geo} \times (\strp_{n+l+1}(f) \circ j(M))\right)[1/p] \colon \strg(M)[1/p] 
\\
\to 
H_n(T^n;\bfL\langle 1 \rangle)^{\IZ/p}[1/p]  \times \strp_{n+l+1}(B\Gamma)[1/p];
\end{multline*}

\item\label{the:The_geometric_simple_structure_set_of_M:comp}
As an abelian group we have
\begin{eqnarray*}
\strg(M) &\cong & 
\IZ^{p^k(p-1)/2} \oplus \bigoplus_{i = 0}^{n-1} L_{n-i}(\IZ)^{r_i},
\end{eqnarray*}
where the numbers $r_i$ are defined in~\eqref{r-numbers};

\item\label{the:The_geometric_simple_structure_set_of_M:partial(M)}
The cokernel of the map
$\partial(M) \colon \strp_{d+l+1}(M) \xrightarrow{\eta_{d+l +1}(M)} H_{d+l}(M;\bfL/\bfL\langle 1\rangle)$
appearing in Lemma~\ref{lem:strg_strp_injective} is a finite cyclic $p$-group.

\end{enumerate}
\end{theorem}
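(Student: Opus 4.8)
The plan is to derive all six assertions from the periodic computation in Theorem~\ref{the:The_periodic_simple_structure_set_of_M} together with the exact sequence
\[
0 \to \strg(M) \xrightarrow{j(M)} \strp_{n+l+1}(M) \xrightarrow{\partial(M)} H_{n+l}(M;\bfL/\bfL\langle 1 \rangle) \cong \IZ
\]
of Lemma~\ref{lem:strg_strp_injective}, which identifies $\strg(M)$ with $\ker(\partial(M))$ and records that $\partial(M)$ is $\bigl(H_{n+l}(M;\bfL)\to H_{n+l}(M;\bfL/\bfL\langle 1\rangle)\bigr)\circ\eta_{n+l+1}(M)$. The first and principal step is to compare $\partial(M)$ with the map $\sigma$ of Theorem~\ref{the:The_periodic_simple_structure_set_of_M}. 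Since $\bfL/\bfL\langle 1\rangle$ is the Eilenberg--MacLane spectrum on $L_0(\IZ)=\IZ$, its $M$-homology is ordinary homology, and the target $H_{n+l}(M;\IZ)\cong\IZ$ lies entirely in the top filtration term $F_{l,n}(M;\IZ)$ of the Leray--Serre spectral sequence of $T^n_\rho\to M\to L^l$, because the preimage of the $(l-1)$-skeleton of $L^l$ is $(n+l-1)$-dimensional and so has no homology in degree $n+l$. Hence the filtration-preserving map $H_{n+l}(M;\bfL)=F_{l,n}(M;\bfL)\to H_{n+l}(M;\IZ)$ kills $F_{l-1,n+1}(M;\bfL)=\ker(\pr)$, so it factors through the identification $E^\infty_{l,n}(M;\bfL)\cong H_n(T^n_\rho;\bfL)^{\IZ/p}$ of diagram~\eqref{big_commutative_surgery_diagram} via the map $q^{\IZ/p}\colon H_n(T^n_\rho;\bfL)^{\IZ/p}\to H_n(T^n_\rho;\IZ)^{\IZ/p}\cong\IZ$ induced by $\bfL\to\bfL/\bfL\langle 1\rangle$ (the $\IZ/p$-action on $H_n(T^n_\rho;\IZ)$ is trivial since $\det\rho=1$). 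Unwinding the definition of $\sigma$ gives $\partial(M)=q^{\IZ/p}\circ\sigma$. Because $H_{n+1}(T^n;\IZ)=0$, the sequence $0\to H_n(T^n;\bfL\langle 1\rangle)\to H_n(T^n;\bfL)\xrightarrow{q}H_n(T^n;\IZ)\to 0$ is exact, and taking $\IZ/p$-invariants identifies $H_n(T^n;\bfL\langle 1\rangle)^{\IZ/p}$ with $\ker(q^{\IZ/p})$ through the injective natural map $\iota$. I then define $\sigma^{\geo}$ by $\iota\circ\sigma^{\geo}=\sigma\circ j(M)$; this is legitimate since $q^{\IZ/p}(\sigma(j(M)x))=\partial(M)(j(M)x)=0$ for all $x$.

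Granting $\partial(M)=q^{\IZ/p}\circ\sigma$, the remaining assertions fall out by diagram chases with the periodic theorem. By Remark~\ref{remark:points_of_view_sigma}, $q^{\IZ/p}$ is the projection of $H_n(T^n;\bfL)^{\IZ/p}\cong\bigoplus_{i=0}^n\bigl(H_i(T^n)^{\IZ/p}\otimes L_{n-i}(\IZ)\bigr)$ onto its summand $H_n(T^n)^{\IZ/p}\otimes L_0(\IZ)=\IZ$, so $q^{\IZ/p}$ is onto; since $\cok(\sigma)$ is a finite $p$-group by Theorem~\ref{the:The_periodic_simple_structure_set_of_M}~\ref{the:The_periodic_simple_structure_set_of_M:image}, $\cok(\partial(M))=\IZ/q^{\IZ/p}(\im\sigma)$ is a finite cyclic $p$-group, which is assertion~\ref{the:The_geometric_simple_structure_set_of_M:partial(M)}. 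Inverting $p$ makes $\partial(M)[1/p]$ onto $\IZ[1/p]$, so $\strg(M)[1/p]\cong\ker(\partial(M)[1/p])$; transporting this through the isomorphism $(\sigma\times\strp_{n+l+1}(f))[1/p]$ of Theorem~\ref{the:The_periodic_simple_structure_set_of_M}~\ref{the:The_periodic_simple_structure_set_of_M:inverting_p} and using $\ker(q^{\IZ/p})[1/p]=H_n(T^n;\bfL\langle 1\rangle)^{\IZ/p}[1/p]$ yields assertion~\ref{the:The_geometric_simple_structure_set_of_M:inverting_p}. For assertion~\ref{the:The_geometric_simple_structure_set_of_M:comp}, $\strg(M)$ embeds via $j(M)$ into the finitely generated, $p$-torsion-free group $\strp_{n+l+1}(M)$ (proof of Theorem~\ref{the:The_periodic_simple_structure_set_of_M}~\ref{the:The_periodic_simple_structure_set_of_M:comp}), hence is finitely generated and $p$-torsion-free, and comparison with the $p$-inverted answer together with $\ker(q^{\IZ/p})\cong\bigoplus_{i=0}^{n-1}L_{n-i}(\IZ)^{r_i}$ pins it down.

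The rest uses only that $j(M)$ and $\iota$ are injective, that $\ker\sigma\subseteq\ker(\partial(M))=j(M)(\strg(M))$, and that $j(M)\circ\nu^{\geo}=\nu$, the last coming from the ladder comparing~\eqref{algebraic_surgery_sequence_one_connective} with~\eqref{algebraic_surgery_sequence} in the proof of Lemma~\ref{lem:strg_strp_injective}. Assertion~\ref{the:The_geometric_simple_structure_set_of_M:inj_map_strp} follows since $(\iota\times\id)\circ\bigl(\sigma^{\geo}\times(\strp_{n+l+1}(f)\circ j(M))\bigr)=(\sigma\times\strp_{n+l+1}(f))\circ j(M)$, a composite of injective maps by Theorem~\ref{the:The_periodic_simple_structure_set_of_M}~\ref{the:The_periodic_simple_structure_set_of_M:inj_map_strp}. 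For assertion~\ref{the:The_geometric_simple_structure_set_of_M:image}: if $p^aH_n(T^n;\bfL)^{\IZ/p}\subseteq\im\sigma$ then $p^a\ker(q^{\IZ/p})\subseteq\im\sigma\cap\ker(q^{\IZ/p})=\im(\sigma^{\geo})$, so $\cok(\sigma^{\geo})$ is a finite $p$-group. For assertion~\ref{the:The_geometric_simple_structure_set_of_M:kernel}: $\nu^{\geo}$ is injective because $\nu=j(M)\circ\nu^{\geo}$ is; $\im(\nu^{\geo})\subseteq\ker(\sigma^{\geo})$ because $\iota\circ\sigma^{\geo}\circ\nu^{\geo}=\sigma\circ\nu=0$ and $\iota$ is injective; and $j(M)$ carries $\ker(\sigma^{\geo})$ isomorphically onto $\ker(\sigma)$ (using $\ker\sigma\subseteq\im j(M)$) and $\im(\nu^{\geo})$ onto $\im(\nu)$, so $\ker(\sigma^{\geo})/\im(\nu^{\geo})\cong\ker(\sigma)/\im(\nu)$ is a finite $p$-group by Theorem~\ref{the:The_periodic_simple_structure_set_of_M}~\ref{the:The_periodic_simple_structure_set_of_M:kernel}.

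The hard part is the first step: establishing $\partial(M)=q^{\IZ/p}\circ\sigma$, and thereby identifying the codomain $H_n(T^n;\bfL\langle 1\rangle)^{\IZ/p}$ with $\ker(q^{\IZ/p})$. This amounts to tracking the map of Leray--Serre spectral sequences induced by $\bfL\to\bfL/\bfL\langle 1\rangle$ and verifying, by a dimension count, that the target $H_{n+l}(M;\IZ)$ sits in the top filtration; once this is in place, every assertion reduces mechanically to Theorem~\ref{the:The_periodic_simple_structure_set_of_M}. One could instead build the $1$-connective analogue of diagram~\eqref{big_commutative_surgery_diagram} outright and read $\sigma^{\geo}$ off it, but that route would require reproving Lemma~\ref{lem:vanishing_of_differentials_in_three_spectral_sequences} for $\bfL\langle 1\rangle$-homology, which the comparison argument avoids.
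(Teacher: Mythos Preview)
Your approach is essentially the same as the paper's: both hinge on the commutativity of the bottom square of the paper's diagram~\eqref{Tolisso}, which is precisely your equation $\partial(M)=q^{\IZ/p}\circ\sigma$ composed with the identification $\alpha$. The paper constructs $\sigma^{\geo}$ first via the $\bfL\langle 1\rangle$-coefficient Leray--Serre spectral sequence (arguing that the relevant differentials along $s+t=n+l$ vanish because they do for $\bfL$), then verifies the diagram commutes; you instead prove the factorization $\partial(M)=q^{\IZ/p}\circ\sigma$ first and \emph{define} $\sigma^{\geo}$ by $\iota\circ\sigma^{\geo}=\sigma\circ j(M)$. Your organization is marginally cleaner since it sidesteps any direct analysis of the $\bfL\langle 1\rangle$-spectral sequence, exactly as you note in your final sentence. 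The deductions of (i)--(vi) then proceed identically in both arguments.

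One correction: $\bfL/\bfL\langle 1\rangle$ is \emph{not} an Eilenberg--MacLane spectrum; it has $\pi_q=L_q(\IZ)$ for all $q\le 0$, so in particular $\pi_{-2}=\IZ/2$, $\pi_{-4}=\IZ$, etc. Fortunately your dimension argument survives unchanged: what you actually use is that $\bfL/\bfL\langle 1\rangle$ is \emph{coconnective} (vanishing $\pi_q$ for $q>0$), so that for any $(m-1)$-dimensional CW-complex $X$ the Atiyah--Hirzebruch spectral sequence gives $H_m(X;\bfL/\bfL\langle 1\rangle)=0$. Applying this to the preimage of the $(l-1)$-skeleton of $L^l$ shows $F_{l-1,n+1}(M;\bfL/\bfL\langle 1\rangle)=0$ as you need. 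With that fix, your argument is correct.
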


\begin{remark}\label{remark:points_of_view_sigma_geo}
 There are several different points of views on the codomain of $\sigma^{\geo}$ (see Remark~\ref{remark:points_of_view_sigma}). Indeed there are isomorphisms
 \[
 H_n(T^n;\bfL\langle 1 \rangle )^{\Z/p} \cong \bigoplus_{i = 0}^{n-1} (H_i(T^n)^{\Z/p} \otimes L_{n-i}(\Z))  \cong \bigoplus_{i = 0}^{n-1} L_{n-i}(\Z)^{r_i}.
 \]
\end{remark}

\begin{proof}We first prove~\ref{the:The_geometric_simple_structure_set_of_M:inverting_p}.
We construct a commutative diagram whose columns are exact after inverting $p$.
\begin{equation}
\xymatrix@!C=17em{%
0 \ar[d] 
&
0 \ar[d] 
\\
\strg(M) \ar[d]
%_{j(M)} 
\ar[r]^-{\sigma^{\geo} \times (\strp_{n+l+1}(f) \circ j(M))}
&
H_n(T^n_{\rho};\bfL\langle 1 \rangle)^{\IZ/p}  \times \strp_{n+l+1}(B\g)\ar[d]
%^{H_n(\id;\bfi)^{\IZ/p} \times \id}
\\
\strp_{n+l+1}(M) \ar[d]
%_{\partial(M)} 
\ar[r]^-{\sigma \times \strp_{n+l+1}(f)}_-{\cong_{1/p}}
&
H_n(T^n_{\rho};\bfL)^{\IZ/p} \times \strp_{n+l+1}(B\g) \ar[d]
%^{H_n(T^n_{\rho};\bfpr)^{\IZ/p} \circ \pr}
\\
H_{n+l}(M;\bfL/\bfL\langle 1 \rangle) \ar[r]^-{\alpha}_-{\cong_{1/p}}
&
H_n(T^n_{\rho};\bfL/\bfL\langle 1 \rangle)^{\IZ/p}.
}
\label{Tolisso}
\end{equation}
The exact left column is due to Lemma~\ref{lem:strg_strp_injective}.  In the right column,
the first nontrivial map is induced by the product of the change of coefficients map
$\bfL\langle 1 \rangle \to \bfL$ with the identity on the structure group and the second
nontrivial map is induced by the composite of projection on the torus factor and the
change of coefficients map $\bfL \to \bfL/\bfL\langle 1 \rangle$.  The right column is
exact after inverting $p$, since $H_{n+1}(T^n_{\rho};\bfL/\bfL\langle 1 \rangle)= 0$; thus
we have the exact sequence of $\IZ[\IZ/p]$-modules
\[
0 \to H_n(T^n_{\rho};\bfL\langle 1 \rangle) \to H_n(T^n_{\rho};\bfL)
\to H_n(T^n_{\rho};\bfL/\bfL\langle 1 \rangle).
\]

Recall that all differentials of the spectral sequence 
\[
  E^2_{i,j} = H_i^{\IZ/p}\bigl(S^l;H_j(T^n_{\rho};\bfL)\bigr) \Longrightarrow H_{i+j}(M;\bfL)
\]
vanish by Lemma~\ref{lem:vanishing_of_differentials_in_three_spectral_sequences}~%
\ref{lem:vanishing_of_differentials_in_three_spectral_sequences:(2)}. This implies that 
for the spectral sequence
\[
E^2_{i,j} = H_i^{\IZ/p}\bigl(S^l;H_j(T^n_{\rho};\bfL\langle 1 \rangle )\bigr) 
\Longrightarrow H_{i+j}(M;\bfL\langle 1 \rangle)
  \]
all differentials which end or start at place $(s,t)$ vanish, provided that $s + t = n+l$. 
Hence we can define  analogously to $\sigma$ a map
\begin{equation}
\sigma^{\geo} \colon \strg(M)  \to H_n(T^n_{\rho};\bfL \langle 1 \rangle)^{\IZ/p}
\label{overline(mu)_upper_geo}
\end{equation}
such that the following diagram commutes
\[
\xymatrix@!C=11em{
\strg(M)  \ar[r]^-{\sigma^{\geo}} \ar[d]_{j(M)}
&
H_n(T^n_{\rho};\bfL \langle 1 \rangle)^{\IZ/p} 
\ar[d]^-{H_n(T^n_{\rho};\bfi)^{\IZ/p} }
\\
\strp_{n+l+1}(M) \ar[r]_-{\sigma}
& 
H_n(T^n_{\rho};\bfL )^{\IZ/p}.
}
\]
The homomorphism  $\alpha$ in diagram~\eqref{Tolisso} is given by the edge isomorphism
\[
H_l^{\IZ/p}\bigl(S^l;H_n(T^n_{\rho};\bfL/\bfL\langle 1 \rangle )\bigr)  \xrightarrow{\cong}
H_{n+l}(M;\bfL/\bfL\langle 1 \rangle)
\]
at $i = l$ and $j = n$ of the spectral sequence 
\[
E^2_{i,j} = H_i^{\IZ/p}\bigl(S^l;H_j(T^n_{\rho};\bfL/\bfL\langle 1 \rangle)\bigr) 
\Longrightarrow H_{i+j}(M;\bfL/\bfL\langle 1 \rangle),
\]
the canonical map 
\[
H_l^{\IZ/p}\bigl(S^l;H_n(T^n_{\rho};\bfL/\bfL\langle 1 \rangle)\bigr) 
\to H_n(T^n_{\rho};\bfL/\bfL\langle 1 \rangle)^{\IZ/p},
\]
which is an isomorphism after inverting $p$ since $l$ is odd,
the isomorphism of $\IZ[\IZ/p]$-modules
\[
H_n(T^n_{\rho};\bfL/\bfL\langle 1 \rangle) \xrightarrow{\cong} H_n(T^n_{\rho};L_0(\IZ)),
\]
which is the obvious edge homomorphism in the spectral sequence
\[
E^2_{i,j} =H_i(T^n;\pi_j(\bfL/\bfL\langle 1 \rangle))
\Longrightarrow H_{i+j}(T^n;\bfL/\bfL\langle 1 \rangle),
\]
and the bijectivity of the inclusion
\[
H_n(T^n_{\rho};L_0(\IZ))^{\IZ/p} \xrightarrow{\cong} H_n(T^n_{\rho};L_0(\IZ)).
\]
The second horizontal arrow $\sigma \times \strp_{n+l+1}(f)$ in diagram~\eqref{Tolisso} is an isomorphism after 
inverting $p$ by Theorem~\ref{the:The_periodic_simple_structure_set_of_M}~%
\ref{the:The_periodic_simple_structure_set_of_M:inverting_p}.
We leave it to the reader to check that the diagram~\eqref{Tolisso} commutes.
By the Five Lemma the upper horizontal arrow
\[\sigma^{\geo} \times (\strp_{n+l+1}(f) \circ j(M)) \colon \strg(M)  \to
H_n(T^n_{\rho};\bfL\langle 1 \rangle)^{\IZ/p}  \times \strp_{n+l+1}(M)
\]
is an isomorphism after inverting $p$. This finishes the proof of
assertion~\ref{the:The_geometric_simple_structure_set_of_M:inverting_p}.
\\[1mm]~\ref{the:The_geometric_simple_structure_set_of_M:inj_map_strp}
This follows from assertion~\ref{the:The_geometric_simple_structure_set_of_M:inverting_p}
since $\strg(M)$ is a subgroup of $\strp_{n+l+1}(M)$ and $\strp_{n+l+1}(M)$ contains no $p$-torsion by
Theorem~\ref{the:The_periodic_simple_structure_set_of_M}~%
\ref{the:The_periodic_simple_structure_set_of_M:comp}.
\\[1mm]~\ref{the:The_geometric_simple_structure_set_of_M:image}
This follows from assertion~\ref{the:The_geometric_simple_structure_set_of_M:inverting_p}
since $H_n(T^n_{\rho};\bfL \langle 1 \rangle)^{\IZ/p}$ is a finitely generated abelian group.
\\[1mm]~\ref{the:The_geometric_simple_structure_set_of_M:kernel}
We get from the diagram~\eqref{Tolisso}  the following commutative diagram with exact columns
\[
  \xymatrix{0 \ar[d] \ar[r]
&
0 \ar[d] \ar[r]
& 0 \ar[d]
\\
\bigoplus_{(P) \in \calp} \widetilde{L}^s_{n+l+1}(\IZ P) \ar[r]^-{\nu^{\geo}} \ar[d]^-{\id} 
&
\strg(M) \ar[r]^-{\sigma^{\geo}} \ar[d]^{j(M)}
&
H_n(T^n_{\rho};\bfL\langle 1 \rangle)^{\IZ/p} \ar[d]^{H_n(T^n_{\rho};\bfi)^{\IZ/p}} 
\\
\bigoplus_{(P) \in \calp} \widetilde{L}^s_{n+l+1}(\IZ P)  \ar[r]^-{\nu} \ar[d]
& 
\strp_{n+l+1}(M) \ar[r]^{\sigma} \ar[d]
&
H_n(T^n_{\rho};\bfL)^{\IZ/p} \ar[d]
\\
0 \ar[r]
&
H_{n+l}(M;\bfL/\bfL\langle 1 \rangle) \ar[r]^-{\alpha}
&
H_n(T^n_{\rho};\bfL/\bfL\langle 1 \rangle)^{\IZ/p}.
}
\]
We have already shown that the map $\alpha$ is an isomorphism after inverting $p$ and its
source is an infinite cyclic group.  Hence $\alpha$ is
injective. Theorem~\ref{the:The_periodic_simple_structure_set_of_M}~%
\ref{the:The_periodic_simple_structure_set_of_M:kernel} shows that $\nu$ is injective,  the kernel of $\sigma$ contains $\im(\nu)$ and
$\ker(\sigma)/\im(\nu)$ is a finite abelian $p$-group. An easy diagram chase shows
that $\sigma^{\geo}$ is injective, the image of $\sigma^{\geo}$ is contained in the kernel of
$\sigma^{\geo}$ and there is obvious isomorphism
\[\
\ker(\sigma^{\geo})/\im(\nu^{\geo}) \xrightarrow{\cong}
\ker(\sigma)/\im(\nu).
\]%
~\ref{the:The_geometric_simple_structure_set_of_M:comp} This follows from
Theorem~\ref{the:The_periodic_simple_structure_set_of_M}~%
\ref{the:The_periodic_simple_structure_set_of_M:comp} and the diagram~\eqref{Tolisso} by
inspecting the definition of the various maps and the identification
$H_{n+l}(M;L_0(\IZ)) \cong L_0(\IZ)$.
\\[1mm]~\ref{the:The_geometric_simple_structure_set_of_M:partial(M)}
This follows from the diagram~\eqref{Tolisso},
after we have shown that the 
the map $H_d(T^n;\bfL) \to H_d(T^n;\bfL/\bfL\langle 1\rangle)$ is  surjective.
The latter claim follows from the fact $T^n$ is stably homotopy equivalent to a wedge of spheres.
This finishes the proof of Theorem~\ref{the:The_geometric_simple_structure_set_of_M}.
\end{proof}

%%%%%%%%%%%%%%%%%%%%%%%%%%%%%%%%%%%%%%%%%%%%%%%%%%%%%%%%%%%%%%%%%%%%%%%%%%%%
%%%%%%%%%%%%%%%%%%%%%%%%%%%% Invariants for detecting the structure set of $M$ %%%%%%%%%%%%%
%%%%%%%%%%%%%%%%%%%%%%%%%%%%%%%%%%%%%%%%%%%%%%%%%%%%%%%%%%%%%%%%%%%%%%%%%%%%

\typeout{-------------- Invariants for detecting the structure set of $M$  -------------------------}

%%%%%%%%%%%%%%%%%%%%%%%%%%%%%%%%%%%%%%%%%%%%%%%%%%%%%%%%%%%%%%%%%%%%%%%%%%%%

%%%%%%%%%%%%%%%%%%%%%%%%%%%%%%%%%%%%%%%%%%%%%%%%%%%%%%%%%%%%%%%%%%%%%%%%%%%%%%

\section{Invariants for detecting the structure set of $M$}
\label{subsec:Invariance_for_Detecting_the_structure_set_of_M}

Let $G$ be a finite group; let $R(G)$ be its complex representation ring.  Let
$\wt R(G) = R(G)/\langle\operatorname{reg}\rangle$ be the reduced regular representation
ring.  (Additively $R(G)$ and $\wt R(G)$ are $K_0(\C G)$ and $\wt K_0(\C G)$,
respectively.)  For $\varepsilon = \pm 1$, let $R(G)^\varepsilon$ and
$\wt R(G)^\varepsilon$ be the subgroups invariant under $V \mapsto \varepsilon \ol V$.
For example, $\wt R(\Z/p) \cong \Z[e^{2\pi i/p}]$ and both $\wt R(\Z/p)^{+1}$ and
$\wt R(\Z/p)^{-1}$ are free abelian of rank $(p-1)/2$ for $p$ an odd prime.

Let $V$ be a finitely generated $\R G$-module and let 
$$
B : V \times V \to \R
$$
be a $G$-equivariant $\varepsilon$-symmetric form.  Atiyah-Singer~\cite[Section
6]{Atiyah-Singer(1968b)} define the \emph{$G$-signature}
$\sign_G (V,B) \in R(G)^\varepsilon$.  If $W$ is a compact $2d$-dimensional oriented
manifold with a $G$-action, define its $G$-signature $\sign_G(W)$ to be the $G$-signature
of its intersection form.

A representing element of $L_{2d}^s(\Z G)$ defines a $G$-equivariant,
$(-1)^d$-symmetric form by first tensoring with $\R$ and then composing with the trace map
$\R G \to \R$, $\sum a_g g \mapsto a_e$.  This defines the \emph{multisignature} maps
\begin{align}\label{multi_signature-widetilde}
{\sign}_{G} & \colon  {L}^s_{2d}(\IZ G)\to
R(G)^{(-1)^d} \nonumber \\
\widetilde{\sign}_{G} & \colon  \widetilde{L}^s_{2d}( \IZ G)\to
\wt R(G)^{(-1)^d}
\end{align}

\begin{lemma}\label{lem:multisignature_iso_after_inverting_2}
  The multisignature homomorphisms $\sign_G$ and $\wt \sign_G$ are  $\Z[1/2]$-isomorphisms.  
\end{lemma}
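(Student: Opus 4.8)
The plan is to reduce the statement first to the real group algebra $\R G$ and then, via the Wedderburn decomposition and Morita invariance, to the division algebras $\R$, $\C$, $\IH$. Since we are allowed to invert $2$, the decoration on the $L$-groups is irrelevant by the Rothenberg sequence~\eqref{Rothenberg_sequence}, and by~\cite[Proposition~22.34 on page~253]{Ranicki(1992)} the change of rings map $L_m(\IZ G)[1/2]\to L_m(\R G)[1/2]$ is bijective for all $m$. By construction the multisignature homomorphisms $\sign_G$ and $\wt\sign_G$ are obtained by first tensoring a representative with $\R$, hence they factor through $L^s_{2d}(\R G)$ and $\wt L^s_{2d}(\R G)$. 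So it suffices to show that $\sign_G\colon L^s_{2d}(\R G)\to R(G)^{(-1)^d}$ and $\wt\sign_G\colon \wt L^s_{2d}(\R G)\to\wt R(G)^{(-1)^d}$ are $\IZ[1/2]$-isomorphisms.

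Next I would use the Wedderburn--Artin decomposition $\R G\cong\prod_i M_{n_i}(D_i)$ with each $D_i$ one of $\R$, $\C$, $\IH$. The involution $g\mapsto g^{-1}$ on $\R G$ acts on complex irreducibles by $V\mapsto\overline V$, and therefore preserves each real Wedderburn factor (a real-type or quaternionic-type irreducible is self-conjugate, and a conjugate pair of complex-type irreducibles spans a single real factor $M_{n_i}(\C)$), inducing on $D_i$ the trivial involution, complex conjugation, or the standard quaternionic involution respectively. By Morita invariance of $L$-theory this gives $L^s_*(\R G)\cong\bigoplus_i L^s_*(D_i)$, matching the decomposition $R(G)\cong\bigoplus_i R(G)_i$ into the spans of the corresponding Galois orbits of irreducibles; and by its very definition the multisignature is, under these identifications, the direct sum over $i$ of the signature invariants of the forms over the $D_i$. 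The groups $L^s_n(D_i)[1/2]$ are classical, and may also be read off from the comparison with real topological $K$-theory underlying Theorem~\ref{the:Comparing_L-theory_and_topological_K-theory} (using~\cite[Theorem~C]{Land-Nikolaus(2018)}, which applies to the real $C^*$-algebras $M_{n_i}(D_i)$): one has $L^s_n(\R)[1/2]=KO_n(\R)[1/2]$ equal to $\IZ[1/2]$ for $n\equiv 0\bmod 4$ and $0$ for $n\equiv 2\bmod 4$; $L^s_n(\C,\text{conj})[1/2]=KO_n(\C)[1/2]=\IZ[1/2]$ for all even $n$; and $L^s_n(\IH)[1/2]=KO_n(\IH)[1/2]=KO_{n+4}(\R)[1/2]$, again $\IZ[1/2]$ for $n\equiv 0\bmod 4$ and $0$ for $n\equiv 2\bmod 4$.

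Now I would count. Let $r$, $c$, $q$ be the numbers of real-type, (conjugate pairs of) complex-type, and quaternionic-type complex irreducibles of $G$, so that $\rk R(G)^{+1}=r+c+q$ and $\rk R(G)^{-1}=c$. The previous paragraph gives that $L^s_{2d}(\R G)[1/2]$ is free over $\IZ[1/2]$ of rank $r+c+q$ when $d$ is even and of rank $c$ when $d$ is odd, i.e.\ of the same rank as $R(G)^{(-1)^d}[1/2]$; moreover on each Wedderburn factor the signature map sends a rank one positive definite form to a generator of the relevant copy of $\IZ[1/2]$, so $\sign_G$ really realizes this identification and is a $\IZ[1/2]$-isomorphism. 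For the reduced version, the inclusion $\{1\}\hookrightarrow G$ together with the augmentation $\R G\to\R$ — a morphism of rings with involution, since $g\mapsto g^{-1}$ fixes the augmentation — provides a retraction of $L^s_{2d}(\R)\to L^s_{2d}(\R G)$, so after inverting $2$ there is a split short exact sequence $0\to L^s_{2d}(\R)[1/2]\to L^s_{2d}(\R G)[1/2]\to\wt L^s_{2d}(\R G)[1/2]\to 0$ mapping under the multisignatures to $0\to R(\{1\})^{(-1)^d}[1/2]\to R(G)^{(-1)^d}[1/2]\to\wt R(G)^{(-1)^d}[1/2]\to 0$ (note that the generator of $L^s_{2d}(\IZ)$ has $G$-signature the regular representation, and that $R(\{1\})^{-1}=0$). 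Since $\sign_{\{1\}}$ is a $\IZ[1/2]$-isomorphism — $L^s_{2d}(\IZ)[1/2]$ is $\IZ[1/2]$ for $d$ even and $0$ for $d$ odd, matching $R(\{1\})^{(-1)^d}$ — the five lemma finishes the proof.

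I expect the main obstacle to be the middle step: verifying carefully that the Morita equivalences and the Wedderburn splitting carry the geometrically defined multisignature onto the direct sum of the signature maps on the division-algebra factors, together with pinning down $L^s_*$ of $\R$, $\C$, $\IH$ with their correct involutions and indexing (which is classical but easy to get wrong by a period). Everything else is a bookkeeping of ranks of $R(G)^{\pm 1}$ in terms of real/complex/quaternionic type and a diagram chase.
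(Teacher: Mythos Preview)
Your argument is correct and follows the standard route (Wedderburn decomposition of $\R G$, Morita invariance, and the computation of $L_*[1/2]$ for $\R$, $\C$, $\IH$); this is precisely the approach behind the references the paper cites (Wall~\cite[Theorems~13A.3--13A.4]{Wall(1999)}, Ranicki~\cite[Propositions~22.14 and~22.34]{Ranicki(1992)}), so there is no genuine difference in strategy---the paper simply defers to those sources while you spell the proof out. The one place that deserves a little more care is the compatibility of the multisignature with the Wedderburn splitting (i.e.\ that the $G$-signature on $L^s_{2d}(\R G)$ really decomposes as the sum of the signature maps on the simple factors, with the induced involutions being the standard ones), which you flag yourself; this is handled in Wall's Chapter~13A and is the content of his Theorem~13A.3.
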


\begin{proof}
See Wall~\cite{Wall(1999)}, Theorems 13A.3 and 13A.4 or Ranicki~\cite{Ranicki(1992)}, Propositions 22,14 and 22.34.  
\end{proof}

When $G$ is cyclic and odd order, according to~\cite[Theorem 13A.4]{Wall(1999)}, \\$\wt
\sign_G \colon \wt {L}^s_{2d}(\IZ G)\to 4 \wt
R(G)^{(-1)^d}$ is an isomorphism.

\begin{theorem}[Geometric structure set of $M$]
\label{the:geometric_structure_set_of_M}
Let $d = (n+l +1)/2$.   There are injective maps
\[
\sigma \times \rho \colon \strp_{n+l+1}(M) \to H_n(T^n; \bfL)^{\Z/p} \oplus \left(\bigoplus_{(P) \in \calp}
 \wt R(P')^{(-1)^d}[1/p]\right),
\]
and
\[
\sigma^{\geo} \times \rho^{\geo} \colon \strg(M) \to H_n(T^n; \bfL\langle 1 \rangle)^{\Z/p} \oplus \left(\bigoplus_{(P) \in \calp}
 \wt R(P')^{(-1)^d}[1/p]\right).
\]
The cokernels of these maps are trivial after tensoring with $\Z[1/2p]$.
\end{theorem}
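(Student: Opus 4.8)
The plan is to build the invariant $\rho$ (and its one-connective analogue $\rho^{\geo}$) out of the $\Gamma_{\ab}$-transfer decomposition of $\strp_*(B\Gamma)$ together with the multisignature, and then to read off both the injectivity and the cokernel statements directly from Theorems~\ref{the:The_periodic_simple_structure_set_of_M} and~\ref{the:The_geometric_simple_structure_set_of_M}. Write $2d = n+l+1$, which is an even integer since $n = k(p-1)$ is even while $l$ is odd. First I would define a homomorphism
\[
\phi \colon \strp_{n+l+1}(B\Gamma) \longrightarrow \bigoplus_{(P)\in\calp} \wt R(P')^{(-1)^d}[1/p]
\]
as the composite of the localization map $\strp_{n+l+1}(B\Gamma) \to \strp_{n+l+1}(B\Gamma)[1/p]$; the isomorphism $\strp_{n+l+1}(B\Gamma)[1/p] \xrightarrow{\cong} \prod_{(P)\in\calp}\strp_{n+l+1}(BP')[1/p]$ of Theorem~\ref{the:Computation_of_strp(BGamma)}~\ref{the:Computation_of_strp(BGamma):restriction_iso}; the isomorphism $\strp_{n+l+1}(BP')[1/p] \cong \wL^s_{2d}(\Z P')[1/p]$ inverse to the $1/p$-localization $\widetilde{\xi}_*(BP')$ of Theorem~\ref{the:The_periodic_structure_set_of_BP_for_a_finite_p-group}~\ref{the:The_periodic_structure_set_of_BP_for_a_finite_p-group:localization}; and the multisignatures $\widetilde{\sign}_{P'}[1/p]$ of~\eqref{multi_signature-widetilde}. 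Because $2d$ is even, both $\wL^s_{2d}(\Z P') \cong \Z^{(p-1)/2}$ (by~\eqref{widetildeLs(ZP)}) and $\wt R(P')^{(-1)^d}$ are finitely generated free abelian of the same rank, so Lemma~\ref{lem:multisignature_iso_after_inverting_2} forces $\widetilde{\sign}_{P'}$ to be injective with finite $2$-primary cokernel; hence $\phi$ is injective and is an isomorphism after $\otimes\,\Z[1/2]$. (Since $\strp_{n+l+1}(B\Gamma)$ is already a finitely generated $\Z[1/p]$-module by Theorem~\ref{the:Computation_of_strp(BGamma)}~\ref{the:Computation_of_strp(BGamma):induction_iso}, the first map is an isomorphism and the $[1/p]$'s above are only bookkeeping.) I then set $\rho := \phi \circ \strp_{n+l+1}(f)$ and $\rho^{\geo} := \rho \circ j(M)$, where $j(M)$ is the map of~\eqref{j(N)}; by the Atiyah--Singer $G$-signature theorem these are the $\rho$-invariant maps of Theorem~\ref{the:zero_in_structure_set_intro}, but only their formal properties are used below.

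For the injectivity statements, note that injectivity of $\phi$ gives $\ker\rho = \ker\bigl(\strp_{n+l+1}(f)\bigr)$, and therefore
\[
\ker(\sigma\times\rho) = \ker\sigma \cap \ker\bigl(\strp_{n+l+1}(f)\bigr) = \ker\bigl(\sigma\times\strp_{n+l+1}(f)\bigr) = 0
\]
by Theorem~\ref{the:The_periodic_simple_structure_set_of_M}~\ref{the:The_periodic_simple_structure_set_of_M:inj_map_strp}. The geometric case is the same computation: since $\rho^{\geo} = \phi\circ\strp_{n+l+1}(f)\circ j(M)$ with $\phi$ injective, one gets $\ker(\sigma^{\geo}\times\rho^{\geo}) = \ker\bigl(\sigma^{\geo}\times(\strp_{n+l+1}(f)\circ j(M))\bigr) = 0$ by Theorem~\ref{the:The_geometric_simple_structure_set_of_M}~\ref{the:The_geometric_simple_structure_set_of_M:inj_map_strp}.

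For the cokernels I would factor
\[
\sigma\times\rho = \bigl(\id_{H_n(T^n;\bfL)^{\Z/p}}\times\phi\bigr)\circ\bigl(\sigma\times\strp_{n+l+1}(f)\bigr),
\]
and, likewise, $\sigma^{\geo}\times\rho^{\geo} = \bigl(\id_{H_n(T^n;\bfL\langle 1\rangle)^{\Z/p}}\times\phi\bigr)\circ\bigl(\sigma^{\geo}\times(\strp_{n+l+1}(f)\circ j(M))\bigr)$. After $\otimes\,\Z[1/2p]$ the right-hand factor is an isomorphism, since it is already one after $\otimes\,\Z[1/p]$ by Theorem~\ref{the:The_periodic_simple_structure_set_of_M}~\ref{the:The_periodic_simple_structure_set_of_M:inverting_p} (resp.\ Theorem~\ref{the:The_geometric_simple_structure_set_of_M}~\ref{the:The_geometric_simple_structure_set_of_M:inverting_p}), and $\id\times\phi$ is an isomorphism after $\otimes\,\Z[1/2p]$ because $\phi$ is one after $\otimes\,\Z[1/2]$ and its source is a $\Z[1/p]$-module. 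Hence both $\sigma\times\rho$ and $\sigma^{\geo}\times\rho^{\geo}$ become isomorphisms after $\otimes\,\Z[1/2p]$, so by flatness of $\Z[1/2p]$ over $\Z$ their cokernels are trivial after tensoring with $\Z[1/2p]$.

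The one step that needs real care --- rather than posing a genuine obstacle --- is the construction of $\phi$: it must be routed through the \emph{restriction} isomorphism of Theorem~\ref{the:Computation_of_strp(BGamma)}~\ref{the:Computation_of_strp(BGamma):restriction_iso} (that is, via $\Gamma_{\ab}$ and the subgroups $P'$), not the induction isomorphism, so that the target is precisely the receptacle of the $\rho$-invariant; and one needs the standing parity $n+l$ odd, so that $2d=n+l+1$ is even and $\wL^s_{2d}(\Z P')$ is nonzero, which is exactly what allows $\rho$ to detect the free rank $p^k(p-1)/2$ of $\strp_{n+l+1}(M)$. Everything else is formal assembly of the three localization statements already established, together with Lemma~\ref{lem:multisignature_iso_after_inverting_2}.
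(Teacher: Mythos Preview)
Your proof is correct and follows essentially the same strategy as the paper: define $\phi$ (the paper's $\widehat{\rho}$) via the restriction isomorphism through $\Gamma_{\ab}$ and the multisignature, observe that $\phi$ is injective and a $\Z[1/2]$-isomorphism, and then factor $\sigma\times\rho = (\id\times\phi)\circ(\sigma\times\strp_{n+l+1}(f))$ to read off injectivity and the cokernel statement from Theorem~\ref{the:The_periodic_simple_structure_set_of_M}. The only notable difference is in the geometric case: the paper sets up a snake-lemma diagram comparing $\strg(M)$, $\strp_{n+l+1}(M)$, and $\cok(j(M))$ against their targets, whereas you simply invoke Theorem~\ref{the:The_geometric_simple_structure_set_of_M}~\ref{the:The_geometric_simple_structure_set_of_M:inverting_p} directly to conclude that $\sigma^{\geo}\times(\strp_{n+l+1}(f)\circ j(M))$ is already a $\Z[1/p]$-isomorphism; this is a legitimate and slightly cleaner shortcut, since that result has already been established.
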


\begin{proof}
  We have already defined $\sigma$ and $\sigma^{\geo}$ in
  Theorems~\ref{the:The_periodic_simple_structure_set_of_M}
  and~\ref{the:The_geometric_simple_structure_set_of_M} respectively.

Let $\widehat \rho$ be  the composite 
\begin{multline}
\widehat  \rho   \colon  \strp_{n+l+1}(B\Gamma)
\xrightarrow{\left(\prod_{(P) \in \calp}
    \res_{\Gamma_{\ab}}^{P'}\right) \circ \strp_{n+l+1}(B\!\pr)}
\prod_{(P) \in \calp} \strp_{n+l+1}(BP')
\\
\xleftarrow{\cong} \prod_{(P) \in \calp} \widetilde{L}^s_{n+l+1}(\IZ P')[1/p]
\xrightarrow{\prod_{(P) \in \calp}  \widetilde{\sign}_{P'}} 
\prod_{(P) \in \calp}  \wt R(P')^{(-1)^d}[1/p],
 \end{multline}
where for $(P) \in \calp$ the subgroup $P' \subset \Gamma_{\ab}$ is
the image of $P$ under the projection $\pr \colon \Gamma \to
\Gamma_{\ab}$, the first map
is the isomorphism taken from
Theorem~\ref{the:Computation_of_strp(BGamma)}~%
\ref{the:Computation_of_strp(BGamma):restriction_iso},
the second map is given by product of the inverse of the isomorphism
appearing in 
Theorem~\ref{the:The_periodic_structure_set_of_BP_for_a_finite_p-group}~%
\ref{the:The_periodic_structure_set_of_BP_for_a_finite_p-group:localization},
and the third map is the product of the homomorphisms defined in~\eqref{multi_signature-widetilde}.
Define $\rho$ to be the composite
\[
\rho   \colon  \strp_{n+l+1}(M)
\xrightarrow{\strp_{n+l+1}(f)} \strp_{n+l+1}(B\Gamma) \xrightarrow{\widehat  \rho} \prod_{(P) \in \calp}  \wt R(P')^{(-1)^d}[1/p].
\]
Note that all these maps are isomorphisms after tensoring with $\Z[1/2p]$.

We thus see that $\widehat \rho[1/2]$ is an isomorphism and, thus, since the domain of
$\widehat \rho$ is torsionfree by Theorem~\ref{the:Computation_of_strp(BGamma)}~%
\ref{the:Computation_of_strp(BGamma):induction_iso}, $\widehat \rho$ is injective.

The map $\sigma \times \strp_{n+l+1}(f)$ is injective and an isomorphism after tensoring
with $\Z[1/p]$ by Theorem~\ref{the:The_periodic_simple_structure_set_of_M}, so it follows
that
$\sigma \times \rho =(\id \times \widehat \rho) \circ (\sigma \times \strp_{n+l+1}(f))$ is
 injective and is an isomorphism after tensoring with $\Z[1/2p]$.
 
Define $\rho^{\geo} = \rho \circ j(M)$ and note that $j(M)$ is injective by Lemma~\ref{lem:strg_strp_injective}.
\[
\xymatrix{0 \ar[d]&& 0 \ar[d] \\
\strg(M) \ar[d]^-{j(M)} \ar[rr]^-{\sigma^{\geo} \times \rho^{\geo}}&& \left(\bigoplus_{i = 0}^{n-1}
L_{n-i}(\IZ)^{\binom{n}{i}}\right)^{\Z/p} \oplus \ar[d]^-{\inc \times \id} \left(\bigoplus_{(P) \in \calp}
 \wt R(P')^{(-1)^d}[1/p]\right)\\
 \strp_{n+l+1}(M) \ar[d] \ar[rr]^-{\sigma\times \rho}&& \left(\bigoplus_{i = 0}^{n}
L_{n-i}(\IZ)^{\binom{n}{i}}\right)^{\Z/p} \oplus \ar[d] \left(\bigoplus_{(P) \in \calp}
 \wt R(P')^{(-1)^d}[1/p]\right) \\
 \cok (j(M)) \ar[rr]^-{\overline{\sigma \times \rho}} \ar[d]  && \cok(\inc \times \id) \ar[d] \\
 0 && 0
 }
\]
The map $\overline{\sigma \times \rho}$ is the map induced on cokernels by the commutative
square above it.  The columns are short exact sequences so the snake lemma applies and
yields the following exact sequence:
\begin{multline*}
 0 \to \ker(\sigma^{\geo} \times \rho^{\geo}) \to \ker(\sigma \times \rho) \to \ker(\overline{\sigma \times \rho}) \to \\
   \cok(\sigma^{\geo} \times \rho^{\geo}) \to \cok(\sigma \times \rho) \to \cok(\overline{\sigma \times \rho}) \to 0.
\end{multline*}
Since we have already shown that $\ker(\sigma \times \rho) = 0$ it follows that
$\sigma^{\geo} \times \rho^{\geo}$ is injective.  Since we have already shown that
$(\sigma \times \rho)[1/p]$ is an isomorphism, it follows that both
$\cok(\sigma \times \rho)[1/p]$ and $\cok(\overline{\sigma \times \rho})[1/p]$ vanish.
Note that $\cok(j(M))$ is an infinite cyclic group by Lemma~\ref{lem:strg_strp_injective}
and
Theorem~\ref{the:The_geometric_simple_structure_set_of_M}~\ref{the:The_geometric_simple_structure_set_of_M:partial(M)}.
Since the domain and codomain of $\overline{\sigma \times \rho}$ are infinite cyclic and
the cokernel is $p$-torsion, its kernel is trivial.  Hence there is a short exact sequence
\[
0  \to  \cok(\sigma^{\geo} \times \rho^{\geo}) \to \cok(\sigma \times \rho) \to \cok(\overline{\sigma \times \rho}) \to 0
\]
where the middle term is $p$-torsion.   Hence all the groups are $p$-torsion and $(\sigma^{\geo} \times \rho^{\geo})[1/p]$ is an isomorphism as desired.  
\end{proof}

We next interpret the detecting maps from Theorem~\ref{the:geometric_structure_set_of_M}
in terms of geometric invariants.  The first map $\sigma^{\geo}$ will be given by {\em
  splitting invariants}, coming from surgery obstructions along submanifolds and the
second map $\rho^{\geo}$ will be given by \emph{$\rho$-invariants} which arise both in
index theory for manifolds with boundary as well as in the homeomorphism classification of
homotopy lens spaces with odd order fundamental group.  %But before we move to geometry, we need a more algebraic point of view on $\sigma^{\geo}$.
%
%\begin{lemma}
%The map $\sigma^{geo}$ can represented as a composite
%\begin{multline*}
% \strg(M) \to H_{n+l+1}(M; \bfL\langle 1 \rangle) \to H_{n+l+1}(T^n \times S^l; \bfL\langle 1 \rangle)^{\Z/p}\\
% \cong H^0(T^n \times S^l; \bfL\langle 1 \rangle)^{\Z/p} \to H^0(T^n \times \pt \bfL\langle 1 \rangle)^{\Z/p} \cong H_d(T^n ; \bfL\langle 1 \rangle)^{\Z/p}
%\end{multline*}
%\end{lemma}

\subsection{Splitting invariants}
Let $X$ be a closed oriented manifold with dimension $d \geq 5$.  Recall that in
Section~\ref{subsec:structure_sets} we mentioned an identification of the geometric
surgery exact sequence with the 1-connective algebraic surgery exact sequence.  To discuss
splitting invariants we say precisely what this identification is.  In particular, we
discuss the bijection $\caln(X) \to H_d(X; \bfL\langle 1 \rangle)$.

Recall that a degree one normal map $(f,\ol f)$ is a degree one map $f: N^d \to X^d$ and a
trivialization $\ol f \colon TN \oplus f^*\xi \cong \ul\R^l$ for stable some bundle $\xi$ over
$X$. Then $\caln(X)$ is the set of normal bordism classes
of degree one normal maps to $X$.  The map $\eta \colon \strg(X) \to \caln(X)$ is given by
considering a simple homotopy equivalence $h : X' \to X$ as a degree one map and taking
$\xi = h^{-1*}\nu_{X'}$, where $h^{-1}$ is a homotopy inverse of $h$ and $\nu_{X'}$ is the
normal bundle of $X'$ with respect to some embedding in Euclidean space.  The map
$\sigma: \caln(X) \to L_d^s(\Z[\pi_1(X)])$ is given by the surgery obstruction.  In
particular a normal bordism class maps to zero if and only if it has a representative
given by a simple homotopy equivalence.
  
A Pontryagin-Thom construction gives a bijection
$PT \colon \caln(X) \xrightarrow{\cong} [X,G/TOP]$.  Sullivan/Quinn/Ranicki give a 4-fold
periodicity, $\Omega^4 (\Z \times G/TOP) \simeq \Z \times G/TOP$.  The corresponding
spectrum is homotopy equivalent to $\bfL(\Z)$ (which we have been abbreviating $\bfL$).
It follows formally that the 0-space of the 1-connective cover $\bfL\langle 1 \rangle$ is
homotopy equivalent to $G/TOP$ and that $[X,G/TOP] = H^0(X; \bfL\langle 1 \rangle)$.
Furthermore, Ranicki shows that closed, oriented, topological manifolds are oriented with
respect to these two spectra, hence there are Poincar\'e duality isomorphisms
$PD \colon H^i(X; \bfL) \xrightarrow{\cong} H_{d-i}(X;\bfL)$ and
$PD \colon H^i(X; \bfL\langle 1 \rangle) \xrightarrow{\cong} H_{d-i}(X;\bfL\langle 1
\rangle)$.  The following foundational theorem is due to Quinn and Ranicki; references
are~\cite[Theorem 18.5]{Ranicki(1992)} and~\cite[Proposition
14.8]{Kuehl-Macko-Mole(2013)}.

\begin{theorem}\label{thm:geom_is_alg} There is a commutative diagram, with vertical
  bijections,
  \[
    \xymatrix{\cdots \ar[r] & L_{d+1}^s(\Z[\pi_1(X)])  \ar@{=}[d] \ar[r]^-{\partial}
      & \strg(X) \ar[d]_s^{\cong} \ar[r]^-{\eta} &\caln(X) \ar[d]_t^{\cong} \ar[r]^-{\sigma} & L_d^s(\Z[\pi_1(X)])  \ar@{=}[d] \\
      \cdots \ar[r] &L_{d+1}^s(\Z[\pi_1(X)]) \ar[r]^-{\xi^{\langle 1 \rangle}_{d+1}} &
      \strone_{d+1}(X) \ar[r] & H_{d}(X;\bfL\langle 1 \rangle) \ar[r]^-{A_d^{\langle 1
          \rangle}(X)} & L_d^s(\Z[\pi_1(X)]), }
  \]
  where $t$ is the composite
  $\caln(X) \xrightarrow{PT} [X,G/TOP] = H^0(X;\bfL\langle 1 \rangle) \xrightarrow{PD}
  H_{d}(X;\bfL\langle 1 \rangle)$.
\end{theorem}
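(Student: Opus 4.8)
The plan is to assemble this statement from the foundational machinery of algebraic surgery rather than to reprove it from scratch: the two ingredients are the identification of the surgery classifying spaces with infinite loop spaces and Ranicki's algebraic theory of surgery. I would proceed as follows.

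First I would recall the structure of $G/TOP$ (Sullivan, Wall, Quinn, Ranicki). For a finite complex $X$ the Pontryagin--Thom construction gives a bijection $PT\colon\caln(X)\xrightarrow{\cong}[X,G/TOP]$, natural in $X$, and topological transversality makes $\caln(-)$ behave like a generalized cohomology functor. The infinite loop space structure on $G/TOP$ realizing the $4$-fold periodicity $\Omega^4(\Z\times G/TOP)\simeq\Z\times G/TOP$ identifies the associated $\Omega$-spectrum with $\bfL=\bfL(\Z)$; hence the $0$-th space of the $1$-connective cover $\bfL\langle 1 \rangle$ is $G/TOP$ and $[X,G/TOP]=H^0(X;\bfL\langle 1 \rangle)$. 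Next I would invoke Ranicki's $L$-theory orientation of closed topological manifolds: an oriented closed $d$-manifold $X$ carries canonical fundamental classes in $H_d(X;\bfL)$ and in $H_d(X;\bfL\langle 1 \rangle)$, giving Poincar\'e duality isomorphisms $PD\colon H^i(X;\bfL\langle 1 \rangle)\xrightarrow{\cong}H_{d-i}(X;\bfL\langle 1 \rangle)$, and similarly with $\bfL$. Composing $PT$ with $PD$ at $i=0$ produces the vertical map $t\colon\caln(X)\to H_d(X;\bfL\langle 1 \rangle)$ recorded in the theorem.

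The heart of the argument is Ranicki's identification of the geometric surgery exact sequence~\eqref{geometric_surgery_sequence} with the $1$-connective algebraic surgery exact sequence~\eqref{algebraic_surgery_sequence_one_connective}, that is, \cite[Theorem~18.5]{Ranicki(1992)}, whose details are revisited in \cite[Proposition~14.8]{Kuehl-Macko-Mole(2013)}; this simultaneously produces the vertical bijection $s$ and the identification~\eqref{ident_structure_sets}. Concretely, one uses the quadratic structure group ${\mathbb S}_{d+1}(\IZ,X)=\strone_{d+1}(X)$ of \cite[Definition~14.6]{Ranicki(1992)} and must verify: (i) it sits in the displayed long exact sequence with $H_*(X;\bfL\langle 1 \rangle)$, $L^s_*(\IZ[\pi_1(X)])$ and connecting map $A_*^{\langle 1 \rangle}(X)$; (ii) the Wall surgery obstruction in $L^s_d(\IZ[\pi_1(X)])$ of a degree one normal map equals $A_d^{\langle 1 \rangle}(X)$ applied to $t$ of its normal invariant, so the right-hand square commutes; (iii) a simple homotopy equivalence $h\colon X'\to X$ determines a class in $\strone_{d+1}(X)$ whose image in $H_d(X;\bfL\langle 1 \rangle)$ is $t(\eta(h))$, with two such equivalent in $\strg(X)$ precisely when the associated quadratic Poincar\'e pairs are cobordant; (iv) the geometric action of $L^s_{d+1}(\IZ[\pi_1(X)])$ on structures agrees with $\xi^{\langle 1 \rangle}_{d+1}$. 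Commutativity of the remaining two squares, involving $\partial$ and $\xi^{\langle 1 \rangle}_{d+1}$, then follows from matching these definitions.

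The step I expect to be the main obstacle --- and the reason this result is genuinely foundational --- is (ii)--(iii): showing that Wall's geometrically defined surgery obstruction and structure set coincide with Ranicki's algebraically defined assembly map and quadratic structure group. This rests on the realization, valid in the manifold range $d\ge 5$, of every relevant $1$-connective quadratic Poincar\'e $n$-ad over $\IZ[\pi_1(X)]$ by a genuine normal map of manifolds with boundary, which is exactly where surgery below the middle dimension and the $\pi$--$\pi$ theorem enter. Since the present paper only uses this comparison as a black box, I would cite \cite[Theorem~18.5]{Ranicki(1992)} and \cite[Proposition~14.8]{Kuehl-Macko-Mole(2013)} and simply record the explicit description of $t$ as in the statement.
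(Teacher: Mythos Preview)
Your proposal is correct and matches the paper's treatment: the paper does not prove this theorem but states it as a foundational result due to Quinn and Ranicki, citing \cite[Theorem~18.5]{Ranicki(1992)} and \cite[Proposition~14.8]{Kuehl-Macko-Mole(2013)}, which is exactly what you conclude you would do. Your outline of steps (i)--(iv) is a helpful gloss on what those references contain, but the paper itself offers no argument beyond the citations.
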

  
The philosophy of splitting invariants is to detect $\caln(X)$ by submanifolds.  Let $Y^j$
be a closed, oriented submanifold of $X^d$.  (We assume our submanifolds have a normal
bundle so we can apply transversality.)  The restriction map
\[
  \res^Y_X \colon \caln(X^d) \to \caln(Y^j)
\]
  is defined by representing a normal bordism class by a normal map $(f ,\ol f)$ where
  $f \colon N \to X$ is transverse to $Y$.  Then
  $\res [ f,\ol f] $ is represented  by the restricted map $f|\colon f^{-1} Y \to Y$
 with
  the trivialization
  $ Tf^{-1} Y \oplus f|^*(\nu(Y \hookrightarrow X) \oplus \xi|_Y) = TN|_{f^{-1}Y} \oplus
  f|^*\xi \cong \ul \R^l$.  
  
It follows from the Pontragin-Thom construction that the map $ \res^Y_X$ is the composite 
$$\caln(X^d) \xrightarrow{PT} [X,G/TOP] \xrightarrow{[-,G/TOP]}  [Y,G/TOP] \xrightarrow{PT^{-1}}
  \caln(Y),
 $$
 but we state this result slightly differently:
  
\begin{lemma}\label{lem:PD_of_res} $\res^Y_X$ is the composite
\begin{multline*}
  \caln(X^d) \xrightarrow{PT} [X,G/TOP] = H^0(X; \bfL \langle 1 \rangle) \\
  \xrightarrow{\inc^*} H^0(Y; \bfL \langle 1 \rangle) = [Y,G/TOP]  \xrightarrow{PT^{-1}}
  \caln(Y^j).
\end{multline*}
\end{lemma}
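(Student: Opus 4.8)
The plan is to deduce the lemma from the Pontryagin--Thom description of $\res^Y_X$ already recorded in the paragraph immediately preceding the statement, by feeding in the Sullivan--Quinn--Ranicki identification of $G/TOP$ with the zeroth space of $\bfL\langle 1 \rangle$ and observing that this identification is natural in the target space. So there are really two steps: first, that the geometrically defined restriction map agrees, under $PT$, with the map $[X,G/TOP]\to[Y,G/TOP]$ induced by the inclusion $\inc\colon Y\hookrightarrow X$; second, that this induced map, under the homotopy equivalence $G/TOP\simeq\Omega^\infty\bfL\langle 1\rangle$, is precisely $\inc^*$ in $\bfL\langle 1\rangle$-cohomology.

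For the first step I would unwind the definitions. Given a class in $\caln(X^d)$, represent it by a degree one normal map $(f,\ol f)$ with $f\colon N\to X$ transverse to $Y$; its Sullivan normal invariant $PT[f,\ol f]\colon X\to G/TOP$ is assembled from the stable bundle data encoded by the trivialization $\ol f\colon TN\oplus f^*\xi\cong\ul\R^l$. Restricting this classifying map along $\inc$ yields a map $Y\to G/TOP$; by construction of the normal invariant it is the Sullivan normal invariant of the restricted degree one normal map $f|\colon f^{-1}Y\to Y$ equipped with the trivialization $Tf^{-1}Y\oplus f|^*(\nu(Y\hookrightarrow X)\oplus\xi|_Y)\cong\ul\R^l$ displayed in the definition of $\res^Y_X$. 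Hence $PT\circ\res^Y_X=\inc^*\circ PT$, i.e. $\res^Y_X=PT^{-1}\circ\inc^*\circ PT$. This is the compatibility of the Sullivan--Wall normal invariant with transversality; since it is exactly the sentence stated just before the lemma, I would present it at this level of detail rather than reprove it from the ground up.

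For the second step, the $4$-fold periodicity $\Omega^4(\Z\times G/TOP)\simeq\Z\times G/TOP$ of Sullivan, Quinn and Ranicki, recalled in the paragraph above, gives a homotopy equivalence of infinite loop spaces $G/TOP\simeq\Omega^\infty\bfL\langle 1\rangle$. Being an equivalence of spaces, it induces for every $CW$-complex $Z$ a bijection $[Z,G/TOP]\cong[Z,\Omega^\infty\bfL\langle 1\rangle]=H^0(Z;\bfL\langle 1\rangle)$, natural in $Z$; naturality converts the map induced by $\inc$ on $[-,G/TOP]$ into the restriction homomorphism $\inc^*\colon H^0(X;\bfL\langle 1\rangle)\to H^0(Y;\bfL\langle 1\rangle)$. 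Substituting this identification into the factorization from the first step yields exactly the claimed composite. The only genuinely non-routine input is the first step, namely the identification of the geometric restriction with pulling the normal invariant back along $\inc$; but this is a standard and well-documented property of the normal invariant, and since the $G/TOP$-level version has already been asserted in the text immediately preceding the lemma, the lemma itself is merely a repackaging of it through the $\bfL\langle 1\rangle$-cohomology description of $[-,G/TOP]$, which is how I would write it up.
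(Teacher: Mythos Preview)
Your proposal is correct and matches the paper's approach exactly: the paper does not give a separate proof of this lemma, but simply states it as a reformulation of the sentence immediately preceding it (that $\res^Y_X$ corresponds under $PT$ to the map $[X,G/TOP]\to[Y,G/TOP]$ induced by the inclusion), using the natural identification $[Z,G/TOP]=H^0(Z;\bfL\langle 1\rangle)$. Your two-step write-up makes this explicit and is entirely in line with what the paper intends.
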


\begin{definition}[Sullivan~\cite{Sullivan(1966PhD), Sullivan(1966notes)}] A
  \emph{characteristic variety} of $X^d$ is a collection of closed, oriented,
  connected, submanifolds $\{Y_i^j\}$ of $X^d$ so that any simple homotopy equivalence
  $h : X' \to X$ which vanishes under the compositie
  \[
    \caln(X^d) \xrightarrow{\res} \prod_{i,j} \caln(Y_i^j) \xrightarrow{\sigma}
    \prod_{i,j} L_j(\Z [\pi_1Y_i^j]) \xrightarrow{\varepsilon} \prod_{i,j} L_j(\Z)
  \]
  is homotopic to a homeomorphism.
\end{definition}

Sullivan's original example is that a characteristic variety for $\C P^n$ is given by
$\{\C P^j\}$ with $0< j < n$.  If a manifold satisfies topological
rigidity~\cite{Kreck-Lueck(2009nonasph)} (for example a sphere or a torus), then the empty
set is a characteristic variety.

\begin{theorem}\label{thm:characteristic_variety_for_torus_x_sphere} Choose a point
  $\pt \in S^l$ in the sphere.
  \begin{enumerate}
  \item\label{thm:characteristic_variety_for_torus_x_sphere:(1)} The following map is an
    isomorphism
    $$\inc^* \times (\pr_{T^n*} \circ PD) \colon H^0(T^n \times S^l; \bfL \langle 1
    \rangle) \to H^0(T^n \times \pt; \bfL \langle 1 \rangle) \oplus H_{n+l}(T^n; \bfL
    \langle 1 \rangle).$$
  \item\label{thm:characteristic_variety_for_torus_x_sphere:(2)} For a subset
    $J \subset \{1,2, \dots, n\}$, let $T^J \times \pt \subset T^n \times S^l$ be the
    obvious $|J|$-dimensional submanifold.  Then
    $\{T^J \times \pt \mid \emptyset \not = J \subset \{1,2, \dots, n\}\}$ is a
    characteristic variety for $T^n \times S^l$.
  \item\label{thm:characteristic_variety_for_torus_x_sphere:(3)} Let
    $\sigma^{\geo} \colon \strg(M) \to H_n(T^n;\bfL\langle 1 \rangle)^{\IZ/p}$ be the map
    defined in Theorem~\ref{the:The_geometric_simple_structure_set_of_M}.  Then
    $\sigma^{\geo}(h : N \to M) = 0$ if and only if the $p$-fold cover
    $\ol h : \ol N \to T^n \times S^l$ is homotopic to a homeomorphism.
  \end{enumerate}
\end{theorem}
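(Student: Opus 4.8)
The plan is to prove the three assertions in order, using the machinery developed in Sections~\ref{sec:The_periodic_simple_structure_set_of_M} and~\ref{sec:The_geometric_simple_structure_set_of_M}. For~\ref{thm:characteristic_variety_for_torus_x_sphere:(1)}, first I would write $H^0(T^n \times S^l; \bfL\langle 1\rangle) \cong \bigoplus_{i\ge 0} H^0(T^n; \bfL\langle 1\rangle)$-type summands using the splitting of $S^l$; more precisely, since $S^l$ is stably homotopy equivalent to $S^0 \vee S^l$, smashing with $T^n_+$ and applying $\bfL\langle 1\rangle$-cohomology gives a natural splitting $H^0(T^n \times S^l; \bfL\langle 1\rangle) \cong H^0(T^n; \bfL\langle 1\rangle) \oplus H^{-l}(T^n; \bfL\langle 1\rangle)$. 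The first summand is detected by $\inc^*$ (restriction to $T^n \times \pt$), and the second summand is carried isomorphically by Poincar\'e duality $PD$ followed by $\pr_{T^n *}$ to $H_{n+l}(T^n; \bfL\langle 1\rangle)$; the routine check is that these two maps together realize the two projections of the splitting. So~\ref{thm:characteristic_variety_for_torus_x_sphere:(1)} is essentially the stable splitting of $S^l$ together with duality bookkeeping.

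For~\ref{thm:characteristic_variety_for_torus_x_sphere:(2)}, recall that by Lemma~\ref{lem:PD_of_res} the restriction $\res^{T^J\times\pt}_{T^n\times S^l}$ on $\caln$ is computed on $H^0(-;\bfL\langle 1\rangle)$ by the pullback $\inc^*$ along $T^J \times \pt \hookrightarrow T^n \times S^l$. I would combine the identification of $\caln$ with $H_{n+l}(T^n\times S^l;\bfL\langle 1\rangle)$ from Theorem~\ref{thm:geom_is_alg} with~\ref{thm:characteristic_variety_for_torus_x_sphere:(1)}: the class in $\caln(T^n\times S^l)$ is detected by its image in $H_{n+l}(T^n;\bfL\langle 1\rangle)$ under $\pr_{T^n*}\circ PD$. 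Using the collapse of the Atiyah--Hirzebruch spectral sequence for $T^n$ (already invoked in Remark~\ref{remark:points_of_view_sigma}), $H_{n+l}(T^n;\bfL\langle 1\rangle) \cong \bigoplus_{i} H_i(T^n)\otimes L_{n+l-i}(\Z)$, and the $H_i(T^n)$-components are detected by the composites $\inc^* \to$ surgery obstruction along the subtori $T^J\times\pt$ with $|J| = i$ followed by the induction $\varepsilon\colon L_i(\Z[\pi_1(T^J)]) \to L_i(\Z)$. Because $T^n\times S^l$ is a manifold on which surgery is done away from the top cell, the surgery obstruction $\sigma\colon \caln(T^n\times S^l)\to L_{n+l}(\Z[\Z^n])$ factors (via the Shaneson/assembly map) so that a normal map which vanishes under all the $\res$ then $\sigma$ then $\varepsilon$ compositions has vanishing $H_i(T^n)$-components for all $i<n+l$; the top component lies in $L_0(\Z)$ and is killed by the surgery obstruction map itself, so such a normal map represents a simple homotopy equivalence. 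Here one uses that $T^n\times S^l$ is topologically rigid (by the Farrell--Jones Conjecture for $\Z^n$, or by~\cite{Kreck-Lueck(2009nonasph)}), so vanishing of the surgery obstruction means the normal map is normally bordant to a simple homotopy equivalence and hence the original homotopy equivalence is homotopic to a homeomorphism.

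For~\ref{thm:characteristic_variety_for_torus_x_sphere:(3)}, the key point is to recognize $\sigma^{\geo}$ as a ``transfer then split along subtori'' invariant. Recall from the proof of Theorem~\ref{the:The_geometric_simple_structure_set_of_M}~\ref{the:The_geometric_simple_structure_set_of_M:inverting_p} that $\sigma^{\geo}\colon \strg(M)\to H_n(T^n_\rho;\bfL\langle 1\rangle)^{\Z/p}$ is built from the edge homomorphism of the Leray--Serre spectral sequence for the bundle $T^n_\rho \to M \to L^l$, together with the vanishing of differentials. Pulling back along the $\Z/p$-cover $T^n\times S^l \to M$ identifies, after passing to $\Z/p$-fixed points, the composite $\strg(M)\xrightarrow{\overline{(-)}}\strg(T^n\times S^l)\xrightarrow{\sigma^{\geo}_{T^n\times S^l}} H_n(T^n;\bfL\langle 1\rangle)$ with $\sigma^{\geo}$ followed by the inclusion $H_n(T^n_\rho;\bfL\langle 1\rangle)^{\Z/p}\hookrightarrow H_n(T^n;\bfL\langle 1\rangle)$. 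Now by part~\ref{thm:characteristic_variety_for_torus_x_sphere:(2)} and the identification $\sigma^{\geo}_{T^n\times S^l}$ with the characteristic-variety splitting invariant from Theorem~\ref{thm:geom_is_alg}, the map $\overline{h}$ is homotopic to a homeomorphism if and only if $\sigma^{\geo}_{T^n\times S^l}(\overline h)=0$, which happens if and only if $\sigma^{\geo}(h)=0$. The main obstacle I expect is bookkeeping in step~\ref{thm:characteristic_variety_for_torus_x_sphere:(3)}: carefully matching the spectral-sequence edge-map definition of $\sigma^{\geo}$ (through the diagram~\eqref{big_commutative_surgery_diagram}, with the $\bfL\langle 1\rangle$-variant) against the Pontryagin--Thom/Poincar\'e-duality definition of the characteristic-variety splitting invariant on $T^n\times S^l$, and checking that the transfer to the $p$-fold cover is compatible with taking $\Z/p$-fixed points in both descriptions. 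Once that compatibility is in hand, the equivalence is immediate.
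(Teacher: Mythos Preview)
Your approach to~\ref{thm:characteristic_variety_for_torus_x_sphere:(1)} (via the stable splitting of $S^l$) and to~\ref{thm:characteristic_variety_for_torus_x_sphere:(3)} (identifying $\sigma^{\geo}$ with the transfer-then-restrict-to-$T^n\times\pt$ map) are both fine and match the paper's strategy.

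The problem is~\ref{thm:characteristic_variety_for_torus_x_sphere:(2)}, where the argument is garbled in two places. First, you write that the class of $\eta(h)$ in $\caln(T^n\times S^l)$ ``is detected by its image in $H_{n+l}(T^n;\bfL\langle 1\rangle)$ under $\pr_{T^n*}\circ PD$'' and then try to read off that component via the subtori $T^J\times\pt$. This has the two summands of~\ref{thm:characteristic_variety_for_torus_x_sphere:(1)} reversed. By Lemma~\ref{lem:PD_of_res}, restriction to $T^J\times\pt$ is computed on $H^0(-;\bfL\langle 1\rangle)$ by $\inc^*$, so the subtori splitting obstructions see the \emph{other} summand $H^0(T^n\times\pt;\bfL\langle 1\rangle)$, not the $\pr_{T^n*}\circ PD$-summand. (Your remark that ``the top component lies in $L_0(\Z)$'' is a symptom: $H_{n+l}(T^n;\bfL\langle 1\rangle)$ has no $L_0(\Z)$-piece, since all the $L$-degrees occurring are $\geq l\geq 3$.)

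Second, you invoke topological rigidity of $T^n\times S^l$ to close the argument. But $T^n\times S^l$ is not aspherical, and in fact $\strg(T^n\times S^l)\cong H^0(T^n;\bfL\langle 1\rangle)$ is nontrivial; the Farrell--Jones Conjecture for $\Z^n$ gives rigidity of $T^n$, not of $T^n\times S^l$. What you need instead is the paper's three-claim structure: (1) the assembly map $A^{\langle 1\rangle}_{n+l+1}(T^n\times S^l)$ is surjective, so $\eta$ is injective and $h$ is homotopic to a homeomorphism iff $\eta(h)=0$; (3) for a simple homotopy equivalence $h$, the $\pr_{T^n*}\circ PD$-component of $\eta(h)$ vanishes automatically, because $\sigma(\eta(h))=0$ factors through the isomorphism $A^{\langle 1\rangle}_{n+l}(T^n)$; and (2) the remaining $\inc^*$-component in $H^0(T^n;\bfL\langle 1\rangle)$ is detected by the subtori obstructions (the paper exhibits an explicit inverse using $(T^J\#E^{|J|})\times T^{J'}\to T^n$). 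With this corrected structure your argument for~\ref{thm:characteristic_variety_for_torus_x_sphere:(3)} goes through as written.
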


\begin{proof}~\ref{thm:characteristic_variety_for_torus_x_sphere:(1)} It is an exercise to
  show the analogue in ordinary homology
 $$
 \inc^* \times (\pr_{T^n*} \circ PD) \colon H^*(T^n \times S^l) \xrightarrow{\cong}
 H^*(T^n \times \pt) \oplus H_{n+l-*}(T^n).
 $$ 
 The general case follows since the Atiyah-Hirzebruch spectral sequence collapses and the $\bfL$-spectrum Poincar\'e duality is compatible with classical Poincar\'e duality since the fundamental class
 $[T^n \times S^l]_{\bfL^{\pt}}\in H_{n+l}(T^n \times S^l;\bfL^{\pt})$ maps to the
 fundamental class
 $[T^n \times S^l] \in H_{n+l}(T^n \times S^l) \subset \bigoplus H_{n+l-*}(T^n \times S^l;
 L^*(\Z))$.  (See Section 16, The $L$-theory orientation of topology
 in~\cite{Ranicki(1992)}.)
 \\[1mm]~\ref{thm:characteristic_variety_for_torus_x_sphere:(2)} Let $h : X' \to T^n \times S^l$ be a
 simple homotopy equivalence.  We first claim that $h$ is homotopic to a homeomorphism if
 and only if
 \[
   PT(\eta(h))= 0 \in H^0(T^n \times S^l; \bfL \langle 1 \rangle).
 \]
 Since
 $H_{n+l+1}(T^n \times S^l ; \bfL \langle 1 \rangle) \to H_{n+l+1}(T^n ; \bfL \langle 1
 \rangle)$ is a (split) surjection, and the Farrell-Jones Conjecture for $\Z^n$, which is due to
 Shaneson in this case, implies that
 $A_{n+l+1}^{\langle 1 \rangle}(T^n) :H_{n+l+1}(T^n ; \bfL \langle 1 \rangle) \to
 L_{n+l+1}(\Z[\Z^n])$ is an isomorphism, the composite
  $$
  A_{n+l+1}^{\langle 1 \rangle}(T^n) \circ H_{n+l+1}(\pr_{T^n} ; \bfL \langle 1 \rangle)
  =A_{n+l+1}^{\langle 1 \rangle}(T^n \times S^l)
  $$
  is surjective.  Hence the boundary map $\partial$ in the surgery exact sequence (see
  Theorem~\ref{thm:geom_is_alg}) is the trivial map and thus $\eta$ is injective.  Our first claim follows.

  Our second claim is that the map
$$
\prod_J \varepsilon \circ \sigma \circ \res_{T^n}^{T^J} \circ (PT)^{-1} : H^0(T^n; \bfL
\langle 1 \rangle) \to \bigoplus_J L_{|J|}(\Z)
$$
is an isomorphism where $J$ runs over all nonempty subsets of $\{1,2,\dots, n\}$.  To see
this is an isomorphism we exhibit its inverse map which sends the generator
$E^{|J|} \to S^{|J|}$ of $L_{|J|}(\Z)$ to $PT((T^{J} \# E^{|J|}) \times T^{J'} \to T^n)$
where $J'$ is the complement of $J$.

Our third claim is that for a simple homotopy equivalence $h: X' \to T^n \times S^l$,
\begin{equation}\label{vanish} \pr_{T^n*}(PD(PT(\eta(h)))) = 0 \in
  H_{n+l}(T^n;\bfL\langle 1 \rangle).
\end{equation}
Indeed,
\begin{align*}
  0 & = \sigma(\eta(h)) & \text{by exactness of the surgery exact sequence}  \\
    & = A_{n+l}^{\langle 1 \rangle}(T^n \times S^l)(PD(PT(\eta(h)))) & \text{by Theorem~\ref{thm:geom_is_alg}} \\
    & =  A_{n+l}^{\langle 1 \rangle}(T^n) \pr_{T^n*}(PD(PT(\eta(h)))) & \text{by naturality of the assembly map.}
\end{align*}
Since $A_{n+l}^{\langle 1 \rangle}(T^n)$ is an isomorphism,~\eqref{vanish} follows.

Now back to the proof of~\ref{thm:characteristic_variety_for_torus_x_sphere:(2)}.  Let
$h : X' \to T^n \times S^l$ be a simple homotopy equivalence so that all splitting invariants along $T^J \times \bullet$ vanish.  Our goal is to show that $h$ is
homotopic to a homeomorphism.  By our first claim,
part~\ref{thm:characteristic_variety_for_torus_x_sphere:(1)}, and our third claim, it
suffices to show that
\[
  \inc^*(PT(\eta(h))) = 0 \in H^0(T^n \times \pt; \bfL\langle 1 \rangle).
\]
By Lemma~\ref{lem:PD_of_res}, this is equivalent to showing
\[
  PT(\res_{T^n \times S^l}^{T^n \times \pt}(\eta(h))) = 0 \in H^0(T^n \times \pt;
  \bfL\langle 1 \rangle)
\]
By our second claim, this is equivalent to showing that for any nonempty subset $J$ of
$\{1,2, \dots, , n\}$,
\[
  \varepsilon(\sigma(\res_{T^n \times S^l}^{T^J \times \pt} (\eta(h) ))) = 0 \in
  L_{|J|}(\Z).
\]
It follows that $\{ T^J \times \pt\} $ is a characteristic variety for $T^n \times S^l$.
\\[1mm]~\ref{thm:characteristic_variety_for_torus_x_sphere:(3)} The composite
$\sigma^{\geo} \colon \strg(M) \to H_n(T^n;\bfL\langle 1 \rangle)^{\IZ/p}$ with the
inclusion \\$H_n(T^n;\bfL\langle 1 \rangle)^{\IZ/p} \hookrightarrow H_n(T^n;\bfL\langle 1
\rangle)$ can be identified with the composite
\begin{multline*}
  \strg(M) \xrightarrow{\tr} \strg(T^n \times S^l) \xrightarrow{\eta} \caln(T^n \times
  S^l) \\\xrightarrow{\res} \caln(T^n \times \pt) \xrightarrow{PT} H^0(T^n; \bfL \langle 1
  \rangle) \xrightarrow{PD} H_n(T^n; \bfL \langle 1 \rangle),
\end{multline*}
where the transfer map tr is passing to the
$p$-fold cover.  By
Theorem~\ref{thm:characteristic_variety_for_torus_x_sphere}\ref{thm:characteristic_variety_for_torus_x_sphere:(1)}
and Lemma~\ref{lem:PD_of_res}, the result follows.
\end{proof}
       
\begin{definition}
  A map $f \colon N \to X$ \emph{splits along a submanifold $Y$} if
  $f$ is homotopic to a map $g$, transverse to $Y$ so that $g^{-1}Y \to
  Y$ is a homotopy equivalence.  The map $f$ \emph{s-splits along
    $Y$}, if, in addition, $g^{-1}Y \to Y$ is a simple homotopy equivalence.
  % The map $f$ \emph{h-splits along $Y$} if $f$ extends to a map from an h-cobordism $(W;
  % N,N') \to (M \times I: M\times 0, M \times 1$ so that $N' \to M \times
  % 1$ splits along $Y$ . . .
\end{definition}

Thus if $\{Y^j_i\}$ is a characteristic variety for $X$, then a simple homotopy
equivalence is homotopic to a homeomorphism if and only if it splits along $\{Y^j_i\}$.

We now mention the relationship between restriction and splitting invariants.
The key result is due to Browder (see~\cite[Section 4.3]{Weinberger(1994)}).

\begin{theorem}\label{thm:Browder_splitting_theorem} Suppose $h : X' \to X$ is a simple
  homotopy equivalence and that $Y^j$ is a submanifold of $X^d$ with codimension
  $d-j\geq 3$ and dimension $j \geq 5$.  Then $h$ is splittable along $Y$ if and only if
  $\sigma(\res_X^Y(h)) = 0 \in L_j^s(\Z[\pi_1Y])$.
\end{theorem}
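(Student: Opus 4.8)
The plan is to prove this as the classical codimension-$\geq 3$ splitting theorem of Browder, following the method of Wall (\emph{Surgery on Compact Manifolds}, Ch.~11; see also Weinberger~\cite{Weinberger(1994)}, \S4.3), but keeping track of Whitehead torsion so that the obstruction lands in $L^s$. First I would set up the geometry: by transversality, homotope $h$ to a map $g \colon X' \to X$ transverse to $Y$, and write $\nu \to Y$ for the normal disk bundle (of fibre dimension $d - j \geq 3$), $W := g^{-1}(Y)$, $N := g^{-1}(\nu)$, $C := X \setminus \operatorname{int}\nu$, $C' := g^{-1}(C)$. Then $g$ restricts to a bundle map $N \to \nu$ covering the degree-one normal map $g|_W \colon W \to Y$, so $N = (g|_W)^*\nu$, and $X = \nu \cup_{\partial\nu} C$, $X' = N \cup_{\partial N} C'$; by construction the simple surgery obstruction $\sigma(g|_W) \in L^s_j(\IZ[\pi_1 Y])$ is exactly $\sigma(\res^Y_X(h))$ (this uses the well-definedness of $\res^Y_X$ on $\caln(X)$ recalled above). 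Since the codimension is at least $3$, the sphere fibres of $\nu$ are simply connected, so $\pi_1\partial\nu \xrightarrow{\cong} \pi_1\nu \xrightarrow{\cong} \pi_1 Y$ and $\pi_1 C \xrightarrow{\cong} \pi_1 X$; these identifications get used repeatedly.

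\textbf{Necessity.} Suppose $h$ splits along $Y$; choose $g$ so that $g|_W \colon W \to Y$ is a homotopy equivalence. Then $g|_N$ and $g|_{\partial N}$ are homotopy equivalences, and a van Kampen/Mayer--Vietoris comparison (using the $\pi_1$-identifications) forces $g|_{C'} \colon C' \to C$ to be a homotopy equivalence as well. The sum formula for Whitehead torsion applied to $X' = N \cup_{\partial N} C'$, together with $\tau(h) = 0$ and the codimension hypothesis, then shows that $h$ may be taken to $s$-split, i.e.\ $g|_W$ can be chosen to be a \emph{simple} homotopy equivalence. Since the simple surgery obstruction of a simple homotopy equivalence vanishes and $\sigma$ is a normal-bordism invariant, $\sigma(\res^Y_X(h)) = \sigma(g|_W) = 0$.

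\textbf{Sufficiency.} This is the substantial direction. Assume $\sigma(g|_W) = 0$. Because $\dim W = j \geq 5$, surgery below the middle dimension makes $g|_W$ highly connected, and the vanishing of the obstruction in $L^s_j(\IZ[\pi_1 Y])$ lets us complete the middle-dimensional surgeries, producing a normal bordism $(F; g|_W, \phi) \colon (V; W, W') \to Y \times (I; 0, 1)$ with $\phi \colon W' \to Y$ a simple homotopy equivalence. The key geometric move is to realise this bordism \emph{ambiently} inside $X'$ by a homotopy of $g$. Each surgery is performed on an embedded framed sphere $S^k \subset W$ with $k \leq j/2 \leq (d-3)/2 < d/2$; after arranging that surgery is done on elements of the kernel of $\pi_*(W) \to \pi_*(Y)$, the composite $S^k \to W \to Y$ is null-homotopic, so the restriction of $N = (g|_W)^*\nu$ over $S^k$ is trivial, and general position (valid since $2k < d$) extends the framed datum $S^k \times D^{j-k} \subset W$ to a framed handle $S^k \times D^{j-k} \times D^{d-j} \subset X'$. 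Sliding $g$ across the trace of this handle is a homotopy of $g$ that leaves $X'$ unchanged and replaces $W$ by the surgered manifold. Carrying out all the surgeries of $V$ in this way converts $g$ into $g' \colon X' \to X$, homotopic to $h$, transverse to $Y$, with $(g')^{-1}(Y) = W'$ and $g'|_{W'} = \phi$ a simple homotopy equivalence; hence $h$ $s$-splits, in particular splits, along $Y$.

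\textbf{Main obstacle.} The delicate part is the ambient realisation in the sufficiency step: one must check that \emph{every} surgery occurring in the normal bordism $V$ --- in particular the middle-dimensional ones guaranteed only abstractly by $\sigma = 0$ --- can be represented by an embedded framed handle in $X'$ extending the corresponding handle in $W$, and that executing these handles is a homotopy of $g$ rather than a genuine change of $X'$. This is exactly where the two hypotheses are used: codimension $d - j \geq 3$ yields $2k < d$ and simply connected sphere fibres (hence the matching of fundamental groups across the decomposition, needed both for the van Kampen step in necessity and for the triviality of the normal data in sufficiency), while $\dim Y = j \geq 5$ makes surgery on $W$ unobstructed below and at the middle dimension. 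For the careful bookkeeping of the normal bundle data and of the induced homotopy I would follow Wall's treatment essentially verbatim.
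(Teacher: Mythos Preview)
The paper does not give a proof of this theorem at all: it is stated as ``the key result is due to Browder (see~\cite[Section 4.3]{Weinberger(1994)})'' and then used as a black box. Your proposal, by contrast, supplies an outline of the classical proof via ambient surgery in codimension $\ge 3$, following Wall's Chapter~11. So there is nothing in the paper to compare your argument against; you are filling in what the authors deliberately cited out.

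As a sketch of the standard argument your outline is sound. The sufficiency direction is the correct strategy: realise each surgery on $W=g^{-1}(Y)$ by a homotopy of $g$ using general position in $X'$, which is exactly where the hypotheses $d-j\ge 3$ (simply connected normal sphere fibre, room to embed the handles) and $j\ge 5$ (surgery on $W$ is unobstructed once $\sigma=0$) enter. One point to tighten in the necessity direction: you argue that the torsion sum formula forces $g|_W$ to be \emph{simple} once $h$ is simple, but the sum formula by itself only relates $\tau(g|_W)$ to $\tau(g|_{C'})$ and does not obviously kill either separately. The cleaner route is to observe that the $L^s$--surgery obstruction of any degree-one normal map which is already a homotopy equivalence vanishes, because the quadratic kernel complex is chain contractible and hence null-cobordant in $L^s$; equivalently, an h-cobordism on $W$ realising the torsion gives a normal bordism to a simple equivalence. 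Either way this is exactly the ``careful bookkeeping'' you defer to Wall for, so the gap is cosmetic rather than substantive.
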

    
\subsection{Rho invariants}

\begin{definition}\label{Rho-invariant} Let $N$ be a closed, oriented,
  $(2d-1)$-dimensional manifold mapping to $BG$ for a finite group $G$.  The
  \emph{$\rho$-invariant}
  \[
   \rho(N \to BG) \in \wt R(G)^{(-1)^d}[1/|G|]
 \]
 is defined by
  \[
    \rho(N \to BG)=\frac{1}{k} \cdot \sign_G(W) \in \wt R(G)^{(-1)^d}[1/|G|]
  \] where $k$ is a power of $|G|$ and $W$ is a compact, oriented $2d$-dimensional
  manifold with orientation preserving free $G$-action, whose boundary is $k$ disjoint
  copies of the $\ol N$, the induced $G$-cover of $N$.  This was given an analytic
  interpretation by Atiyah-Patodi-Singer.  The \emph{$\rho'$-function}
  \[
    \rho'(N \to BG) \colon \strg(N) \to R(G)^{(-1)^d}[1/|G|]
  \]
  is defined by
  \[
    \rho'(N \to BG)(N' \to N) = \rho(N' \to N \to BG) - \rho(N \to BG).
  \]
\end{definition}
 
The $\rho$-invariant and the $\rho'$-function only depend on the induced homomorphism
$\pi_1 N \to G$, or, equivalently, on the induced $G$-cover $\ol N \to N$.  When
$\pi_1 N = G$, we will write $\rho(N)$ and $\rho'$.

The passage from $R(G)^{(-1)^d}[1/|G|]$ to $ \wt R(G)^{(-1)^d}[1/|G|]$ ensures that the
$\rho$-inva\-riant is independent of the choices of $k$ and $W$.  For the definition of the
$\rho$-invariant see~\cite[Remark after Corollary~7.5]{Atiyah-Singer(1968b)}
or~\cite[Section 13B] {Wall(1999)}.

Here are two fundamental properties of the $\rho'$-function.

\begin{theorem}\label{thm:rho'} Let $N \to BG$ be a map from a closed, oriented,
  $(2d-1)$-dimensional manifold to the classifying space of a finite group.  Let
  $\phi: \pi_1N \to G$ be the induced map on fundamental groups.  Recall there is an
  identification $\strg(N) = \strone_{2d}(N)$ (see~\eqref{ident_structure_sets}).  In
  particular the geometric structure set is an abelian group.
  \begin{enumerate}
  \item\label{thm:rho':(1)} The map $\rho'(N \to BG)\colon \strg(N) \to \wt R(G)^{(-1)^d}[1/|G|]$
    is a homomorphism of abelian groups.
  \item\label{thm:rho':(2)} Recall that $L^s_{2d}(\Z[\pi_1N])$ acts on $\strg(N)$
    (see~\cite[Theorem 10.4]{Wall(1999)}).  For $x \in L^s_{2d}(\Z\pi_1N)$ and
    $y \in \strg(N)$,
    \[
      \rho'(N \to BG)(x + y) = \widetilde{\sign}_{G}(\wt L^s_{2d}(\phi)(x)) + \rho'(N \to
      BG)(y) \in \wt R(G)^{(-1)^d}[1/|G|].
    \]
    In particular, by taking $y = \id_N$, we have an equality of maps
    $$ \rho'(N \to BG) \circ \partial = \wt \sign_G \circ \wt L^s_{2d}(\phi) \colon \wt
    L^s_{2d}(\Z \pi_1 N) \to \wt R(G)^{(-1)^d}[1/|G|].$$
  \end{enumerate}

\end{theorem}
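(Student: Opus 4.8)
The plan is to prove part~\ref{thm:rho':(2)} first, since it carries all the geometric content, and then to deduce part~\ref{thm:rho':(1)}. For~\ref{thm:rho':(2)}, recall how $L^s_{2d}(\IZ\pi_1 N)$ acts on $\strg(N)$ by Wall realization (\cite[Theorem~10.4]{Wall(1999)}): given $x\in L^s_{2d}(\IZ\pi_1 N)$ and $y=[h\colon N'\to N]$, transport $x$ along $h_*$ and realize it by a degree one normal map $(W;N',N'')\to(N'\times I;\,N'\times 0,\,N'\times 1)$ rel $N'\times 0$ that restricts to a simple homotopy equivalence on $N'\times 0$ and has surgery obstruction $x$; then $x+y=[N''\xrightarrow{\simeq}N'\xrightarrow{h}N]$. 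Write $\psi\colon\pi_1 N'\to G$ for the composite $\pi_1 N'\xrightarrow{h_*}\pi_1 N\xrightarrow{\phi}G$ (and likewise on $N''$ through the homotopy equivalence to $N'$). Since the term $\rho(N\to BG)$ cancels, it suffices to prove
\[
\rho(N''\to BG)-\rho(N'\to BG)=\widetilde{\sign}_G\bigl(\widetilde{L}^s_{2d}(\psi)(x)\bigr)\in\wt R(G)^{(-1)^d}[1/|G|].
\]
The right-hand side is, by the definition of the multisignature map~\eqref{multi_signature-widetilde}, the reduced $G$-signature of the $(-1)^d$-symmetric $\IR$-form obtained from the surgery kernel of $W\to N'\times I$ by tensoring with $\IR$ and applying the trace; since the product cobordism $N'\times I$ has trivial middle-dimensional intersection form, this equals the reduced $G$-signature (the image in $\wt R(G)^{(-1)^d}$ of $\sign_G$) of the intersection form of the induced $G$-cover $\overline W$ of $W$, which I denote $\widetilde{\sign}_G(\overline W)$.

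To evaluate the left-hand side, choose a power $k$ of $|G|$ and a compact oriented free $G$-manifold $\overline V$ with $\partial\overline V=k\cdot\overline{N'}$ and $\rho(N'\to BG)=\tfrac1k\sign_G(\overline V)$. Gluing $\overline V$ to $k$ disjoint copies of $\overline W$ along $k\cdot\overline{N'}$ produces a compact oriented free $G$-manifold with boundary $k\cdot\overline{N''}$, whence $\rho(N''\to BG)=\tfrac1k\sign_G(\overline V\cup k\,\overline W)$. The gluing is along the \emph{closed} manifold $k\cdot\overline{N'}$, so Wall's non-additivity correction vanishes and Novikov additivity gives $\sign_G(\overline V\cup k\,\overline W)=\sign_G(\overline V)+k\cdot\sign_G(\overline W)$; dividing by $k$ and passing to $\wt R(G)^{(-1)^d}[1/|G|]$ yields the displayed identity. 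The last assertion of~\ref{thm:rho':(2)} is then the case $y=\id_N$: here $\partial(x)=x\cdot\id_N$ by the definition of $\partial$ in Theorem~\ref{thm:geom_is_alg}, $\psi=\phi$, and $\rho'(\id_N)=0$, so $\rho'(N\to BG)\circ\partial=\widetilde{\sign}_G\circ\widetilde{L}^s_{2d}(\phi)$.

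For~\ref{thm:rho':(1)}, part~\ref{thm:rho':(2)} already shows that $\rho'$ restricts to a homomorphism on the subgroup $\im(\xi^{\langle 1\rangle}_{2d})\subseteq\strone_{2d}(N)$: there it equals $\widetilde{\sign}_G\circ\widetilde{L}^s_{2d}(\phi)$ followed by passage to the quotient, which is well defined because $\widetilde{\sign}_G\circ\widetilde{L}^s_{2d}(\phi)$ kills $\ker(\xi^{\langle 1\rangle}_{2d})=\im(A^{\langle 1\rangle}_{2d})$ --- by naturality of the assembly map this image lands in the image of the assembly map for $BG$, on which the reduced $G$-signature vanishes (rationally it is carried by $L_{2d}(\IZ)$, whose signature is a multiple of the regular representation, and $\wt R(G)^{(-1)^d}[1/|G|]$ is torsion free). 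For the general case I would realize the abelian group law on $\strg(N)=\strone_{2d}(N)$ geometrically: the sum of $y_i=[h_i\colon N_i\to N]$ is represented by surgering a degree one normal map over $N$ representing $\eta(y_1)+\eta(y_2)\in\caln(N)$ into a simple homotopy equivalence, the surgeries taking place in a normal bordism over $N$; a second application of the Wall-realization/Novikov-additivity computation above (again with all gluings along closed manifolds) identifies $\rho'(y_1+y_2)-\rho'(y_1)-\rho'(y_2)$ with the reduced $G$-signature of a product-type bordism, which vanishes. Equivalently one can package this by exhibiting a homomorphism of abelian groups $\strone_{2d}(N)\to\widetilde{L}^s_{2d}(\IZ G)[1/2]$ whose composite with the $\IZ[1/2]$-isomorphism $\widetilde{\sign}_G$ of Lemma~\ref{lem:multisignature_iso_after_inverting_2} recovers $\rho'$.

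The main obstacle is the identification used in~\ref{thm:rho':(2)}: that the equivariant signature of the Wall-realization cobordism computes the multisignature of the surgery obstruction. Making this precise requires performing enough surgery in the interior of $W$ to split off the kernel form, checking that the part inherited from $N'\times I$ contributes trivially modulo the radical of the intersection form, and matching the $\IR G$-trace form on the kernel with the form defining $\sign_G$ in~\eqref{multi_signature-widetilde}; one must also verify that every gluing is along a closed manifold, so that no Wall non-additivity term intervenes. For~\ref{thm:rho':(1)} the delicate point is to pin down a geometric model for the abelian group structure on $\strg(N)$ and to check its compatibility with the algebraic group structure on $\strone_{2d}(N)$ under the identification~\eqref{ident_structure_sets}, so that the bordism argument applies and the indeterminacy --- the choice of ``path'' in the algebraic structure group --- is controlled.
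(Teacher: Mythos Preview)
Your argument for part~\ref{thm:rho':(2)} is correct and is exactly the paper's proof, only spelled out in more detail: the paper states the cobordism identity $\rho(N'\to BG)=\sign_G(\overline W)+\rho(N''\to BG)$ as an immediate consequence of the definition of $\rho$, observes that the Wall action is implemented by such a $W$, and stops there.

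For part~\ref{thm:rho':(1)} the paper does not give an argument at all; it simply cites Crowley--Macko~\cite{Crowley-Macko(2011)}, noting that the additivity of $\rho'$ is the \emph{main result} of that paper. Your direct approach is more ambitious. The piece you do prove --- that $\rho'$ is a homomorphism on $\im(\xi^{\langle 1\rangle}_{2d})$ --- is fine (and your justification that $\widetilde{\sign}_G\circ\widetilde{L}^s_{2d}(\phi)$ kills $\im A^{\langle 1\rangle}_{2d}(N)$ is correct, though the cleanest reason is that $H_{2d}(BG;\bfL\langle 1\rangle)$ is torsion for finite $G$ while the target $\wt R(G)^{(-1)^d}[1/|G|]$ is torsion-free). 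The gap is in the ``general case'': you describe the sum $y_1+y_2$ by surgering a normal map representing $\eta(y_1)+\eta(y_2)$ and then appeal to a second Novikov-additivity computation. But producing a specific normal bordism whose ends are $N_1$, $N_2$, and a representative of $y_1+y_2$, and whose $G$-signature is controlled, is precisely the hard step; the abelian group structure on $\strg(N)$ comes from the identification with $\strone_{2d}(N)$, and turning that algebraic addition into a geometric cobordism statement compatible with $\rho$ is the substance of~\cite{Crowley-Macko(2011)}. You acknowledge this yourself in the last paragraph. So your sketch points in the right direction but does not constitute a proof; for the purposes of this paper the correct move is to cite Crowley--Macko, as the authors do.
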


\begin{proof}~\ref{thm:rho':(1)} This is the main result of the
  paper~\cite{Crowley-Macko(2011)} by Crowley and Macko.  \\[1mm]~\ref{thm:rho':(2)} The
  following is an easy consequence of the definition of the $\rho$-invariant: If $W^{2d}$
  is a compact oriented manifold with a map to $BG$ and if
  $(\partial W \to BG) = (N' \sqcup - N'' \to BG)$, then
  $\rho(N' \to BG) = \sign_G(\ol W) + \rho(N'' \to BG)$.

  The action is implemented by such a $W$ and the result follows.
\end{proof}

Let $L^l$ be a homotopy lens space with fundamental group $P \cong \Z/p$, for $p$ an odd
prime.  Let $d = (l+1)/2$.  A homotopy lens space is the orbit space of a free action of
$\Z/p$ on $S^l$; equivalently it is a closed manifold having the homotopy type of a lens
space.

\begin{theorem}
  $\rho': \strg(L^l) \to \wt R(P)^{(-1)^d}[1/p] \cong \Z[1/p]^{(p-1)/2}$ is an injection,
  and is an isomorphism after tensoring with $\Z[1/2p]$.
\end{theorem}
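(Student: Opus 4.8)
The plan is to combine the reduced $1$-connective surgery exact sequence for $L^l$ with the two structural properties of the $\rho'$-function recorded in Theorem~\ref{thm:rho'} and with the multisignature isomorphism of Lemma~\ref{lem:multisignature_iso_after_inverting_2}. At the outset, $\rho'$ is a homomorphism of abelian groups by Theorem~\ref{thm:rho'}~\ref{thm:rho':(1)} (using the identification $\strg(L^l)=\strone_{l+1}(L^l)$ from~\eqref{ident_structure_sets}), and the codomain is $\wt{R}(P)^{(-1)^d}[1/p]\cong\IZ[1/p]^{(p-1)/2}$ since $\wt{R}(\IZ/p)^{\pm 1}$ is free abelian of rank $(p-1)/2$. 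Concretely, I would write down the piece
\[
\wt{H}_{l+1}(L^l;\bfL\langle 1\rangle)\xrightarrow{\wt{A}}\wt{L}^s_{l+1}(\IZ P)\xrightarrow{\wt\xi^{\langle 1\rangle}}\strg(L^l)\xrightarrow{\wt\eta^{\langle 1\rangle}}\wt{H}_l(L^l;\bfL\langle 1\rangle)\xrightarrow{\wt{A}}\wt{L}^s_l(\IZ P)
\]
of~\eqref{reduced_surgery_sequence_geometric} with $X=L^l$ and $m=l+1=2d$.

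Next I would dispose of the outer terms. Since $l$ is odd, $\wt{L}^s_l(\IZ P)=0$ by~\eqref{widetildeLs(ZP)}, so $\wt\eta^{\langle 1\rangle}$ is surjective. A routine Atiyah--Hirzebruch spectral sequence argument — using that the reduced homology of the lens space $L^l$ is $\IZ/p$ in odd degrees below $l$ and $\IZ$ in degree $l$, while $\pi_t(\bfL\langle 1\rangle)$ is $\IZ$ for $t\equiv 0\bmod 4$ and $\IZ/2$ for $t\equiv 2\bmod 4$, and $p$ is odd — shows that $\wt{H}_{l+1}(L^l;\bfL\langle 1\rangle)=0$ and that $\wt{H}_l(L^l;\bfL\langle 1\rangle)$ is a finite abelian $p$-group. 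Hence $\wt\xi^{\langle 1\rangle}$ is injective and there is a short exact sequence
\[
0\to\wt{L}^s_{l+1}(\IZ P)\xrightarrow{\wt\xi^{\langle 1\rangle}}\strg(L^l)\xrightarrow{\wt\eta^{\langle 1\rangle}}\wt{H}_l(L^l;\bfL\langle 1\rangle)\to 0
\]
in which $\wt{L}^s_{l+1}(\IZ P)\cong\IZ^{(p-1)/2}$ by~\eqref{widetildeLs(ZP)} (as $l+1$ is even).

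Now the main step. Theorem~\ref{thm:rho'}~\ref{thm:rho':(2)}, applied with $N=L^l$, $G=P$ and $\phi=\id_P$, identifies the composite $\rho'\circ\wt\xi^{\langle 1\rangle}\colon\wt{L}^s_{l+1}(\IZ P)\to\wt{R}(P)^{(-1)^d}[1/p]$ with the reduced multisignature $\widetilde{\sign}_P$. Tensoring the displayed sequence with $\IZ[1/2p]$ kills $\wt{H}_l(L^l;\bfL\langle 1\rangle)$, so $\wt\xi^{\langle 1\rangle}\otimes\IZ[1/2p]$ is an isomorphism; and $\widetilde{\sign}_P$ is an isomorphism after inverting $2$ by Lemma~\ref{lem:multisignature_iso_after_inverting_2}. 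Therefore $\rho'\otimes\IZ[1/2p]$ is an isomorphism, which gives the second assertion. For the injectivity of $\rho'$: on $\im(\wt\xi^{\langle 1\rangle})\cong\IZ^{(p-1)/2}$ the map $\rho'$ agrees with $\widetilde{\sign}_P$, which is injective (a $\IZ[1/2]$-isomorphism out of a torsion-free group), so $\ker\rho'$ injects into $\wt{H}_l(L^l;\bfL\langle 1\rangle)$ and is a finite $p$-group; since the codomain $\wt{R}(P)^{(-1)^d}[1/p]$ is torsion-free, $\ker\rho'$ must be the entire torsion subgroup of $\strg(L^l)$. So everything comes down to the statement that $\strg(L^l)$ is torsion-free, equivalently that the above extension is non-split over each cyclic summand of $\wt{H}_l(L^l;\bfL\langle 1\rangle)$.

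This torsion-freeness is the part I expect to be the genuine obstacle: it is not formal, it does not follow from the cited properties of $\rho'$, and internally one would have to compute the extension class of the short exact sequence in $\Ext^1(\wt{H}_l(L^l;\bfL\langle 1\rangle),\wt{L}^s_{l+1}(\IZ P))$. The cleanest way out is to invoke the classical surgery-theoretic classification of fake lens spaces: the simple structure set of a lens space with cyclic fundamental group of odd prime order is a free abelian group detected by the $\rho$-invariant, the other classical invariant (Reidemeister torsion) being constant on $\strg(L^l)$ because all the homotopy equivalences under consideration are simple. This is Wall's computation~\cite[Chap.~14]{Wall(1999)} (with the analytic interpretation of $\rho$ due to Atiyah--Patodi--Singer), which I would cite for this last point; the rank and "$\otimes\IZ[1/2p]$" statements above then complete the proof.
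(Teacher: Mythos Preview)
Your setup via the reduced $1$-connective surgery sequence is correct and matches the paper's use of the multisignature identification $\rho'\circ\partial=\widetilde{\sign}_P$ from Theorem~\ref{thm:rho'}~\ref{thm:rho':(2)}. The $\IZ[1/2p]$-isomorphism statement follows exactly as you say. Where you diverge is at the torsion-freeness of $\strg(L^l)$: you correctly identify this as the crux and then invoke Wall's Chapter~14 classification. But look at the remark immediately following the paper's proof: the whole point of this theorem is to give a \emph{new} proof of Wall's detection result, so appealing to Wall here is circular in spirit.

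The paper handles torsion-freeness differently and self-containedly. Rather than analyzing the extension class of your short exact sequence, it shows that the comparison map $\strone_{l+1}(L^l)\to\strone_{l+1}(BP)$ is injective: the equivariant Atiyah--Hirzebruch spectral sequence gives $H^P_{l+1}(S^l\to S^\infty;\bfL\langle 1\rangle)=0$, and then the long exact sequence of the triple $S^l\to S^\infty\to\pt$ yields the injection. Since $\strone_{l+1}(BP)\cong\IZ[1/p]^{(p-1)/2}$ by Theorem~\ref{the:The_periodic_structure_set_of_BP_for_a_finite_p-group} (which was proved earlier using equivariant $KO$-theory and Atiyah--Segal completion), the finitely generated group $\strg(L^l)$ of rank $(p-1)/2$ embeds in a torsion-free group and is therefore $\IZ^{(p-1)/2}$. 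This replaces your extension computation entirely and avoids Wall. Your surgery-sequence analysis is a fine way to see the rank and the cokernel modulo $p$, but the paper's route through $BP$ is what actually closes the argument.
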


\begin{proof}
  We first show that $\strg(L^l)$ is isomorphic to $\Z^{(p-1)/2}$ and is in particular
  torsion-free.  The Atiyah-Hirzebruch Spectral Sequence in equivariant homology
  (see~\cite[Theorem 4.7]{Davis-Lueck(1998)}) shows that
  $H_{l+1}^P(S^l \to S^\infty; \bfL \langle 1 \rangle)$ is zero.  The long exact sequence
  of the triple $S^l \to S^\infty \to \pt$ then shows that
  $\strone_{l+1}(L^l) \to \strone_{l+1}(BP)$ is injective.  But the domain is
  $\strg(L^l)$, which is finitely generated and has rank $(p-1)/2$ by the surgery exact
  sequence, and the codomain is isomorphic to $\Z[1/p]^{(p-1)/2}$ by
  Theorem~\ref{the:The_periodic_structure_set_of_BP_for_a_finite_p-group}.

  The multisignature map $\wt \sign_G : \wt L_{l+1}^s(\Z P) \to \wt R(P)^{(-1)^d}[1/p]$ is
  a $\Z[1/2p]$-iso\-mor\-phism by Lemma~\ref{lem:multisignature_iso_after_inverting_2}.
  We conclude from by   Theorem~\ref{thm:rho'}~\ref{thm:rho':(2)} that  the composite
  $ \wt L_{l+1}^s(\Z P) \xrightarrow{\partial} \strg(L^l) \xrightarrow{\rho'} \wt
  R(P)^{(-1)^d}[1/p]$ is a $\Z[1/2p]$-isomorphism.   Since $\strg(L^l)$ is torsion-free of rank
  $(p-1)/2$ the result follows.
\end{proof}

\begin{remark}
  This gives a new proof of Wall's result~\cite[Chapter 14E]{Wall(1999)} that the
  structure set of a homotopy lens space is detected by the $\rho'$ invariant.  He showed
  that if $L^l$ is a homotopy lens space with fundamental group $\Z/k$ for $k$ odd, that
  $\rho' \colon \strg(L^l) \to \wt R(\Z/k)^{(-1)^d}[1/k]$ is injective.  This follows for
  $k$ a prime power by our argument above, and, we believe, for all $k$ odd by the
  techniques in our paper.  In fact, Wall proved much more and identified the image of the
  map $\rho'$, which takes much more work.

  Wall's result that structure set of lens spaces is detected by the $\rho'$-invariant and
  the fact that splitting invariants detect the structure set of $T^n \times S^l$ were a
  major impetus for our paper.
\end{remark}

Recall that $\calp$ is the set of conjugacy classes of subgroups of order $p$ of
$\g = \Z^n \rtimes \Z/p$.  Recall $|\calp| = p^k$ where $n=k(p-1)$.  Let
$\pr: \g \to \g_{\ab}\cong (\Z/p)^{k+1}$ be the quotient map.  For each $(P) \in \calp$,
let $P' = \pr(P)$, let $\g_P = \pr^{-1}(\pr(P))$, and note that
$\ker \pr \subsetneq \g_P \subsetneq \g$.  Let $M_P \to M$ be the cover corresponding to
the subgroup $\g_P$.  Consider the transfer map
$$
\res_\g^{\g_P} : \strg(M) \to \strg(M_P),
$$
obtained by sending a simple homotopy equivalence $N \to M$ to the covering simple
homotopy equivalence $N_P \to M_P$.

\begin{theorem}[Detection Theorem]
\label{thm:Detection_Theorem}
An element $h \in \strg(M)$ is the trivial element if and only
if $\sigma^{\geo}(h) = 0$ and $\rho'(M_P \to BP')(\res_\g^{\g_P}(h)) = 0$ holds for all $(P) \in \calp$.
\end{theorem}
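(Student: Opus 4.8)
The plan is to deduce the theorem from Theorem~\ref{the:geometric_structure_set_of_M}, which provides an \emph{injective} homomorphism $\sigma^{\geo}\times\rho^{\geo}$ on $\strg(M)$ whose second factor $\rho^{\geo}$ is, by construction, the product over $(P)\in\calp$ of homomorphisms $\rho^{\geo}_{(P)}\colon\strg(M)\to\wt R(P')^{(-1)^d}[1/p]$, $d=(n+l+1)/2$. So everything reduces to the identity
\[
\rho^{\geo}_{(P)}(h)\;=\;\rho'(M_P\to BP')\bigl(\res_\g^{\g_P}(h)\bigr)\qquad\bigl((P)\in\calp\bigr),
\]
to be established for all $h\in\ker\sigma^{\geo}$. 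Granting it, the nontrivial direction is immediate: if $\sigma^{\geo}(h)=0$ and $\rho'(M_P\to BP')(\res_\g^{\g_P}(h))=0$ for all $(P)$, then $\rho^{\geo}(h)=0$, so $(\sigma^{\geo}\times\rho^{\geo})(h)=0$ and $h=0$; the reverse implication holds because $\sigma^{\geo}$ and each $\rho'(M_P\to BP')$ (Theorem~\ref{thm:rho'}~\ref{thm:rho':(2)}) are homomorphisms.

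I would first prove the displayed identity on the subgroup $\im(\nu^{\geo})\subseteq\ker\sigma^{\geo}$, then extend. Write $h=\nu^{\geo}(x)$, $x=(x_Q)_{(Q)}\in\bigoplus_{(Q)}\wt L^s_{n+l+1}(\Z Q)$; via the identification of the geometric surgery sequence with the $1$-connective algebraic one (Theorem~\ref{thm:geom_is_alg},~\eqref{ident_structure_sets},~\eqref{reduced_surgery_sequence_geometric}) this means $h=\partial_M\bigl(\sum_{(Q)}\ind^\Gamma_Q x_Q\bigr)$, where $\partial_M$ is the Wall action on $\id_M$, i.e.\ $\xi^{\langle 1\rangle}_{n+l+1}(M)$. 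For the left-hand side I unwind the construction
\[
\rho^{\geo}_{(P)}=\wt\sign_{P'}\circ\bigl(\wt\xi_{n+l+1}(BP')[1/p]\bigr)^{-1}\circ\res_{\Gamma_{\ab}}^{P'}\circ\strp_{n+l+1}(B\!\pr)\circ\strp_{n+l+1}(f)\circ j(M),
\]
using $j(M)\circ\wt\xi^{\langle 1\rangle}(M)=\wt\xi(M)$ (the ladder map into~\eqref{reduced_surgery_sequence_periodic}), naturality of $\xi(-)$ under the $\pi_1$-isomorphism $f$, under $B\!\pr$, and under the transfer $\res_{\Gamma_{\ab}}^{P'}$ (restriction of the assembly map along $P'\subseteq\Gamma_{\ab}$ in the Connolly--Davis--Khan picture $\strp_*(X)=H^{\pi_1X}_*(\wt X\to\pt;\bfL)$), together with the fact that $\wt\xi_{n+l+1}(BP')$ is the $1/p$-localization of Theorem~\ref{the:The_periodic_structure_set_of_BP_for_a_finite_p-group}~\ref{the:The_periodic_structure_set_of_BP_for_a_finite_p-group:localization}. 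This rewrites the left-hand side as $\wt\sign_{P'}\bigl(\res_{\Gamma_{\ab}}^{P'}(L^s(\pr)(\sum_{(Q)}\ind^\Gamma_Q x_Q))\bigr)$. Since $\pr$ restricts to an isomorphism $Q\xrightarrow{\cong}Q'$ and $P'=Q'\Leftrightarrow(P)=(Q)$ by Lemma~\ref{lem:preliminaries_about_Gamma_and_Zn_rho}~\ref{lem:preliminaries_about_Gamma_and_Zn_rho:list_of_finite_subgroups},~\ref{lem:preliminaries_about_Gamma_and_Zn_rho:abelianization}, the double coset formula in the abelian group $\Gamma_{\ab}$ --- exactly the computation from the proof of Theorem~\ref{the:Computation_of_strp(BGamma)}~\ref{the:Computation_of_strp(BGamma):restriction_iso} --- collapses this to $p^{k}\cdot\wt\sign_{P'}\bigl(\wt L^s(\pr|_{P})(x_{P})\bigr)$.

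For the right-hand side I would use that the geometric transfer $\res_\g^{\g_P}$ carries the Wall action to the Wall action composed with the $L$-theory transfer, so $\res_\g^{\g_P}(h)=\partial_{M_P}\bigl(\res_\g^{\g_P}(\sum_{(Q)}\ind^\Gamma_Q x_Q)\bigr)$; then Theorem~\ref{thm:rho'}~\ref{thm:rho':(2)} with $y=\id_{M_P}$ (using $\rho'(\id_{M_P})=0$) gives $\rho'(M_P\to BP')(\res_\g^{\g_P}(h))=\wt\sign_{P'}\bigl(\wt L^s(\phi_P)(\res_\g^{\g_P}(\sum_{(Q)}\ind^\Gamma_Q x_Q))\bigr)$, $\phi_P=\pr|_{\g_P}\colon\g_P\to P'$. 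Now $\g_P=\pr^{-1}(P')$ is normal in $\Gamma$, contains $P$, and $[\Gamma:\g_P]=p^{k}$; hence $\res_\g^{\g_P}\circ\ind^\Gamma_Q$ vanishes on reduced $L$-groups for $(Q)\neq(P)$, while for $Q=P$ the double coset formula contributes $|\g_P\backslash\Gamma/P|=[\Gamma:\g_P]=p^{k}$ copies of $\wt L^s(\pr|_{P})$ --- again giving $p^{k}\cdot\wt\sign_{P'}(\wt L^s(\pr|_{P})(x_{P}))$. This proves the identity on $\im(\nu^{\geo})$. To finish, recall that $\ker\sigma^{\geo}/\im(\nu^{\geo})$ is a finite abelian $p$-group (Theorem~\ref{the:The_geometric_simple_structure_set_of_M}~\ref{the:The_geometric_simple_structure_set_of_M:kernel}), so for $h\in\ker\sigma^{\geo}$ one has $p^{N}h\in\im(\nu^{\geo})$ for some $N$; as both sides are homomorphisms into $\wt R(P')^{(-1)^d}[1/p]$, on which multiplication by $p$ is invertible, the identity for $p^{N}h$ forces it for $h$.

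The step I expect to be the main obstacle is the middle one: making precise that $\res_\g^{\g_P}\colon\strg(M)\to\strg(M_P)$ is compatible with each piece of the algebraic surgery machinery --- the maps $j$ and $\xi$, the Wall action, and the transfers on the equivariant $\bfL$-homology groups $H^{?}_*(\wt{(-)}\to\pt;\bfL)$ --- and assembling the two double coset computations (in $\Gamma$ and in $\Gamma_{\ab}$) cleanly. These are all standard, but must be pieced together with care; note that this argument yields the identity only on $\ker\sigma^{\geo}$, which is precisely what the theorem needs.
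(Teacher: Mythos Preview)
Your proof is correct and follows essentially the same route as the paper's: reduce the question to the equality $\rho^{\geo}_{(P)}=\rho'(M_P\to BP')\circ\res_\g^{\g_P}$ on $\ker\sigma^{\geo}$, pass to the finite-$p$-index subgroup $\im(\nu^{\geo})$ using that the target is a $\IZ[1/p]$-module, and then verify the identity there using Theorem~\ref{thm:rho'}~\ref{thm:rho':(2)}.

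The only difference is in that last verification. The paper enlarges from $\im(\nu^{\geo})$ to the larger $\im(\partial)=\im(\xi^{\langle 1\rangle}_{n+l+1}(M))$ and then, for $h=\partial z$, appeals directly to Theorem~\ref{thm:rho'}~\ref{thm:rho':(2)}; implicitly this uses the single compatibility $\res_{\Gamma_{\ab}}^{P'}\circ L^s(\pr)=L^s(\phi_P)\circ\res_\g^{\g_P}$, which is immediate from the pullback square $\g_P=\pr^{-1}(P')$. You instead stay on $\im(\nu^{\geo})$ and carry out two parallel double coset computations (one in $\Gamma_{\ab}$ for the left side, one in $\Gamma$ for the right), checking that both collapse to $p^{k}\cdot\wt\sign_{P'}(\wt L^s(\pr|_P)(x_P))$. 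Your version is more explicit and more self-contained; the paper's is shorter but leaves more to the reader.
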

\begin{proof}
Obviously it suffices to show
that the restriction of the homomorphism of abelian groups 
\[
\rho^{\geo} \colon \strg(M) \to \prod_{(P) \in \calp}  \wt R(P')^{(-1)^d}[1/p]
\]
appearing in Theorem~\ref{the:geometric_structure_set_of_M} to the kernel of homomorphism
$\sigma^{\geo}$ appearing in Theorem~\ref{the:geometric_structure_set_of_M} sends $h$ to
$(\rho'(M_P \to BP')(\res_\g^{\g_P}(h)))_{ (P) \in \calp}$.  The image of the map
\[
\nu^{\geo} \colon \bigoplus_{(P) \in \calp} \widetilde{L}^s_{n+l+1}(\IZ P) \to 
\widetilde{L}^s_{n+l+1}(\IZ \Gamma)
\xrightarrow{\widetilde{\xi}^{\langle 1 \rangle}_{n+l+1}(M)} \strg(M)
\]
defined in Theorem~\ref{the:The_geometric_simple_structure_set_of_M}~%
\ref{the:The_geometric_simple_structure_set_of_M:kernel} is contained in the kernel of
$\sigma^{\geo}$ and has finite $p$-power index.
The map $\rho^{\geo}$ is a homomorphism of abelian groups by Theorem~\ref{thm:rho'}~\ref{thm:rho':(1)}.
Since the image of $\rho^{\geo}$ is a
$\Z[1/p]$-module, we conclude that it suffices to show that the restriction of the
homomorphism $\rho^{\geo}$ to the image of the map $\nu^{\geo}$ sends $h$ to
$(\rho'(\res_\g^{\g_P}(h)))_{ (P) \in \calp}$. Since $\im(\nu^{\geo})$ is contained in
the image of $\xi_{n+l+1}^{\langle 1 \rangle}(M) = \partial $, it suffices to show that restriction of
the homomorphism $\rho^{\geo}$ to the image of $\xi_{n+l+1}^{\langle 1 \rangle}(M)= \partial$ sends
$h$ to $(\rho'(M_P \to BP')(\res_\g^{\g_P}(h)))_{ (P) \in \calp}$.  This follows from
Theorem~\ref{thm:rho'}~\ref{thm:rho':(2)}.
\end{proof}

Now Theorem~\ref{the:zero_in_structure_set_intro} follows directly from
Theorems~\ref{the:geometric_structure_set_of_M},~\ref{thm:characteristic_variety_for_torus_x_sphere},
and~\ref{thm:Detection_Theorem}.

\begin{example}\label{exa:small_values_for_p_k_and_n}
  Take $p = 3$, $k = 1$, $n = 2$ and the $\IZ/3$-action in $\IZ^2$ given by the
  $(n_1,n_2) \mapsto (-n_2,n_1 - n_2)$.  Then $\IZ^2_{\rho}$ is $\IZ[\exp(2\pi/3)]$,
  we have
  \[
  \strg(M) \cong \IZ^3 \oplus \IZ/2.
  \]
  There is one nontrivial splitting obstruction taking values in $L_2(\IZ) \cong \IZ/2$,
  make the corresponding map transversal to $T^2 \times \pt \subset T^2 \times S^3$.
 
  There are three conjugacy classes of  subgroups of order $p$ in $\Gamma$ and each yields a
  $\rho$-invariant type obstruction.
  \end{example}

%%%%%%%%%%%%%%%%%%%%%%%%%%%%%%%%%%%%%%%%%%%%%%%%%%%%%%%%%%%%%%%%%%%%%%%%%%%%%%%%%%%%%%
%%%%%%%%%%%%% The operation of the group of self-homotopy equivalences on the geometric structure set  %%%%%%%%%
%%%%%%%%%%%%%%%%%%%%%%%%%%%%%%%%%%%%%%%%%%%%%%%%%%%%%%%%%%%%%%%%%%%%%%%%%%%%%%%%%%%%%%

\typeout{----  The operation of the group of self-homotopy equivalences on the geometric structure set  ------------}

\typeout{----------------------------------  Open questions  -----------------------------------------}

%%%%%%%%%%%%%%%%%%%%%%%%%%%%%%%%%%%%%%%%%%%%%%%%%%%%%%%%%%%%%%%%%%%%%%%%%%%%%%%%%%%%%%

\section{Appendix: Open questions}

\label{sec:Appendix:Open_questions}

As mentioned earlier, our inspiration, our muse, for this paper is Wall's
classification~\cite[Chapter 14E]{Wall(1999)} of homotopy lens spaces $L^l$ with odd order
fundamental group.  This classification is complete.  It consists of the classification of
homotopy types of homotopy lens spaces, the classification of the simple homotopy types
within a homotopy type, the computation of the geometric structure set, and the
computation of the moduli space ${\calm}(L^L)$ of homeomorphism classes of the closed
manifolds within a simple homotopy type.  In the case of homotopy lens spaces it is a bit
easier to state if one considers the polarized homotopy type, fixing an orientation and an
identification of the fundamental group with $\Z/k$.  Then the polarized homotopy type is
given by the first $k$-invariant, lying in the group
$(\Z/k)^\times \subset \Z/k = H^{l+1}(B(\Z/k))$.  The simple homotopy types of polarized
lens spaces are given by the Reidemeister torsion $\Delta(L^l)$ and Wall determines the
set of possible values which occur.  Fixing a simple homotopy type he showed, as mentioned
earlier, that
$$
\rho' : \strg(L^l) \to \wt R(\Z/k)^{(-1)^{(l+1)/2}}
$$
is injective and he computed the image.

From this, it is not difficult to compute the moduli space.  For any closed manifold $X$,
let $\sAut(X)$ be the group of homotopy classes of simple self-homotopy equivalences.
Then $\sAut(X)$ acts on $\strg(X)$ with orbit space the moduli space.  In the case of a
homotopy lens space, then $\sAut(X)$ can be computed directly since $L^l$ is a skeleton of
$K(\Z/k,1)$, or better yet, one can compute the action of $\sAut(X)$ on the set of
$k$-invariants, Reidemeister torsions, and $\rho'$-invariants.  We omit the details.

The discussion above leads to several questions:

\begin{enumerate}

\item Can one describe the image of the injective map of
  Theorem~\ref{the:geometric_structure_set_of_M}
  \[
    \sigma^{\geo} \times \rho^{\geo} \colon \strg(M) \to H_n(T^n; \bfL\langle 1
    \rangle)^{\Z/p} \oplus \left(\bigoplus_{(P) \in \calp} \wt R(P')^{(-1)^d}[1/p]\right)?
  \]
  A first step is to compute the $\rho$-invariant of $M_P \to BP'$.

\item In the proof of the Detection Theorem~\ref{thm:Detection_Theorem} we show that
  $\rho^{\geo}$ restricted to the kernel of $\sigma^{\geo}$ is given by differences of
  $\rho$-invariants.  Is this also true for $\rho^{\geo}$ itself, in other words, does
  $\rho^{\geo} = \prod_{P \in \calp} \rho'(M_P \to BP')\circ \res_\g^{\g_P}$?

\item What can we say about the moduli space of homeomorphism classes of manifolds
  (simple) homotopy equivalent to $M$.  Is it infinite?  Can we compute action of
  $\sAut(M)$ on the structure set?  How does this group act on the splitting and
  $\rho$-invariants?  When is a self-homotopy equivalence of $M$ homotopic to a
  homeomorphism?  Can we classify all self-homotopy equivalences of $M$, perhaps up to
  finite ambiguity?  Does the subgroup of self-homotopy equivalences of $M$ which are
  homotopic to a homeomorphism have finite index in the group of self-homotopy
  equivalences of $M$?  Is a self-homotopy equivalence of $M$ determined by the induced
  map on the fundamental group up to finite ambiguity? Can we show that within the
  homotopy type of $M$ there are infinitely many mutually different homeomorphism types?
  
\item Is a homotopy equivalence $h : N \to M$ splittable along
  \[
    p^k\{\pt \} \times L^l = (T^n)^{\Z/p} \times L^l \subset T^n \times_{\Z/p} S^l = M
  \]
  if  and only if $h$ is a simple homotopy equivalence?  (The if direction follows from
  Theorem~\ref{thm:Browder_splitting_theorem} and equation~\eqref{equation:LsZ/p}.)
  
\item Is a simple homotopy equivalence $h : N \to M = T^n \times_{\Z/p} S^l$ homotopic to
  a homeomorphism if and only if $\ol h : \ol N \to T^n \times S^l$ is splittable along
  $T^J \times \pt$ for all nonempty $J \subset \{1,2,\dots, n\}$ and if $h \simeq k$ where
  $k| : k^{-1}((T^n)^{\Z/p} \times L^l) \to (T^n)^{\Z/k} \times L^l $ is a homeomorphism.
  (An alternate conjecture is that a simple homotopy equivalence $h : N \to M$ is
  homotopic to a homeomorphism if and only if $h \simeq k$ where
  $k| : k^{-1}((T^n)^{\Z/p} \times L^l) \to (T^n)^{\Z/p} \times L^l $ and
  $\ol k | : \ol k^{-1} (T^J \times \pt) \to T^J \times \pt $ are homeomorphisms for all
  $J$. )

\end{enumerate}

%%%%%%%%%%%%%%%%%%%%%%%%%%%%%%%%%%%%%%%%%%%%%%%%%%%%%%%%%%%%%%%%%%%%%%%%%%%%%%%
%%%%%%%%%%%%%%%%%%%%%%%%%%%%%%  References   %%%%%%%%%%%%%%%%%%%%%%%%%%%%%%%%%%%
%%%%%%%%%%%%%%%%%%%%%%%%%%%%%%%%%%%%%%%%%%%%%%%%%%%%%%%%%%%%%%%%%%%%%%%%%%%%%%%

% \addcontentsline{toc<<}{section}{References} 
% \bibliographystyle{abbrv}
% \bibliography{dbpub,dbpre}

%\version{06.07.2019 (Wolfgang)}

\end{document}